\newcolumntype{N}{c@{}S}
\pgfplotsset{compat=1.15}
\def\centerarc[#1](#2)(#3:#4:#5)
\newtheorem{theorem}{Theorem}[section]
\newtheorem{lemma}[theorem]{Lemma}
\newtheorem{prop}[theorem]{Proposition}
\newtheorem{corollary}[theorem]{Corollary}
\newtheorem{proposition}[theorem]{Proposition}
\theoremstyle{definition}
\newtheorem{definition}[theorem]{Definition}
\theoremstyle{remark}
\newtheorem{remark}[theorem]{Remark}
\newcommand{\da}{\mathrm{da}} 
\newcommand{\dl}{\mathrm{dl}} 
\newcommand{\act}[1]{\langle{#1}\rangle}
\newcommand{\vG}{\varGamma}
\newcommand{\VV}{{\mathcal{V}}}
\newcommand{\Vo}{\mathring{\VV}}
\newcommand{\trans}{\prime}
\newcommand{\om}{\varOmega}
\renewcommand{\Omega}{\om}
\def\d{\partial}
\newcommand{\agl}{\sphericalangle}
\newcommand{\vphi}{\varphi}
\newcommand{\veps}{\varepsilon}
\newcommand{\Xm}[1]{\mathfrak{X}({#1})}
\newcommand{\Xmo}[1]{\mathfrak{X}_c({#1})}
\renewcommand{\forall}{\text{ for all }}
\newcommand{\xtt}{\tilde{x}}
\newcommand{\Gamtt}{\tilde{\Gamma}}
\newcommand{\dtt}{\tilde{\partial}}
\newcommand{\Rtt}{\tilde{R}}
\newcommand{\gtt}{\tilde{g}}
\newcommand{\sigmatt}{\tilde{\sigma}}
\newcommand{\startt}{\tilde{\star}}
\newcommand{\Moo}{{\breve{M}}}
\newcommand{\Yoo}{{\breve{Y}}}
\newcommand{\Poo}{{\breve{\Phi}}}
\newcommand{\xoo}{\breve{x}}
\newcommand{\doo}{\breve{\partial}}
\newcommand{\Sc}{\ensuremath{\mathcal{S}}}
\newcommand{\E}{\ensuremath{\mathscr{E}}}
\newcommand{\Eint}{\ensuremath{\mathscr{E}^{\mathrm{int}}}}
\newcommand{\V}{\ensuremath{\mathscr{V}}}
\newcommand{\Vint}{\ensuremath{\mathscr{V}^{\mathrm{int}}}}
\newcommand{\Vbnd}{\ensuremath{\mathscr{V}^{\mathrm{bnd}}}}
\newcommand{\dhat}[1]{{\bar{{#1}}}}
\newcommand{\Gex}{{\dhat{G}}}  
\newcommand{\gex}{{\dhat{g}}}  
\newcommand{\nabex}{\dhat{\nabla}}  
\newcommand{\Rmex}{\dhat{R}}
\newcommand{\Kex}{\dhat{K}}
\newcommand{\gh}{{g}}
\newcommand{\vh}{{v}}
\newcommand{\sh}{{\sigma}}
\newcommand{\sigmaappr}{{\sigma_h}}
\renewcommand{\div}{\mathrm{div}}
\DeclareMathOperator{\curl}{\mathrm{curl}}
\DeclareMathOperator{\rot}{\mathrm{rot}}
\DeclareMathOperator{\inc}{\mathrm{inc}}
\DeclareMathOperator{\grad}{\mathrm{grad}}  
\newcommand{\W}{\mathord{\adjustbox{valign=B,totalheight=.6\baselineskip}{$\bigwedge$}}}
\newcommand{\TT}{\mathcal{T}}
\newcommand{\og}{\omega}
\newcommand{\mt}[1]{[{#1}]}
\newcommand{\EuclBasis}{E}
\newcommand{\GBasis}{e}
\newcommand{\jump}[2][]{\ensuremath{\ifthenelse{\equal{#1}{}}{\llbracket #2 \rrbracket}{\llbracket #2 \rrbracket}_{#1}}}
\newcommand{\jmp}[1]{\ensuremath{\llbracket #1 \rrbracket}}
\newcommand{\vol}[1]{\mathrm{vol}_{{#1},g}}
\newcommand{\mat}[1]{\ensuremath{\begin{pmatrix}#1\end{pmatrix}}}	
\newcommand{\diag}[1]{\text{diag}(#1)}								
\newcommand{\N}{\ensuremath{\mathbb{N}}}
\newcommand{\R}{\ensuremath{\mathbb{R}}}
\newcommand{\T}{\ensuremath{\mathscr{T}}} 
\newcommand{\Teuc}{\tilde{T}}
\newcommand{\Ltwo}[1][]{\ensuremath{L^2\ifthenelse{\equal{#1}{}}{}{(#1)}}}
\newcommand{\Linf}[1][]{\ensuremath{L^{\infty}\ifthenelse{\equal{#1}{}}{}{(#1)}}}
\newcommand{\Lp}[1][]{\ensuremath{L^{p}\ifthenelse{\equal{#1}{}}{}{(#1)}}}
\newcommand{\Winf}[1][]{\ensuremath{W^{1,\infty}\ifthenelse{\equal{#1}{}}{}{(#1)}}}
\newcommand{\Winfh}[1][]{\ensuremath{W_h^{1,\infty}\ifthenelse{\equal{#1}{}}{}{(#1)}}}
\newcommand{\Wsp}[1][]{\ensuremath{W^{s,p}\ifthenelse{\equal{#1}{}}{}{(#1)}}}
\newcommand{\Wsph}[1][]{\ensuremath{W_h^{s,p}\ifthenelse{\equal{#1}{}}{}{(#1)}}}
\newcommand{\Hone}[1][]{\ensuremath{H^1\ifthenelse{\equal{#1}{}}{}{(#1)}}}
\newcommand{\Honeh}[1][]{\ensuremath{H^1_h\ifthenelse{\equal{#1}{}}{}{(#1)}}}
\newcommand{\Honez}[1][]{\ensuremath{H^1_0\ifthenelse{\equal{#1}{}}{}{(#1)}}}
\newcommand{\Hmone}[1][]{\ensuremath{H^{-1}\ifthenelse{\equal{#1}{}}{}{(#1)}}}
\newcommand{\Hsh}[1][]{\ensuremath{H^s_h\ifthenelse{\equal{#1}{}}{}{(#1)}}}
\newcommand{\Htwo}[1][]{\ensuremath{H^2\ifthenelse{\equal{#1}{}}{}{(#1)}}}
\newcommand{\Htwoz}[1][]{\ensuremath{H^2_0\ifthenelse{\equal{#1}{}}{}{(#1)}}}
\newcommand{\Hmtwo}[1][]{\ensuremath{H^{-2}\ifthenelse{\equal{#1}{}}{}{(#1)}}}
\newcommand{\HDiv}[1][]{\ensuremath{H(\mathrm{div}\ifthenelse{\equal{#1}{}}{}{,#1})}}
\newcommand{\HDivz}[1][]{\ensuremath{H_0(\mathrm{div}\ifthenelse{\equal{#1}{}}{}{,#1})}}
\newcommand{\HCurl}[1][]{\ensuremath{H(\mathrm{curl}\ifthenelse{\equal{#1}{}}{}{,#1})}}
\newcommand{\HDivDiv}[1][]{\ensuremath{H(\mathrm{divdiv}\ifthenelse{\equal{#1}{}}{}{,#1})}}
\newcommand{\HCurlCurl}[1][]{\ensuremath{H( \mathrm{curl curl}\ifthenelse{\equal{#1}{}}{}{,#1})}}
\newcommand{\HCurlDiv}[1][]{\ensuremath{H( \mathrm{curl div}\ifthenelse{\equal{#1}{}}{}{,#1})}}
\newcommand{\Hinc}[1][]{\ensuremath{H(\inc\ifthenelse{\equal{#1}{}}{}{,#1})}}
\newcommand{\Cone}[1][]{\ensuremath{C^1\ifthenelse{\equal{#1}{}}{}{(#1)}}}
\newcommand{\Czero}[1][]{\ensuremath{C^0\ifthenelse{\equal{#1}{}}{}{(#1)}}}
\newcommand{\Ctwo}[1][]{\ensuremath{C^2\ifthenelse{\equal{#1}{}}{}{(#1)}}}
\newcommand{\Ck}[1][]{\ensuremath{C^k\ifthenelse{\equal{#1}{}}{}{(#1)}}}
\newcommand{\TF}[2][]{\ensuremath{C_0^{\infty}(#2\ifthenelse{\equal{#1}{}}{}{,#1})}}
\newcommand{\Cinf}[1][]{\ensuremath{C^{\infty}\ifthenelse{\equal{#1}{}}{}{(#1)}}}
\newcommand{\DF}[1][]{\ensuremath{C_0^{\infty,*}\ifthenelse{\equal{#1}{}}{}{(#1)}}}
\newcommand{\Gauss}{K}
\newcommand{\cn}{\varpi} 
\newcommand{\OneForm}{\cn}  
\newcommand{\nv}{\nu}
\newcommand{\nut}{{\tilde{\nu}}}   
\newcommand{\tv}{{{\tau}}}       
\newcommand{\gn}{{{\hat{\nu}}}}  
\newcommand{\gt}{{{\hat{\tau}}}} 
\newcommand{\tin}{\tau_-}
\newcommand{\tout}{\tau_+}
\newcommand{\tinout}{\tau_\pm}
\newcommand{\htin}{\hat{\tau}_-}
\newcommand{\htout}{\hat{\tau}_+}
\newcommand{\htinout}{\hat{\tau}_\pm}
\newcommand{\hni}{\hat{\nu}_-}
\newcommand{\hno}{\hat{\nu}_+}
\newcommand{\hnio}{\hat{\nu}_\pm}
\newcommand{\WW}{{\mathcal{W}}}
\newcommand{\Wo}{\mathring{\mathcal{W}}}
\newcommand{\Regge}[1][]{\ensuremath{\mathcal{R}\ifthenelse{\equal{#1}{}}{}{(#1)}}}
\newcommand{\RR}{\Regge}
\newcommand{\idop}{\ensuremath{I}}
\newcommand{\DD}{\mathcal{D}}
\newcommand{\pder}[2]{\ensuremath{\partial_{#2}{#1}}}
\DeclareMathOperator{\tro}{\mathrm{tr}}
\newcommand{\tr}[1]{\ensuremath{\,\mathrm{tr}(#1)}}
\newcommand{\cof}[1]{\ensuremath{\,\mathrm{cof}(#1)}}
\newcommand{\norm}[2][]{\ensuremath{\ifthenelse{\equal{#1}{}}{\left\|#2\right\|}{\left\|#2\right\|_{#1}}}}
\newcommand{\lnorm}[2][]{\ensuremath{\left\|#2\right\|_{L^2\ifthenelse{\equal{#1}{}}{}{(#1)}}}}
\newcommand{\hnorm}[2][]{\ensuremath{\left\|#2\right\|_{H^1\ifthenelse{\equal{#1}{}}{}{(#1)}}}}
\newcommand{\BDM}{BDM}
\newcommand{\RT}{RT}
\newcommand{\Pol}{\ensuremath{\mathcal{P}}}
\newcommand{\RegInt}[1][]{\mathcal{I}^{\Regge}_{\ifthenelse{\equal{#1}{}}{}{#1}}}
\newcommand{\HoneInt}[1][]{\mathcal{I}_{h}^{\VV,\ifthenelse{\equal{#1}{}}{}{#1}}}
\newcommand{\Chrtwo}[1][]{\ensuremath{\Gamma\ifthenelse{\equal{#1}{}}{_{\bullet\bullet}^{\,\,\,\,\,\bullet}}{_{#1 \bullet}^{\hphantom{#1 \bullet}\bullet}}}}
\newcommand{\Eucl}{\delta}
\newcommand{\VDer}[2]{\ensuremath{D_{#2}(#1)}}
\newcommand{\nrm}[1]{\left\vvvert{{#1}}\right\vvvert}
\title[Covariant incompatibility \& curvature]{Analysis of curvature approximations via covariant curl and incompatibility for Regge metrics}
\author[J.~Gopalakrishnan]{Jay~Gopalakrishnan}
\address{Portland State University, PO Box 751, Portland OR 97207,USA }
\email{gjay@pdx.edu}
\author[M.~Neunteufel]{Michael~Neunteufel}
\address{Institute for Analysis and Scientific Computing, TU Wien, Wiedner Hauptstr. 8-10, 1040 Wien, Austria}
\email{michael.neunteufel@tuwien.ac.at}
\author[J.~Sch\"oberl]{Joachim~Sch\"oberl}
\address{Institute for Analysis and Scientific Computing, TU Wien, Wiedner Hauptstr. 8-10, 1040 Wien, Austria}
\email{joachim.schoeberl@tuwien.ac.at}
\author[M.~Wardetzky]{Max~Wardetzky}
\address{Institute of Numerical and Applied Mathematics, University of G\"ottingen, Lotzestr. 16-18, 37083 G\"ottingen, Germany}
\email{email: wardetzky@math.uni-goettingen.de}
\begin{document}

\maketitle

\begin{abstract}
  The metric tensor of a Riemannian manifold can be approximated using
  Regge finite elements and such approximations can be used to compute
  approximations to the Gauss curvature and the Levi-Civita connection
  of the manifold. It is shown that certain Regge approximations yield
  curvature and connection approximations that converge at a higher
  rate than previously known.  The analysis is based on covariant
  (distributional) curl and incompatibility operators which can be
  applied to piecewise smooth matrix fields whose
  tangential-tangential component is continuous across element
  interfaces. Using the properties of the canonical interpolant of the
  Regge space, we obtain superconvergence of approximations of these
  covariant operators. Numerical experiments further illustrate the
  results from the error analysis.
	\\
	\vspace*{0.25cm}
	\\
	{\bf{Key words:}} Gauss curvature, Regge calculus, finite element method, differential geometry \\
	
	\noindent
	\textbf{\textit{MSC2020:} 65N30, 53A70, 83C27}
\end{abstract}

\section{Introduction}
\label{sec:intro}


This paper is concerned with the finite element approximation of the
Gauss curvature $\Gauss$ of a two-dimensional Riemannian manifold.  As
shown by Gauss's Theorema Egregium, $K$ is an intrinsic quantity of
the manifold. It can be computed solely using the metric tensor of the
manifold. Therefore, when a finite element approximation of the metric
tensor is given, it is natural to ask if an approximation to $K$ can
be computed. The answer was given in the affirmative by the recent
work of~\cite{Gaw20}, assuming that the metric is
approximated using Regge finite elements, and  further improved by~\cite{BKG21}.
The convergence theorems of
this paper are heavily based on these works. We prove that the
resulting curvature and connection approximations converge at a higher
rate than previously known for the approximation given by the
canonical Regge interpolant. Our method of analysis is different and
new. In particular, we show how covariant curl and incompatibility can
be approximated using appropriate finite element spaces, given a
nonsmooth Regge metric. These operators arise
in a myriad of other applications, so our
intermediate results regarding them are of independent interest.
Notions of curvature while gluing together piecewise smooth metrics
has been a preoccupation in varied fields far away from computing, as
early as \cite{Israe66} to recent years~\cite{Stric20}, so we note at
the outset that we approach the topic with
numerical computation in mind.

The Regge finite element takes its name from Regge
calculus, originally developed for solving Einstein field equations in
general relativity. It  discretizes the metric tensor through
edge-length specifications, allowing the curvature to be approximated by
means of angle deficits~\cite{Regge61}.
Regge calculus was established in 
theoretical and numerical physics and routinely finds applications
in relativity and quantum mechanics. In \cite{williams92,RW00,BOW18} a comprehensive overview of the development of Regge calculus over the last fifty years can be found.
Just as Whitney forms  \cite{whitney57} can be interpreted as finite elements, 
it was observed that Regge's approach of prescribing {quantities on edges}
is equivalent to defining a piecewise constant metric tensor whose  tangential-tangential components are continuous across element interfaces \cite{Sorkin75}. The first rigorous proof of convergence of Regge's
angle deficits to the scalar curvature, for a sequence of appropriate triangulations in the sense of measures, was accomplished in \cite{Cheeger84}.
Later, it was also shown \cite{christiansen15}
that for a given metric in the lowest order Regge finite element space,  a sequence of mollified metrics converges to the angle deficit in the sense of measures.
Methods based on angle deficits for approximating the Gauss curvature on triangulations consisting of piecewise flat triangles are well-established in discrete differential geometry and computer graphics. 
{On specific triangulations satisfying certain conditions, convergence  in the $\Linf$-norm up to quadratic order was proven, but for a general irregular grid  there is no reason to expect convergence~\cite{BCM03,Xu06,XX09}.}
In \cite{LX07},  Regge's concept of angle deficits has been extended to quadrilateral meshes. Notable
among the results applicable for higher dimensional manifolds
is the proof of convergence for approximated Ricci curvatures of isometrically embedded hypersurfaces $\subset\R^{n+1}$ presented 
in \cite{Fritz13}, and used later for Ricci flows \cite{Fritz15}.

Another natural perspective to place the modern 
developments on the Regge finite element is within the 
emergence of \emph{finite element exterior calculus} (FEEC) \cite{arnold06, arnold10}. Finite element structures for Regge calculus were developed in \cite{christiansen04,christiansen11} and  the resulting elements became popular in FEEC under the name Regge finite elements~\cite{li18}.
Regge elements approximating metric and strain tensors were extended to arbitrary polynomial order on triangles, tetrahedra, and higher dimensional simplices in \cite{li18}, and for quadrilaterals, hexahedra, and prisms in \cite{Neun21}. 
The utility of Regge elements when
discretizing parts of the  Kr\"oner complex,
or the elasticity complex,  was considered in 
\cite{AH21, christiansen11,Hauret13}.
Properties of Regge elements were  exploited to construct a method avoiding membrane locking for general triangular shell elements~\cite{NS21}.

In this backdrop, the recent work of \cite{Gaw20} provides an  interesting
application of Regge elements by developing a
high-order Gauss curvature approximation formula based on higher degree 
Regge elements. (It was applied to Ricci and Ricci-DeTurck flow
\cite{Gawlik19}.) The key is an  integral formulation of the angle deficit, extendable to higher orders. Using it, rigorous proofs of convergence at specific rates were proved in~\cite{Gaw20}. Even more recently, in \cite{BKG21}, this approach has been reformulated in terms of a nonlinear distributional curvature and connection 1-form (Levi-Civita connection), using the element-wise Gauss curvature, jump of geodesic curvature at edges, and angle deficits at vertices as sources of curvature \cite{Sullivan08,Str2020}. The authors show that
$\Ltwo$-convergence of the approximated curvature can be obtained if
Regge elements using polynomials
of degree at least two are used to approximate the metric.
This is in line with the rule of thumb that a second order
differential operator approximated using polynomials of degree $k$
leads to convergence rates of order~$k-2$.
Nonetheless, convergence rates better than this rule of thumb have often
been observed in compatible discretizations in FEEC.
One of our goals in this paper is to establish such an improved rate for the curvature and connection 
approximations, as well as for the intermediate covariant operators
arising in our analysis, such as the curl and incompatibility.

In a later section, we extend the ideas in \cite{Gaw20,BKG21} by
exploiting certain orthogonality properties for the error in the
canonical interpolation by Regge finite elements to obtain one extra
order of convergence for the curvature approximation.
This extra order  is comparable to super-convergence properties of mixed methods \cite{BBF13,Com89} and has been observed for the Hellan--Herrmann--Johnson method for the biharmonic plate and shell equation~\cite{AW20,Walker21}.

Another difference in our analysis, in comparison  to
\cite{Gaw20,BKG21}, is the use of the intrinsic (or covariant)
incompatibility operator (which we define  using the covariant curl
 on the manifold).  It is now well known that linearizing the
curvature operator around the Euclidean metric gives a first order
term involving the incompatibility operator~\cite{christiansen11} and
we exploit this relationship in the analysis of the curvature
approximation.  On Euclidean manifolds, the incompatibility operator
is well known to be the natural differential operator for Regge
elements in any dimension. By showing that the curvature
approximation can be analyzed via the incompatibility operator, we
hope to generate new ideas for computing and analyzing approximations
of the intrinsic curvature tensor of higher dimensional manifolds.  The
incompatibility operator also arises in modeling elastic materials
with dislocations~\cite{AG16,AG20}, another potential area of application. 
The key insight on which we base our definition of these covariant
operators for Regge metrics is revealed by the essential role played
by a glued smooth structure (described in
\S\ref{sec:covariant-curl-regge}).  Since coordinates in this glued
smooth structure are generally inaccessible for computations, we
detail how to compute these operators in the coordinates in which the
Regge metric is given as input.

This paper can be read linearly, but we have structured it so a
numerical analyst can also directly start with the error analysis in  Section~\ref{sec:num_ana}---where the main convergence theorems appear in \S\ref{subsec:statement_theorem}---referring back to the previous sections as needed.
(Only coordinate-based formulas are used in \S\ref{sec:num_ana};
their derivations from intrinsic geometry are in the previous sections.)
The next section (\S\ref{sec:notation}) establishes 
notation and introduces geometric preliminaries and finite element spaces. Section~\ref{sec:curv} defines the curvature approximation formula and details coordinate formulas we use for numerical computations.
In \S\ref{sec:cov}, covariant differential operators on piecewise smooth
metric tensors are derived, concentrating on the covariant curl and incompatibility operator, and how they arise from linearization of curvature.
Section~\ref{sec:connection_one_form_approx} is devoted to the
approximation of the connection 1-form.
Section~\ref{sec:num_ana} is devoted to  the numerical analysis of the errors in the method. The analysis is performed by first  proving  optimal convergence rates for the distributional covariant curl and inc, and then for the approximations of the  Gauss curvature and connection 1-form. Numerical examples illustrating  the theoretical results are presented in \S\ref{sec:num_examples}.

\section{Notation and preliminaries}
\label{sec:notation}

This section provides definitions that we use throughout.  We give
intrinsic definitions of quantities on a manifold, but in view of our
computational goals, we also make extensive use of coordinate
expressions. We use the Einstein summation convention, by which a term
where the same integer index appears twice, as both an upper and a
lower subscript, is tacitly assumed to be summed over the values of
that index in $\{ 1, 2\}$. Summation convention does not apply when a
repeated index is not an integer (such as when a subscript or a
superscript represents a vector field or other non-integer
quantities).

\subsection{Spaces on the manifold.}   \label{ssec:spaces-manifold}

Let $M$ denote a two-dimensional oriented manifold with or
without boundary. Endowed with a smooth metric $\gex$, let $(M, \gex)$
be a Riemannian manifold.
Let the 
unique Levi-Civita connection generated by $\gex$ be denoted by~$\nabex$.
Let $\Xm M$,
$\W^k(M)$, and $\TT^k_l(M)$ denote, respectively, the sets of smooth
vector fields on $M$, $k$-form fields on $M$, and $(k, l)$-tensor
fields on $M$. The value of a tensor $\rho \in \TT^k_l(M)$ acting on
$k$ vectors $X_i \in \Xm M$ and $l$ covectors $\mu_j\in \W^1(M)$ is
denoted by $\rho(\mu_1, \ldots, \mu_l, X_1, \ldots, X_k)$.  Note that
$\W^1(M) = \TT^1_0(M)$ and $\Xm M = \TT^0_1(M)$. Note also that it is
standard to extend the Levi-Civita connection $\nabex$ from vector
fields to tensor fields (see e.g., \cite[Lemma~4.6]{Lee97}) so that
Leibniz rule holds.

For coordinate computations, we use a chart to move locally to a
Euclidean domain with coordinates $x^1, x^2$. Let the accompanying
coordinate frame and coframe be denoted by $\d_i$ and $dx^i$.
We assume these coordinates preserve orientation, so 
the orientation of $M$ is given by the ordering $(\d_1, \d_2)$.
Let $\Sc(M) = \{ \sigma \in \TT_0^2(M): \sigma(X, Y) = \sigma(Y, X)$ for $X, Y \in \Xm M \}$ and $\Sc^+(M) = \{ \sigma \in \Sc(M): \sigma(X, X) > 0$ for $0\neq X \in \Xm M \}$.
They represent the subspace of symmetric tensors in $\TT_0^2(M)$, whose
elements $\sigma$ can be expressed in coordinates as
$\sigma = \sigma_{ij} dx^i \otimes dx^j$ with smoothly varying
coefficients $\sigma_{ij}$ satisfying $\sigma_{ij} = \sigma_{ji}$ and are additionally positive definite, respectively.


We will use standard operations on 2-manifold spaces such as the Hodge star
$\star: \W^k(M) \to \W^{2-k}(M)$, the exterior derivative
$d^k: \W^k(M) \to \W^{k+1}(M)$, the tangent to cotangent isomorphism
$\flat: \Xm M \to \W^1(M),$ and the reverse operation
$\sharp: \W^1(M) \to \Xm M$. Their definitions can be found in
standard texts~\cite{Lee97, Peter16, Tu17}.

\subsection{Curvature.}   \label{ssec:curvature}

The exact metric $\gex$ is an element of $\Sc^+(M)$. 
We define the {\em Riemann curvature tensor} $\Rmex \in \TT^4_0(M)$ of the manifold following~\cite{Lee97},
\begin{equation}
  \label{eq:RiemannCurvTensor}
  \Rmex(X, Y, Z, W) =
    \gex (\nabex_X \nabex_Y Z - \nabex_Y \nabex_X Z - \nabex_{[X,Y]}Z, W), \qquad X, Y, Z, W \in \Xm M.
\end{equation}
If $X$ and $Y$ are linearly independent,
the {\em Gauss curvature} of $M$ can be expressed
by
\begin{equation}
  \label{eq:Kexact}
    K (\gex) = 
  \frac{\Rmex(X, Y, Y, X)}{ \gex(X, X) \gex(Y, Y) - \gex(X, Y)^2},  
\end{equation}
whose value is well known to be independent of the choice of the basis
(see, e.g., \cite[p.~144]{Lee97} or  \cite[Ch.~4, Proposition~3.1]{Carmo1992}).

We will also need the {\em geodesic curvature} along a curve $\varGamma$ in the
manifold $(M, \gex)$. To recall its standard definition
(see~\cite[p.~140]{Tu17} or~\cite{Lee97}), we let $0 < s < a$ be the
$\gex$-arclength parameter so that $\varGamma$ is described by $\mu(s)$ for
some smooth $\mu: [0, a] \to M$ and its $\gex$-unit tangent vector is
$\textsc{t}(s) = d\mu/ds$. Let $\textsc{n}(s)$ be such that
$(\textsc{t}(s), \textsc{n}(s))$ is a 
$\gex$-orthonormal set of two vectors in the tangent space whose
orientation is the same as that of $M$, i.e.,
$dx^1 \wedge dx^2(\textsc{t}(s), \textsc{n}(s)) >0$. Then
\begin{equation}
  \label{eq:geodesic-curv-defn}
  \kappa(\gex) = \gex ( \nabex_{\textsc{t}(s)} \textsc{t}(s), \textsc{n}(s))
\end{equation}
gives the geodesic curvature at the point $\mu(s)$ of $\varGamma$.

\subsection{Approximate metric.}   \label{ssec:approximate-metric}

We are interested in approximating $\Kex(\gex)$ when the metric is
given only approximately.  We assume that $M$ has been subdivided into
a geometrically conforming triangulation $\T$. The edges of $\T$ may be curved, but do not necessarily consist of geodesics.

On each element $T \in \T$, we are given an approximation
$g|_T \in \Sc(T)$ of $\gex|_T$.  When the approximation is
sufficiently good, $g$ will also be positive definite since $\gex$
is. Then each $T \in \T$ can be considered to be a
Riemannian manifold $(T, g|_T)$ with $g|_T$ as its metric.  
Since $g|_T$ is smooth within each element $T$
(not across $\d T$), we use the unique Levi-Civita connection $\nabla$
generated by $g|_T$ to compute covariant derivatives within $T$.
(Constraints on $g$ across element boundaries are
clarified below in \eqref{eq:ttspace}).
We drop the
accent $\dhat{\;}$  in any
previous definition to indicate that it pertains to the manifold
$(T, g|_T)$ instead of $(M, \gex)$, e.g., $R$ refers to the Riemann curvature tensor computed using $g$ and $\nabla$ in place of $\gex$ and $\nabex$ in \eqref{eq:RiemannCurvTensor}.

A point $p \in T$ can be viewed either as a point in the manifold $M$
or as a point in the manifold $T$. Irrespective of the two viewpoints,
the meanings of coordinate frame $\d_i$, coframe $dx^i$, and the
tangent space $T_pM$ at $p$ are unchanged. In coordinates,
\begin{equation}
  \label{eq:gij}
  g_{ij} = g(\d_i, \d_j), \qquad g^{ij} = g^{-1}(dx^i, dx^j)
\end{equation}
may be viewed as entries of symmetric positive definite matrices.
\emph{Christoffel symbols of the first kind} ($\Gamma_{ijk}$) and the
\emph{second kind} ($\Gamma_{ij}^k$) are defined by
\begin{subequations}
  \label{eq:Christoffels}
\begin{equation}
  \label{eq:Chris12}
  \Gamma_{ijk} = g( \nabla_{\d_i} \d_j, \d_k), \qquad
  \nabla_{\d_i} \d_j = \Gamma_{ij}^k \d_k.
\end{equation}
They can alternately be expressed, using~\eqref{eq:gij}, as
\begin{equation}
  \label{eq:Chris12-coords}
  \Gamma_{ijl} = \frac 1 2 (\d_i g_{jl} + \d_j g_{li} - \d_l g_{ij}), \quad
  \Gamma_{ij}^k = g^{kl} \Gamma_{ijl}.
\end{equation}
\end{subequations}
Later, we will also use 
$\Gamma_{ijl}(\sigma)$ to denote $ \frac 1 2 (\d_i \sigma_{jl} + \d_j \sigma_{li} - \d_l \sigma_{ij})$ for other tensors $\sigma$ in $\TT^2_0(M)$.

\subsection{Tangents and normals on element boundaries.}
\label{ssec:tang-norm-elem}

%

Throughout this paper, we use $\tv$ to denote a tangent vector (not
$g$-normalized; cf.~\eqref{eq:tau-normalization} later) along an element boundary $\d T$ for any
$T \in \T$. The orientation of $\tv$ is aligned with the boundary
orientation of $\d T$ (inherited from the orientation of $T$, which is
the same as the orientation of $M$). 
For any $p \in \d T,$ define
$\nut \in T_pM$ by
\begin{equation}
  \label{eq:gnu}
  g(\nut, X) = (dx^1 \wedge dx^2)
  {(\tv,X)}, \qquad\text{ for all } X \in T_pM.
\end{equation}
It is easy to see from~\eqref{eq:gnu} that the
ordered basis
{$(\tv, \nut)$} has the same orientation as $(\d_1, \d_2)$
since $(dx^1 \wedge dx^2){(\tv, \nut)}>0$, and moreover, 
\begin{equation}
  \label{eq:nt-properties}
  g(\nut, \tv) = 0,   \quad\text{ and } \qquad  g(\nut, \nut) \det(g) = g(\tv, \tv).
\end{equation}
In particular, defining 
\begin{equation}
  \label{eq:normalized-nu-tau}
  \gn = \frac{\nut}{\sqrt{g_{\nut \nut}}}, \qquad
  \gt = \frac{\tv}{\sqrt{g_{\tv \tv}}},
\end{equation}
we obtain a $g$-orthonormal basis 
{$(\gt, \gn)$} of normal and tangent
vectors along every element boundary $\d T$, whose orientation matches
the manifold's orientation. 
E.g., if $M$ is the unit disc in $\R^2$
with the Euclidean metric $g=\Eucl$ and the standard orientation, then
$\gt$ is oriented counterclockwise and $\gn$ points inward.

In \eqref{eq:normalized-nu-tau} and
throughout, we use $\sigma_{uv}$ to denote $\sigma(u, v)$ for any vectors $u, v \in T_pM$ and $\sigma \in \Sc(M)$. Note that $g_{uv}$ is not to be confused with the $g_{ij}$ introduced 
in~\eqref{eq:gij} where the indices are integers (which trigger the
summation convention) rather than vectors.

To write $\nut$ in coordinates, it is useful to introduce the
alternating symbol $\veps^{ij}$ whose value is $1$, $-1,$ or $0$
according to whether $(i,j)$ is an even permutation, odd permutation,
or not a permutation of $(1,2)$, respectively. The value of the
symbols $\veps_{ij}$, $\veps_i^{\phantom{,i}j}$, and
$\veps^i_{\phantom{i,}j}$ equal $\veps^{ij}$. It is easy to see
that~\eqref{eq:gnu} implies
\begin{equation}
  \label{eq:nu-tau-coords}
  {\nut^k = -g^{kj} \veps_{ji} \tv^i}
\end{equation}
and furthermore (e.g., using~\eqref{eq:nu-tau-coords},
\eqref{eq:nt-properties}, and the Hodge star in coordinates, 
{$\star dx^i = \sqrt{\det g}g^{ij}\varepsilon_{jk}dx^k$}) that
\begin{equation}
  \label{eq:star-rotation}
  {(\star \og)(\gn) =  \og(\gt),\qquad (\star \og)(\gt)= -\og(\gn)},
\end{equation}
for any $\og \in \W^1(M)$.

\subsection{Finite element spaces.}  \label{ssec:finite-elem-spac}

Let $C^\infty(\T)$ denote the space of {\em piecewise smooth}
  functions on $\T$, by which we mean functions that are infinitely
smooth within each mesh element and continuous up to (including) the
boundary of each mesh element $T$.
A 
notation in \S\ref{ssec:spaces-manifold}
with $\T$ in place of $M$ indicates the piecewise smooth
analogue, e.g.,
\[
  \Xm \T = \{ X= X^i\d_i: X^i \in C^\infty(\T)\},
  \qquad  
  \Sc(\T) =\{ \sigma_{ij} dx^i \otimes dx^j: \sigma_{ij}=\sigma_{ji} \in C^\infty(\T)\},
\]
etc.
Note that a $\sigma \in \Sc(\T)$
need not  be continuous across the element
interfaces. Let $E = \d T_+ \cap \d T_-$ denote an interior mesh edge
(possibly curved) shared between elements $T_\pm \in \T$.  Let $T_pE$
denote the one-dimensional tangent space of the curve $E$ at any one
of its points $p$. (Note that the tangent space at $p$ from either
element $T_\pm$ coincides with $T_pM$ and $T_p E \subset T_pM$.)  We
say that a $\sigma \in \Sc(\T)$ has ``tangential-tangential
continuity'' or that $\sigma$ is {\em $tt$-continuous} if
\begin{equation}
  \label{eq:tt-cts}
\sigma|_{T_+}(X, X) =
\sigma|_{T_-}(X, X),
\qquad \text{ for all } X \in T_pE,  
\end{equation}
at all $p \in E$, and for every interior mesh edge $E$.  Let
\begin{subequations}
  \label{eq:ttspace}
  \begin{align}
    \Regge(\T)
    & =
    \{
    \sigma \in \Sc(\T): \sigma \text{ is $tt$-continuous}
    \},
    \\
    \RR^+(\T)
    & = \{ \sigma \in \Regge(\T): \sigma(X, X)>0 \text{ for all } X \in T_pM\}.
  \end{align}
\end{subequations}
The approximate metric $g$ is assumed to be in $\RR^+(\T)$. For the
numerical analysis later, we will additionally assume that it is in
$\RR_h^k$ defined below.

In finite element computations, we use a reference element $\hat T,$
the {unit triangle}, and the space $\Pol^k(\hat T)$ of polynomials of
degree at most~$k$ on $\hat T$. Let $\Teuc$ denote a Euclidean
triangle with possibly curved edges that is diffeomorphic to
$\hat T$ via $\hat{\Phi}: \hat T \to \Teuc$. For finite element computations on manifolds,  we need charts so that each whole
element $T \in \T$ of the manifold is covered by a single chart giving the coordinates $x^i$ on $T$.  The chart identifies the parameter domain of $T$ as the 
(possibly curved) Euclidean triangle   $\Teuc$    diffeomorphic to  $T$.
Let $\Phi: T \to \Teuc $ denote
the diffeomorphism. Then $\Phi_T = \hat \Phi^{-1} \circ \Phi : T \to \hat T$ maps diffeomorphically to the reference element where $\Pol^k(\hat T)$ is defined. 
We use its pullback $\Phi_T^*$ below.

Define the
{\em Regge finite element space} of degree $k$ on the manifold $M$ by
\begin{equation}
  \label{eq:ReggeFE}  
  \begin{aligned}
    \Regge_h^k =  \{ \sigma \in \Regge(\T):
    &
    \text{ for all }  T \in \T,\;
    \sigma|_T = \sigma_{ij} dx^i \otimes dx^j
    \\
    & \text{ with } \sigma_{ij} = \Phi_T^*\hat q \text{ for some }
    \hat q \in \Pol^k(\hat T ) \}.
  \end{aligned}
\end{equation}
The subscript $h$ indicates a mesh size parameter, e.g., on meshes
whose elements are close to straight-edged triangles, one may set
$h = \max_{T \in \T} \text{diam}(T).$ Let 
\begin{equation*}
  \begin{gathered}
    \VV(\T) = \{ u \in \W^0 (\T) :
    \; \text{$u$ is continuous on } M\},
    \\
    \Vo_\vG(\T)
     = \{ u \in \VV(\T): u|_{\vG} = 0\},
  \end{gathered}
\end{equation*}
where $\vG$ denote a subset of the boundary $\d M$ of positive
boundary measure. 

The {\em
  Lagrange finite element space} on $M$ and its subspaces with essential boundary conditions are defined by
\begin{equation}
\label{eq:LagrangeFEspace}
\begin{gathered}
  \VV_h^k = \{ u \in \VV(\T):  \text{ for all } T \in \T,
  \; u|_T=\Phi_T^* \hat u \text{ for some }
  \hat u \in \Pol^k(\hat T) \}, 
  \\
  \Vo_{h, \vG}^k = \{ u \in \VV_h^k: u|_{\vG} = 0 \} \quad\text{ and } \quad
  \Vo_h^k = \Vo_{h, \d M}^k.
\end{gathered}
\end{equation}

The previous 
definitions in this subsection were independent of the
metric. We will now introduce a metric-dependent space of 
normal-continuous vector fields. First,
we introduce the following notation surrounding an
interior mesh edge $E$ shared by two adjacent elements in $\T$,
\begin{subequations}
	\label{eq:jump-def}
	\begin{equation}
	E = \d T_- \cap \d T_+, \qquad T_\pm \in \T.   
	\end{equation} 
	In this context, the $g$-orthonormal tangent and normal vectors
	introduced above along $\d T_{\pm}$ are denoted by $\gt_\pm$ and
	$\gn_\pm$, respectively.  For a collection of scalar functions, 
        $\{f_{\d T}(\gn): T \in \T \}$,   each depending on the normal $\gn$ at an  element boundary,
        we define the jump  on $E$ by
	\begin{equation}
	\label{eq:jump-def-b}
	\jmp{f( \gn)} = f_{\d T_+}(\gn_+) + f_{\d T_-}(\gn_-).   
	\end{equation}
\end{subequations}
Thus the jump function $\jmp{f(\gn)}$ is well defined (and
single-valued) on the union of all interior mesh edges, excluding the
mesh vertices. The jump of an element boundary function dependent on $\gt$
is defined similarly.

We say that  a piecewise smooth vector field $W \in \Xm  \T$
 has {\em ``$g$-normal continuity''} across element
interfaces if  $\jump{ g(W, \gn)}=0$.  Define
\begin{equation}
\label{eq:Ngspaces}
\begin{gathered}
\WW_g(\T) = \{ W \in \Xm \T : \jmp{g(W, \gn)} = 0\},
\\
\Wo_{g,\vG}(\T)
= \{ W \in \WW_g(\T): g(W|_{\vG}, \gn) = 0\}, \quad
\Wo_{g}(\T) = \Wo_{g,\d M}(\T).
\end{gathered}
\end{equation}
Also define their polynomial subspaces
\begin{equation*}
\begin{gathered}
  \begin{aligned}
    \WW^k_{g,h} = \{ W \in \WW_{g}(\T) : \text{ for all } T \in \T,
    & \; W|_T=\Phi_T^* \hat W \\
    & \text{ for some }
    \hat W \in \Pol^k(\hat T,\R^2)\},    
  \end{aligned}
\\
\Wo^k_{g,h,\vG}
= \{ W \in \WW^k_{g,h}: g(W|_{\vG}, \gn) = 0\}, \quad
\Wo^k_{g,h} = \Wo^k_{g,h, \d\om}.
\end{gathered}
\end{equation*}

\subsection{Integrals over the manifold's triangulation.} \label{ssec:integr-over-manif}

On every element $T \in \T$, in order to integrate a scalar function
$f \in \W^0(T)$,
adopting the notation of \cite{Lee12b},
we tacitly use the unique Riemannian
volume form $\vol T$ to convert it to a 2-form and then pullback to integrate
over the Euclidean parameter domain $\Teuc$, i.e.,
\begin{equation}
  \label{eq:intTf}
  \int_{(T, g)} f \equiv
  \int_{T} f \,\vol T
  =
  \int_{\Teuc} (\Phi^{-1})^*(f \,\vol T)
  = 
  \int_{\Teuc}f \circ \Phi^{-1} \frac{\sqrt{\det g}}{\det (D\Phi)} \; \da,
\end{equation}
where $\det(D\Phi)$ denotes the Jacobian determinant of $\Phi$, we have used the
standard extension of pullback to forms, and we have appended an area
measure notation ``$\da$'' to emphasize that the right most integral
is a standard Lebesgue integral over the Euclidean domain $\Teuc $.
For $v, w \in \W^0(\T)$, set
  \begin{equation*}
  \int_\T w = \sum_{T\in \T} \int_{(T, g)} w, 
  \qquad
  (v, w)_\T = \sum_{T\in \T} \int_{(T, g)}  v \, w,  
\end{equation*}
with the understanding that the right hand sides above must be
evaluated using~\eqref{eq:intTf}.  In order to integrate along the boundary
curve $\d T$, we use the one-dimensional analogue of the formula
in~\eqref{eq:intTf} to compute on the Euclidean domain
$\d \Teuc = \Phi( \d T)$, namely
\begin{equation}
  \label{eq:bdrintg}
  \int_{(\d T, g)}  f =
  \int_{\d \Teuc} (\Phi^{-1})^*( f \, \vol{\d T})
  = \int_{\d \Teuc} f\circ \Phi^{-1}\; \sqrt{ g_{tt}} \;\dl, 
\end{equation}
where $t$ is a tangent vector along $\d \Teuc$ of unit Euclidean
length---and to emphasize that the last integral is a standard
Euclidean integral, we have appended the length measure ``$\dl$''. We use
\begin{equation*}
  \int_{\d \T} w\; = \sum_{T\in \T} \int_{(\d T, g)} w
\end{equation*}
to simplify notation for sum of integrals over element boundaries.

\section{Curvature approximation}
\label{sec:curv}

In this section we give the curvature approximation formula and
discuss a few nontrivial computational details on curved elements. In order to
approximate the Gauss curvature $K(\gex)$, one may consider computing
$K(g|_T)$ on each element $T \in \T$ using the given approximation $g$
of the exact metric $\gex$. However, this alone cannot generally be a
good approximation to $K(\gex)$ because discontinuities of $g$ across
elements generate additional sources of curvature on the edges and vertices
of the mesh.  Below we provide a curvature approximation incorporating
these extra sources. Since it coincides with the formula given in a
recent work~\cite{BKG21} for a specific case, we opt for a brief description,
expanding only on aspects complementary to that work.

\subsection{A finite element curvature approximation}
\label{subsec:curv_approx_formula}

Given a metric $g \in \RR^+(\T)$ approximating $\gex$
%
we identify
three sources of curvature, modeled after similar terms in the
Gauss-Bonnet formula, and define them as the following linear
functionals acting on $\vphi \in \VV(\T)$:
\begin{equation}
  \label{eq:variousK}
  \begin{gathered}
    \act{K_g^T, \,\vphi}_{\VV(\T)}
    =   \int_{(T, g)} K(g)\, \vphi,
    \qquad 
    \act{K^T_{E, g}, \vphi}_{\VV(\T)}
    = \int_{(E, g)} \kappa(g) \, \vphi,
    \\
    \act{ K_{V, g},  \vphi}_{\VV(\T)}
    = \left(2\pi -\sum_{T \in \T_V} \agl_V^Tg\right) \, \vphi(V),
  \end{gathered}
\end{equation}
where $\T_V$ denotes the set of all elements of $\T$ which have $V$ as a vertex.
Here $K(g)$ and $\kappa(g) $ are defined by \eqref{eq:Kexact}
and~\eqref{eq:geodesic-curv-defn} after replacing $\gex$ by $g$, 
and $\agl_V^T(\cdot)$ denotes the
interior angle at a vertex $V$ of $T$ determined using the metric in
its argument (computable using \eqref{eq:2} below).
Throughout, we use
$\act{f, \vphi}_H$ to denote duality pairing on a vector space $H$
that gives the action of a linear functional $f \in H'$ acting on a $\vphi \in H$.
Also, $\V$ and $\E$ denote the set of mesh vertices and edges,
respectively (so in~\eqref{eq:variousK},  $V \in \V$ and $E \in \E$).
Define 
  $K_g \in \VV(\T)'$ by
  \begin{equation}
    \label{eq:Kg}
    K_g =
    \sum_{T \in \T} \left(
      K_g^T + \sum_{E \in \E_T} K_{E, g}^T \right)  + \sum_{V \in \V} K_{V, g}.      
  \end{equation}
Here $ \E_T$ denotes the set of three edges of
$\d T$.

  In addition to $g$, 
  suppose that we are also given boundary curvature data
  in essential (Dirichlet) or natural (Neumann) forms for manifolds with boundary.
  The former
  type of data arises when we know that $M$ is a submanifold of a larger
  manifold whose Gauss curvature is known outside of $M$. To accommodate
  such information only on a part of the boundary of $M$,
  we split $\d M$ into two non-overlapping parts $\Gamma_D$ and $\Gamma_N$.
  One of these can be empty. In case none of them is empty, both must have positive length.  On $\Gamma_D$, we assume that we are given $\Gauss^D = \Gauss(\gex)|_{\Gamma_D}$ and that $\Gauss^D$ is in the trace of the Lagrange finite element space $\VV_{h}^k$. E.g., when a manifold is flat around $\Gamma_D$ (i.e., $\Gauss(\gex)$ vanishes in a neighborhood of $\Gamma_D$), we may set homogeneous Dirichlet data $K^D = 0$ on $\Gamma_D$.
  The other type of boundary data, in the form of a natural (or
  Neumann) boundary condition, is motivated by the Gauss--Bonnet theorem, and provides geodesic curvature data at the boundary.
  These natural Neumann-type boundary data is given in the form of
  a data functional  $\kappa^N \in \VV(\T)'$, 
\begin{align}
\label{eq:Neumann_data_functional}
  \act{\kappa^N, \vphi}_{\VV(\T)}  = \int_{(\Gamma_N, \gex)} \kappa(\gex)\, \vphi
  \;- \sum_{V \in \V^N} \tilde{\agl}_V^N(\gex) 
  \,\vphi(V),
\end{align}
for $\vphi \in \VV(\T)$, where $\V^N \subset \V\cap\Gamma_N$ is the subset of the manifold's vertices contained in the interior of $\Gamma_N$ and
$\tilde{\agl}_V^N$ denotes the exterior angle measured by the edges of
$\Gamma_N$ at $V$. (If $V$ is part of a smooth boundary, such an angle amounts to $\pi$,
whereas at kinks of the boundary the angle has to be provided as input data.)
The action of functional \eqref{eq:Neumann_data_functional} on the
finite-dimensional subspace $\VV_h^k \subset \VV(\T)$ is computable if
we are given the exact metric $\gex$ on and near $\Gamma_N$. 
For manifolds without boundary, there is no need to provide any boundary data.

\begin{definition}
  \label{def:lift_distr_curv_mf}
  Let $g \in \RR^+(\T)$ and $k \ge 0$ be an integer. 
  The finite element curvature approximation $K_h(g)$ of degree $k+1$
  is the unique function in
  $\VV_{h}^{k+1}$ determined by requiring that
  $K_h(g)|_{\Gamma_D} = K^D$ on $\Gamma_D$ and for all
  $u_h\in \Vo_{h,\Gamma_D}^{k+1}$,
  \begin{equation}
    \label{eq:1}
    \begin{aligned}    
      \int_\T K_h(g) \,u_h = \act{K_g, u_h}_{\VV(\T)}
      -\act{\kappa^N,u_h}_{\VV(\T)}.
    \end{aligned}
  \end{equation}
\end{definition}

\subsection{Implementation issues}

We now discuss how to numerically compute the
quantities in~\eqref{eq:1} in the given computational coordinates~$x^i$. Recall (from \S\ref{ssec:tang-norm-elem}) that
the tangent vector $\tv$ along the boundary $\d T$ is aligned with the
boundary orientation of $\d T$.
  Let  $\V_T$ denotes the set of three vertices of an
  element~$T \in \T$. 
  At any vertex $V \in \V_T$, the
tangent $\tv$ undergoes a change in direction, between   an incoming
and an outgoing tangent vector, which we denote by $\tin$ and $\tout$,
respectively. The angle at $V$ with respect to the metric $g$ is then
computed by
\begin{equation}
  \label{eq:2}
  \agl_{V}^Tg = \arccos \left(
    \frac{g( -\tin, \tout)}
    {\sqrt{ g(\tin, \tin)}\sqrt{g(\tout, \tout)}}
    \right).
\end{equation}
This is what we use to calculate the angle deficit
functional $K^T_{V, g}$ \eqref{eq:variousK}.

Next, consider the interior source term $K^T_g$, defined using $K(g)$,
and related to the Riemann curvature by~\eqref{eq:Kexact}.  By~\eqref{eq:RiemannCurvTensor},
$R_{ijkl} = R(\d_i, \d_j, \d_k, \d_l)$ simplifies to
\begin{align}
  \nonumber 
  R_{ijkl} & =
             (\d_i\Gamma_{jk}^p+\Gamma_{jk}^q\Gamma_{iq}^p-
             \d_j\Gamma_{ik}^p-\Gamma_{ik}^q\Gamma_{jq}^p)g_{pl}
  \\
  \label{eq:Rijkl}    
           & =
             \d_i \Gamma_{jkl} - \d_j \Gamma_{ikl} - \Gamma_{ilp}\Gamma_{jk}^p
             + \Gamma_{jlp} \Gamma_{ik}^p,  
\end{align}
where  $\Gamma_{ij}^k$ and $\Gamma_{ijk}$ are as in \eqref{eq:Christoffels}.
For two-dimensional manifolds the Gauss curvature can be expressed~\cite{Carmo1992} by $K(g) = R_{1221} / \det g.$ 
Hence by~\eqref{eq:intTf},
\begin{equation}
  \label{eq:KT}
  \act{K_g^T, \vphi}_{\VV(\T)}
  = \int_T K(g) \,\vphi\, \sqrt{\det g}\; dx^1 \wedge dx^2
  = \int_{\Teuc}
  \frac{\Phi_*(R_{1221} \,\vphi)}{\sqrt{\det g}\; \det (D\Phi)}\; \da.
\end{equation}

It thus remains to discuss the computation of the edge sources
$K_{E, g}^T$ using the definition of the  $\kappa(g)$
in \eqref{eq:geodesic-curv-defn}.  In finite element computations,
we usually do not have ready access to the $g$-arclength parameter $s$ used
there. 
But $\kappa(g)$ can be computed  using the
readily accessible $\tv$ and $\nut$ of \S\ref{ssec:tang-norm-elem},
as shown below. %
%
Let $\gamma(t)$ be an orientation-preserving parametrization that
gives an oriented mesh edge $E \subset \d T$ as
$E = \{ \gamma(t): t_0 \le t \le t_1 \}$.  Parametrizing scalar functions $a$
on $E$  by $t$, we
abbreviate $d a /d t$ to $\dot{a}$.  Note that the components of
$\tv = \tv^k \d_k$ are given by $\tv^k = \dot\gamma^k$ and their derivatives by 
$ d^2 \gamma^k / d t^2 = \dot \tv^k$.
Let
\begin{equation}
  \label{eq:22}
  \dot \tv = \dot{\tv}^k \d_k,  
  \quad
  G^w_{uv} = g(u^iv^i\Gamma_{ij}^k \d_k, w),  
\end{equation}
where $u = u^i\d_i, v=v^i \d_i, w \in \Xm T$ and recall that $\nut$ was defined in \eqref{eq:gnu}.

\begin{lemma}
  \label{lem:geosdesiccurv}
  The geodesic curvature along each edge of an element boundary
  $\d T$ is given by
  \begin{equation}
  \label{eq:9}
  \kappa(g) =
  \frac{\sqrt{\det g}}{ g_{\tv\tv}^{3/2}}
  \left(
    g_{\dot \tv \nut}
    +
    G_{\tv\tv}^\nut
  \right).  
\end{equation} 
\end{lemma}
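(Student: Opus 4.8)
The plan is to start from the intrinsic definition \eqref{eq:geodesic-curv-defn} and convert every quantity appearing there to the computationally accessible parameter $t$ of $\gamma$. First I would identify the unit frame of \eqref{eq:geodesic-curv-defn} with the normalized vectors of \eqref{eq:normalized-nu-tau}: since $\gamma$ is orientation-preserving and aligned with $\tv$, the $g$-unit tangent $\textsc{t}(s)=d\mu/ds$ equals $\gt=\tv/\sqrt{g_{\tv\tv}}$, and the oriented $g$-unit normal $\textsc{n}(s)$ equals $\gn=\nut/\sqrt{g_{\nut\nut}}$ (the orientation match is built into \eqref{eq:gnu} and \eqref{eq:normalized-nu-tau}). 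The remaining task is then to write the covariant acceleration $\nabla_{\textsc{t}}\textsc{t}$ in the coordinate frame and pair it against $\gn$.

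Next I would relate the arclength parameter $s$ to $t$ via the chain rule. The $g$-speed of $\gamma$ is $ds/dt = \sqrt{g(\tv,\tv)} = \sqrt{g_{\tv\tv}}$; write $\lambda = \sqrt{g_{\tv\tv}}$, so that $d\mu^k/ds = \tv^k/\lambda$. Differentiating once more along the curve and inserting the second-kind Christoffel symbols from \eqref{eq:Christoffels}, the covariant acceleration acquires the coordinate components
\[
(\nabla_{\textsc{t}}\textsc{t})^k = \frac{1}{\lambda^2}\big(\dot\tv^k + \Gamma_{ij}^k\tv^i\tv^j\big) - \frac{\dot\lambda}{\lambda^3}\,\tv^k,
\]
where the final term is the correction produced by differentiating the non-constant speed $\lambda$, and $\dot\tv^k = d^2\gamma^k/dt^2$ as in \S\ref{ssec:tang-norm-elem}.

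The decisive simplification is that this speed-correction term disappears upon pairing with the normal: because $g(\tv,\nut)=0$ by \eqref{eq:nt-properties}, we have $g(\tv,\gn)=0$, so only the first bracket survives and $\kappa(g)=g(\nabla_{\textsc{t}}\textsc{t},\gn)=\lambda^{-2}\big(g(\dot\tv,\gn)+g(\Gamma_{ij}^k\tv^i\tv^j\d_k,\gn)\big)$. Recognizing $g(\Gamma_{ij}^k\tv^i\tv^j\d_k,\nut)$ as $G_{\tv\tv}^\nut$ from \eqref{eq:22}, I would then eliminate the normalization of $\gn$ using the second identity in \eqref{eq:nt-properties}, namely $g_{\nut\nut}\det g = g_{\tv\tv}$, which gives $\sqrt{g_{\nut\nut}} = \lambda/\sqrt{\det g}$ and hence $\gn = (\sqrt{\det g}/\lambda)\,\nut$. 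Substituting and collecting the powers of $\lambda=\sqrt{g_{\tv\tv}}$ (two from $\lambda^{-2}$ and one from the normalization of $\gn$) yields the claimed formula \eqref{eq:9}.

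The only genuinely delicate point is the reparametrization bookkeeping in the displayed expression for $(\nabla_{\textsc{t}}\textsc{t})^k$—specifically, verifying that the $\dot\lambda$ term is exactly proportional to $\tv^k$ so that it is annihilated against the normal. Everything after that reduction is a direct substitution of the orthogonality and normalization relations \eqref{eq:nt-properties} together with the definition \eqref{eq:22}. I would also take care to confirm the orientation convention so that $\textsc{n}=\gn$ rather than $-\gn$, which, as noted above, is guaranteed by the matching orientations established in \S\ref{ssec:tang-norm-elem}.
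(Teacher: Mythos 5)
Your proposal is correct and follows essentially the same route as the paper's proof: identify the unit frame with $(\gt,\gn)$, use the chain rule between arclength $s$ and the parameter $t$ to expand $\nabla_{\gt}\gt$, observe that the reparametrization correction is proportional to $\tv$ and hence annihilated by pairing with the normal, and finally rescale $\gn$ to $\nut$ via $g_{\nut\nut}\det g = g_{\tv\tv}$. The only cosmetic difference is that you evaluate the correction term explicitly as $-\dot\lambda\lambda^{-3}\tv^k$ while the paper leaves it as $\tv^k\, d^2t/ds^2$; both are discarded by the same orthogonality argument.
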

\begin{proof}
  By~\eqref{eq:geodesic-curv-defn} and our definitions of $\gt, \gn$ in
  \S\ref{ssec:tang-norm-elem},
  \begin{equation}
    \label{eq:3}
    \kappa(g) = g(\nabla_{\gt(s)} \gt(s), \gn(s))
  \end{equation}
  where $s$ is  the $g$-arclength parameter.  Inverting 
  $s(t) = \int_0^t g(\tv(\alpha), \tv(\alpha))^{1/2}\, d\alpha$ to write $t$ as a
  function of $s$, applying the chain rule to $\mu(s) = \gamma(t(s))$,
  and using $dt/ds = g_{\tv\tv}^{-1/2}$,
  \[
    \frac{d  \gt^k}{ds}=  \frac{d^2\mu^k}{d s^2}
    = \frac{d^2 \gamma^k}{d t^2} \left(\frac{dt}{ds}\right)^2 +
    \frac{d \gamma^k}{d t} \frac{d^2t}{ds^2}
    = \frac{\dot \tv^k }{g_{\tv\tv}} + \tv^k \frac{d^2t}{ds^2}.
  \]
  Using the properties of the connection $\nabla$ (see e.g. \cite{Lee97}), 
  \begin{equation}
    \label{eq:t-d-t}
    \nabla_{ \gt} \gt =
    \frac{d  \gt^k}{ds}\, \d_k +
    \gt^i  \gt^j \Gamma_{ij}^k \d_k
    =
    \frac{\dot \tv }{g_{\tv\tv}} + \tv \frac{d^2t}{ds^2}
    +
    \gt^i  \gt^j \Gamma_{ij}^k \d_k.
  \end{equation}
  Hence \eqref{eq:3}, \eqref{eq:t-d-t}, and the $g$-orthogonality of
  $\tv$ with $\gn$, implies that at the point $\gamma(t)$,
\begin{equation}
  \label{eq:4}
  \kappa(g) =
  \frac{g(\dot \tv, \gn)}{g_{\tv\tv}} +
  \frac{g ( \tv ^i  \tv ^j \Gamma_{ij}^k \d_k ,\gn)}{g_{\tv\tv}}.
\end{equation}
Now, by \eqref{eq:nt-properties} and \eqref{eq:normalized-nu-tau},
$\gn  = \nut (\det g/ g_{\tv\tv})^{1/2}$, so~\eqref{eq:4} implies
\begin{equation}
  \label{eq:5}
  \kappa(g) =
  \frac{(\det g)^{1/2}}{ g_{\tv\tv}^{3/2}}
  \left[
    g(\dot \tv, \nut)
    +
    g (\tv^i \tv^j \Gamma_{ij}^k \d_k , \nut)
  \right]
\end{equation}
which proves~\eqref{eq:9}.
\end{proof}

Returning to the edge source term, using
Lemma~\ref{lem:geosdesiccurv} and \eqref{eq:bdrintg},
\begin{align}
  \label{eq:20}
  K_{E, g}^T(\vphi, g)
  & =  \int_E \kappa(g) \,\vphi \sqrt{g_{\tv\tv}} 
   = \int_{\Phi(E)}
    \Phi_*
    \left(\frac{\sqrt{\det g}}{ g_{\tv\tv}}
    \left(
    g_{\dot \tv \nut}
    +
    G_{\tv\tv}^\nut
    \right) \vphi \right)\, \dl.
\end{align}
Thus, through~\eqref{eq:2}, \eqref{eq:KT}, and~\eqref{eq:20}, we have
shown it is possible to easily compute all the terms in the curvature
approximation~\eqref{eq:1} using standard finite element tools.

\subsection{A model case for further analysis}
\label{ssec:model}

Having shown how curvature of general manifolds can be computed, we
now focus our analysis on the following model case for the
remainder of the paper.
We assume that the manifold $M$, as a set, equals
$\om \subset \R^2$, a bounded open connected domain,
that $\Gamma_N = \emptyset,$ and that $\gex$ is
compactly supported in~$\om$ (so Dirichlet boundary data is zero).
The set $\om$ forms
the full parameter domain of $M$ for the single trivial chart
$\Phi: M \to \om$ with $\Phi$ equaling the identity map. The
triangulation $\T$ of $M$ is now a conforming finite element mesh in
the planar domain $\om$ and its elements are (possibly curved)
triangles.

In this setting, an element $T \in \T$ can be considered as either the
Euclidean manifold $(T, \delta)$, equipped with the identity metric
$\Eucl$, or the Riemannian manifold $(T, g)$.  Let us reconsider the
tangent vector $\tv$ on an element boundary $\d T$ with the
orientation described in \S\ref{ssec:tang-norm-elem}, previously left
un-normalized. Henceforth, we  assume that
\begin{equation}
  \label{eq:tau-normalization}
  1 = \delta(\tv, \tv). 
\end{equation}
We computed the normal vector $\nut$ from $\tv$
by \eqref{eq:gnu} for the manifold $(T, g)$. For the manifold
$(T, \Eucl)$, a generally different Euclidean normal vector arises at
any $p$ in $\d T$ and we denote it by $\nv$.
It can be computed by simply
replacing $g$ with $\Eucl$ in \eqref{eq:gnu}, i.e., $\nv \in T_pM$ satisfies
\begin{equation}
  \label{eq:gnu-eucl}
  \Eucl(\nv, X) = 
  {dx^1 \wedge dx^2 (\tv,X)} \qquad\text{ for all } X \in T_pM.  
\end{equation}
Analogous to~\eqref{eq:nu-tau-coords}, we now have the accompanying
coordinate expression,
\begin{equation}
  \label{eq:Eucl-normal-comp}
  {\nv^k = -\delta^{kj} \veps_{ji} \tv^i = -\veps^k_{\phantom{k,}i}\, \tv^i.  }
\end{equation}
Note that~\eqref{eq:tau-normalization} implies that
$\delta(\nv, \nv)=1$.
These identities, together with \eqref{eq:normalized-nu-tau}
guide us move between the
$\delta$-orthonormal tangent and normal vectors ($\tv$ and $\nv$) and
{$g$-orthonormal tangent and normal vectors ($\gt$ and $\gn$)}, while
preserving the orientation.
Jumps of functions of $\nv$ and $\tv$
on the Euclidean element boundaries $(\d T, \delta)$, $T \in \T$,
are defined in analogy to \eqref{eq:jump-def-b}.

Recall the parameterization $\gamma(t)$ along an edge $E$ and
accompanying notation considered before~\eqref{eq:22}.
We now claim that
\begin{equation}
  \label{eq:6}
  \dot{\nv}=-\delta(\nu, \dot \tau)\tv, \qquad
  \dot{\tv}=\delta(\nu, \dot{\tv})\nv.
\end{equation}
This follows by differentiating the equation $\delta(\nv, \nv) =1$
with respect to $\tv$, yielding $\delta(\dot \nv, \nv)=0$. Hence there
must be a scalar $\alpha$ such that $\dot \nv = \alpha \tv$. Now
differentiating $\delta(\nv, \tv) =0$, we find that the $\alpha$ must
satisfy $\delta(\alpha \tv, \tv) + \delta(\nv, \dot \tv)=0$, so
$\alpha = -\delta(\nv,\dot \tv),$ thus proving the first identity
in~\eqref{eq:6}.  A similar argument using $\delta(\tv, \tv) = 1 $
proves the second identity in~\eqref{eq:6}.  Later, we will have
occasion to consider variations of
$\sigma_{\tv\nv} = \sigma(\tv, \nv)$ along an edge for some tensor
$\sigma\in \TT_0^2(M)$. By chain rule,
\begin{align*}
  \frac{d}{d t} (\sigma_{\nv\tv}(\gamma(t))
  & = \dot \nv^i \tv^j \sigma_{ij}(\gamma(t))
    + \nv^i \dot  \tv^j \sigma_{ij}(\gamma(t))
    + \nv^i \tv^j\frac{d}{dt}(\sigma_{ij}(\gamma(t)).
\end{align*}
Hence~\eqref{eq:6}, together with $\dot\gamma = \tau$,  yields
\begin{equation}
  \label{eq:10}
  \d_\tv ( \sigma_{\nv\tv}) = (\d_\tv \mt\sigma)_{\nv\tv}
  + 
  ( \sigma_{\nv\nv} - \sigma_{\tv\tv}) \,\delta(\nv,  \dot \tv),
\end{equation}
on a curved edge, where $\mt\sigma$ is the matrix whose $(i,j)$th entry is $\sigma_{ij}$.

We proceed to display the curvature approximation formula in
coordinates for this model case using the Euclidean
normal ($\nv$) and tangent ($\tv$).

\begin{lemma}
  \label{lem:geodesic-M-Om}
  The geodesic curvature along any mesh edge $E$ is given by
  \[
    \kappa(g) = \frac{\sqrt{\det g} }{g_{\tv\tv}^{3/2}} (  \dot{\tv}^\nv + \Gamma_{\tv\tv}^\nv)
  \]
  where
  $\Gamma_{\tv\tv}^\nv = \tv^i \tv^j \Gamma_{ij}^k \Eucl_{kl}\nv^l$
  and $\dot{\tv}^\nv = \delta(\dot\tv, \nv)$.
\end{lemma}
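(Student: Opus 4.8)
The plan is to reduce the claim to Lemma~\ref{lem:geosdesiccurv}, which already expresses the geodesic curvature in terms of the $g$-normal $\nut$ as $\kappa(g) = \frac{\sqrt{\det g}}{g_{\tv\tv}^{3/2}}\bigl(g_{\dot \tv \nut} + G_{\tv\tv}^\nut\bigr)$. Since the formula asserted here differs only by replacing the $g$-normal $\nut$ with the Euclidean normal $\nv$, it suffices to establish the single pairing identity
\[
  g(X, \nut) = \Eucl(X, \nv) \qquad \text{ for all } X \in T_pM.
\]

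First I would prove this identity. The vectors $\nut$ and $\nv$ are defined by \eqref{eq:gnu} and \eqref{eq:gnu-eucl}, respectively, and both defining equations share the identical right-hand side $(dx^1 \wedge dx^2)(\tv, X)$. Hence $g(\nut, X) = \Eucl(\nv, X)$ for every $X$, and the symmetry of $g$ and $\Eucl$ gives the stated identity. Alternatively, one may verify it directly in coordinates from the explicit expressions \eqref{eq:nu-tau-coords} and \eqref{eq:Eucl-normal-comp}, since both sides reduce to $-X^k \veps_{ki} \tv^i$ after using $g_{kl} g^{lj} = \delta_k^j$.

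Next I would apply this identity to the two terms appearing in Lemma~\ref{lem:geosdesiccurv}. Taking $X = \dot\tv$ converts $g_{\dot \tv \nut} = g(\dot\tv, \nut)$ into $\Eucl(\dot\tv, \nv) = \dot{\tv}^{\nv}$, and taking $X = \tv^i\tv^j\Gamma_{ij}^k\d_k$ converts $G_{\tv\tv}^\nut = g(\tv^i\tv^j\Gamma_{ij}^k\d_k, \nut)$ into $\tv^i \tv^j \Gamma_{ij}^k \Eucl_{kl} \nv^l = \Gamma_{\tv\tv}^{\nv}$. Substituting both into the formula of Lemma~\ref{lem:geosdesiccurv} yields the assertion. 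I note that the model-case normalization \eqref{eq:tau-normalization} plays no role in this algebra; it only fixes which tangent vector $\tv$ is used, whereas Lemma~\ref{lem:geosdesiccurv} is valid for any parametrization of the edge.

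The computation carries no serious obstacle. The only point requiring care is recognizing that the $g$-pairing against $\nut$ and the $\Eucl$-pairing against $\nv$ coincide, precisely because the two normals are defined through a common duality relation with $\tv$ via the area form $dx^1 \wedge dx^2$. Once that observation is isolated as the pairing identity above, the lemma follows by immediate substitution into Lemma~\ref{lem:geosdesiccurv}.
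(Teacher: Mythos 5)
Your proposal is correct and takes essentially the same approach as the paper: the paper's proof likewise observes that comparing \eqref{eq:gnu} with \eqref{eq:gnu-eucl} gives the pairing identity $g(\nut, X) = \Eucl(\nv, X)$ for all $X \in T_pM$, and then substitutes it directly into Lemma~\ref{lem:geosdesiccurv}. Your supplementary remarks (the coordinate verification of the identity and the observation that the normalization \eqref{eq:tau-normalization} plays no role) are accurate but not needed.
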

\begin{proof}
  Comparing~\eqref{eq:gnu-eucl} with~\eqref{eq:gnu},
\begin{equation}
  \label{eq:g-delta-equiv}
  g(\nut, X) = \Eucl(\nv, X) 
\end{equation}
for any $X \in T_pM.$ Using~\eqref{eq:g-delta-equiv} in
Lemma~\ref{lem:geosdesiccurv}, the result follows.
\end{proof}
In this model case of a single parameter domain,
  since the identity metric $\delta$ is well defined throughout, 
  the angle deficit
  in \eqref{eq:variousK}
  can be reformulated using $\delta$ as a sum of element contributions at each
  vertex. Define   
  \[
    \act{K_{V, g}^T, u_h}_{\VV(\T)} = \big(\agl_V^T\delta - \agl_V^Tg\big)\,u_h(V).
  \]
  When summing $\agl_V^T\delta$  over all $V \in \V_T$, we clearly obtain $2\pi$. Hence
  \begin{align}
    \label{eq:7}
	\sum_{V \in\V}\act{ K_{V, g},  \vphi}_{\VV(\T)} 
	= \sum_{T \in \T}\sum_{V \in \V_T}\act{K_{V, g}^T, u_h}_{\VV(\T)}.
	\end{align}
\begin{proposition}
  \label{prop:modelK}
  In this model case, equation \eqref{eq:1} implies that for all
  $u_h\in \Vo_h^k$,
  \begin{equation}
    \label{eq:8}
    \begin{aligned}
      \int_\T
       K_h(g) \, u_h
       & =
       \sum_{T \in \T} 
       \sum_{V \in \V_T}
      \big(\agl_V^T\delta - \agl_V^Tg\big)\,u_h(V)
      \\
      &
      + \sum_{T \in \T} \bigg( 
      \int_{\d T }
      \frac{\sqrt{\det g}}{g_{\tv\tv}} (\dot{\tv}^{\nv} + \Gamma_{\tv\tv}^\nv)\, u_h
      \;\dl + 
            \int_T K(g)\, u_h \sqrt{\det g}\; \da
            \bigg).
    \end{aligned}
  \end{equation}
  If $g \in \Sc^+(M)$, then all terms vanish except the last.
\end{proposition}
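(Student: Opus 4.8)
The plan is to substitute the definition of the curvature functional $K_g$ into the defining relation \eqref{eq:1} and translate each contributing functional into the coordinate expressions already established in this section. First I would observe that in the model case $\Gamma_N = \emptyset$, so that $\V^N \subset \V \cap \Gamma_N$ is empty and the integral over $\Gamma_N$ is trivial; hence the Neumann functional \eqref{eq:Neumann_data_functional} vanishes identically and \eqref{eq:1} collapses to $\int_\T K_h(g)\,u_h = \act{K_g, u_h}_{\VV(\T)}$. Expanding $K_g$ through \eqref{eq:Kg} then splits the right-hand side into element-interior contributions $\act{K_g^T, u_h}$, edge contributions $\sum_{E\in\E_T}\act{K_{E,g}^T, u_h}$, and vertex contributions $\sum_{V}\act{K_{V,g}, u_h}$, each given by \eqref{eq:variousK}.

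Next I would convert each group of terms into coordinates, using that the single chart $\Phi$ is the identity map, so that $\det(D\Phi)=1$. For the interior term, \eqref{eq:intTf} applied to \eqref{eq:variousK} yields $\act{K_g^T, u_h} = \int_T K(g)\,u_h\sqrt{\det g}\,\da$, which is exactly the last term of \eqref{eq:8}. For the edge terms I would insert the geodesic curvature formula of Lemma~\ref{lem:geodesic-M-Om} into the boundary integral \eqref{eq:bdrintg}; the arclength factor $\sqrt{g_{\tv\tv}}$ cancels one power of $g_{\tv\tv}^{1/2}$, producing $\int_{\d T}\frac{\sqrt{\det g}}{g_{\tv\tv}}(\dot{\tv}^{\nv} + \Gamma_{\tv\tv}^{\nv})\,u_h\,\dl$ after summing over the three edges of $\d T$. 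For the vertices, the reformulation \eqref{eq:7} rewrites $\sum_V \act{K_{V,g}, u_h}$ directly as $\sum_T\sum_{V\in\V_T}(\agl_V^T\delta - \agl_V^T g)\,u_h(V)$. Assembling the three groups gives \eqref{eq:8}.

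For the final assertion, assume $g \in \Sc^+(M)$ is globally smooth, and recall that the test functions are continuous on $M$ and vanish on $\d M$. Reorganizing the edge sum over element boundaries into a sum over mesh edges, each interior edge $E$ is traversed once from each adjacent element with opposite boundary orientation. Writing $\tv_\pm,\nv_\pm$ for the induced Euclidean tangents and normals, one checks $\tv_- = -\tv_+$ and $\nv_- = -\nv_+$, while $\dot\tv_- = \dot\tv_+$ (the sign from differentiating $\tv_-=-\tv_+$ is offset by the reversed parameter direction). Consequently both $\dot{\tv}^{\nv}$ and $\Gamma_{\tv\tv}^{\nv}$ change sign, whereas $\det g$ and $g_{\tv\tv}$ are unchanged and $g$ is continuous across $E$; so the integrand is odd under the swap and the two element contributions cancel against the common continuous $u_h$. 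Boundary edges contribute nothing since $u_h$ vanishes on $\d M$. For the vertex sum, rewritten as $\sum_V u_h(V)\sum_{T\in\T_V}(\agl_V^T\delta - \agl_V^T g)$, I would use that at any interior vertex the edge tangents fan out around the full tangent plane, so both $\sum_{T\in\T_V}\agl_V^T\delta$ and $\sum_{T\in\T_V}\agl_V^T g$ equal $2\pi$ (the latter because $g$ induces a genuine inner product at the smooth point $V$, with no cone defect); boundary vertices are again killed by $u_h(V)=0$. Thus only the interior integral survives.

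I expect the main obstacle to be the careful bookkeeping of orientations in the edge cancellation, in particular verifying $\dot\tv_- = \dot\tv_+$ under the reversed parametrization while $\tv$ and $\nv$ flip sign, together with the angle-sum identity $\sum_{T\in\T_V}\agl_V^T g = 2\pi$ for a smooth metric. The latter rests on the angles \eqref{eq:2} being measured in the $g$-inner product \emph{at the single point} $V$, so that the interior angles of the elements surrounding a non-singular point add up to the full rotation regardless of how the edges curve away from $V$.
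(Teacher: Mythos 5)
Your proposal is correct and follows essentially the same route as the paper's proof: assembling \eqref{eq:8} from \eqref{eq:7}, \eqref{eq:KT}, \eqref{eq:20}, and Lemma~\ref{lem:geodesic-M-Om} with $\Phi$ the identity, then for smooth $g$ cancelling interior-edge contributions via the sign flips of $\tv,\nv$ (with $\dot\tv$ unchanged) and the interior-vertex sums via $\sum_{T\in\T_V}\agl_V^Tg = 2\pi = \sum_{T\in\T_V}\agl_V^T\delta$. Your added justifications (the vanishing of the Neumann functional when $\Gamma_N=\emptyset$, and the reversed-parametrization argument for $\dot\tv_-=\dot\tv_+$) merely fill in details the paper leaves implicit.
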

\begin{proof}
  Equation~\eqref{eq:8} follows from \eqref{eq:7},  \eqref{eq:KT}, \eqref{eq:20}, and
  Lemma~\ref{lem:geodesic-M-Om}, once $\Phi$ is set to the identity.
  To prove the last statement, divide the set of mesh vertices $\V$
  into the set of boundary and interior vertices
  $
  \Vbnd= \V \cap \d M$
  and
  $ \Vint = \V \setminus \Vbnd.
  $
  At every interior vertex $V\in \Vint$, a rearrangement gives
  \begin{align*}
    \sum_{ T\in \T}\sum_{V \in \Vint_T} \act{K_{V, g}^T, u_h}_{\VV(\T)}
    = \sum_{V \in \Vint}\sum_{T \in \T_V}
    \big(\agl_V^T\Eucl - \agl_V^Tg\big) \, u_h(V),
  \end{align*}
  which  vanishes because the smoothness of $g$ implies
  $ \sum_{T \in \T_V} \agl_V^Tg = 2\pi = \sum_{T \in \T_V}
  \agl_V^T\Eucl.  $
  The element boundary integrals can be rewritten using the
  set of interior mesh edges $\Eint$, as 
  \begin{align}\label{eq:21}
      \sum_{T \in \T}
      \int_{\d T}
      \frac{\sqrt{\det g}}{g_{\tv\tv}} (\dot{\tv}^{\nv} + \Gamma_{\tv\tv}^\nv)\, u_h
      \;\dl
     = \sum_{E \in \Eint}
      \int_{E}
      \frac{\sqrt{\det g}}{g_{\tv\tv}} \jmp{\dot{\tv}^{\nv} + \Gamma_{\tv\tv}^\nv}
      \, u_h
      \;\dl,
  \end{align}    
  since $u_h=0$ on $\Gamma_D$ and that the trace of
  $\sqrt{\det g} / g_{\tv\tv}$ is well defined (single valued) on $E$
  due to the given smoothness of $g \in \Sc^+(M)$.  It is easy to see
  that $\dot{\tv}$ has the same value from adjacent elements of $E$
  while $\tv$ and $\nv$ changes sign, so $\jmp{\dot{\tv}^\nv} = 0 $
  and $ \jmp{\Gamma_{\tv\tv}^\nv} = 0$.  Hence~\eqref{eq:21} vanishes.
\end{proof}

\section{Covariant derivatives using the nonsmooth metric}
\label{sec:cov}

The objective of this section is to formulate a covariant
incompatibility operator that can be applied to our situation with
piecewise smooth metrics. To this end, we first define several
covariant derivatives in the smooth case, restricting ourselves to a
single element, i.e., the smooth Riemannian manifold $(T, g|_T)$. Then
we proceed to consider the changes needed due to the jumps of the metric
across element interfaces.

\subsection{Covariant curl and incompatibility for smooth metric}

For a $\mu \in \W^1(T)$, the exterior derivative $d^1 \mu \in \W^2(T)$ is characterized in terms of the connection~\cite{Peter16} by
\begin{equation}
  \label{eq:d1og}
  (d^1 \mu)(X, Y) = (\nabla_X \mu)(Y) - (\nabla_Y \mu)(X),  
\end{equation}
for any $X, Y \in \Xm T.$ Next, for a $\sigma \in \TT^2_0(T)$ writing
$\sigma_Z(Y) = \sigma(Z, Y)$ for any $X, Y, Z \in \Xm T,$ we 
define an operation analogous to~\eqref{eq:d1og} on $\sigma$ by
\begin{equation}
  \label{eq:d1sigma}
  (d^1 \sigma_Z)(X, Y)
  =  (\nabla_X \sigma)(Z,Y) - (\nabla_Y \sigma)(Z, X).  
\end{equation}
Since the expressions in~\eqref{eq:d1og} and~\eqref{eq:d1sigma}--holding $Z$
fixed--are
skew-symmetric in $X$ and $Y$, they may be viewed as elements of $\W^2(T)$.
Then we may use the Hodge star $\star$ operation to convert them to
0-forms, since $T$ is two dimensional. Doing so, define 
\begin{subequations}
  \label{eq:curl_inc_group}
  \begin{align}
    \label{eq:curl_inc_group-1}
  \curl_g \mu & = \star (d^1\mu),
                   && \mu \in \TT^1_0(T) = \W^1 (T),
  \\ \label{eq:curl_inc_group-2}
  (\curl_g \sigma)(Z) & = \star (d^1\sigma_{Z}),
                   && \sigma \in \TT^2_0(T), \; Z \in \Xm T.
  \end{align}
  The latter, due to  linearity in $Z$, is in~$\W^1(T)$, while the former
  $\curl_g \mu$ is in $\W^0(T).$
Combining these operations in succession, we define the {\em covariant incompatibility},
\begin{align}
  \label{eq:incg}
  \inc_g \sigma & =  \curl_g\curl_g \sigma, && \sigma \in \TT^2_0(T),
\end{align}
\end{subequations}
on two-dimensional manifolds. Clearly $\inc_g \sigma$ is  in $\W^0(T)$.

We will now quickly write down coordinate expressions of these tensors.
Expanding
the right hand side of~\eqref{eq:d1sigma} using the Leibniz rule $
(\nabla_X \sigma)(Z,Y) = X\sigma(Z, Y) - \sigma(\nabla_XZ, Y) -
\sigma(Z, \nabla_XY)$ twice, substituting $Z= \d_i, X=\d_j, Y=\d_k$,
and simplifying using~\eqref{eq:Chris12}, 
\begin{equation}
  \label{eq:dsigma-ijk}
  (d^1 \sigma_{\d_i})(\d_j, \d_k)
  = (\pder{\sigma_{ik}}{j} - \Gamma_{ji}^m \sigma_{mk})
  - (\pder{\sigma_{ij}}{k} - \Gamma_{ki}^m \sigma_{mj})  
\end{equation}
for $\sigma = \sigma_{jk} dx^j \otimes dx^k \in \TT_0^2(T)$.  Note that equation~\eqref{eq:dsigma-ijk}
can be rewritten as
$ d^1 \sigma_{\d_i} = \veps^{jk} (\pder{\sigma_{ik}}{j} - \Gamma_{ji}^m
\sigma_{mk}) \,dx^1 \wedge dx^2.$
Next, since~\eqref{eq:curl_inc_group-2} implies $\curl_g \sigma  = (\star d^1\sigma_{\d_i}) dx^i,$
recalling that
$\star (f dx^1 \wedge dx^2) = f / \sqrt{\det g}$ for any
scalar field $f \in \W^0(T)$, we arrive at
\begin{subequations}
  \label{eq:coords-curlogsigma}
\begin{align}
  \label{eq:curlgsigma-coord}
   \curl_g (\sigma_{jk} dx^j \otimes dx^k)
  & =
  \frac{1}{\sqrt{\det g}}
    \veps^{jk} (\pder{\sigma_{ik}}{j} - \Gamma_{ji}^m \sigma_{mk}) dx^i.
  \intertext{Similarly, one obtains coordinate expressions for the remaining
  operators in \eqref{eq:curl_inc_group}, namely}
  \label{eq:curlgog-coord}
  \curl_g (\mu_i dx^i)
  & =
    \frac{1}{\sqrt{\det g}} \veps^{ij} \pder{\mu_j}{i},
  \\ \nonumber 
  \inc_g(\sigma_{jk} dx^j \otimes dx^k)
  & =  \frac{1}{\det g}
    \Big( \veps^{qi} \veps^{jk} \pder{\sigma_{ik}}{jq}
    -  \veps^{qi} \veps^{jk} \pder{( \Gamma_{ji}^m \sigma_{mk})}{q}
  \\ 
  & \hspace{2.5cm} \label{eq:incg-coord}
    - \Gamma_{lq}^l \veps^{qi} \veps^{jk}
    (\pder{\sigma_{ik}}{j} - \Gamma_{ji}^m \sigma_{mk})
    \Big). 
\end{align}
\end{subequations}
In the derivation of \eqref{eq:incg-coord}, we have employed the
useful formula
\begin{equation}
  \label{eq:der-det}
  \Gamma^l_{lq} = \frac{\pder{(\det g)}{q}}{ 2\det g}.  
\end{equation}

It is useful to contrast the expressions
in~\eqref{eq:coords-curlogsigma} with the standard Euclidean curl and
inc.  To this end we use matrix and vector proxies,
$\mt \sigma \in \R^{2 \times 2}$ and $\mt \mu \in \R^2$, of
$\mu \in \W^1(T)$ and $\sigma \in \TT_0^2(T)$, respectively~\cite{Arnol18}.
These proxies are made up of coefficients in the coordinate basis expansion,
specifically $\mt \sigma$ is the matrix whose $(i,j)$th entry is
$\sigma_{ij} = \sigma(\d_i, \d_j)$, and $\mt \mu$ is the Euclidean
vector whose $i$th component, denoted by $\mt{\mu}_i$, equals
$\mu_i = \mu(\d_i)$.  Then the standard two-dimensional curl operator
applied to the vector $\mt \mu$ gives
$\curl \mt \mu = \veps^{ij} \pder{\mu_j}{i}$. When this operator is
repeated row-wise on a matrix, we get the standard row-wise matrix
curl, namely $\mt{\curl \mt \sigma}_i = \veps^{jk} \d_j
\sigma_{ik}$. The standard Euclidean incompatibility operator
\cite{ArnolAwanoWinth08, AG16, ChrisGopalGuzma21} in two dimensions is
$\inc \mt \sigma = \veps^{qi} \veps^{jk}
\pder{\sigma_{ik}}{jq}$. Using these, we can rewrite the formulas in
\eqref{eq:coords-curlogsigma} as
\begin{subequations}
  \begin{align}
    \mt{\curl_g \mu}
    & =
      \frac{1}{\sqrt{\det g}} \curl \mt\mu,
    \\
    \label{eq:curlgsigma-coord-matvec}
    \mt{\curl_g \sigma}_i 
    & =
      \frac{1}{\sqrt{\det g}}
      (\mt{\curl \mt\sigma}_i - \veps^{jk}\Gamma_{ji}^m \sigma_{mk}), 
    \\ \label{eq:incg-coord-matvec}
    \mt{\inc_g\sigma} 
    & =  \frac{1}{\det g}
      \Big( \inc\mt\sigma
      -  \veps^{qi} \veps^{jk} \pder{ (\Gamma_{ji}^m \sigma_{mk})}{q}   
      - \Gamma_{lq}^l \veps^{qi}
      (\curl \mt\sigma _i - \veps^{jk} \Gamma_{ji}^m \sigma_{mk})
      \Big). 
  \end{align}
\end{subequations}
Note how the expressions for covariant curl and covariant inc
contain, but differ from their Euclidean analogues.

Other useful covariant operators include
\begin{subequations}
  \begin{align}
  \label{eq:rotg-scalar}
  \rot_g f & = -(\star d^0 f)^\sharp, && f \in \W^0(T),
  \\
  (\rot_g X)(\mu,\eta)
            & = \mu(\nabla_{(\star \eta)^\sharp} X),
                                   && \mu, \eta \in \W^1(T), \;X \in \Xm T.
\end{align}
\end{subequations}
Clearly, $\rot_g f$ is in $\Xm T$, while $ \rot_g X \in \TT^0_2(T).$
Their coordinate expressions are
\begin{subequations}
  \begin{align}
  \rot_g f & = \frac{\veps^{kp} \d_p f }{ \sqrt{\det g}} \d_k =
             \frac{[\rot f]^k }{ \sqrt{\det g}} \d_k,
  \\ \label{eq:rotg-X}
  \rot_g X & = \frac{\veps^{pi} (\d_i X^m + \Gamma^m_{ik} X^k)}{\sqrt{\det g}}
              \d_m \otimes \d_p
             = \frac{[\rot [X]]^{mp} + \veps^{pi} \Gamma^m_{ik} X^k}{\sqrt{\det g}}
              \d_m \otimes \d_p,
\end{align}
\end{subequations}
where, in the latter expressions, the proxy notation has been extended
to $\Xm T$ and $\TT_2^0(T)$ in an obvious fashion to use the Euclidean
matrix and vector $\rot.$

It is easy to see
that the following integration by parts formula
\begin{equation}
  \label{eq:curg-rotg-intg-by-parts}
  \int_{(T, g)} (\curl_g \sigma)(Z) = \int_{(T, g)} \sigma( \rot_g Z)
  + \int_{(\d T, g)} \sigma(Z, \gt)
\end{equation}
holds for all  $\sigma \in \TT^2_0(T)$ and $Z \in \Xm T$.
(This can be seen, e.g., using the coordinate
expressions~\eqref{eq:rotg-X} and \eqref{eq:curlgsigma-coord-matvec}
and standard integration by parts on the Euclidean parameter domain.)
Here $\gt$ is the unit tangent defined in
\eqref{eq:normalized-nu-tau}, the integrals are computed as indicated
in \eqref{eq:intTf}, and $ \sigma( \rot_g Z)$ denotes the result
obtained by acting the $\TT^2_0(T)$-tensor $\sigma$ on the
$\TT^0_2(T)$-tensor $\rot_g
Z$. Equation~\eqref{eq:curg-rotg-intg-by-parts} shows that $\rot_g$
can be interpreted as the adjoint of $\curl_g$. A similar integration by parts formula for $\phi \in \W^0(T)$ and $\mu\in \W^1(T)$ 
\begin{equation}
  \label{eq:curg-rotg-intg-by-parts-simpler}
  \int_{(T, g)} \phi\, \curl_g \mu = \int_{(T, g)} \mu(\rot_g \phi)
  + \int_{(\d T, g)} \phi \,\mu(\gt) 
\end{equation}
connects the other $\curl_g$ and $\rot_g$ defined in
\eqref{eq:curl_inc_group-1} and \eqref{eq:rotg-scalar}.

\subsection{Covariant curl in the Regge metric}
\label{sec:covariant-curl-regge}

We proceed to extend the definitions of the covariant operators to the
case when the metric $g$ is only $tt$-continuous (see
\eqref{eq:tt-cts}) across element interfaces.  Let $\Moo$ denote the
open set obtained from $M$ by removing all the mesh vertices (after
its triangulation by~$\T$). The topological manifold $\Moo$ can be
endowed with a natural {\em glued smooth structure} based on the
$tt$-continuity of $g$, as alluded to  in the 
literature~\cite{Cheeger84, ClarkDray87, Kosov02, Kosov04}, \cite[Theorem~3.3]{li18} and~\cite{Warde06}.  This glued smooth structure is different from that
given by the coordinates $x^i$  (see
\S\ref{ssec:model}) in which we plan to conduct all computations.  A 
striking difference is that while $g$ is only $tt$-continuous in
$x^i$, it is fully continuous in the natural glued smooth structure.

The glued smooth structure can be understood using the following coordinate
construction around any interior mesh edge $E$. Let $z \in E$ and let
$U_z$ denote an open neighborhood of $z$ not intersecting any other
mesh edge or mesh vertex.  Let $d_g(\cdot, \cdot)$ denote the distance
function generated by $g$ on the manifold $\Moo$. For any $p \in U_z$,
let $\pi(p) = \arg\min_{q \in E} d_g(q, p).$ We use $T_\pm, \gn_\pm, \gt_\pm$
introduced in~\eqref{eq:jump-def}.  Let $E_p$ denote the submanifold
of $E$ connecting $z$ to $\pi(p)$ oriented in the $\gt_+$~direction.
Then for any $p \in U_z$, define new coordinates
\begin{equation}
  \label{eq:Kosovskii-coords}
  \xoo^1(p) = \pm d_g(\pi(p), p) \quad\text{if } p \in T_\pm,
  \qquad
  \xoo^2(p) = \int_{(E_p, g)} 1.  
\end{equation}
Denote the coordinate frame of $\xoo^i$ by $\doo_i$. It can be
shown~\cite{Kosov02} that $\doo_1 = \gn_+$ and $\doo_2 = \gt_+$ at
points in $U_z \cap E$, and that $g(\doo_i,\doo_j)$ is continuous
across $U_z \cap E$ for all $i,j$. Augmenting the set $\Moo$ with the
maximal atlas  giving such coordinates, we obtain a manifold with
the glued smooth structure, which we continue to denote by $\Moo.$
Moreover, $(\Moo, g)$ is a Riemannian manifold with piecewise smooth
and {\em globally continuous metric} $g$.

For the next result, we need the subspace of smooth vector fields
\[
  \Xmo\Moo= \{ X \in \Xm \Moo: \text{  support of $X$ is
  relatively compact in $\Moo$}  \}.
\]
Because the transformations
$
\d_i = (\d \xoo^j / \d x^i ) \doo_j
$ and 
$  dx^i = (\d x^i / \d \xoo^j ) d\xoo^j
$
are smooth within mesh elements, previously defined piecewise smooth
spaces like $\RR(\T)$ carry over to the glued smooth structure.

\begin{prop}
  \label{prop:curlg-ext}
  For all $\sigma \in \Regge(\T)$ and $\vphi \in \Xmo \Moo$, we have
  \begin{equation}
    \label{eq:curlg-ext}
    \int_{(\Moo, g)} \sigma (\rot_g \vphi)
    =
    \int_\T (\curl_g\sigma)(\vphi) - \int_{\d \T} g(\vphi, \gn) \sigma(\gn, \gt).
  \end{equation}
\end{prop}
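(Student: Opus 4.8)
The plan is to derive \eqref{eq:curlg-ext} by summing the single-element integration-by-parts formula \eqref{eq:curg-rotg-intg-by-parts} over all $T \in \T$ and then reorganizing the element-boundary terms using the glued smooth structure of $(\Moo, g)$ from \S\ref{sec:covariant-curl-regge}. I would work on the glued manifold $\Moo$, on which $g$ is globally continuous and $\vphi \in \Xmo{\Moo}$ is genuinely smooth, while $\sigma \in \Regge(\T)$ remains only piecewise smooth and $tt$-continuous. Summing \eqref{eq:curg-rotg-intg-by-parts} over $T$, the volume integrals assemble into $\int_{(\Moo,g)}\sigma(\rot_g\vphi)$ and $\int_\T (\curl_g\sigma)(\vphi)$, since the edges and vertices carry no area. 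The whole claim then reduces to the boundary identity
\[
\sum_{T\in\T}\int_{(\d T, g)} \sigma(\vphi, \gt) = \int_{\d\T} g(\vphi,\gn)\,\sigma(\gn,\gt).
\]

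To prove this, on each $\d T$ I would expand $\vphi$ in the $g$-orthonormal frame $(\gt, \gn)$ of \eqref{eq:normalized-nu-tau}, so that $\sigma(\vphi,\gt) = g(\vphi,\gt)\,\sigma(\gt,\gt) + g(\vphi,\gn)\,\sigma(\gn,\gt)$. For an interior edge $E = \d T_-\cap \d T_+$, the two boundary frames satisfy $\gt_- = -\gt_+$ and $\gn_- = -\gn_+$. Passing to the glued coordinates of \eqref{eq:Kosovskii-coords}, where $\doo_2 = \gt_+$ and $\doo_1 = \gn_+$ form a $g$-orthonormal frame along $E$, the continuity of $\vphi$ gives $g(\vphi,\gt_-) = -g(\vphi,\gt_+)$ and $g(\vphi,\gn_-) = -g(\vphi,\gn_+)$, while the $tt$-continuity \eqref{eq:tt-cts} of $\sigma$ gives $\sigma(\gt_+,\gt_+) = \sigma(\gt_-,\gt_-)$. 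Hence the tangential--tangential contributions $g(\vphi,\gt)\,\sigma(\gt,\gt)$ from the two sides cancel, and the surviving normal contributions combine, via the jump convention \eqref{eq:jump-def-b}, into $\jmp{g(\vphi,\gn)\,\sigma(\gn,\gt)}$, which is exactly the interior-edge part of the right-hand side. Boundary edges contribute nothing, since $\vphi$ has support relatively compact in $\Moo$ and therefore vanishes near $\d M$. Rearranging the summed identity then yields \eqref{eq:curlg-ext}.

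The hard part is the cancellation of the tangential--tangential boundary terms: this is precisely where one must invoke the \emph{full} continuity of $g$ in the glued smooth structure rather than mere $tt$-continuity in the given coordinates $x^i$, together with the smoothness of $\vphi$ on $\Moo$. Without passing to the glued structure, the quantity $g(\vphi,\gt)$ need not agree up to sign across $E$ for a $\vphi$ with nonvanishing normal component, and the cancellation would fail. Everything else — the sign bookkeeping for $\gt_\pm,\gn_\pm$ and verifying that the assembled normal terms reproduce the claimed jump — is routine once the glued-coordinate identities $\doo_1 = \gn_+$, $\doo_2 = \gt_+$ and the continuity of $g$ along $E$ are in hand.
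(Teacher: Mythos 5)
Your proposal is correct and follows essentially the same route as the paper's proof: integrate \eqref{eq:curg-rotg-intg-by-parts} element by element, decompose $\vphi = g(\vphi,\gt)\gt + g(\vphi,\gn)\gn$ on element boundaries, and cancel the $\sigma(\gt,\gt)$ contributions across interior edges using the continuity of $g$ in the glued smooth structure together with the $tt$-continuity of $\sigma$, with compact support of $\vphi$ handling boundary edges. Your explicit sign bookkeeping ($\gt_-=-\gt_+$, $\gn_-=-\gn_+$) simply spells out what the paper leaves implicit.
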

\begin{proof}
  The integral on the left hand side of \eqref{eq:curlg-ext} may equally well be
  written as $\int_\T \sigma(\rot_g \vphi),$ since the set of vertices
  excluded in $\Moo$ is of measure zero.  Then, integrating by parts, 
  element by element, using \eqref{eq:curg-rotg-intg-by-parts},
  \[
    \int_{\T}\sigma (\rot_g \vphi)
    = \int_\T (\curl_g \sigma) (\vphi)
    - \int_{\d\T} \sigma(\vphi, \gt).
  \]
  Now, using the $g$-orthogonal decomposition
  $\vphi = g(\vphi, \gt)\gt + g(\vphi, \gn) \gn$, we have
  \begin{align*}
    \int_{\d\T} \sigma(\vphi, \gt)
    = \int_{\d\T} g(\vphi, \gn) \sigma(\gn, \gt) + \int_{\d\T}
    g(\vphi, \gt) \sigma(\gt, \gt).
  \end{align*}
  Boundary mesh edges do not contribute to the last integral since
  $\vphi$ is compactly supported. Across an interior mesh edge, since
  $g$ is continuous in the glued smooth structure of $\Moo$, and since
  $\sigma$ is $tt$-continuous, the contributions to the last integral
  from adjacent elements cancel each other. Hence  \eqref{eq:curlg-ext} follows.
\end{proof}

We use $(\Moo, g)$ to extend the definition of covariant curl.  Recall
that the adjoint of $\curl_g$ is $\rot_g$, as shown
by~\eqref{eq:curg-rotg-intg-by-parts}. Hence, as in the theory of Schwartz distributions, a natural extension
would be to consider $\curl_g\sigma$, for $\sigma \in \Regge(\T)$, as
a linear functional on $\Xmo \Moo$ defined by
\begin{align}
  \nonumber
  \act{\curl_g\sigma, \vphi}_{\Xmo \Moo}
  & = \int_{(\Moo, g)} \sigma (\rot_g \vphi)
  \\ \label{eq:dist-curlg-firstdef}
  & =
    \int_\T (\curl_g\sigma)(\vphi) - \int_{\d \T} g(\vphi, \gn) \sigma(\gn, \gt)
\end{align}
for all $\vphi \in \Xmo \Moo$, where we have used
Proposition~\ref{prop:curlg-ext} in the second equality.  The next key
observation is that we may extend the above functional to act on a
piecewise smooth vector field $W \in \Xm \T$ instead of the smooth $\vphi$, {\em
  provided $W$ is $g$-normal continuous} (an intrinsically verifiable property on the manifold).

\begin{definition}
  \label{def:curlg-extended}
  For any $\sigma \in \Regge(\T)$, define $\curl_g \sigma$ as a linear
  functional on $\Wo_g(\T)$, the space defined in \eqref{eq:Ngspaces},  by
  \begin{equation}
    \label{eq:curlg-extended-2}
    \act{\curl_g\sigma, W}_{\Wo_g(\T)}
    =
    \int_\T (\curl_g\sigma)(W) - \int_{\d \T} g(W, \gn) \sigma(\gn, \gt)    
  \end{equation}
  for all $W \in \Wo_g(\T)$, where  the first term on the right hand side
  is evaluated
  using the smooth case in~\eqref{eq:curl_inc_group-2}.  
  The rationale for this definition is
  that there are functions $\vphi$ in $\Xmo \Moo$ approaching
  $W \in \Wo_g(\T)$ in such a way that the right hand side
  of~\eqref{eq:dist-curlg-firstdef} converges to that
  of~\eqref{eq:curlg-extended-2}: see
  Proposition~\ref{prop:dens}
  in Appendix~\ref{sec:rationale-g-normal}. Furthermore, because of the $g$-normal
  continuity of $W$, the last term in~\eqref{eq:curlg-extended-2}
  vanishes when $\sigma$ is globally smooth,
  so~\eqref{eq:curlg-extended-2} indeed extends $\curl_g$ on smooth
  functions. 
\end{definition}

\subsection{Implementation issues in computing covariant curl}

We now develop a formula for computing the extended covariant curl
in the given computational  coordinates  $x^i$ (not $\xoo^i$). We use the
Euclidean parameter domain $(\om, \delta)$ and the Euclidean
$\delta$-orthonormal tangent and normal ($\tv$ and $\nv$) on element
boundaries 
(see~\S\ref{ssec:model}).  Abbreviating $\delta(w, X)$ to
$w^X$, consider \eqref{eq:Ngspaces} for $g=\Eucl$, namely
\begin{align*}
 \WW(\om) &= \left\{ w \equiv [w^1, w^2] :
   \om \to \R^2 \;\big| \; w^i \in C^\infty(\T),
   \; \jump{ w^\nv} = 0\right\},\\
   \Wo(\om) &= \left\{ w \in \WW(\om) \;\big| \; w^{\nv}|_{\d\om} = 0 \right\}.
\end{align*}
Their finite element subspaces of interest are 
\begin{equation}
\label{eq:BDMFEspace}
\begin{gathered}
\WW_{h}^k = \{ w \in \WW(\om):  \text{ for all } T \in \T,
\; w|_T=\Phi_T^* \hat w \text{ for some }
\hat w \in \Pol^k(\hat T,\R^2) \}, 
\\
\Wo_{h}^k = \{ w \in \WW_{h}^k: w^{\nv}|_{\d \om} = 0 \}, 
\end{gathered}
\end{equation}
where the pull-back $\Phi_T^* \hat w$ is the Piola transformation. For $k>0$, \eqref{eq:BDMFEspace} coincides with the {\em Brezzi--Douglas--Marini finite element space} ($\BDM$) \cite{BDM85} on the parameter domain. 
%
In practice, it is more convenient to work with the $\BDM$ space
than~\eqref{eq:Ngspaces}. For any $w = [w^1, w^2] \in \Wo(\om)$,
let 
\begin{align}
\label{eq:def_Q_op}
 Q_g w = \frac{w^1(x^1, x^2) \d_1 + w^2(x^1, x^2) \d_2}{ \sqrt{\det g}}.
\end{align}

\begin{prop}[Extended covariant curl in computational coordinates] \hfill
  \label{prop:cov-curl-coordinates}
  \begin{enumerate}
  \item \label{item:cov-curl-coordinates-1}
    A vector field $w$ on $\om$ is in
    $\Wo(\om)$ if and only if $Q_g w$ is in
  $\Wo_g(\T)$.
  \item  \label{item:cov-curl-coordinates-2}
  For any $\sigma \in \Regge(\T)$ and $w \in \Wo(\om)$, 
  \begin{subequations}
    \label{eq:cov_distr_curl}
    \begin{align}
      \nonumber 
      \lefteqn{\act{ \curl_g\sigma, Q_gw}_{\Wo_g(\T)}}
      \\ \label{eq:cov_distr_curl1}
      &= \sum_{T\in\T} \bigg(
        \int_T
        \frac{
        \mt{\curl \mt\sigma}_i {{w}}^i
        -
        \sigma_{mk}\veps^{jk}\Gamma_{ji}^m
        w^i
        }{\sqrt{\det g}}        \,\da      
        - 
        \int_{\partial T}
        \frac{g_{\tv\tv}\sigma_{\nv\tv}-g_{\nv\tv}\sigma_{\tv\tv}}
        {g_{\tv\tv}\sqrt{\det g}}{{w}}^{\nv}\,\dl \bigg)
      \\
      &= \sum_{T\in\T}
        \bigg(\int_T
        \sigma_{mk}\frac{\mt{\rot \mt{{w}}}^{mk}
        -\veps^{kj}
        (\Gamma_{lj}^l{{w}}^m
        -\Gamma_{ji}^m{{w}}^i)}
        {\sqrt{\det g}}\, \da
        + 
        \int_{\partial T}
        \frac{\sigma_{\tv\tv} g_{i\tv}w^i}{g_{\tv\tv}\sqrt{\det g}}
        \, \dl
        \bigg).
        \label{eq:cov_distr_curl2}
    \end{align}
  \end{subequations}    
  \end{enumerate}
\end{prop}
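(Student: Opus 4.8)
The plan is to prove both parts by direct computation in the given coordinates, the only real tool being the dictionary between the $g$-orthonormal frame $(\gt,\gn)$ and the $\delta$-orthonormal frame $(\tv,\nv)$ furnished by \eqref{eq:g-delta-equiv}, \eqref{eq:nt-properties}, and \eqref{eq:normalized-nu-tau}. For part~(\ref{item:cov-curl-coordinates-1}) the whole matter reduces to the single identity
\[
 g(Q_g w,\gn)=\frac{w^\nv}{\sqrt{g_{\tv\tv}}}.
\]
I would obtain it by writing $\gn=\sqrt{\det g/g_{\tv\tv}}\,\nut$ (from \eqref{eq:nt-properties} and \eqref{eq:normalized-nu-tau}), converting $g(w^i\d_i,\nut)$ into $\delta(w^i\d_i,\nv)=w^\nv$ through \eqref{eq:g-delta-equiv}, and cancelling the $\sqrt{\det g}$ hidden in $Q_g$. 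Since $\tv$ is $\delta$-unit by \eqref{eq:tau-normalization}, the quantity $g_{\tv\tv}$ is a $tt$-component and hence single-valued across every interior edge by \eqref{eq:tt-cts}; therefore $\jmp{g(Q_g w,\gn)}=\jmp{w^\nv}/\sqrt{g_{\tv\tv}}$, and positivity of $\sqrt{g_{\tv\tv}}$ gives $\jmp{g(Q_g w,\gn)}=0\iff\jmp{w^\nv}=0$. The same identity on $\d\om$ matches the essential conditions of $\Wo(\om)$ and $\Wo_g(\T)$ from \eqref{eq:Ngspaces}. As $1/\sqrt{\det g}$ is smooth and positive within each element, $w\mapsto Q_g w$ is a bijection preserving piecewise smoothness, which settles this part.

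For the first identity in part~(\ref{item:cov-curl-coordinates-2}), I would substitute $W=Q_g w$ into the definition \eqref{eq:curlg-extended-2}. The volume contribution is immediate from \eqref{eq:curlgsigma-coord-matvec}: pairing $\curl_g\sigma$ with $(Q_g w)^i=w^i/\sqrt{\det g}$ and then inserting the factor $\sqrt{\det g}$ demanded by \eqref{eq:intTf} reproduces the volume term of \eqref{eq:cov_distr_curl1}. For the boundary term I would expand $\nut=a\tv+b\nv$; the conditions $g(\nut,\tv)=0$ and $g(\nut,\nv)=\delta(\nv,\nv)=1$ (the latter via \eqref{eq:g-delta-equiv}), together with the Gram-determinant identity $g_{\tv\tv}g_{\nv\nv}-g_{\nv\tv}^2=\det g$ (valid because $(\tv,\nv)$ is a positively oriented $\delta$-orthonormal frame, i.e.\ a rotation of the coordinate frame), give $a=-g_{\nv\tv}/\det g$ and $b=g_{\tv\tv}/\det g$. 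Hence $\sigma(\gn,\gt)=(g_{\tv\tv}\sigma_{\nv\tv}-g_{\nv\tv}\sigma_{\tv\tv})/(g_{\tv\tv}\sqrt{\det g})$, and multiplying by $g(Q_g w,\gn)$ and the arclength factor $\sqrt{g_{\tv\tv}}$ from \eqref{eq:bdrintg} produces exactly the boundary term of \eqref{eq:cov_distr_curl1}.

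The second identity \eqref{eq:cov_distr_curl2} I would derive by applying the element-wise integration by parts \eqref{eq:curg-rotg-intg-by-parts} to each $\int_{(T,g)}(\curl_g\sigma)(Q_g w)$, trading $\curl_g\sigma$ for $\rot_g(Q_g w)$. Feeding the components $w^m/\sqrt{\det g}$ of $Q_g w$ into \eqref{eq:rotg-X} and removing the arising $\d_i(\det g)$ with \eqref{eq:der-det}, the volume integrand simplifies to $\sigma_{mk}(\det g)^{-1}\big(\mt{\rot\mt w}^{mk}-\veps^{kj}(\Gamma^l_{lj}w^m-\Gamma^m_{ji}w^i)\big)$, which after the $\sqrt{\det g}$ of \eqref{eq:intTf} is the volume term of \eqref{eq:cov_distr_curl2}. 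The boundary bookkeeping is where the argument is cleanest: integration by parts adds $\int_{(\d T,g)}\sigma(Q_g w,\gt)$, and decomposing $Q_g w=g(Q_g w,\gt)\gt+g(Q_g w,\gn)\gn$ in the $g$-orthonormal frame gives $\sigma(Q_g w,\gt)-g(Q_g w,\gn)\sigma(\gn,\gt)=g(Q_g w,\gt)\,\sigma_{\gt\gt}$, so the $\sigma(\gn,\gt)$ pieces cancel against the boundary term already present in \eqref{eq:curlg-extended-2}. Reinserting $\gt=\tv/\sqrt{g_{\tv\tv}}$, $\sigma_{\gt\gt}=\sigma_{\tv\tv}/g_{\tv\tv}$, and the arclength factor leaves precisely $\sigma_{\tv\tv}g_{i\tv}w^i/(g_{\tv\tv}\sqrt{\det g})$.

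I do not expect any single deep step: the difficulty is entirely the careful bookkeeping of the frame change $(\tv,\nv)\leftrightarrow(\gt,\gn)$ and of the Christoffel and $\det g$ derivative identities. The one genuinely clarifying observation, and the step I would be most careful to get right, is the $g$-orthonormal decomposition of $Q_g w$ that forces the $\sigma(\gn,\gt)$ boundary contributions to cancel in \eqref{eq:cov_distr_curl2}; it is also the quickest way to see that \eqref{eq:cov_distr_curl1} and \eqref{eq:cov_distr_curl2} are consistent. I would likewise take care to justify the Gram-determinant identity underlying the expansion of $\nut$, since the entire boundary term of \eqref{eq:cov_distr_curl1} rests on it.
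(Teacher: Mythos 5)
Your proposal is correct and follows essentially the same path as the paper's proof: part~(\ref{item:cov-curl-coordinates-1}) via the identity $g(Q_g w,\gn)=w^\nv/\sqrt{g_{\tv\tv}}$ together with the $tt$-continuity of $g_{\tv\tv}$, the first identity by substituting $Q_gw$ into \eqref{eq:curlg-extended-2} and converting frames on the boundary, and the second identity by element-wise integration by parts \eqref{eq:curg-rotg-intg-by-parts} followed by the $g$-orthogonal decomposition of $Q_gw$ that cancels the $\sigma(\gn,\gt)$ terms. The only (cosmetic) difference is in the boundary term of \eqref{eq:cov_distr_curl1}, where you expand $\nut$ in the $(\tv,\nv)$ frame using the Gram-determinant identity $g_{\tv\tv}g_{\nv\nv}-g_{\nv\tv}^2=\det g$, while the paper expands $\gn$ as $\delta(\gn,\nv)\nv+\delta(\gn,\tv)\tv$ and evaluates the coefficients via \eqref{eq:nu-tau-coords} and the cofactor expansion of $g^{-1}$; both yield the same expression for $\sigma(\gn,\gt)$.
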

\begin{proof}
 By \eqref{eq:g-delta-equiv} and \eqref{eq:nt-properties}, 
 \begin{equation}
   \label{eq:d-g-w-Qw}
   w^\nv = \delta(w, \nv) = g(w, \nut) = g(w, \gn\sqrt{g_{\nut\nut}})
   = g(w, \gn) \frac{ \sqrt{g_{\tv\tv}}}{ \sqrt{\det g }}
   = { g(Q_gw, \gn)}{ \sqrt{g_{\tv \tv}}}.    
 \end{equation}
 Hence the continuity of $g_{\tv \tv}$ across element interfaces
 implies that
 \[
   \jmp{g(Q_g w, \gn)} = 0
   \quad \text { if and only if }\quad 
   \jmp{w^\nv}=0.
 \]
 Similarly $w^{\nv}|_{\d\om}=0$ if and only if $g(Q_gw, \gn)=0$ vanishes
 on the boundary. Hence $Q_gw \in \Wo_g(\T)$ if and only if $w \in \Wo(\om).$

  To prove~\eqref{eq:cov_distr_curl1}, we write
  \eqref{eq:curlg-extended-2} in coordinates.  Observe that the
  coordinate expression for covariant curl
  in~\eqref{eq:curlgsigma-coord-matvec} and the integration
  formula~\eqref{eq:intTf} imply
  \[
    \int_\T (\curl_g\sigma)(Q_gw) = \sum_{T\in \T} \int_T
    \frac{
      \mt{\curl \mt\sigma}_i
      {\mt{Q_g w}}^i-\sigma_{mk}\veps^{jk}\Gamma_{ji}^m{\mt{Q_g w}}^i
    }{\sqrt{\det g}}  \sqrt{\det g}
    \,\da,      
  \]
  which coincides with the sum of integrals over elements $T$
  in~\eqref{eq:cov_distr_curl1}. For the element boundary integrals of
  $g(Q_gw, \gn) \sigma(\gn, \gt)$ contributing to
  $\act{\curl_g\sigma, Q_gw}_{\Wo_g(\T)}$, first note
  that~\eqref{eq:d-g-w-Qw} implies
  \begin{align}
    \label{eq:300}
    g(Q_gw, \gn) \sigma(\gn, \gt)
    & = \frac{w^{\nv}}{\sqrt{g_{\tv\tv}}}
      \frac
      {\sigma( \delta(\gn, \nv)\nv + \delta(\gn, \tv) \tv, \tv)}
      {\sqrt{g_{\tv\tv}}}.
  \end{align}
  We simplify by chasing the definitions of $\gn$ and $\tv$. (We detail the
  argument this once and will not expand on later similar occasions.)
  \begin{align*}
    \delta(\gn, \nv)
    & = g(\gn, \nut) = g(\gn, \gn \sqrt{g_{\nut\nut}})
    && \text{ by~\eqref{eq:g-delta-equiv} and~\eqref{eq:normalized-nu-tau}}
    \\
    & = \frac{ \sqrt{g_{\tv\tv}}}{\sqrt{\det g}}
    && \text{ by \eqref{eq:nt-properties},}
    \\
    \delta(\gn, \tv)
    & = \frac{\delta(\nut, \tv)}{\sqrt{g_{\nut\nut}}}
      = \frac{ \sqrt{\det g}}{\sqrt{g_{\tv\tv}}}\, \delta(\nut, \tv)
    && \text{ by~\eqref{eq:normalized-nu-tau} and \eqref{eq:nt-properties},}
    \\
    & = -\frac{ \sqrt{\det g}}{\sqrt{g_{\tv\tv}}}\,
      g^{kj} \veps_{j i} \tv^i \delta_{km} \tv^m
    && \text{ by \eqref{eq:nu-tau-coords},}
    \\
    & = \frac{-g(\nv, \tv) }{ \sqrt{g_{\tv\tv} \det g}},
  \end{align*}
  where, in the last step, we have simplified using the cofactor
  expansion of $g^{kj}$ and \eqref{eq:Eucl-normal-comp}.
  Hence~\eqref{eq:300} implies
  \[
    g(Q_gw, \gn) \sigma(\gn, \gt) =
    \frac{w^{\nv}}{g_{\tv\tv}}
    \left( \frac{\sqrt{g_{\tv\tv}}}{\sqrt{\det g}} \sigma_{\nv\tv} 
      - \frac{g_{\nv\tv}}{\sqrt{g_{\tv\tv}\det g}} \sigma_{\tv\tv} 
    \right)
    =
    \frac{w^{\nv}(\sigma_{\nv\tv}g_{\tv\tv} - g_{\nv \tv} \sigma_{\tv\tv})}
    {g_{\tv\tv}\sqrt{ g_{\tv\tv} \det g}}.    
  \]
  Integrating this over each element boundary using the measure
  $\sqrt{g_{\tv \tv}} \, \dl$ and summing over elements,
  the right hand sides of \eqref{eq:curlg-extended-2} 
  and~\eqref{eq:cov_distr_curl1} are seen to be the same.

  To prove the second identity~\eqref{eq:cov_distr_curl2}, consider
  any $W \in \Wo_g(\T)$.  We start by applying the integration by
  parts formula \eqref{eq:curg-rotg-intg-by-parts} to the first term
  on the right hand side of the definition
  \eqref{eq:curlg-extended-2}:
  \begin{align*}
    \act{\curl_g\sigma, W}_{\Wo_g(\T)}
    & = \int_\T \sigma(\rot_g W) + \int_{\d T} \sigma(W, \gt)
      -\int_{\d T} g(W, \gt)\, \sigma(\gn,\gt)
    \\
    & = \int_\T \sigma(\rot_g W) + \int_{\d \T} g(W, \gt) \, \sigma_{\gt\gt}
  \end{align*}
  after simplifying using $g$-orthogonal decomposition
  $W = g(W, \gt)\gt + g(W, \gn) \gn$ on element
  boundaries. Substituting $W = Q_gw$, applying the quotient rule to
  compute $\rot_g(Q_gw)$ using \eqref{eq:der-det}, and expressing the
  result in $x^i$ coordinates, \eqref{eq:cov_distr_curl2} follows.
\end{proof}

In analogy with the finite element curvature approximation, we may now
also lift the functional $\curl_g \sigma$ to a finite element space to
get a computable representative of the covariant curl
on the parameter domain $\om$.  Using the BDM space in
\eqref{eq:BDMFEspace}, we define $\curl_{g, h} \sigma$, for any
$\sigma \in \Regge(\T)$, as the unique element in $\Wo_{h}^k$
satisfying
\begin{equation}
  \label{eq:cov-curl-lift}
  \int_\om \delta( \curl_{g, h} \sigma, w_h) \,\da
  = \act{ \curl_g\sigma, Q_gw_h}_{\Wo_g(\T)}
  \quad \text{ for all } w_h \in \Wo_h^k,
\end{equation}
where the right hand side can be evaluated using either of the
formulas in \eqref{eq:cov_distr_curl}.

\subsection{Covariant incompatibility in the Regge metric}
\label{sec:covar-incomp}

To extend the smooth covariant incompatibility defined in
\eqref{eq:incg}, we use the space $\VV(\T)$ of piecewise smooth and
globally continuous functions on~$M$. For any $u \in \VV(\T)$, the
vector field $\rot_g u$, by definition \eqref{eq:rotg-scalar}, satisfies
$
g(\rot_g u, \gn) = g(-(\star d^0 u)^\sharp, \gn) = -(\star d^0 u)(\gn).
$
Hence \eqref{eq:star-rotation} implies
\begin{equation}
  \label{eq:rotgu-du}
  g(\rot_g u, \gn) = -(d^0u)(\gt),
\end{equation}
so in particular, $\jmp{ g(\rot_g u, \gn)} = 0$ due to the continuity
of $u$.  Also note that in the formula \eqref{eq:incg} for the
smooth case,  
$\inc_g \sigma = \curl_g \curl_g \sigma$,  
the outer $\curl_g$'s adjoint is the $\rot_g$ appearing in
\eqref{eq:curg-rotg-intg-by-parts-simpler}. These facts motivate us to
use Definition~\ref{def:curlg-extended} to extend $\inc_g$ to
$tt$-continuous $\sigma$ as follows.

\begin{definition}
  \label{def:incg-extended}
  For any $\sigma \in \Regge(\T)$, extend $\inc_g \sigma$ as a linear
  functional on $\Vo(\T)$ by
  \begin{equation}
    \label{eq:def_distr_cov_inc}
    \act{\inc_g \sigma, u}_{\Vo(\T)} =
    \act{\curl_g \sigma, \rot_g u}_{\Wo_g(\T)}
  \end{equation}
  for all $u \in \Vo(\T)$. Note that $\rot_g u$ is an allowable
  argument in the right hand side pairing since it is $g$-normal
  continuous (and hence in $ \Wo_g(\T)$) by~\eqref{eq:rotgu-du}.  The
  next result shows that~\eqref{eq:def_distr_cov_inc} indeed extends
  the smooth case.
\end{definition}

\begin{figure}
	\centering
	\begin{tikzpicture}[scale=1.5]
        \draw[draw=black] (0,0) --(3,0)--(1.5,1.5)--cycle;

	\node (A) at (1.5, 0.55) [] {$T$};
	\node (V0) at (-0.15, -0.3) [] {$V_0$};
	\node (V1) at (3.4, -0.0) [] {$V_1$};
	\node (V2) at (1.5, 1.8) [] {$V_2$};
        
	\node (A) at (2.7, 0.75) [] {$E_0$};
        \node (A) at (0.7, 1.3) [] {$E_1$};
	\node (A) at (1.5, -0.4) [] {$E_2$};
	
	\node (A) at (3.5, -0.65) [] {\textcolor{teal}{$\jmp{\sigma_{\gn\gt}}_{V_1}^T$}};
	\fill[teal](3,0) circle (2pt);
	
%
	
	\draw[very thick,->,color=blue](0.8,0) to  node[above]{$\gt$} (1.5,0);
	\draw[very thick,->,color=red](0.8,0) to  node[above left]{$\gn$} (0.8,0.7);
	
	\draw[very thick,->,color=blue](3-0.2-0.6,0.6+0.2) to  node[above right]{$\gt$} (3-0.2-0.6-0.45,0.6+0.45+0.2);
	\draw[very thick,->,color=red](3-0.2-0.6,0.6+0.2) to  node[below]{$\gn$} (3-0.2-0.6-0.45,0.6-0.45+0.2);
	
	\centerarc[teal,thick,latex-](3,0)(135:180:0.9);
	\centerarc[teal,thick,latex-](3,0)(135:180:0.7);
	\centerarc[teal,thick,latex-](3,0)(135:180:0.5);
	\centerarc[teal,thick,latex-](3,0)(135:180:0.3);
	\end{tikzpicture}	
	\caption{Illustration of the vertex jump defined in \eqref{eq:vertexxjump}.}
	\label{fig:vertex_jump}
\end{figure}
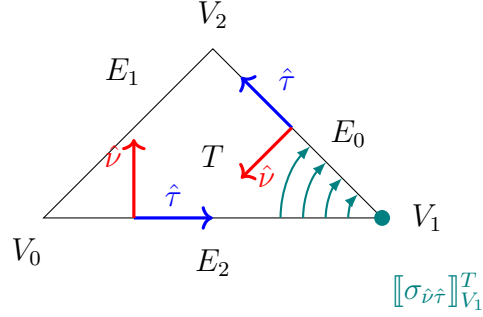

The ``vertex jump'' of $\sigma$ at a vertex $V \in \V_T$ of an element
$T \in \T$ (cf.~\cite{christiansen11,Hauret13} and see
Figure~\ref{fig:vertex_jump}), denoted by
 $\jmp{\sigma_{\gn \gt}}_{V}^T$,
represents the jump in the value of $\sigma(\gn, \gt)$ across the
vertex $V$ when traversing $\d T$ in the $\gt$-direction. Alternately,
enumerating the vertices of $\V_T$ as $V_0, V_1, V_2$ so that the
indices increase while moving in the $\gt$-direction, naming the edge
opposite to $V_i$ as $E_i$, and calculating the indices mod~3, we put
\begin{align}
  \label{eq:vertexxjump}
\jmp{\sigma_{\gn\gt}}_{V_i}^T = \left(\sigma(\gn, \gt)|_{E_{i-1}}-\sigma(\gn, \gt)|_{E_{i+1}}\right)(V_i).
\end{align}

\begin{prop}
  \label{prop:cov_distr_inc}        
  For any $\sigma \in \Regge(\T)$ and $u \in \Vo(\T)$,
  \[
    \act{\inc_g \sigma, u}_{\Vo(\T)} =
    \int_{\T} u \;\inc_g \sigma
    - \int_{\d \T} u\; (\curl_g\sigma + d^0\sigma_{\gn \gt})(\gt)
    -\sum_{T \in \T}\sum_{V \in \V_T}\jump[V]{\sigma_{\gn \gt}}^Tu(V).
  \]
  When $\sigma$ is globally smooth, all terms on the right hand side
  vanish except the first.
\end{prop}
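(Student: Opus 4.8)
The plan is to unfold the definition \eqref{eq:def_distr_cov_inc}, which sets $\act{\inc_g\sigma,u}_{\Vo(\T)} = \act{\curl_g\sigma,\rot_g u}_{\Wo_g(\T)}$, and then apply the extended-curl formula \eqref{eq:curlg-extended-2} with $W=\rot_g u$ (a legitimate argument because $\rot_g u$ is $g$-normal continuous by \eqref{eq:rotgu-du}). This splits the quantity into a volume term $\int_\T(\curl_g\sigma)(\rot_g u)$ and a boundary term $-\int_{\d\T}g(\rot_g u,\gn)\,\sigma_{\gn\gt}$. First I would integrate the volume term by parts on each element using the scalar/$1$-form identity \eqref{eq:curg-rotg-intg-by-parts-simpler} with $\phi=u$ and $\mu=\curl_g\sigma$; since $\curl_g\curl_g\sigma=\inc_g\sigma$ by \eqref{eq:incg}, this produces $\int_\T u\,\inc_g\sigma - \int_{\d\T}u\,(\curl_g\sigma)(\gt)$, giving the first target term together with one half of the edge integrand.

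Next I would rewrite the remaining boundary term. By $g(\rot_g u,\gn) = -(d^0u)(\gt)$ from \eqref{eq:rotgu-du}, it equals $+\int_{\d\T}(d^0u)(\gt)\,\sigma_{\gn\gt}$. The crucial step is a one-dimensional integration by parts along each (possibly curved) edge of $\d T$: since $\gt$ is the $g$-unit tangent, $(d^0u)(\gt)$ and $(d^0\sigma_{\gn\gt})(\gt)$ are precisely the $g$-arclength derivatives of $u$ and of $\sigma_{\gn\gt}$, so integrating against the arclength measure turns $\int_{\d\T}(d^0u)(\gt)\,\sigma_{\gn\gt}$ into $-\int_{\d\T}u\,(d^0\sigma_{\gn\gt})(\gt)$ plus endpoint contributions at the triangle vertices. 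Merging the two edge integrals then yields exactly $-\int_{\d\T}u\,(\curl_g\sigma+d^0\sigma_{\gn\gt})(\gt)$, matching the middle term of the claim.

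The step that requires care is the bookkeeping of the vertex endpoint contributions. Enumerating the vertices $V_0,V_1,V_2$ of $T$ in the $\gt$-direction as in \eqref{eq:vertexxjump}, one checks that the edge entering $V_i$ is $E_{i+1}$ and the edge leaving $V_i$ is $E_{i-1}$; the endpoint evaluations from these two edges combine at $V_i$ into $u(V_i)\big(\sigma_{\gn\gt}|_{E_{i+1}} - \sigma_{\gn\gt}|_{E_{i-1}}\big)(V_i) = -u(V_i)\jump[V_i]{\sigma_{\gn\gt}}^T$, which is the negative of the vertex jump in \eqref{eq:vertexxjump}. Summing over the vertices of each element and then over $T\in\T$ reproduces the last term of the claim. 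I expect this sign/orientation tracking---confirming that the incoming and outgoing edges at each vertex are $E_{i+1}$ and $E_{i-1}$ and that the endpoint signs align with \eqref{eq:vertexxjump}---to be the main obstacle; the rest is routine application of the already-established identities.

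Finally, for the smooth case I would argue directly rather than by cancelling the correction terms. When $\sigma$ is globally smooth, the boundary term in \eqref{eq:curlg-extended-2} vanishes (the remark following Definition~\ref{def:curlg-extended}: the $g$-normal continuity of $\rot_g u$ combined with $\gn_+=-\gn_-$, $\gt_+=-\gt_-$ makes the interior-edge contributions cancel), so $\act{\inc_g\sigma,u}_{\Vo(\T)}=\int_\T(\curl_g\sigma)(\rot_g u)$. Then \eqref{eq:curg-rotg-intg-by-parts-simpler} gives $\int_\T u\,\inc_g\sigma$, because the resulting edge integral $\int_{\d\T}u\,(\curl_g\sigma)(\gt)$ cancels across interior edges (continuity of $u$ and $\curl_g\sigma$, with $\gt$ flipping sign) and vanishes on $\d M$ where $u=0$; hence only the first term survives.
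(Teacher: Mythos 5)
Your proof of the main identity is correct and follows essentially the same route as the paper: unfold \eqref{eq:def_distr_cov_inc}, apply \eqref{eq:curlg-extended-2} with $W=\rot_g u$, integrate the volume term by parts via \eqref{eq:curg-rotg-intg-by-parts-simpler} using $\inc_g=\curl_g\curl_g$, convert the remaining boundary term with \eqref{eq:rotgu-du}, and finish with one-dimensional integration by parts edge by edge. Your orientation bookkeeping is also the same as the paper's and is consistent with \eqref{eq:vertexxjump}: with $E_i$ opposite $V_i$, the incoming edge at $V_i$ is indeed $E_{i+1}$ and the outgoing edge is $E_{i-1}$, and the endpoint evaluations combine to $-u(V_i)\jmp{\sigma_{\gn\gt}}_{V_i}^T$, which yields the stated vertex sum with the correct sign.

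The only departure is the smooth case, and there your route is genuinely different but valid. The paper keeps the derived formula and shows the correction terms vanish: the edge integrands from adjacent elements cancel (the two sign flips $\gn_+=-\gn_-$, $\gt_+=-\gt_-$ make $\sigma_{\gn\gt}$ single-valued while $\gt$ reverses), and the vertex sum is rearranged into a sum over interior vertices of inter-element jumps $\jmp{\sigma}^V_{\gn\gt}$, each of which vanishes for smooth $\sigma$. You instead go back to the definition, note that the boundary term of \eqref{eq:curlg-extended-2} vanishes for globally smooth $\sigma$ (the remark after Definition~\ref{def:curlg-extended}), and integrate by parts once more to conclude $\act{\inc_g\sigma,u}_{\Vo(\T)}=\int_\T u\,\inc_g\sigma$ directly. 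This buys a shorter argument that avoids re-examining the vertex terms, at the cost of showing only that the second and third terms of the stated formula vanish \emph{jointly} (their sum is zero), whereas the paper's cancellation argument exhibits each of them vanishing separately; for the proposition's purpose --- consistency of the extended $\inc_g$ with the smooth one --- this distinction is immaterial.
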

\begin{proof}
  By \eqref{eq:def_distr_cov_inc}, \eqref{eq:curlg-extended-2},
  \eqref{eq:curg-rotg-intg-by-parts-simpler}, and \eqref{eq:rotgu-du},
  \begin{align*}
    \act{\inc_g \sigma, u}_{\Vo(\T)}
    & =
    \int_\T (\curl_g \sigma)(\rot_g u)
    - \int_{\d \T} \sigma_{\gn \gt} \;g( \rot_g u, \gn)
    \\
    & =
      \int_\T u \; \inc_g \sigma - \int_{\d \T} u\; (\curl_g\sigma)(\gt)
      +\int_{\d \T} \sigma_{\gn \gt} \; (d^0u)(\gt).
  \end{align*}
  Now, on the edge $E_i$ oriented from end point $V_{i-1}$ to $V_i$
  (indexed mod~3; see Figure~\ref{fig:vertex_jump}), by the
  one-dimensional integration by parts formula
  \[
    \int_{(E_i, g)} \sigma_{\gn \gt} \; (d^0u)(\gt) 
    =
    \sigma_{\gn \gt} (V_i) \, u \big(V_i)
    - \sigma_{\gn \gt} (V_{i-1}) \, u \big(V_{i-1})    
    -\int_{(E_i, g)} (d^0\sigma_{\gn \gt})(\gt)\; u.     
  \]
  When summing over the three edges $E_i \subset \d T$, the above
  vertex values of $\sigma_{\gn\gt}$ yield vertex jumps. Hence the
  first statement  follows by summing over all $T$ in $\T$.

  To prove the second statement, note that the edge integrals from adjacent
  triangles cancel each other. To show that the last term with vertex
  contributions also vanish, let $\E_V$ denote the set of vertices
  connected to  $V \in \V$ by an edge and let $\gt_V$ denote the $g$-unit
  tangent vector along an $E \in \E_V$ pointing away from~$V$.  Then the jump
  $\jmp{\sigma (\gn, \gt_V)}$ on any edge $E \in \E_V$
  is defined as before, using  \eqref{eq:jump-def}.
  Its limit as we approach a vertex $V$ along any edge $E \in \E_V$,
  is denoted by $\jmp{\sigma}^V_{\gn \gt}$.  Using it, the
  last sum can be rearranged to
  \[
     \sum_{T \in \T}\sum_{V \in \V_T}\jump[V]{\sigma_{\gn \gt}}^T\; u(V)
    = -\sum_{V \in \Vint} \sum_{E \in \E_V} \jmp{\sigma}^V_{\gn\gt}  \, u(V).
  \]
  where $\Vint \subset \V$ is the subset of mesh vertices in the
  interior of the domain.  
  Each summand on the right hand side vanishes when $\sigma$ is smooth.
\end{proof}

As before, one may now lift the incompatibility functional 
into a finite element space to get a computable
representative. Namely, for any
$\sigma \in \Regge(\T)$ and $g \in \Regge^+(\T)$, let
$\inc_{g, h}\sigma$ be the unique function in $\Vo_h^{k+1}$ satisfying
\begin{equation}
  \label{eq:incg-h}
  \int_\om (\inc_{g, h} \sigma) \,u_h \; \da
  = \act{ \inc_g \sigma, u_h}_{\Vo(\T)}, \qquad
  \text{ for all } u_h \in \Vo_h^{k+1},
\end{equation}
where the right hand side can be computed using the formula in
Proposition~\ref{prop:cov_distr_inc}.

\subsection{Linearization of curvature}
\label{subsec:connection_curv_inc}

Linearization of curvature was discussed in various forms by many
previous authors. For example, the linearization of our vertex curvature sources
can be guessed from the three-dimensional case presented in
\cite{christiansen11}, while that of the edge and interior curvature
sources were derived in~\cite{Gaw20} (making use of
\cite{FischMarsd75}) in terms of the covariant divergence operator.
Here we
revisit the topic to derive the linearization of edge and interior
curvature sources directly in terms of the covariant incompatibility and curl
operators. While doing so, we also present different and elementary
proofs.

The variational derivative of a scalar function $f:\Sc(T) \to\R$
in the direction of a
$\sigma \in \Sc(T)$ is given by 
\begin{align*}  
  D_{\sigma}(f(\rho)) =
  \lim\limits_{t\to 0} \frac{f(\rho+t\sigma)-f(\rho)}{t}
\end{align*}
when the limit exists. We use this exclusively for scalar functions of the metric $g$ (i.e., with $\rho=g$ above). Note that the changes in $\rho(p)$ and
$\sigma(p)$ as $p$ varies in $M$ are immaterial in the above
definition. Hence, we may use Riemann normal coordinates~\cite{Lee97}
to prove the pointwise identities of tensorial quantities
in the next result. Let $\xtt^i$
denote the Riemann normal coordinates given by a chart covering a
point $p \in T,$ $\dtt_i$ denote the corresponding coordinate frame,
$\sigmatt_{ij} = \sigma(\dtt_i, \dtt_j)$ for any
$\sigma \in \TT_0^2(T)$,
$\Rtt_{ijkl} = R(\dtt_i, \dtt_j, \dtt_k, \dtt_l),$ and let
$\Gamtt_{ijk}, \Gamtt_{ij}^k$ be defined by \eqref{eq:Christoffels} with
$\d_i$ replaced by $\dtt_i$.  Then, by \cite[Proposition 5.11]{Lee97},
\begin{equation}
  \label{eq:normalmagic}
  \gtt_{ij}|_p = \delta_{ij},
  \quad
  \dtt_k \gtt_{ij} |_p = 0,
  \quad
  \Gamtt_{ijk}|_p = 0,
\end{equation}
which greatly simplify calculations. As an example, consider
the expression for covariant incompatibility of a $\sigma \in \Sc(T)$
given by \eqref{eq:incg-coord-matvec} in
coordinate proxies.
It simplifies in normal coordinates,
by virtue of~\eqref{eq:normalmagic}, to
$ \mt{\inc_g \sigma} =\inc \mt \sigmatt
-\veps^{ij}\veps^{kl}\sigmatt_{ml}\dtt_i\Gamtt^m_{jk}.  $ Expanding
out the last term in terms of $\gtt_{ij}$, using
\eqref{eq:Chris12-coords}, and simplifying,
\begin{align}
  [\inc_g \sigma] & =
    \inc \mt \sigmatt
    -
    \frac{1}{2}\tro\mt\sigmatt\, \inc \mt\gtt.
    \label{eq:cov_inc_normal_coord}
\end{align}

\begin{lemma}[Variations of curvature terms]
	\label{lem:variation}
	Consider an element manifold  $(T, g)$ for $T \in \T$. Let
        $p$ be an arbitrary point in $T$ and let
        $X, Y \in T_pM$. Let
        $q$ be any point in one of the edges $E$ of $\d T$ 
        and let $\tau \in T_qE$. Then
        \begin{subequations}
	\begin{align}
          \label{eq:variation-1}
          &\VDer{K(g) \vol T (X, Y)}{\sigma}
            = -\frac{1}{2}\vol T (X, Y) \,\inc_g \sigma, 
          &&  \text{ at the point } p \in T, 
          \\           \label{eq:variation-2}
          &\VDer{\kappa(g) \vol {E} (\tau)}{\sigma}
            =\frac{1}{2}(\curl_g\sigma+d^0\sigma_{\gt\gn})(\tau),
          &&  \text{ at the point } q \in E, 
          \\           \label{eq:variation-3}
          & 
            \VDer{
            \agl_V^T g}{\sigma} =
            -\frac{1}{2}\jmp{\sigma_{\gn\gt}}_V^T,
          && \text{ at every vertex $V$ of } T.
	\end{align}          
        \end{subequations}
\end{lemma}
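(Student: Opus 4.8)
The unifying idea is that $\VDer{\cdot}{\sigma}$ is a pointwise operation in $g$ and $\sigma$, so I may evaluate each identity in Riemann normal coordinates $\xtt^i$ centered at the relevant point, where \eqref{eq:normalmagic} makes $\gtt_{ij}=\delta_{ij}$, $\dtt_k\gtt_{ij}=0$, and $\Gamtt_{ijk}=0$ hold there. This annihilates all Christoffel symbols and first metric derivatives at the base point and turns each claim into a one-line linearization whose right-hand side is then recognized through the coordinate formulas of \S\ref{sec:cov}.

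For the interior term \eqref{eq:variation-1}, using $K(g)=R_{1221}/\det g$ and $\vol T=\sqrt{\det g}\,dx^1\wedge dx^2$, I would write $K(g)\vol T(X,Y)=R_{1221}\,(dx^1\wedge dx^2)(X,Y)/\sqrt{\det g}$ and differentiate at $p$. Two ingredients appear: $\VDer{\sqrt{\det g}}{\sigma}=\tfrac12\tro\mt\sigma$ (at $p$, where $\det g=1$), and $\VDer{R_{1221}}{\sigma}$. In normal coordinates the quadratic Christoffel products in \eqref{eq:Rijkl} contribute nothing to the variation at $p$, so only $\d_1\Gamma_{221}-\d_2\Gamma_{121}$ is differentiated; inserting the linearized symbols $\Gamma_{ijl}(\sigma)$ defined after \eqref{eq:Christoffels} and simplifying gives $\VDer{R_{1221}}{\sigma}\big|_p=-\tfrac12\inc\mt\sigma$. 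The same computation applied to $g$ in place of $\sigma$ shows $R_{1221}|_p=-\tfrac12\inc\mt g|_p$, hence $\inc\mt g|_p=-2K$; substituting this into \eqref{eq:cov_inc_normal_coord} identifies the assembled variation with $-\tfrac12\,\vol T(X,Y)\,\inc_g\sigma$.

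For the edge term \eqref{eq:variation-2}, I would start from Lemma~\ref{lem:geodesic-M-Om}, writing $\kappa(g)\vol E(\tau)=\kappa(g)\sqrt{g_{\tv\tv}}=\sqrt{\det g}\,(\dot\tv^\nv+\Gamma_{\tv\tv}^\nv)/g_{\tv\tv}$, and linearize at $q$ in normal coordinates ($\det g=g_{\tv\tv}=1$, $\Gamma=0$). Only $\VDer{(\sqrt{\det g}/g_{\tv\tv})}{\sigma}=\tfrac12\tro\mt\sigma-\sigma_{\tv\tv}=\tfrac12(\sigma_{\nv\nv}-\sigma_{\tv\tv})$ and $\VDer{\Gamma_{\tv\tv}^\nv}{\sigma}=(\d_\tv\mt\sigma)_{\nv\tv}-\tfrac12(\d_\nv\mt\sigma)_{\tv\tv}$ survive. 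To recognize the target I use that at $q$ the $g$-orthonormal frame $(\gt,\gn)$ agrees with the Euclidean frame $(\tv,\nv)$ to first order (since $g=\delta$ and $\d g=0$ there), so by symmetry of $\sigma$ and \eqref{eq:10}, $(d^0\sigma_{\gt\gn})(\tau)|_q=\d_\tv(\sigma_{\nv\tv})|_q=(\d_\tv\mt\sigma)_{\nv\tv}+(\sigma_{\nv\nv}-\sigma_{\tv\tv})\dot\tv^\nv$, while \eqref{eq:curlgsigma-coord-matvec} gives $(\curl_g\sigma)(\tau)|_q=\tv^i\mt{\curl\mt\sigma}_i$. Matching then reduces to the purely Euclidean identity $(\d_\tv\mt\sigma)_{\nv\tv}-(\d_\nv\mt\sigma)_{\tv\tv}=\tv^i\mt{\curl\mt\sigma}_i$, which follows from $\nv^k=-\veps^k_{\phantom{k,}i}\tv^i$ in \eqref{eq:Eucl-normal-comp}; with it the two sides coincide, producing the factor $\tfrac12$.

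For the vertex term \eqref{eq:variation-3}, differentiating $\agl_V^Tg=\arccos(\cos\theta)$ from \eqref{eq:2} gives $\VDer{\agl_V^Tg}{\sigma}=-\VDer{\cos\theta}{\sigma}/\sin\theta$. At $V$ in normal coordinates, with $\tin,\tout$ taken $\delta$-unit, $\cos\theta=-\delta(\tin,\tout)$ and the quotient rule yields $\VDer{\cos\theta}{\sigma}=-\sigma_{\tin\tout}-\tfrac12\cos\theta\,(\sigma_{\tin\tin}+\sigma_{\tout\tout})$. On the other side, at $V$ the frame $(\gn,\gt)$ on each incident edge is the $\pi/2$ rotation $J$ of its $\delta$-unit tangent (so $\gn=J\gt$, cf.~\eqref{eq:Eucl-normal-comp}); identifying, via \eqref{eq:vertexxjump}, the incoming and outgoing edges at $V$ with the tangents $\tin$ and $\tout$, the vertex jump becomes $\jmp{\sigma_{\gn\gt}}_V^T=\sigma(J\tout,\tout)-\sigma(J\tin,\tin)$. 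Writing $\tin,\tout$ through their angles reduces both sides to elementary trigonometric expressions in the entries of $\mt\sigma$, and comparing coefficients confirms the factor $-\tfrac12$. The computations are individually short once normal coordinates are fixed, so I expect the real difficulty to be orientation and sign bookkeeping: the sense of $J$, the incoming/outgoing labeling in \eqref{eq:vertexxjump}, and the sign of $\sin\theta$ must all be chosen consistently for the signs $-\tfrac12,+\tfrac12,-\tfrac12$ to emerge. Among the three, \eqref{eq:variation-2} will demand the most care, since the single linearized symbol $\VDer{\Gamma_{\tv\tv}^\nv}{\sigma}$ must be split so as to reproduce both the covariant curl and the $d^0\sigma_{\gt\gn}$ contribution at once.
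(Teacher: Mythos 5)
Your proposal is correct and takes essentially the same route as the paper's proof: each identity is obtained by linearizing the same coordinate formulas ($K(g)=R_{1221}/\det g$, Lemma~\ref{lem:geodesic-M-Om}, and \eqref{eq:2}) in Riemann normal coordinates, where \eqref{eq:normalmagic} removes the Christoffel and first-derivative terms, and the results are then recognized via \eqref{eq:cov_inc_normal_coord}, \eqref{eq:curlgsigma-coord-matvec}, and \eqref{eq:10}. The only deviation is in \eqref{eq:variation-3}, where you finish with an explicit trigonometric comparison at $g=\delta$ instead of the paper's expansion of $\sigma_{\htin\htout}$ in the orthonormal frame at general $g$; both produce the same $\sin\theta$ cancellation, so this is a cosmetic difference.
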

\begin{proof}
  We prove~\eqref{eq:variation-1},  
  using the coordinate formula
  $\Gauss(g)= R_{1221}/\det g$ and the Jacobi formula, which implies
  $    \VDer{\sqrt{\det g} }{\sigma} = \frac{1}{2}\sqrt{\det g}
  \,\tr{g^{-1}\sigma}.
  $
  Together they give
  \begin{align*}
    \VDer{K(g)\vol T (\d_1, \d_2)}{\sigma} =
    \frac{1}{\sqrt{\det g}}
    \Big(\VDer{R_{1221}}{\sigma}-\frac{1}{2}\tr{g^{-1}\sigma}R_{1221}\Big)
  \end{align*}
  in any coordinate frame.
  Specializing to Riemann normal coordinates,
  since~\eqref{eq:normalmagic} implies that the last two terms
  of \eqref{eq:Rijkl} vanish, 
  $\Rtt_{1221} = \dtt_1 \Gamtt_{221} - \dtt_2 \Gamtt_{211}  = -\frac 1 2
  \inc \mt\gtt$, so 
  \begin{align*}
    \VDer{K(g)\vol T (\dtt_1, \dtt_2)}{\sigma}
    &
      = -\frac{1}{2}\inc \mt\sigmatt + \frac{1}{4}\tro\mt\sigmatt \,\inc \mt\gtt.
  \end{align*}
  Hence~\eqref{eq:variation-1} follows from~\eqref{eq:cov_inc_normal_coord}.

  To prove~\eqref{eq:variation-2}, we start with its right hand side.
  By \eqref{eq:curlgsigma-coord-matvec},
  \begin{align*}
    (\curl_g \sigma + d^0 \sigma_{\gt\gn})(\tv)
    & =
      \frac{1}{\sqrt{\det g}}
      (\mt{\curl \mt\sigma}_i - \veps^{jk}\Gamma_{ji}^m \sigma_{mk})\tv^i
      + \tv^i \d_i  \sigma_{\gt\gn}.
  \end{align*}
  At any point on the
  edge $E$, without loss of generality, we may choose a Riemann
  normal coordinate system so that $\dtt_1 = \tilde\tv = \gt$ and
  $\dtt_2 = \tilde\nv = \gn$. Then, using~\eqref{eq:normalmagic}, the
  above expression becomes
  \begin{align}
    (\curl_g \sigma + d^0 \sigma_{\gt\gn})(\tilde\tv)
    & =
      \mt{\curl \mt\sigmatt}_i\tilde\tv^i
      +  \d_{\tilde\tv}  \sigma_{\tilde\tv\tilde\nv}
      \nonumber 
    \\ \label{eq:11}
    &
      = \mt{\curl \mt\sigmatt}_i\tilde\tv^i  +
      (\dtt_{\tilde \tv}\mt\sigmatt)_{\tilde\nv\tilde\tv}
      +
      (\sigma_{\tilde\nv \tilde\nv}
      -\sigma_{\tilde\tv \tilde\tv})\dot{\tilde\tv}_{\tilde\nv},
  \end{align}
  where we used \eqref{eq:10} to get the last equality. Now we work on
  the left hand side of \eqref{eq:variation-2}.  Differentiating the
  expression for geodesic curvature from Lemma~\ref{lem:geodesic-M-Om}, 
  we get
  \begin{align*}
    & \VDer{\kappa(g)\vol {E} (\tau)}{\sigma}
      =
    \VDer{g_{\tau\tau}^{-1} \sqrt{\det g}    
    (\dot{\tau_\nu} + \Gamma_{\tv\tv}^\nv)
    }
    {\sigma}
    \\
    & =
    \frac{\sqrt{\det g}}{g_{\tv\tv}}
    \bigg[
     \left( \frac{1}{2}\mathrm{tr}(g^{-1}\sigma)
      -\frac{\sigma_{\tv\tv}}{g_{\tv\tv}}\right)
      (\Gamma_{\tv\tv}^{\nv}+\dot{\tv}^{\nv})
      + \tv^i\tv^j \nv^l\delta_{kl}(g^{km} \Gamma_{ijm}(\sigma)
      - g^{ka}\sigma_{ab}g^{bm} \Gamma_{ijm})
      \bigg]
  \end{align*}
  in general coordinates. Specializing to the previously used Riemann
  normal coordinates, applying~\eqref{eq:normalmagic}
  and simplifying $\Gamma_{ijm}(\sigma)$, 
  \[
    \VDer{\kappa_g(g)\vol {E} (\tilde\tau)}{\sigma}
    =
    \Big(\frac{1}{2}\tro \mt\sigmatt-\mt{\sigmatt}_{\tv\tv}\Big)\dot{\tilde\tv}^{\nv}
    +
    (\dtt_{\tilde\tv}\mt\sigmatt)_{\tilde\nv\tilde\tv}
    -\frac{1}{2}(\dtt_{\tilde\nv}\mt\sigmatt)_{\tilde\tv\tilde\tv}.
  \]
  This coincides with the expression in~\eqref{eq:11}, so
  \eqref{eq:variation-2} is proved.

  To prove \eqref{eq:variation-3}, let $\theta = \agl_V^T g$ denote the angle in
  \eqref{eq:2} and let $\htinout = \tinout/\sqrt{g_{\tinout\tinout}}$
  denote the $g$-normalized incoming and outgoing tangents at~$V$.
  Since $\tau_\pm$ does not vary with~$g$, 
  \begin{align}
    \nonumber 
    -D_\sigma \cos \theta
    & = D_\sigma
      \Big(\frac{g_{\tin\tout}}{\sqrt{g_{\tin \tin}g_{\tout \tout}}}\Big)
     =
      \frac{ D_\sigma {g_{\tin \tout}} }{\sqrt{g_{\tin \tin}g_{\tout \tout}}}      
      +
      g_{\tin\tout}
      D_\sigma \frac{1}{\sqrt{g_{\tin \tin}g_{\tout \tout}}}
    \\ \label{eq:12}
    & = \sigma_{\htin \htout} - \frac 1 2
      g_{\htin\htout} ( \sigma_{\htin \htin} + \sigma_{\htout\htout}).
  \end{align}
  Let $\hnio$ be such that $\htinout, \hnio$ form an ordered
  orthonormal basis matching the orientation of $T$ under
  consideration. Then
  $g_{\htout\hni} = \sin\theta = - g_{\htin \hno}$.  Substituting
  $\sigma_{\htin \htout} =
  \frac 1 2 \sigma(\htin, g_{\htout\htin}\htin + g_{\htout \hni}\hni) +
  \frac 1 2 \sigma( g_{\htin\htout} \htout + g_{\htin \hno}\hno, \htout)$
  into~\eqref{eq:12},
  \[
    -D_\sigma \cos \theta = \frac 1 2
    \big(
    g_{\htout\hni}\sigma_{\htin \hni} +
    g_{\htin\hno} \sigma_{\hno\htout} \big)
    = \frac{\sin\theta}{2}
    \big(
    \sigma_{\htin \hni} 
    -
    \sigma_{\hno\htout} \big) =
    -\frac{\sin\theta}{2} \jmp{\sigma_{\gn\gt}}_{V}^T.
  \]
  Since $
  D_\sigma \theta = -(D_\sigma \cos\theta) / \sin\theta,
  $ the result is proved.
\end{proof}

\section{Connection approximation}
\label{sec:connection_one_form_approx}

In this section we approximate the Levi-Civita 
connection when only an approximation to the true metric
is given, namely 
$g\in \Regge^+(\T)$.
To do so, we assume we are given a $g$-orthonormal frame $(e_1, e_2)$ in
each $T \in\T$.
Then, the connection is fully determined by
a single {\em connection form} $\OneForm(g; \cdot) \equiv \cn_g \in\W^1(T)$,
within each element $T$, given by
\begin{align}
\label{eq:oneform_gortho_frame}
\OneForm_g(X)=g(\GBasis_1,\nabla_X\GBasis_2)= -g(\nabla_X \GBasis_1,\GBasis_2)
\end{align}
for any  $X\in\Xm{M}.$
This section is largely based on \cite{BKG21} (so we will be brief), but we note
that while they approximate the Hodge star of $\cn_g$, we
approximate $\cn_g$ directly (and also note that the orientation
in their work is opposite to ours).


To extend the connection to accommodate the possible discontinuities
of the frame $(e_1, e_2)$ across element interfaces, let
$\agl_g(a, b)$ denote the
counterclockwise angle from $b$ to $a$ measured in the $g$-inner
product,  for any two vectors $a, b \in T_pM$.
This angle is well defined even for points $p$ on a mesh
edge $E$ (excluding the vertices) since we use the glued smooth
structure (see \eqref{eq:Kosovskii-coords}) in which $g$ is continuous
across the edge.
On each interior mesh edge $E$, let $T_\pm, \gn_\pm, \gt_\pm$ be as
in~\eqref{eq:jump-def}, orient the edge $E$ by 
$\gt^E = \gt_+$, and  put $\gn^E = \gn_+$, $e_{\pm, i} = e_i|_{T_\pm}$. 
Let $\Theta^E = \agl_g(e_{+, 1}, e_{-, 1})$;  see Figure~\ref{fig:angle_comp}.
(It is possible to compute this angle without resorting to the coordinates in
\eqref{eq:Kosovskii-coords}, as we will explain later in Appendix~\ref{sec:oneform_algorithm}.)  This is the
angle by which a vector must be rotated while parallel transporting it
across the edge $E$ in the $\gn^E$ direction.  To account for this
rotation, we extend $\cn_g$ as follows:
\begin{definition}
  \label{def:dist-cn}
  Given  $g\in \Regge^+(\T)$ and $g$-orthonormal piecewise smooth frame
  $e_1, e_2 \in \Xm \T$, define $\cn_g \in \Wo_g(\T)'$ by
  \begin{equation}
    \label{eq:29}
    \act{\cn_g, W}_{\Wo_g(\T)}
    =  \int_{\T} \cn_g(W)
      + \sum_{E \in \Eint} \int_{(E, g)} \Theta^E \,g(W, \gn^E)
  \end{equation}
  for all $W \in \Wo_g(\T)$.  
\end{definition}
\begin{figure}[h]
	\centering
	\begin{tikzpicture}[scale=1.5]
	\draw[draw=black] (0,0.5) --(1.5,0)--(1.5,1.5)--cycle;
	\draw[draw=black] (1.5,0)--(1.5,1.5)--(3.1,0.9)--cycle;
	
	\draw[draw=black,->] (1.49,0.75) to (1.04,0.6);
	
	\draw[draw=black,->] (1.51,0.75) to (1.35,0.31);
	
	\draw[draw=teal, thick,->] (1.5,0.7) to (1.5,1.2);
	
	\node (A) at (0.4, 0.1) [] {$T_+$};
	\node (A) at (2.2, 0.1) [] {$T_-$};
	
	
	\node[teal] (A) at (1.3, 1.2) [] {$\gt^E$};
	
	\node (A) at (1.2, 0.8) [] {$\GBasis_{+,1}$};
	\node (A) at (1.76, 0.45) [] {$\GBasis_{-,1}$};
	
	\centerarc[black, thick,-{Latex[length=1.3mm]}](1.5,0.75)(250:197:0.35);
	
	\node (A) at (1.1, 0.33) [] {$\Theta^E$};
	\end{tikzpicture}
	\caption{Angle between frames on different elements.}
	\label{fig:angle_comp}
\end{figure}
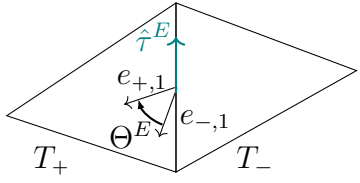

Within each element, the well-known identity
$d^1\OneForm_g = K(g) \vol T$ holds. Equivalently, using the curl in
\eqref{eq:curl_inc_group-1}, $\curl_g (\cn_g|_T) = K(g|_T)$ for each
$T \in \T$. To speak of $\curl_g \cn_g$ for the functional $\cn_g$ in
\eqref{eq:29}, we must extend $\curl_g$. Motivated by
\eqref{eq:curg-rotg-intg-by-parts-simpler}, we define
\begin{equation}
  \label{eq:30}
  \act{\curl_g \mu , u}_{\Vo(\T)} = \act{\mu, \rot_g u}_{\Wo_g(\T)},
  \qquad\text{ for all }
  u \in \Vo(\T), \;
  \mu \in \Wo_g(\T)'.  
\end{equation}
The right hand side is well defined for $\mu \in \Wo_g(\T)'$ since
$\rot_g u \in \Wo_g(\T)$ by \eqref{eq:rotgu-du}.
Next, for
each $V \in \V$ and $E \in \E_V$, let $s_{EV}$ equal $+1$ if $\gt^E$
points towards $V$ and $-1$ otherwise.  Following \cite{BKG21}, we
assume that at each interior vertex $V$, the 
``consistency'' condition
\begin{equation}
  \label{eq:top-cons}
  \sum_{E \in \E_V} s_{EV} \Theta^E(V)
  + \sum_{T \in \T_V} \agl_V^Tg = 2\pi 
\end{equation}
holds. It can be seen from the proof of \cite[Proposition~5.4]{BKG21}
that the left hand side above always equals $2 \pi m$ for some integer
$m$. The condition~\eqref{eq:top-cons} requires that $e_i$ be chosen
so as to achieve $m=1$. (We'll give a recipe for doing this shortly: see~\eqref{eq:28} below.) 

\begin{proposition}
  Let $K_g$ be as in \eqref{eq:Kg}, $\cn_g$ be as \eqref{eq:29} for a $g$-orthonormal frame $e_i$ satisfying~\eqref{eq:top-cons}, and
  $\curl_g\cn_g$ be as given by~\eqref{eq:30}. Then
  $
    \curl_g \cn_g = K_g.
  $
\end{proposition}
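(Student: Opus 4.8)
The plan is to test both functionals against an arbitrary $u \in \Vo(\T)$ and show that the resulting pairings coincide. By the definition \eqref{eq:30} of $\curl_g$ on functionals, $\act{\curl_g \cn_g, u}_{\Vo(\T)} = \act{\cn_g, \rot_g u}_{\Wo_g(\T)}$, which is legitimate because $\rot_g u \in \Wo_g(\T)$ by \eqref{eq:rotgu-du}. Expanding the right-hand side with \eqref{eq:29} gives $\int_\T \cn_g(\rot_g u) + \sum_{E \in \Eint}\int_{(E,g)}\Theta^E\, g(\rot_g u, \gn^E)$. First I would integrate the volume term by parts element by element using \eqref{eq:curg-rotg-intg-by-parts-simpler} together with the classical identity $\curl_g(\cn_g|_T) = K(g|_T)$, obtaining $\sum_T\int_{(T,g)}K(g)\,u - \sum_T\int_{(\d T,g)} u\,\cn_g|_T(\gt)$. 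The first sum is exactly $\sum_T\act{K_g^T,u}$, the interior curvature source in \eqref{eq:Kg}, so it remains to match the boundary integrals with the edge and vertex sources.

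The heart of the argument is the analysis of these boundary integrals. Regrouping them over interior edges (boundary edges drop out since $u|_{\d M}=0$), each $E = \d T_-\cap\d T_+$ contributes $-\int_E u\,\big(\cn_g|_{T_+}(\gt^E) - \cn_g|_{T_-}(\gt^E)\big)$. Here I would establish the key pointwise identity that on each element $\cn_g|_T(\gt)$ splits into a geodesic-curvature part and a frame-rotation part: using the transformation law $\tilde\cn_g = \cn_g - d\alpha$ for the connection form under a rotation of the $g$-orthonormal frame by an angle $\alpha$, together with the Frenet-type identity $g(\gt, \nabla_{\gt}\gn) = -\kappa(g)$, one obtains $\cn_g|_T(\gt) = -\kappa(g) + \d_{\gt}\beta$, where $\beta$ is the angle of the element frame relative to $(\gt,\gn)$ and the derivative is taken along $E$ with respect to that element's own connection. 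Subtracting the two sides, and using that the two element frames differ by the rotation $\Theta^E$ (so the difference of frame angles is $\mp\Theta^E$), isolates on one hand the jump of the geodesic curvature, which reassembles $\sum_T\sum_{E\in\E_T}\act{K_{E,g}^T,u}$ (the edge source, consistent with Lemma~\ref{lem:geodesic-M-Om}), and on the other hand a residual term of the form $\pm\int_E u\,\d_{\gt^E}\Theta^E$.

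Finally I would collect the residual $\Theta^E$ terms: the one just produced together with the edge term $\sum_E\int_{(E,g)}\Theta^E\, g(\rot_g u,\gn^E) = -\sum_E\int_E\Theta^E\, (d^0u)(\gt^E)$, where I again use \eqref{eq:rotgu-du}. Integrating by parts along each edge, the edge-interior integrals combine into a total derivative $\int_E \d_{\gt^E}(u\,\Theta^E)$, whose endpoint evaluations yield $\sum_{V}\sum_{E\in\E_V} s_{EV}\,\Theta^E(V)\,u(V)$. The consistency condition \eqref{eq:top-cons} then rewrites the inner sum as $2\pi - \sum_{T\in\T_V}\agl_V^Tg$, which matches the vertex source $\sum_V\act{K_{V,g},u}$ in \eqref{eq:Kg}, completing the identification $\curl_g \cn_g = K_g$.

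I expect the main obstacle to be the sign and orientation bookkeeping in the key edge identity of the second step. The two adjacent elements carry \emph{different} Levi-Civita connections, so the tangential jump of $\cn_g$ genuinely contains the geodesic-curvature jump; the angle $\Theta^E$ is defined through the glued smooth structure with a specific counterclockwise convention (opposite to that of \cite{BKG21}); and the normals $\gn^E, \gn_\pm$ together with the edge orientation all enter with signs. These must be tracked carefully so that the geodesic-curvature jump lands with the correct sign to form the edge source, and so that the edge integration by parts of the $\Theta^E$ terms produces precisely the signed vertex sum $s_{EV}\Theta^E(V)$ governed by \eqref{eq:top-cons}.
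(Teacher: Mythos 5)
Your proposal is correct in outline, but it takes a genuinely different---and far more self-contained---route than the paper. The paper's proof is essentially a reduction to the literature: after unfolding the adjoint definition \eqref{eq:30}, it rewrites the element term $\int_{(T,g)}\cn_g(\rot_g u)$ as the wedge product $\int_T \cn_g\wedge d^0u$ and the edge term via \eqref{eq:star-rotation} as $-\int_{(E,g)}\Theta^E\, d^0u(\gt^E)$, and then simply invokes \cite[Proposition~5.4]{BKG21}, where the actual Gauss--Bonnet-type bookkeeping lives. You instead repropose proving that cited result from scratch: element-wise integration by parts through \eqref{eq:curg-rotg-intg-by-parts-simpler} with $\curl_g(\cn_g|_T)=K(g|_T)$, the pointwise splitting of $\cn_g|_T(\gt)$ into geodesic curvature plus the derivative of a frame angle, regrouping of interior-edge jumps into the edge sources of \eqref{eq:Kg} plus a $\partial_{\gt^E}\Theta^E$ residual, and a final integration by parts along edges whose endpoint terms are converted into the vertex sources by the consistency condition \eqref{eq:top-cons}. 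Your route buys transparency: it shows exactly where \eqref{eq:top-cons} enters (to turn $\sum_{E\in\E_V}s_{EV}\Theta^E(V)$ into the angle deficit) and why only boundary values of $u$ and $\rot_g u$ matter; the paper's route buys brevity at the price of deferring all substance to \cite{BKG21}.

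One warning: the sign bookkeeping you flagged is not cosmetic---it is the hinge on which your last step turns. With $\beta_\pm$ the counterclockwise angle from $(\gt_\pm,\gn_\pm)$ to $(e_{\pm,1},e_{\pm,2})$, the correct pointwise identity is $\cn_g|_T(\gt)=-\kappa(g)-\partial_\gt\beta$ (your version has $+\partial_\gt\beta$), and the two edge integrands must come out with \emph{matching} signs, i.e.\ as $-u\,\partial_{\gt^E}\Theta^E$ and $-\Theta^E\,\partial_{\gt^E}u$, so that they sum to the exact derivative $-\int_E\partial_{\gt^E}(u\,\Theta^E)$. With the opposite resolution of your ``$\pm$'' one is left with $\int_E\big(u\,\partial_{\gt^E}\Theta^E-\Theta^E\,\partial_{\gt^E}u\big)$, which is \emph{not} a pure endpoint term, and the proof does not close. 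So the proof is complete only once the relation between the frame-angle difference $\beta_+-\beta_-$ and $\Theta^E$ is pinned down consistently with the orientation conventions of Section~\ref{sec:connection_one_form_approx} (which, as the paper itself warns, differ from those of \cite{BKG21}); that is a finite, checkable computation, not a missing idea.
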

\begin{proof}
  This was proved in \cite{BKG21}, so we will merely indicate how to
  apply their result to
  \begin{align*}
    \act{\curl_g \cn_g, u}_{\Vo(\T)}
    & = \act{\cn_g, \rot_g u}_{\Wo_g(\T)} = \int_\T \cn_g(\rot_g u)
     + \sum_{E \in \Eint } \int_{(E, g)} \Theta^E \, g( \rot_g u, \gn^E).
  \end{align*}
  Using
  $\alpha\wedge(\star\beta)=g^{-1}(\alpha,\beta)\vol{T}$
  with  $\alpha = \cn_g$ and $\beta = d^0 u$,
  \begin{align*}
    \int_{(T, g)} \cn_g(\rot_g u) = 
    -\int_Tg^{-1}(\OneForm(g),\star d^0u_h)\vol{T}  =\int_T \OneForm(g)\wedge d^0u_h,
  \end{align*}
  and using \eqref{eq:star-rotation}, 
  \begin{align*}
    \int_{(E, g)} \Theta^E \, g( \rot_g u, \gn^E)
    &  = -\int_{(E, g)}\Theta^E\,d^0u_h(\gt^E).
  \end{align*}
  Now invoking
  \cite[Proposition 5.4]{BKG21}, the result follows.
\end{proof}

A computable representative of the connection form is obtained by
lifting the $\cn_g$ into the BDM finite element space (defined in
\eqref{eq:BDMFEspace}) on the parameter
domain, as follows.

\begin{definition}[Connection 1-form approximation]
  \label{def:distributional_one_form}
  Define $\OneForm_h(g)$ as the unique function in $\Wo_h^k$ such
  that for all $v_h\in\Wo_h^k$
  \begin{align}
    \label{eq:distr_one_form}
    \begin{split}
      \int_\om \Eucl(\OneForm_h(g),v_h)\,\da &= \act{\cn_g, Q_g v_h}_{\Wo_g(\T)}
    \end{split}
  \end{align}
  where $Q_g$ is as Proposition~\ref{prop:cov-curl-coordinates} and
  the right hand side is evaluated using~\eqref{eq:29}.
\end{definition}


In the remainder, we assume that $\cn_h(g)$ is computed using a
specific $g$-orthonormal frame $(\GBasis_1,\GBasis_2)$ satisfying \eqref{eq:top-cons}, that we describe now.
We start with a globally smooth $\delta$-orthonormal
Euclidean basis $(\EuclBasis_1, \EuclBasis_2)$ on the parameter domain
(e.g., the standard unit basis on $\R^2$).  Then, this basis is
continuously transformed to a $g$-orthonormal frame as in the next
lemma.  Let $G(t) = \delta +t(g-\delta)$.
We consider the ordinary differential equation (ODE)
\begin{align}
\label{eq:ODE_oneform}
\dot{u}(t)u(t)^{-1} = -\frac{1}{2}G(t)^{-1} (g-\Eucl),\qquad u(0)=\idop.
\end{align}

\begin{lemma}
  \label{lem:expl_sol_ode_oneform}
  The solution of ODE \eqref{eq:ODE_oneform} is given by
  $u(t) = G(t)^{-1/2}$.  The frame $(u(t) E_1, u(t) E_2)$ is
  $G(t)$-orthonormal, so at $t=1$, it is $g$-orthonormal.
\end{lemma}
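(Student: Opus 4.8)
The statement has two parts: that $u(t)=G(t)^{-1/2}$ solves \eqref{eq:ODE_oneform}, and that the transported frame is $G(t)$-orthonormal. The plan is to exploit the single structural feature that makes all the matrix manipulations tractable: in the model-case coordinates of \S\ref{ssec:model} the Euclidean metric $\delta$ is the identity matrix $\idop$, so $G(t)=\idop+t(g-\delta)$ is an affine function of the fixed symmetric matrix $g-\delta$. Consequently $G(t)$, its constant derivative $\dot G = g-\delta$, and every continuous matrix function of $G(t)$---in particular $G(t)^{1/2}$, $G(t)^{-1/2}$, and $G(t)^{-1}$---are all symmetric and mutually commute. First I would record that $G(t)=(1-t)\delta+tg$ is a convex combination of the symmetric positive definite matrices $\delta$ and $g$ (using $g\in\Regge^+(\T)$), hence itself symmetric positive definite and invertible on $[0,1]$, so that the fractional powers are well defined.

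For the first part, because all these matrices commute, the derivative of $u(t)=G(t)^{-1/2}$ may be computed exactly as in the scalar case, giving $\dot u = -\tfrac12 G^{-3/2}\dot G$. Multiplying on the right by $u^{-1}=G^{1/2}$ and again using commutativity to collapse the powers, I obtain $\dot u\,u^{-1} = -\tfrac12 G^{-1}\dot G = -\tfrac12 G^{-1}(g-\delta)$, which is precisely the right-hand side of \eqref{eq:ODE_oneform}; and $u(0)=\delta^{-1/2}=\idop$ supplies the initial condition. Uniqueness then follows because \eqref{eq:ODE_oneform} is the linear system $\dot u = A(t)\,u$ with continuous coefficient $A(t)=-\tfrac12 G(t)^{-1}(g-\delta)$ on $[0,1]$, to which Picard--Lindel\"of applies. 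I expect this differentiation of the matrix square root to be the one genuinely delicate step: for a general curve of symmetric matrices $\tfrac{d}{dt}G^{-1/2}$ is \emph{not} $-\tfrac12 G^{-3/2}\dot G$ but is governed by a Sylvester/Daleckii--Krein type formula, so the proof hinges on making the commutativity claim explicit \emph{before} differentiating.

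For the second part, $G(t)$-orthonormality of $(u(t)E_1,u(t)E_2)$ means $(uE_i)^{\top}G(t)(uE_j)=\delta_{ij}$. Since $u=G^{-1/2}$ is symmetric and commutes with $G$, a direct computation gives $u^{\top}G\,u = G^{-1/2}G\,G^{-1/2}=\idop$, whence $(uE_i)^{\top}G(uE_j)=E_i^{\top}E_j=\delta_{ij}$, using that $(E_1,E_2)$ is $\delta$-orthonormal with $\delta=\idop$. Setting $t=1$ gives $G(1)=g$ and hence $g$-orthonormality of $(u(1)E_1,u(1)E_2)$. This part is a routine verification rather than an obstacle, and it also reveals the origin of the ODE: differentiating the orthonormality constraint $u^{\top}G\,u=\idop$ in $t$ produces exactly the symmetric choice of $\dot u\,u^{-1}$ prescribed in \eqref{eq:ODE_oneform}.
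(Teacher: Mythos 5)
Your proof is correct and rests on exactly the same structural fact as the paper's: $G(t)=\delta+t(g-\delta)$ is an affine function of the single symmetric matrix $g-\delta$, so the whole family and all functions of it are simultaneously diagonalized by the ($t$-independent) eigenvectors of $g$ --- the paper makes this explicit by writing $g=V\Lambda V^{-1}$ and reducing everything to scalar computations, while your commutativity argument is the same fact in different clothing. Your additional touches (positive definiteness of the convex combination, the Picard--Lindel\"of uniqueness remark, and the explicit check $u^{\top}Gu=\idop$) simply flesh out what the paper leaves as ``easily verified.''
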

\begin{proof}
  Let $g=V\Lambda V^{-1}$ be a diagonalization of $g$ with eigenvalues
  $\lambda_i$ in $\Lambda = \diag{\lambda_i}$ and eigenvectors in the
  orthogonal matrix $V$. Then $G(t) = V ((1-t)\Eucl+t\Lambda) V^{-1}$
  and
  $G(t)^{-1/2} = V \mathrm{diag} \big(1+ t(\lambda_i -1)\big)^{-1/2}
  V^{-1}$. Using these expressions, the statements of the lemma can
  be easily verified.
\end{proof}

We use the $g$-orthonormal frame
\begin{equation}
  \label{eq:28}
  e_i = u(1) E_i = g^{-1/2} E_i  
\end{equation}
for computations. As stated in \cite{BKG21}, 
since the frame $E_i$ obviously satisfies~\eqref{eq:top-cons} with $g = \delta$,
the chosen $e_i$ obtained by the continuous deformation of the metric and frame, 
satisfies~\eqref{eq:top-cons}.
In Appendix~\ref{sec:oneform_algorithm} we present a stable algorithm
for computing the right hand side of
\eqref{eq:distr_one_form} using the discontinuous metric~$g$ in the
computational coordinates.

\section{Error analysis}
\label{sec:num_ana}

In this section, we prove {\it a priori} estimates for the error in
the previously defined approximations.  We restrict ourselves to the
model case (see~\S\ref{ssec:model}) and work on the parameter domain
$\om$, where we shall use the Euclidean dot product $u \cdot v = \delta(u, v)$
and the standard Frobenius inner product $A: B$ between matrices $A, B$.
We assume that 
the
triangulation $\T$ consists of affine-equivalent elements, is
shape-regular, and is quasi-uniform of meshsize
$h:=\max_{T \in \T}\mathrm{diam}(T)$.

\subsection{Convergence results}
\label{subsec:statement_theorem}

All results here concern the canonical interpolant into the Regge
space $\Regge_h^k$ (defined in \eqref{eq:ReggeFE}) of degree
$k \ge 0$.  This well-known interpolant~\cite{li18}, denoted by
$\RegInt[k]:\Czero[\Omega,\Sc]\to\Regge_h^k$, satisfies following
equations
\begin{subequations}
	\label{eq:RegInt}  
	\begin{align}  
          \int_E(\RegInt[k]\sigma)_{\tv\tv}\,q\,\dl
          & = \int_E\sigma_{\tv\tv}\,q\,\dl
          && \text{ for all } q\in \Pol^{k}(E)
             \text{ and edges $E$ of $\d T$},\label{eq:RefInt_edge}
          \\
          \int_T\RegInt[k]\sigma:\rho\,\da
          & = \int_T\sigma:\rho\,\da
          && \text{ for all } \rho\in \Pol^{k-1}(T,\R^{2 \times 2}).
             \label{eq:RefInt_trig}
	\end{align}
\end{subequations}
Note that when $\rho$ is a skew-symmetric matrix, both sides of~\eqref{eq:RefInt_trig} vanish, so~\eqref{eq:RefInt_trig} is nontrivial only for symmetric $\rho$. Throughout, we use standard
Sobolev spaces $\Wsp[\Omega]$ and their norms and seminorms for any $s\geq 0$ and
$p\in [1,\infty].$ When the domain is $\Omega$, we omit it from the
norm notation if there is no chance of confusion.  We also use the
element-wise norms $\|u\|_{\Wsph}^p =\sum_{T\in \T}\|u\|^p_{\Wsp[T]},$
with the usual adaption for $p=\infty$. When $p=2$, we put
$\|\cdot\|_{\Hsh}=\|\cdot\|_{W^{s,2}_h}$.  Let
\[
  \nrm{\sigma}_2 = \| \sigma \|_{L^2} + h \| \sigma \|_{H_h^1}, \qquad
  \nrm{\sigma}_\infty = \| \sigma \|_{L^\infty} + h \| \sigma
\|_{W^{1,\infty}_h}.
\]
Most of our results assume that  $k$ is an integer,
\begin{equation}
  \label{eq:13}
  k\ge 0, \quad g = \RegInt[k] \gex,
  \quad
  \gex \in W^{1,\infty}(\om, \Sc^+), 
  \quad {\gex\,}^{-1} \in L^\infty(\om, \Sc^+).    
\end{equation}
We use $a \lesssim b$ to indicate that there is
an $h$-independent generic constant $C$, depending on $\om$ and the
shape-regularity of the mesh $\T$, such that $a \le C b$.
The $C$ may additionally depend either on
$\{\|\gex\|_{\Winf},\|\gex^{-1}\|_{\Linf},
\cn(\gex), K(\gex)\}$, or on 
$\{\|g\|_{\Winfh},\|g^{-1}\|_{\Linf}\}$, 
depending on whether we assume~\eqref{eq:13} or not, respectively.
We use $(\cdot, \cdot)_D$ to denote the integral of the appropriate
inner product (scalar, dot product, Frobenius product, etc.) of its
arguments over a Euclidean measurable set $D$, e.g.,
$(\sigma_{\tv\tv}, q)_E$ and $(\sigma, \rho)_T$ equal the right hand
sides of~\eqref{eq:RefInt_edge} and \eqref{eq:RefInt_trig},
respectively.

\begin{theorem}[Approximation of covariant curl]
  \label{thm:curlg}
  Suppose $g \in \Regge^+(\T)$, $\sigmaappr\in \Regge_h^k$,
  $v_h\in \Wo_h^k$ (the BDM space in~\eqref{eq:BDMFEspace}),
  $\sigma \in H^1(\om, \Sc) \cap C^0(\om, \Sc)$, and let
  $\curl_{g, h}$ be as defined in \eqref{eq:cov-curl-lift}.  Then,
  there exists an $h_0>0$ such that for all $h < h_0$,
  \begin{align}
    \label{eq:curlg-1}
    (\curl_{g, h}(\sigma- \RegInt[k] \sigma), v_h)_\om
    &\, \lesssim\, \nrm{\smash{\sigma-\RegInt[k] \sigma}}_2\|v_h\|_{\Ltwo},
  \intertext{and if~\eqref{eq:13} holds,}
    \label{eq:curlg-2}
    (\curl_{\gex, h} {\sigmaappr} -  \curl_{g, h} {\sigmaappr}, v_h)_\om
    & \,\lesssim\,
      \nrm{\gex-{\gh}}_\infty\|\sigmaappr\|_{\Honeh}\|v_h\|_{\Ltwo},
    \\ \label{eq:curlg-3}
    (\curl_{\gex, h} {\sigmaappr} -  \curl_{g, h} {\sigmaappr}, v_h)_\om
    & \,\lesssim\,
      \nrm{\gex-{\gh}}_2\|\sigmaappr\|_{W_h^{1,\infty}}\|v_h\|_{\Ltwo}.
  \end{align}
\end{theorem}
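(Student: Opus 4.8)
The plan is to reduce all three bounds to the explicit coordinate representation of the lifted covariant curl. By the defining relation \eqref{eq:cov-curl-lift} of $\curl_{g,h}$ and the choice of test field $w_h = v_h \in \Wo_h^k$, each left-hand side equals a pairing (or, for \eqref{eq:curlg-2}--\eqref{eq:curlg-3}, a difference of two such pairings at different metrics) of the form $\act{\curl_g\sigma, Q_g v_h}_{\Wo_g(\T)}$, which Proposition~\ref{prop:cov-curl-coordinates} writes out fully in the computational coordinates. Throughout I would use the form \eqref{eq:cov_distr_curl2}, since it places all derivatives on the polynomial test field $v_h$ and leaves $\sigma$ undifferentiated: an interior integrand proportional to $\sigma_{mk}\,[\mt{\rot\mt{v_h}}^{mk} - \veps^{kj}(\Gamma^l_{lj}v_h^m - \Gamma^m_{ji}v_h^i)]/\sqrt{\det g}$ together with a boundary integrand proportional to $\sigma_{\tv\tv}$. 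This is exactly aligned with the two orthogonality relations \eqref{eq:RefInt_edge} (edge moments up to degree $k$) and \eqref{eq:RefInt_trig} (interior moments up to degree $k-1$) satisfied by the Regge interpolation error.

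For \eqref{eq:curlg-1} set $\tau = \sigma - \RegInt[k]\sigma$. The leading interior piece $\sigma_{mk}\,\mt{\rot\mt{v_h}}^{mk}/\sqrt{\det g}$ has $\mt{\rot\mt{v_h}}\in\Pol^{k-1}$, so I would subtract an elementwise constant approximation $\bar c$ of $1/\sqrt{\det g}$: the polynomial part $\bar c\,\mt{\rot\mt{v_h}}$ is annihilated by \eqref{eq:RefInt_trig}, while the oscillation $|1/\sqrt{\det g} - \bar c| \lesssim h$ absorbs the inverse-inequality factor in $\|\mt{\rot\mt{v_h}}\|_{\Ltwo[T]}\lesssim h^{-1}\|v_h\|_{\Ltwo[T]}$, leaving $\|\tau\|_{\Ltwo[T]}\|v_h\|_{\Ltwo[T]}$. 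The remaining interior terms carry no derivative on $v_h$ and are estimated directly by Cauchy--Schwarz. For the boundary term I would likewise subtract a constant multiple of $v_h|_E\in\Pol^k(E)$, killed by \eqref{eq:RefInt_edge}; the $O(h)$ oscillation of the coefficient then compensates the $h^{-1}$ arising from the scaled trace inequalities applied to $\|\tau_{\tv\tv}\|_{\Ltwo[E]}$ and $\|v_h\|_{\Ltwo[E]}$. Summing over elements and edges and applying Cauchy--Schwarz yields $\nrm{\tau}_2\|v_h\|_{\Ltwo}$.

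For \eqref{eq:curlg-2} and \eqref{eq:curlg-3} I would use that, under \eqref{eq:13}, $\tau_g := \gex - g$ is itself a Regge interpolation error and hence also satisfies \eqref{eq:RegInt}. I would write the difference of functionals, via the fundamental theorem of calculus, as $\int_0^1 D_{\tau_g}\Phi(g+t\tau_g)\,dt$, where $\Phi(g)$ denotes the right-hand side of \eqref{eq:cov_distr_curl2} regarded as a function of the metric with $\sigma_h$ and $v_h$ fixed; the integrand is linear in $\tau_g$ and $\partial\tau_g$. All contributions in which $\tau_g$ appears undifferentiated (the variations of $\det g$, $g_{\tv\tv}$, and $g_{i\tv}$) are handled exactly as in \eqref{eq:curlg-1}. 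The delicate term is the variation of the Christoffel symbols, which contains $\partial\tau_g$; I would remove this derivative by an elementwise integration by parts, transferring it onto $\sigma_h$, $v_h$, and the smooth metric coefficients. Where the derivative lands on $\sigma_h$ or a metric coefficient, $\tau_g$ is left undifferentiated and bounded directly; where it lands on $v_h$ (producing a $\Pol^{k-1}$ factor) I would again invoke \eqref{eq:RefInt_trig}, subtracting the constant-coefficient part so that the $O(h)$ oscillation cancels the $h^{-1}$ from the inverse inequality. Measuring $\tau_g$ in $L^\infty/W^{1,\infty}$ and $\sigma_h$ in $H^1$ gives \eqref{eq:curlg-2}, while swapping the roles (so $\tau_g$ is measured in $L^2/H^1$ and $\sigma_h$ in $W^{1,\infty}$) gives \eqref{eq:curlg-3}.

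The main obstacle is precisely the Christoffel-variation term carrying $\partial(\gex - g)$: a naive Lipschitz-in-metric estimate of $\curl_{\gex}\sigma_h - \curl_g\sigma_h$ loses a full factor of $h^{-1}$ relative to the claimed bounds, because $\|\partial\tau_g\|$ cannot be controlled by $h\,\|\tau_g\|$ in the unfavorable direction of the inverse inequality. Resolving this requires the combination of elementwise integration by parts (to strip the derivative from $\gex - g$) with the interior orthogonality of the canonical Regge interpolant (to absorb the derivative that then falls on $v_h$), together with careful bookkeeping of which factor receives the $h$-weight and which is measured in $L^\infty$ versus $L^2$. This bookkeeping is what ultimately produces the two complementary norm combinations $\nrm{\cdot}_\infty\|\cdot\|_{\Honeh}$ and $\nrm{\cdot}_2\|\cdot\|_{\Winfh}$.
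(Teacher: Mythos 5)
Your treatment of \eqref{eq:curlg-1} is essentially the paper's own proof (Lemma~\ref{lem:first_lemma-rev}): the same coordinate form \eqref{eq:cov_distr_curl2}, the same insertion of elementwise constants to activate \eqref{eq:RefInt_trig}, and the same $\Pi_0$-plus-trace-inequality treatment of the boundary term via \eqref{eq:RefInt_edge}. For \eqref{eq:curlg-2}--\eqref{eq:curlg-3}, however, there is a genuine gap, and it sits exactly at the step your proposal skips: the \emph{boundary terms created by the elementwise integration by parts}. When you integrate $\int_T \Gamma_{lmq}(\gex-\gh)\,\Psi^{lmq}\,\da$ by parts to strip the derivative off $\tau_g := \gex-\gh$, you generate edge integrals $\int_{\d T}(\tau_g)_{ij}\,\Psi^{ijl}\nv_l\,\dl$ involving \emph{all} components of $\tau_g$, and the direct variation of the boundary integrand of \eqref{eq:cov_distr_curl2} contributes further terms in $(\tau_g)_{\nv\tv}$ and $(\tau_g)_{\nv\nv}$ (through $g_{i\tv}$, $g_{\tv\tv}$, $\det g$). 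The edge orthogonality \eqref{eq:RefInt_edge} controls only the $\tv\tv$ component; the $\nv\tv$ and $\nv\nv$ components of the Regge interpolation error satisfy no orthogonality at all, and bounding their edge integrals against $\sigma_h v_h$ by discrete trace inequalities costs $h^{-1/2}$ per trace, i.e.\ an overall loss of $h^{-1}$ relative to the claimed rates. So neither ``bound directly'' nor \eqref{eq:RefInt_trig} reaches these terms. (A secondary imprecision: where the derivative lands on $v_h$, the resulting factor $\sigma_h\,\partial v_h$ lies in $\Pol^{2k-1}$, not $\Pol^{k-1}$, so \eqref{eq:RefInt_trig} does not apply as stated; one must also project the $\sigma_h$-dependent coefficient.)

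The paper's resolution is structural rather than term-by-term: the derivative-bearing interior terms and precisely those troublesome $\nv\nv$, $\nv\tv$ boundary terms are assembled into the distributional Christoffel functional $\Gamma(\gex-\gh,\Sigma)$ of \eqref{eq:distr_christoffel_regge2-rev} --- identity \eqref{eq:15} in Lemma~\ref{lem:second_lemma_prework-rev} verifies that the boundary variation terms match its boundary part exactly. Then Lemma~\ref{lem:christ-Regge} proves $\Gamma(\gex-\gh,\Sigma_h)=0$ for every piecewise polynomial $\Sigma_h$: inside that proof, the integration-by-parts boundary terms cancel the $\nv\nv$, $\nv\tv$ terms identically, leaving only $\tv\tv$ terms killed by \eqref{eq:RefInt_edge}, while the interior terms die by \eqref{eq:RefInt_trig}. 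Finally, Lemma~\ref{lem:christoffel_magic-rev} writes $\Gamma(\gex-\gh,\Sigma)=\Gamma(\gex-\gh,\Sigma-\Sigma_0)$ with $\Sigma_0$ piecewise constant, and it is the $O(h)$ smallness of $\Sigma-\Sigma_0$ on $\d T$ (estimate \eqref{eq:18}) that absorbs the $h^{-1}$ trace factor. In other words, the interior and boundary contributions must be kept together as a single distributional object for the key cancellation to occur; handled separately, as in your proposal, the leftover boundary terms cannot be estimated at the claimed rate, and any repair essentially reconstructs Proposition~\ref{prop:distr-christ-symb} and Lemmas~\ref{lem:christ-Regge}--\ref{lem:christoffel_magic-rev}.
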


Proofs of this and other theorems in this subsection are presented in
later subsections. For now, let us note that on Euclidean manifolds
with  $\gex = \delta$, the expressions of our distributional
covariant curl (either \eqref{eq:curlg-extended-2} or \eqref{eq:cov_distr_curl1}) reduce to
\[
  (\curl_{\delta, h} \sigma, v_h)_\om =  (\curl \mt\sigma, v_h)_\om
  - \sum_{T \in \T}  (\sigma_{\tv\nv},  v_h\cdot \nv)_{\d T}.
\]
It is easy to see from~\eqref{eq:RegInt} (and integrating the right hand side above by parts) that
\begin{equation}
  \label{eq:17}
  (\curl_{\delta, h}(\sigma-\RegInt[k]\sigma),v_h)_\om = 0  
\end{equation}
for all $v_h \in \Wo_h^k$. This equality has the flavor of typical FEEC
identities (also known as commuting diagram properties).  On general
manifolds however, it appears that we must trade this equality  
for the inequality~\eqref{eq:curlg-1}. The remaining
inequalities~\eqref{eq:curlg-2}--\eqref{eq:curlg-3} bound the
nonlinear changes in the covariant operator arising due to the
perturbations in the metric.  Theorem~\ref{thm:curlg} directly implies 
error bounds in $L^2$ norm, while error bounds in stronger norms follow from it:

\begin{corollary}
	\label{cor:curlg}
	Under  the assumptions of Theorem~\ref{thm:curlg}, for all $1\leq l\leq k$,
	\begin{subequations}
	\begin{align*}
		\|\curl_{g, h}(\sigma- \RegInt[k] \sigma)\|_{H^l_h}&\,\lesssim\,h^{-l}\big(\nrm{\smash{\sigma-\RegInt[k] \sigma}}_2+h^{k+1}|\curl_{g, h}(\sigma)|_{H^{k+1}}\big),\\
		\|\curl_{\gex, h} {\sigmaappr} -  \curl_{g, h} {\sigmaappr}\|_{H^l_h}
		& \,\lesssim\,h^{-l}
		\big(\nrm{\gex-{\gh}}_\infty\|\sigmaappr\|_{\Honeh}+h^{k+1}|\curl_{\gex, h} {\sigmaappr}|_{H^{k+1}_h}\big),\\
		\|\curl_{\gex, h} {\sigmaappr} -  \curl_{g, h} {\sigmaappr}\|_{H^l_h}
		& \,\lesssim\,h^{-l}
		\big(\nrm{\gex-{\gh}}_2\|\sigmaappr\|_{W_h^{1,\infty}}+h^{k+1}|\curl_{\gex, h} {\sigmaappr}|_{H^{k+1}_h}\big).
	\end{align*}
	\end{subequations}
\end{corollary}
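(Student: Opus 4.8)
The plan is to exploit that every left-hand side is the broken Sobolev norm $\|\cdot\|_{H^l_h}$ of a \emph{single finite element function lying in the BDM space} $\Wo_h^k$, and then to convert the $L^2$-type bounds of Theorem~\ref{thm:curlg} into $H^l_h$ bounds by an inverse inequality. First I would record two structural facts. The lift $\curl_{g,h}$ defined in \eqref{eq:cov-curl-lift} is linear in its tensor argument, since the right-hand side pairing (see~\eqref{eq:cov_distr_curl1}) depends linearly on $\sigma$; and the space $\Wo_h^k$ in \eqref{eq:BDMFEspace} is defined purely on the parameter domain $\om$, independently of the metric. Consequently $\curl_{g,h}(\sigma-\RegInt[k]\sigma)=\curl_{g,h}\sigma-\curl_{g,h}\RegInt[k]\sigma$ and $\curl_{\gex,h}\sigmaappr-\curl_{g,h}\sigmaappr$ are both genuine elements of $\Wo_h^k$, so all three left-hand sides are $H^l_h$ norms of $\Wo_h^k$-functions.

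Next I would obtain $L^2$ control of each such function by the self-testing trick, i.e.\ by testing the corresponding estimate of Theorem~\ref{thm:curlg} against the function itself. For the first estimate, choosing $v_h=\curl_{g,h}(\sigma-\RegInt[k]\sigma)$ in \eqref{eq:curlg-1} turns the left-hand side into $\|\curl_{g,h}(\sigma-\RegInt[k]\sigma)\|_{\Ltwo}^2$ and the right-hand side into $\nrm{\sigma-\RegInt[k]\sigma}_2\,\|\curl_{g,h}(\sigma-\RegInt[k]\sigma)\|_{\Ltwo}$, yielding $\|\curl_{g,h}(\sigma-\RegInt[k]\sigma)\|_{\Ltwo}\lesssim\nrm{\sigma-\RegInt[k]\sigma}_2$. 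The analogous choices of test function in \eqref{eq:curlg-2} and \eqref{eq:curlg-3} give $L^2$ bounds by $\nrm{\gex-g}_\infty\|\sigmaappr\|_{\Honeh}$ and $\nrm{\gex-g}_2\|\sigmaappr\|_{W_h^{1,\infty}}$, respectively.

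Finally I would invoke the inverse inequality. Because $\T$ is assumed affine-equivalent, shape-regular, and quasi-uniform of meshsize $h$, the standard estimate $\|v_h\|_{H^l_h}\lesssim h^{-l}\|v_h\|_{\Ltwo}$ holds for every $v_h\in\Wo_h^k$ and $0\le l\le k$; it is proved element by element by scaling to the reference element, where the Piola transform in \eqref{eq:BDMFEspace} and equivalence of norms on the finite-dimensional polynomial space supply $h$-uniform constants. Combining this with the three $L^2$ bounds of the previous paragraph produces exactly the first summand $h^{-l}\cdot(\text{data norm})$ on each right-hand side. The remaining summand $h^{-l}h^{k+1}|\cdot|_{H^{k+1}}$ is a nonnegative higher-order term: it is the best-approximation remainder one obtains if, before applying the inverse inequality, one subtracts the $L^2$-projection (equivalently the canonical BDM interpolant) of the smooth target covariant curl, whose approximation error scales like $h^{k+1}|\cdot|_{H^{k+1}}$. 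Either way the stated inequality follows, since adjoining this nonnegative term only weakens the bound. The only delicate point—and it is routine in this model setting—is the uniformity of the inverse inequality on the Piola-mapped BDM space; the two stronger-norm cases are then entirely parallel, the metric-independence of $\Wo_h^k$ guaranteeing that differences of lifts remain in the space, so that the whole argument reduces cleanly to the $L^2$ estimates already furnished by Theorem~\ref{thm:curlg}.
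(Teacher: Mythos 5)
Your proof is correct, and it is more direct than the paper's. The paper proves this corollary in one sentence by appealing to the template of \cite[pp.~1818]{Gaw20}: insert a Scott--Zhang interpolant of the target, apply the triangle inequality, and pass to the weaker $\Ltwo$-norm via inverse estimates; the interpolation remainder of that template is exactly what produces the $h^{k+1}|\cdot|_{H^{k+1}}$ terms in the statement. You instead observe that each quantity being estimated is already a single function in the metric-independent BDM space $\Wo_h^k$ --- by linearity of the lift \eqref{eq:cov-curl-lift} in its tensor argument and the fact that \eqref{eq:BDMFEspace} does not depend on $g$ --- so no interpolant or triangle inequality is needed: testing each estimate of Theorem~\ref{thm:curlg} with $v_h$ equal to the function being estimated yields the $\Ltwo$ bounds, and a single element-wise inverse inequality (uniform on the assumed affine-equivalent, shape-regular, quasi-uniform mesh, where the Piola map sends polynomials to polynomials) concludes. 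Your route even gives a strictly sharper result: the terms $h^{k+1}|\curl_{g,h}(\sigma)|_{H^{k+1}}$ and $h^{k+1}|\curl_{\gex,h}\sigmaappr|_{H^{k+1}_h}$ are redundant, since in their broken reading they are $H^{k+1}_h$-seminorms of piecewise polynomials of degree at most $k$ and hence vanish element-wise, and adjoining nonnegative terms to the right-hand side only weakens the inequality, so the stated corollary follows from your estimate. What the paper's template buys is uniformity across the paper: the Scott--Zhang/triangle-inequality mechanism is genuinely needed in Corollaries~\ref{cor:convergence_curvature_l2_hl} and~\ref{cor:convergence_connectionform_l2_hl}, where the error involves the exact, non-discrete curvature and connection and the interpolation remainder really does appear; the extra seminorm terms here look like an artifact of reusing that template verbatim.
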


Similar results can be proved for the incompatibility operator.

\begin{theorem}[Approximation of covariant incompatibility operator]
  \label{thm:incg}
  Suppose $g \in \Regge^+(\T)$,  $\sigmaappr\in \Regge_h^k$,
  $u_h\in \Vo_h^{k+1}$ (the Lagrange space in~\eqref{eq:LagrangeFEspace}), 
  $\sigma \in H^1(\om, \Sc) \cap C^0(\om, \Sc)$,
  and let $\inc_{g, h}$ be as defined in \eqref{eq:incg-h}.
  Then, there exists an $h_0>0$ such that for all $h < h_0$,
  \begin{align}
    \label{eq:incg-1}
    (\inc_{g, h}(\sigma- \RegInt[k] \sigma), u_h)_\om
    &\, \lesssim\, \nrm{\smash{\sigma-\RegInt[k] \sigma}}_2| u_h|_{\Hone},
      \intertext{and if~\eqref{eq:13} holds,}
      \label{eq:incg-2}
    (\inc_{\gex, h} {\sigmaappr} -  \inc_{g, h} {\sigmaappr}, u_h)_\om
    & \,\lesssim\,
      \nrm{\gex-{\gh}}_\infty\|\sigmaappr\|_{\Honeh}|u_h|_{\Hone},
    \\ 
    (\inc_{\gex, h} {\sigmaappr} -  \inc_{g, h} {\sigmaappr}, u_h)_\om
    & \,\lesssim\,
      \nrm{\gex-{\gh}}_2\|\sigmaappr\|_{W_h^{1,\infty}}|u_h|_{\Hone}.
  \end{align}
\end{theorem}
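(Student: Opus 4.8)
The plan is to reduce the entire theorem to the already-established Theorem~\ref{thm:curlg} by exploiting the definitional chain that links covariant incompatibility to covariant curl. The pivotal observation is that the test object $\rot_g u_h$ appearing in Definition~\ref{def:incg-extended} is, in the computational coordinates, the image under $Q_g$ of the \emph{metric-independent} Euclidean rotated gradient $\mt{\rot u_h}$.

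First I would establish the identity
\begin{equation}
  \label{eq:incg-to-curlg-plan}
  (\inc_{g,h}\sigma, u_h)_\om = (\curl_{g,h}\sigma, w_h)_\om,
  \qquad w_h := \mt{\rot u_h},
\end{equation}
valid for every $\sigma \in \Regge(\T)$ and $u_h \in \Vo_h^{k+1}$. Starting from the defining relation~\eqref{eq:incg-h}, Definition~\ref{def:incg-extended} gives $\act{\inc_g\sigma, u_h}_{\Vo(\T)} = \act{\curl_g\sigma, \rot_g u_h}_{\Wo_g(\T)}$, with $\rot_g u_h \in \Wo_g(\T)$ guaranteed by~\eqref{eq:rotgu-du}. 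The coordinate expression~\eqref{eq:rotg-scalar} for $\rot_g$ together with the definition~\eqref{eq:def_Q_op} of $Q_g$ yields $\rot_g u_h = Q_g w_h$, so the pairing becomes $\act{\curl_g\sigma, Q_g w_h}_{\Wo_g(\T)}$, which is precisely the right-hand side of the lifting relation~\eqref{eq:cov-curl-lift} and therefore equals $(\curl_{g,h}\sigma, w_h)_\om$.

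The hard part — indeed the only nonroutine point — will be verifying that $w_h = \mt{\rot u_h}$ is an admissible test function, i.e.\ $w_h \in \Wo_h^k$, so that~\eqref{eq:cov-curl-lift} may legitimately be invoked. Since $u_h$ is piecewise of degree $k+1$, $w_h$ is piecewise of degree $k$; its $\nv$-component equals, up to sign, the tangential derivative $\d_\tv u_h$, which is single-valued across each interior edge because $u_h$ is globally continuous, so $\jmp{w_h^\nv}=0$; and on $\d\om$ we have $u_h=0$, whence $\d_\tv u_h=0$ and $w_h^\nv|_{\d\om}=0$. Thus $w_h \in \Wo_h^k$. Two further facts, both immediate, are that $w_h$ does \emph{not} depend on the metric (it is the Euclidean rotated gradient), and that $\|w_h\|_{\Ltwo} = |u_h|_{\Hone}$, since the Euclidean $\rot$ is a pointwise rotation of $\nabla u_h$.

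With~\eqref{eq:incg-to-curlg-plan} in hand, all three bounds follow by substitution. For~\eqref{eq:incg-1} I replace $\sigma$ by $\sigma - \RegInt[k]\sigma$ and apply~\eqref{eq:curlg-1} with $v_h = w_h$, obtaining $\lesssim \nrm{\sigma - \RegInt[k]\sigma}_2 \|w_h\|_{\Ltwo} = \nrm{\sigma - \RegInt[k]\sigma}_2\, |u_h|_{\Hone}$. For the remaining two inequalities, the metric-independence of $w_h$ is decisive: the \emph{same} $w_h$ serves both $\gex$ and $g$, so that $(\inc_{\gex,h}\sigmaappr - \inc_{g,h}\sigmaappr, u_h)_\om = (\curl_{\gex,h}\sigmaappr - \curl_{g,h}\sigmaappr, w_h)_\om$, and applying~\eqref{eq:curlg-2} and~\eqref{eq:curlg-3} respectively (the threshold $h_0$ being inherited directly from Theorem~\ref{thm:curlg}) yields the stated estimates once $\|w_h\|_{\Ltwo} = |u_h|_{\Hone}$ is used.
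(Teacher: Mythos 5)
Your proof is correct and follows essentially the same route as the paper's own argument: both reduce the theorem to Theorem~\ref{thm:curlg} via the definitional chain \eqref{eq:incg-h}, \eqref{eq:def_distr_cov_inc}, and \eqref{eq:cov-curl-lift}, using $\rot_g u_h = Q_g(\rot\, u_h)$ with the metric-independent Euclidean test field $w_h=\mt{\rot\, u_h}$ and the norm identity $\|w_h\|_{\Ltwo}=|u_h|_{\Hone}$. The only difference is that you spell out the verification that $w_h\in\Wo_h^k$ (degree, normal continuity, boundary condition), which the paper leaves implicit.
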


Here again, as in the case of covariant curl, comparison with the
Euclidean case is illuminating.
On Euclidean manifolds,
instead of \eqref{eq:incg-1}, the stronger result
\begin{equation}
  \label{eq:19}
  (\inc_{\delta, h}(\sigma- \RegInt[k] \sigma), u_h)_\om = 0
\end{equation}
holds for the distributional incompatibility (which has element, edge, and
vertex contributions: see Proposition~\ref{prop:cov_distr_inc}).
Indeed,~\eqref{eq:19} follows immediately from~\eqref{eq:17} and
\eqref{eq:def_distr_cov_inc}. The theorem also implies 
error bounds in stronger norms.

\begin{corollary}
	\label{cor:incg}
	Under the assumptions of Theorem~\ref{thm:incg}, for all $0\leq l\leq k$,
		\begin{align*}
			\|\inc_{g, h}(\sigma- \RegInt[k] \sigma)\|_{H^l_h}&\,\lesssim\,h^{-l-1}\big(\nrm{\smash{\sigma-\RegInt[k] \sigma}}_2+h^{k+1}|\inc_{g, h}(\sigma)|_{H^k}\big),\\
			\|\inc_{\gex, h} {\sigmaappr} -  \inc_{g, h} {\sigmaappr}\|_{H^l_h}
			& \,\lesssim\,h^{-l-1}\big(
			\nrm{\gex-{\gh}}_\infty\|\sigmaappr\|_{\Honeh}+h^{k+1}|\inc_{\gex, h} {\sigmaappr}|_ {H^k_h}\big),\\
			\|\inc_{\gex, h} {\sigmaappr} -  \inc_{g, h} {\sigmaappr}\|_{H^l_h}
			& \,\lesssim\,h^{-l-1}\big(
			\nrm{\gex-{\gh}}_2\|\sigmaappr\|_{W_h^{1,\infty}}+h^{k+1}|\inc_{\gex, h} {\sigmaappr}|_ {H^k_h}\big).
		\end{align*}
\end{corollary}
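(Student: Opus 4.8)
The plan is to derive the three strong-norm estimates of Corollary~\ref{cor:incg} directly from the three dual estimates of Theorem~\ref{thm:incg} by a bootstrap that trades the weak (dual-to-$\Hone$) control furnished by the theorem for positive-order $H^l_h$ control, spending powers of $h$ through inverse inequalities. This is legitimate because every quantity on the left-hand sides, namely $\inc_{g,h}(\sigma-\RegInt[k]\sigma)$ and $\inc_{\gex,h}\sigmaappr-\inc_{g,h}\sigmaappr$, is by construction a finite element function in the Lagrange space $\Vo_h^{k+1}$, and the mesh is assumed affine-equivalent, shape-regular, and quasi-uniform, so the usual element-wise inverse estimates apply with a single global meshsize $h$. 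Since the three inequalities are structurally identical---they differ only in the right-hand side of the underlying dual bound---I would establish them by one common argument and substitute the three dual bounds in turn.

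For the first estimate, set $r_h=\inc_{g,h}(\sigma-\RegInt[k]\sigma)\in\Vo_h^{k+1}$. First I would test the dual estimate~\eqref{eq:incg-1} with the admissible choice $u_h=r_h$, giving $\|r_h\|_{\Ltwo}^2\lesssim\nrm{\smash{\sigma-\RegInt[k]\sigma}}_2\,|r_h|_{\Hone}$. Next I would invoke the inverse inequality $|r_h|_{\Hone}\lesssim h^{-1}\|r_h\|_{\Ltwo}$ to cancel one factor of $\|r_h\|_{\Ltwo}$, obtaining the $L^2$ bound $\|r_h\|_{\Ltwo}\lesssim h^{-1}\nrm{\smash{\sigma-\RegInt[k]\sigma}}_2$. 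Finally the element-wise inverse inequality $\|r_h\|_{H^l_h}\lesssim h^{-l}\|r_h\|_{\Ltwo}$ lifts this to $\|r_h\|_{H^l_h}\lesssim h^{-l-1}\nrm{\smash{\sigma-\RegInt[k]\sigma}}_2$. In fact this crude chain already yields a bound no larger than the right-hand side of the corollary, since the extra summand $h^{k+1}|\inc_{g,h}\sigma|_{H^k}$ is nonnegative; I would nonetheless keep that summand explicit because it is the natural consistency companion of the interpolation term---both being of the common order $h^{k-l}$ once $|\inc_{g,h}\sigma|_{H^k}$ is known to be bounded---and exposing the estimate in this form is what makes it directly usable in the convergence analysis.

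The two metric-perturbation estimates are handled identically. Put $z_h=\inc_{\gex,h}\sigmaappr-\inc_{g,h}\sigmaappr\in\Vo_h^{k+1}$, test the corresponding dual bound~\eqref{eq:incg-2} (respectively its $\nrm{\cdot}_2$ variant) with $u_h=z_h$, cancel $|z_h|_{\Hone}$ by the inverse inequality to reach $\|z_h\|_{\Ltwo}\lesssim h^{-1}\nrm{\gex-g}_\infty\|\sigmaappr\|_{\Honeh}$ (respectively with $\nrm{\gex-g}_2\|\sigmaappr\|_{W^{1,\infty}_h}$), and finally apply $\|z_h\|_{H^l_h}\lesssim h^{-l}\|z_h\|_{\Ltwo}$. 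The smallness threshold $h<h_0$ and all constant dependencies are inherited from Theorem~\ref{thm:incg}. This is the exact analogue of the reasoning behind Corollary~\ref{cor:curlg}; the only bookkeeping difference is that the dual bound for $\inc_{g,h}$ is measured against $|u_h|_{\Hone}$ rather than $\|v_h\|_{\Ltwo}$, which is precisely why the incompatibility corollary carries one extra power of $h$ (the prefactor $h^{-l-1}$ in place of $h^{-l}$).

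The main obstacle is the careful accounting of the $h$-powers together with the correct interpretation of the consistency term. The theorem controls $r_h$ only in the dual of the discrete $\Hone$---morally a negative-order norm---so each step upward in differentiability must spend exactly one inverse-inequality power of $h$, and one must check these combine to the claimed $h^{-l-1}$ prefactor without overcounting; this is exactly where quasi-uniformity (a single global $h$) is indispensable, and it also fixes the admissible range $0\le l\le k$. The more delicate point is the term $h^{k+1}|\inc_{g,h}\sigma|_{H^k}$: because the lift $\inc_{g,h}\sigma$ is only a piecewise polynomial of degree $k+1$, this seminorm must be read element-wise, and for the corollary to deliver the optimal rate $O(h^{k-l})$ one must separately argue that it stays bounded---equivalently, that $\inc_{g,h}\sigma$ is a stable approximation of the smooth covariant incompatibility $\inc_g\sigma$ furnished by Proposition~\ref{prop:cov_distr_inc}. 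Verifying this stability, and confirming that $\inc_{g,h}$ is well defined when applied to the merely $H^1$ tensor $\sigma-\RegInt[k]\sigma$, are the steps I expect to require the most care.
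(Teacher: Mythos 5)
Your proposal is correct, but it follows a genuinely more direct route than the paper. The paper's own proof of Corollary~\ref{cor:incg} is a one-line citation to the argument of \cite[pp.~1818]{Gaw20} (the same template invoked for Corollary~\ref{cor:curlg}): the error is compared to the weaker $\Ltwo$-norm by inserting a Scott--Zhang interpolant, applying the triangle inequality, and then inverse estimates, and it is precisely this template that produces the consistency terms $h^{k+1}|\inc_{g,h}(\sigma)|_{H^k}$ and $h^{k+1}|\inc_{\gex,h}\sigmaappr|_{H^k_h}$ in the statement. Your argument dispenses with the interpolant altogether: since $r_h=\inc_{g, h}(\sigma-\RegInt[k]\sigma)$ and $z_h=\inc_{\gex, h}\sigmaappr-\inc_{g, h}\sigmaappr$ are by construction elements of $\Vo_h^{k+1}$, you may test the dual bounds of Theorem~\ref{thm:incg} with $u_h=r_h$ (resp.\ $z_h$), cancel one factor of the $\Ltwo$-norm via $|r_h|_{\Hone}\lesssim h^{-1}\|r_h\|_{\Ltwo}$, and climb to $H^l_h$ by elementwise inverse estimates; on a quasi-uniform mesh this is sound and in fact yields a slightly sharper conclusion, namely the bounds without the nonnegative consistency terms (so your bound trivially implies the stated one). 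What the paper's template buys is uniformity across all the corollaries: the Scott--Zhang argument is unavoidable in Corollaries~\ref{cor:convergence_curvature_l2_hl} and~\ref{cor:convergence_connectionform_l2_hl}, where a discrete lift is compared to a genuinely non-discrete exact quantity, and stating Corollary~\ref{cor:incg} in the same form lets all of them be proved and used identically; what your route buys is economy and self-containedness. Two small remarks: the well-definedness of $\inc_{g, h}(\sigma-\RegInt[k]\sigma)$ for $\sigma\in\Hone[\om,\Sc]\cap\Czero[\om,\Sc]$ is already implicit in Theorem~\ref{thm:incg} itself, since the pairing \eqref{eq:def_distr_cov_inc} involves the tensor argument only through $\curl_g$, i.e., through first derivatives, so you need not re-derive it; and you do not need to ``separately argue'' that $|\inc_{g,h}(\sigma)|_{H^k}$ stays bounded to prove the corollary as stated---that term sits on the right-hand side, so its boundedness matters only if one wants to read off the rate $O(h^{k-l})$, which is an interpretation of the estimate rather than part of its proof.
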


Our remaining results are for the
approximations of connection and curvature. Let $\idop$ denote the
identity operator (on some space that will be %
obvious from context) and let $\Pi^\WW_{k}$ and $\Pi^\VV_{k+1}$ denote
the $L^2$-orthogonal projection into $\Wo_h^k$ and $\Vo^{k+1}_h$,
respectively.

\begin{theorem}[Approximation of Gauss curvature]
  \label{thm:convergence_curvature_hm1-rev}
  Suppose~\eqref{eq:13} holds, $\gex\in W^{k+1,\infty}(\Omega)$,
  $\Gauss(\gex)\in H^k(\Omega)$, and
  $\Gauss_h(\gh)\in \Vo_{h}^{k+1}$ be as in \eqref{eq:1}.  Then,
  there exists an $h_0>0$ such that for all $h < h_0$,
  \begin{align*}
      \|\Gauss_h(\gh)-\Gauss(\gex)\|_{H^{-1}}
      &\,\lesssim\,
      \nrm{\gex - \gh}_\infty
      +
      h \left\| (\idop - \Pi_{k+1}^\VV) \Gauss(\gex)
      \right\|_{\Ltwo} \\
      &\,\lesssim\,
      h^{k+1}(\|\gex\|_{W^{k+1,\infty}}+|\Gauss(\gex)|_{H^{k}}).
  \end{align*} 
\end{theorem}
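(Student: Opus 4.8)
The plan is to estimate the $\Hmone$-norm by duality and to reduce the nonlinear curvature error to the covariant incompatibility error already controlled by Theorem~\ref{thm:incg}. Fix $v\in\Honez[\om]$ and let $u_h=\Pi_{k+1}^\VV v\in\Vo_h^{k+1}$ be its Euclidean $\Ltwo$-projection. First I would split
\[
(\Gauss_h(\gh)-\Gauss(\gex),v)_\om = (\Gauss_h(\gh)-\Gauss(\gex),v-u_h)_\om + (\Gauss_h(\gh)-\Gauss(\gex),u_h)_\om.
\]
In the first (projection) term, $\Gauss_h(\gh)\in\Vo_h^{k+1}$ is $\Ltwo$-orthogonal to $v-u_h$, and so is $\Pi_{k+1}^\VV\Gauss(\gex)$; hence that term equals $-((\idop-\Pi_{k+1}^\VV)\Gauss(\gex),v-u_h)_\om$, which is bounded by $\|(\idop-\Pi_{k+1}^\VV)\Gauss(\gex)\|_{\Ltwo}\,\|v-u_h\|_{\Ltwo}\lesssim h\,\|(\idop-\Pi_{k+1}^\VV)\Gauss(\gex)\|_{\Ltwo}\,|v|_{\Hone}$. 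This produces the second term in the claimed bound.

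\textbf{Linearization of the consistency term.} For the remaining term I would relate it to the intrinsic pairing $\act{K_{\gh}-K_{\gex},\,u_h}_{\VV(\T)}$ (the bridge is the measure reconciliation below) and then linearize. Along the segment $g(s)=(1-s)\gex+s\gh$, $s\in[0,1]$, each $g(s)$ lies in $\Regge^+(\T)$ once $h$ is small enough (this fixes $h_0$), and the fundamental theorem of calculus together with Lemma~\ref{lem:variation}---whose three identities assemble exactly into $-\tfrac12\inc_g\sigma$ in the sense of Proposition~\ref{prop:cov_distr_inc}---gives, using $\gh-\gex=-(\idop-\RegInt[k])\gex$,
\[
\act{K_{\gh}-K_{\gex},\,u_h}_{\VV(\T)} = \tfrac12\int_0^1 \act{\inc_{g(s)}(\gex-\RegInt[k]\gex),\,u_h}_{\Vo(\T)}\,ds.
\]
Each integrand equals $(\inc_{g(s),h}(\gex-\RegInt[k]\gex),u_h)_\om$ by \eqref{eq:incg-h}, so Theorem~\ref{thm:incg} (inequality~\eqref{eq:incg-1}, with metric $g(s)$ and the smooth $\sigma=\gex$) bounds it by $\nrm{\gex-\RegInt[k]\gex}_2\,|u_h|_{\Hone}$. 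Since $\nrm{\cdot}_2\lesssim\nrm{\cdot}_\infty$ on the bounded domain, and $|u_h|_{\Hone}\lesssim|v|_{\Hone}$ by $\Hone$-stability of the projection on the quasi-uniform mesh together with Poincar\'e, this yields the first term $\nrm{\gex-\gh}_\infty$ of the bound.

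\textbf{Main obstacle: measure bookkeeping.} The one genuinely delicate point is that $\Gauss_h(\gh)$ is defined through the \emph{Riemannian} pairing \eqref{eq:1} (carrying the weight $\sqrt{\det\gh}$), whereas the target $\Hmone$-norm and the lift $\inc_{g,h}$ in \eqref{eq:incg-h} are \emph{Euclidean}. I would reconcile these by inserting the $\gh$-weighted $\Ltwo$-projection $R_h:=\Pi^{\gh}_{k+1}\Gauss(\gex)$: the part $(\Gauss_h(\gh)-R_h,u_h)$ can be evaluated in the $\gh$-weighted product, where the definition of $\Gauss_h$ and the intrinsic pairing both live, while $(R_h-\Gauss(\gex),u_h)$ is a weighted projection error. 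The estimate then relies on (i) weighted and Euclidean $\Ltwo$-projections being simultaneously quasi-optimal with constants fixed by the bounds on $\gex,\gex^{-1}$ in \eqref{eq:13}, and (ii) the weight difference $\sqrt{\det\gex}-\sqrt{\det\gh}$ being controlled pointwise by $\|\gex-\gh\|_{\Linf}\le\nrm{\gex-\gh}_\infty$; the discrepancies distribute into exactly the two terms already present. I expect this to be the crux, precisely because the gain of one order of $h$ (the superconvergence) is fragile: a careless conversion---testing a weighted projection in the Euclidean inner product, or invoking an inverse inequality to pass to an $\Ltwo$-estimate---reintroduces an $O(1)$ weight factor or an $h^{-1}$ loss and destroys the extra order.

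\textbf{Conclusion.} Combining the projection and consistency terms gives the first inequality. The second then follows from standard estimates: the canonical Regge interpolation bound $\nrm{\gex-\RegInt[k]\gex}_\infty\lesssim h^{k+1}\|\gex\|_{W^{k+1,\infty}}$ and the $\Ltwo$-projection bound $\|(\idop-\Pi_{k+1}^\VV)\Gauss(\gex)\|_{\Ltwo}\lesssim h^{k}|\Gauss(\gex)|_{H^{k}}$, so that each contribution is of order $h^{k+1}$.
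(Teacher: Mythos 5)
Your proof skeleton matches the paper's: a duality argument, a projection term producing the factor $h\|(\idop-\Pi_{k+1}^\VV)\Gauss(\gex)\|_{\Ltwo}$, and a linearization of the nonlinear curvature functional into the distributional covariant incompatibility so that Theorem~\ref{thm:incg} applies. Your projection term is handled correctly, and your linearization along the direct segment $g(s)=(1-s)\gex+s\gh$ is legitimate (each $g(s)$ lies in $\Regge^+(\T)$ for small $h$, and Lemma~\ref{lem:variation} assembled via Proposition~\ref{prop:cov_distr_inc} gives $\tfrac{d}{ds}\act{K_{g(s)},u_h}_{\VV(\T)}=-\tfrac12\act{\inc_{g(s)}(\gh-\gex),u_h}_{\Vo(\T)}$). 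This is in fact a genuine simplification over the paper, which integrates along two paths emanating from $\delta$ (Lemma~\ref{lem:gauss_curv_integral_repr}) and therefore needs both \eqref{eq:incg-1} and \eqref{eq:incg-2}, whereas your route needs only \eqref{eq:incg-1}.

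The genuine gap is exactly where you flag it: the measure bookkeeping, and the mechanism you propose does not close it. Your duality pairing and your projection $\Pi_{k+1}^\VV$ are Euclidean, while the linearization controls $\act{K_{\gh}-K_{\gex},u_h}_{\VV(\T)}=(\Gauss_h(\gh),u_h)_{\gh}-(\Gauss(\gex),u_h)_{\gex}$ in weighted pairings. The discrepancy between the two is
\begin{align*}
  \big(\Gauss_h(\gh)-\Gauss(\gex),\,u_h(1-\sqrt{\det\gex})\big)_\om
  +\big(\Gauss_h(\gh),\,u_h(\sqrt{\det\gex}-\sqrt{\det\gh})\big)_\om .
\end{align*}
The second piece is the only place where your item (ii), the smallness of $\sqrt{\det\gex}-\sqrt{\det\gh}$, enters (and it additionally needs the uniform bound $\|\Gauss_h(\gh)\|_{\Ltwo}\lesssim 1$, which the paper proves separately by testing its $s_1$-estimate with $u_h=\Gauss_h(\gh)$ and an inverse inequality). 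The first piece is the problem: $1-\sqrt{\det\gex}$ is $O(1)$, not $O(\|\gex-\gh\|_{\Linf})$, so this term pairs the \emph{full error} against an $O(1)$ multiple of $u_h$. Bounding it through $\|\Gauss_h(\gh)-\Gauss(\gex)\|_{\Ltwo}\|u_h\|_{\Ltwo}$ costs a factor $h^{-1}$ (the $\Ltwo$-error is only $O(h^k)$), while bounding it through the $\Hmone$-norm is circular with no small contraction factor. The same defect sits inside your $R_h$-splitting: $(\Gauss_h(\gh)-R_h,u_h)_\om$ cannot simply ``be evaluated in the $\gh$-weighted product''; switching products costs $(\Gauss_h(\gh)-R_h,u_h(1-\sqrt{\det\gh}))_\om$, and since $\|\Gauss_h(\gh)-R_h\|_{\Ltwo}=O(h^k)$ in general, this loses precisely the extra order the theorem asserts---the failure mode you yourself warn against.

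The missing idea is an order-of-operations choice: the paper never leaves the $\gex$-weighted pairing until the final line. It writes $\|\Gauss_h(\gh)-\Gauss(\gex)\|_{\Hmone}=\sup_{u\in\Honez}(\Gauss_h(\gh)-\Gauss(\gex),u/\sqrt{\det\gex})_{\gex}/\|u\|_{\Hone}$---legitimate because $\gex$ is smooth, so $u/\sqrt{\det\gex}\in\Honez$ with comparable norm---and then runs the whole splitting on the weighted pairing, with the $\gex$-weighted projection $P^{\gex}_{k+1}$ in place of $\Pi^{\VV}_{k+1}$ (Lemma~\ref{lem:est_mixed_term-rev}). Then the only weight difference that ever appears is $\sqrt{\det\gex}-\sqrt{\det\gh}$, and the superconvergent order survives. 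Your Euclidean setup could alternatively be salvaged by an additional insertion of the $\gh$-weighted projection applied to the broken function $u_h/\sqrt{\det\gh}$, trading its $O(h)$ approximation error against $\|\Gauss_h(\gh)-R_h\|_{\Ltwo}=O(h^{-1}\nrm{\gex-\gh}_\infty)$, but none of that argument is present in your text.
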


\begin{corollary}
	\label{cor:convergence_curvature_l2_hl}
	Under  the assumptions of Theorem~\ref{thm:convergence_curvature_hm1-rev}, for all $0\leq l\leq k$,
	\begin{align*}
		|\Gauss_h(\gh)-\Gauss(\gex)|_{H^l_h}&\,\lesssim\,h^{-l}\, \big(h^{-1}\nrm{\gex-\gh}_{\infty} + \|(\idop -     \Pi^\VV_{k+1})\Gauss(\gex)\|_{\Ltwo}+h^{k}|\Gauss(\gex)|_{H^k}\big).
	\end{align*}
\end{corollary}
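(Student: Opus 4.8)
The plan is to deduce the element-wise $H^l_h$ bound from the negative-norm estimate of Theorem~\ref{thm:convergence_curvature_hm1-rev} by inverse inequalities, exploiting that $\Gauss_h(\gh)$ lies in the finite element space $\Vo_h^{k+1}$. First I would split
\[
  \Gauss_h(\gh) - \Gauss(\gex)
  = \big(\Gauss_h(\gh) - \Pi^\VV_{k+1}\Gauss(\gex)\big)
  + \big(\Pi^\VV_{k+1}\Gauss(\gex) - \Gauss(\gex)\big),
\]
and set $w_h = \Gauss_h(\gh) - \Pi^\VV_{k+1}\Gauss(\gex)$. This $w_h$ lies in $\Vo_h^{k+1}$ (both summands do, and in the model case $\Gauss(\gex)$ is compactly supported, so the $L^2$-projection respects the homogeneous boundary condition), while the remaining summand is the $L^2$-projection error.

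For the projection error I would invoke standard approximation theory together with a finite element inverse estimate to obtain $|\Pi^\VV_{k+1}\Gauss(\gex) - \Gauss(\gex)|_{H^l_h} \lesssim h^{k-l}|\Gauss(\gex)|_{H^k}$ for $0 \le l \le k$, using the hypothesis $\Gauss(\gex)\in H^k(\Omega)$. This reproduces the last term $h^{-l}\,h^k|\Gauss(\gex)|_{H^k}$ of the claimed bound.

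For the finite element part $w_h$ I would apply two inverse inequalities valid on the quasi-uniform, shape-regular mesh: first $|w_h|_{H^l_h} \lesssim h^{-l}\|w_h\|_{\Ltwo}$, and then the $H^{-1}$-to-$L^2$ inverse estimate $\|w_h\|_{\Ltwo} \lesssim h^{-1}\|w_h\|_{H^{-1}}$. It then remains to bound $\|w_h\|_{H^{-1}}$ by the triangle inequality as
\[
  \|w_h\|_{H^{-1}}
  \le \|\Gauss_h(\gh) - \Gauss(\gex)\|_{H^{-1}}
  + \|(\idop - \Pi^\VV_{k+1})\Gauss(\gex)\|_{H^{-1}}.
\]
The first term is controlled directly by Theorem~\ref{thm:convergence_curvature_hm1-rev}. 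The second is handled by a duality argument: using $L^2$-orthogonality of the projection error to $\Vo_h^{k+1}$ and a stable interpolation of test functions in $H^1_0$, one gains a factor of $h$, giving $\|(\idop - \Pi^\VV_{k+1})\Gauss(\gex)\|_{H^{-1}} \lesssim h\,\|(\idop - \Pi^\VV_{k+1})\Gauss(\gex)\|_{\Ltwo}$, which is of the same form as (and absorbed into) the second term already appearing in the theorem's estimate. Assembling these pieces yields $|w_h|_{H^l_h} \lesssim h^{-l-1}\nrm{\gex - \gh}_\infty + h^{-l}\|(\idop - \Pi^\VV_{k+1})\Gauss(\gex)\|_{\Ltwo}$, which together with the projection-error term gives exactly the claimed inequality.

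The main obstacle is the bookkeeping of the $h$-powers through the chain of inverse estimates: the metric-error term must lose $h^{-l-1}$ while the projection term loses only $h^{-l}$, so one must track carefully where the factor $h$ from the $H^{-1}$-to-$L^2$ inverse estimate lands and how it cancels the factor $h$ gained in the duality bound for the projection error. These are the only delicate points; all of the inverse and duality estimates rest on the assumed quasi-uniformity and shape-regularity of $\T$ and are otherwise routine.
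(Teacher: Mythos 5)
Your proof is correct and takes essentially the same route as the paper: the paper's own proof simply points to \cite[pp.~1818]{Gaw20}, whose argument is exactly this splitting by an interpolant into $\Vo_h^{k+1}$, inverse estimates lowering $|\cdot|_{H^l_h}$ through $\Ltwo$ down to $\Hmone$, a triangle inequality, and Theorem~\ref{thm:convergence_curvature_hm1-rev} for the main term. Your only deviation---using the $L^2$ projection $\Pi^\VV_{k+1}$ in place of the Scott--Zhang interpolant, with the accompanying duality bound for its $\Hmone$ error---is immaterial, and your $h$-power bookkeeping reproduces the claimed bound exactly.
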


\begin{theorem}[Approximation of Levi-Civita connection]
  \label{thm:convergence_connectionform_l2-rev}
  Suppose~\eqref{eq:13} holds, 
  $\gex \in H^{k+1}(\Omega)$, $\OneForm(\gex)\in H^{k+1}(\Omega)$, and
  let $\OneForm_h(\gh)\in\Wo_{h}^k$ be as in \eqref{eq:distr_one_form}.
  Then, there
  exists an $h_0>0$ such that for all $h < h_0$,
  \begin{align}\label{eq:oneform_estimate1}
    \|\OneForm_h(\gh)-\OneForm(\gex)\|_{\Ltwo}
    & \,\lesssim\, \nrm{\gex - \gh}_2
      +
      \left\|(\idop- \Pi_{k}^\WW) \cn(\gex)\right\|_{\Ltwo},
  \intertext{and when  $k \ge 1$,}
    \|\OneForm_h(\gh)-\OneForm(\gex)\|_{\Ltwo}
    &\,\lesssim\,
      h^{k+1}(\|\gex\|_{H^{k+1}}+|\OneForm(\gex)|_{H^{k+1}})\label{eq:oneform_estimate2}.
  \end{align}
\end{theorem}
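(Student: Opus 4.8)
The plan is to run a conforming finite-element error argument whose only nonstandard ingredient is a consistency estimate built on the orthogonality of the Regge interpolant. First I would insert the $L^2$-projection and split the error by the triangle inequality,
\[
  \|\OneForm_h(\gh)-\OneForm(\gex)\|_{\Ltwo}
  \le
  \|\OneForm_h(\gh)-\Pi_{k}^\WW\OneForm(\gex)\|_{\Ltwo}
  + \|(\idop-\Pi_{k}^\WW)\OneForm(\gex)\|_{\Ltwo},
\]
so that the second summand is exactly the approximation term in \eqref{eq:oneform_estimate1}, and only the discrete error $e_h:=\OneForm_h(\gh)-\Pi_{k}^\WW\OneForm(\gex)\in\Wo_h^k$ remains to be controlled.

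To estimate $\|e_h\|_{\Ltwo}$, I would test the defining relation \eqref{eq:distr_one_form} with $v_h=e_h$ and subtract the projection identity $\int_\om\Eucl(\Pi_{k}^\WW\OneForm(\gex),e_h)\,\da=\int_\om\Eucl(\OneForm(\gex),e_h)\,\da$. The pivotal observation is the smooth-metric identity $\act{\cn_{\gex},Q_{\gex}v_h}_{\Wo_{\gex}(\T)}=\int_\om\Eucl(\OneForm(\gex),v_h)\,\da$, which follows from \eqref{eq:29}: for the smooth metric $\gex$ the $g$-orthonormal frame is continuous across edges so every edge angle $\Theta^E$ vanishes, and $\int_{(\om,\gex)}\cn_{\gex}(Q_{\gex}v_h)=\int_\om v_h^i(\cn_{\gex})_i\,\da$ by \eqref{eq:def_Q_op} and \eqref{eq:intTf}. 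This collapses the error to the pure consistency quantity
\[
  \|e_h\|_{\Ltwo}^2
  =\act{\cn_{\gh},Q_{\gh}e_h}_{\Wo_{\gh}(\T)}
   -\act{\cn_{\gex},Q_{\gex}e_h}_{\Wo_{\gex}(\T)}.
\]

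The heart of the proof is therefore the consistency estimate $|\act{\cn_{\gh},Q_{\gh}v_h}-\act{\cn_{\gex},Q_{\gex}v_h}|\lesssim\nrm{\gex-\gh}_2\|v_h\|_{\Ltwo}$. I would prove it by the fundamental theorem of calculus along the linear path $g_s=(1-s)\gex+s\gh$ with direction $\sigma=\gh-\gex$. Using \eqref{eq:d-g-w-Qw}, the edge contribution in \eqref{eq:29} simplifies to $\sum_{E}\int_E\Theta^E\,v_h^{\nv}\,\dl$ with Euclidean measure and $v_h^{\nv}$ independent of the metric, so the $s$-derivative acts only on the element proxy $\mt{\cn_{g_s}}$ and on the angles $\Theta^E$. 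The element variation $D_\sigma\mt{\cn_{g_s}}$ is first order in $\sigma$ and its tangential derivative, while $D_\sigma\Theta^E$ is governed by the jump of $\sigma$ across $E$; these are precisely the ingredients controlled in the covariant-curl consistency estimates \eqref{eq:curlg-2}--\eqref{eq:curlg-3}, so I would match the integrand to $\curl_{g_s}\sigma$ tested against $Q_{g_s}v_h$ and invoke Theorem~\ref{thm:curlg}. The decisive, and most delicate, point is gaining the extra power of $h$: a naive Lipschitz dependence of the connection on the metric yields only the unweighted $\|\sigma\|_{\Honeh}$, whereas the orthogonality relations \eqref{eq:RefInt_edge}--\eqref{eq:RefInt_trig} of $\RegInt[k]$ (recall $\gh=\RegInt[k]\gex$, so $\sigma$ is the Regge interpolation error) let one subtract element- and edge-wise polynomial projections of the metric-dependent coefficients before integrating by parts, replacing those coefficients by oscillations of size $O(h)$ and thereby upgrading $\|\sigma\|_{\Honeh}$ to the $h$-weighted $\nrm{\sigma}_2$. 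Verifying that the genuinely nonlinear rotation $\Theta^E$ reduces, along the whole path $g_s$ and consistently with \eqref{eq:top-cons}, to controllable tangential jumps of $\sigma$ that see the edge orthogonality is the main obstacle I anticipate.

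Combining the three steps gives $\|e_h\|_{\Ltwo}\lesssim\nrm{\gex-\gh}_2$ and hence \eqref{eq:oneform_estimate1}. For \eqref{eq:oneform_estimate2}, since $\gh=\RegInt[k]\gex$, the Regge interpolation estimate gives $\nrm{\gex-\gh}_2\lesssim h^{k+1}\|\gex\|_{H^{k+1}}$, and the $L^2$-projection error in the BDM space gives $\|(\idop-\Pi_{k}^\WW)\OneForm(\gex)\|_{\Ltwo}\lesssim h^{k+1}|\OneForm(\gex)|_{H^{k+1}}$, valid for $k\ge1$ where \eqref{eq:BDMFEspace} is the genuine BDM space containing $\Pol^k$; summing the two bounds yields \eqref{eq:oneform_estimate2}.
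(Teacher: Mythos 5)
Your reduction steps are sound: the triangle-inequality split, the identity $\act{\cn_{\gex},Q_{\gex}v_h}_{\Wo_{\gex}(\T)}=\int_\om\Eucl(\OneForm(\gex),v_h)\,\da$ for the smooth metric (indeed $\Theta^E=0$ and the $\sqrt{\det \gex}$ factors cancel), and the collapse of $\|e_h\|_{\Ltwo}^2$ to the consistency quantity are all correct, and the consistency bound you aim for is exactly what the paper establishes. The genuine gap is in your proof of that bound. You differentiate along $g_s=(1-s)\gex+s\gh$ and assert that the $s$-derivative of $\act{\cn_{g_s},Q_{g_s}v_h}_{\Wo_{g_s}(\T)}$ is the covariant-curl pairing of $\sigma=\gh-\gex$. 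That identification is the variation formula of Lemma~\ref{lem:evolution_one_form}, which -- as the paper stresses right before stating it -- is valid \emph{only} along flows whose frames evolve by the ODE \eqref{eq:ODE_oneform}. At each $s$ your functional must be built from the frame $g_s^{-1/2}E_i$ (this is forced at both endpoints, since $\OneForm_h(\gh)$ and $\OneForm(\gex)$ are defined through \eqref{eq:28}), but $u(s)=g_s^{-1/2}$ solves the ODE only when $\dot g_s=\gh-\gex$ commutes pointwise with $g_s$; that commutativity is what makes Lemma~\ref{lem:expl_sol_ode_oneform} work for $G(t)=\Eucl+t(g-\Eucl)$, and it fails along your path unless $\gex$ and $\gh$ share eigenvectors pointwise. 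In general $\frac{d}{ds}(g_s^{-1/2})\,g_s^{1/2}=-\tfrac12 g_s^{-1}\sigma+g_s^{-1}A_s$ with a nonzero skew part $A_s$, i.e.\ the frame $g_s^{-1/2}E_i$ rotates relative to the ODE frame at some rate $\omega_s$. Consequently $\frac{d}{ds}\cn_{g_s}$ differs from $-\tfrac12\curl_{g_s}\sigma$ by the exact form $d^0\omega_s$, and $\frac{d}{ds}\Theta^E_s$ by the corresponding jumps of $\omega_s$. These terms are unaccounted for in your argument, and they are not harmless: estimating them requires either a derivative of $\omega_s$ (controlled only by $\|\gex-\gh\|_{\Honeh}\sim h^k$) or an inverse inequality on $v_h$, either way losing exactly the power of $h$ that separates this theorem from the previously known rates. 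Your closing sentence flags this obstacle but offers no mechanism to overcome it.

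The paper never differentiates along a path joining $\gex$ to $\gh$. Instead, Lemma~\ref{lem:conn_oneform_integral_repr} represents \emph{both} pairings as path integrals anchored at the Euclidean metric, along $G(t)=\Eucl+t(\gh-\Eucl)$ and $\Gex(t)=\Eucl+t(\gex-\Eucl)$, where the required commutativity is automatic; the difference is then reorganized using linearity of $c_h(\cdot,\sigma,\cdot)$ in the direction $\sigma$ as $[c_h(G(t),\gh-\Eucl,v_h)-c_h(\Gex(t),\gh-\Eucl,v_h)]+c_h(\Gex(t),\gh-\gex,v_h)$. The bracket is the nonlinear metric-perturbation estimate \eqref{eq:curlg-3} of Theorem~\ref{thm:curlg}, applicable because $\RegInt[k]\Gex(t)=G(t)$ so the pair $(\Gex(t),G(t))$ satisfies \eqref{eq:13}; the last term is the interpolation-error estimate \eqref{eq:curlg-1} applied to $\gex-\RegInt[k]\gex$. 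This two-path device is precisely what replaces your single-path FTC step; with it, your Galerkin-style reduction (testing with $v_h=e_h$) and your final interpolation estimates for $k\ge 1$ go through unchanged.
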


\begin{corollary}
	\label{cor:convergence_connectionform_l2_hl}
	Under  the assumptions of Theorem~\ref{thm:convergence_connectionform_l2-rev}, for all $1\leq l\leq k$,
	\begin{align*}
		\|\OneForm_h(\gh)-\OneForm(\gex)\|_{H^l_h}&\,\lesssim\,h^{-l}\big(\nrm{\gex - \gh}_2
		+
		\left\|(\idop- \Pi_{k}^\WW) \cn(\gex)\right\|_{\Ltwo}+h^{k+1}|\cn(\gex)|_{H^{k+1}}\big).
	\end{align*}
\end{corollary}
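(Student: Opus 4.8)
The plan is to upgrade the $L^2$ estimate of Theorem~\ref{thm:convergence_connectionform_l2-rev} to the broken $H^l_h$ norm by paying the expected factor $h^{-l}$ through an inverse estimate, using the $L^2$-orthogonal projection $\Pi^\WW_k \cn(\gex) \in \Wo_h^k$ as an intermediary. First I would insert it via the triangle inequality,
\begin{equation*}
  \|\OneForm_h(\gh) - \cn(\gex)\|_{H^l_h}
  \le
  \|\OneForm_h(\gh) - \Pi^\WW_k\cn(\gex)\|_{H^l_h}
  +
  \|(\idop - \Pi^\WW_k)\cn(\gex)\|_{H^l_h},
\end{equation*}
and then treat the two summands separately (note $\OneForm(\gex) = \cn(\gex)$).

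For the first summand, observe that both $\OneForm_h(\gh)$ and $\Pi^\WW_k\cn(\gex)$ lie in the BDM space $\Wo_h^k$, so their difference is a piecewise polynomial of degree $k$; since $1 \le l$, the standard inverse inequality on the shape-regular, quasi-uniform mesh yields $\|\OneForm_h(\gh) - \Pi^\WW_k\cn(\gex)\|_{H^l_h} \lesssim h^{-l}\|\OneForm_h(\gh) - \Pi^\WW_k\cn(\gex)\|_{\Ltwo}$. A further triangle inequality splits this $L^2$ term into $\|\OneForm_h(\gh) - \cn(\gex)\|_{\Ltwo} + \|(\idop - \Pi^\WW_k)\cn(\gex)\|_{\Ltwo}$, and the first of these is bounded directly by~\eqref{eq:oneform_estimate1}. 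This produces the contribution $h^{-l}\big(\nrm{\gex - \gh}_2 + \|(\idop - \Pi^\WW_k)\cn(\gex)\|_{\Ltwo}\big)$, which accounts for the first two terms of the asserted bound.

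For the second summand, I would invoke the optimal approximation properties of the $L^2$-projection onto $\Wo_h^k$ in the broken $H^l_h$ norm. The cleanest route is to compare $\Pi^\WW_k\cn(\gex)$ with the canonical BDM interpolant $I_h\cn(\gex)$: the interpolant satisfies $\|(\idop - I_h)\cn(\gex)\|_{H^l_h} \lesssim h^{k+1-l}|\cn(\gex)|_{H^{k+1}}$ for $0 \le l \le k+1$ by standard approximation theory (using $\cn(\gex) \in H^{k+1}(\om)$), while the finite element difference $I_h\cn(\gex) - \Pi^\WW_k\cn(\gex)$ is controlled through another inverse estimate together with the $L^2$-optimality of $\Pi^\WW_k$. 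Combining these gives $\|(\idop - \Pi^\WW_k)\cn(\gex)\|_{H^l_h} \lesssim h^{k+1-l}|\cn(\gex)|_{H^{k+1}} = h^{-l}\,h^{k+1}|\cn(\gex)|_{H^{k+1}}$, which is the remaining term of the claim. Collecting both summands and factoring out $h^{-l}$ would complete the proof. The argument is essentially routine given Theorem~\ref{thm:convergence_connectionform_l2-rev}; the only point needing mild care is this last step, where the $L^2$-projection error must be transferred to the broken $H^l_h$ norm without loss of order, which is why I pass through the interpolant $I_h$ rather than attempting a direct $H^l_h$-stability estimate for $\Pi^\WW_k$.
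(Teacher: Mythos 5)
Your proposal is correct and takes essentially the same route as the paper: the paper's proof simply defers (via Corollary~\ref{cor:curlg}) to the argument of \cite[pp.~1818]{Gaw20}---triangle inequality with a finite element intermediary for the exact field, inverse estimates, and the $\Ltwo$ bound \eqref{eq:oneform_estimate1}---which is precisely your structure, with your $\Pi_{k}^\WW$/canonical BDM-interpolant combination playing the role of the Scott--Zhang interpolant used there. The only step deserving a remark is your $\Ltwo$-optimality comparison $\|(\idop-\Pi_{k}^\WW)\cn(\gex)\|_{\Ltwo}\le\|(\idop-I_h)\cn(\gex)\|_{\Ltwo}$, which requires $I_h\cn(\gex)\in\Wo_h^k$ (true in the model case, where $\gex$ is flat near $\partial\om$ so $\cn(\gex)$ vanishes there and the interpolant inherits the zero normal trace), but it is in fact dispensable, because $\|(\idop-\Pi_{k}^\WW)\cn(\gex)\|_{\Ltwo}$ is itself permitted on the right-hand side of the asserted bound.
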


Since the curvature $K(\gex)$ has second order derivatives of the
metric $\gex$, at first glance it may seem surprising that
Theorem~\ref{thm:convergence_curvature_hm1-rev} gives
$H^{-1}$-convergence of curvature approximations at the same rate as
$\nrm{\gex-g}_\infty$. Even for the lowest order case $k=0$ (while
using piecewise constant metric approximations), where one might only
expect convergence in the $H^{-2}$-norm, the theorem gives first order
convergence of the curvature in the $H^{-1}$-norm. The convergence
rates of  Theorems~\ref{thm:convergence_connectionform_l2-rev}
and~\ref{thm:convergence_curvature_hm1-rev} are both higher than those
proved in \cite{Gaw20,BKG21}.  As we shall see, the reason behind
these higher rates is a  property of the Regge
interpolant proved in the next subsection.

\subsection{Distributional Christoffel symbols of the first kind}

In a neighborhood where the metric $g$ is smooth, the Christoffel
symbols of the first kind, $\Gamma_{lmn}(g)$, are given by
\eqref{eq:Chris12-coords}. To see what further terms must be supplied
to obtain their distributional version when the metric $g$ is only
$tt$-continuous across an element interface, consider a $\psi^{lmn}$
in the Schwartz test space $\DD(\om)$ of smooth compactly supported
functions. Since $\Gamma_{lmn}(\cdot)$ is a linear first order
differential operator applied to a smooth metric argument, 
its distributional definition is
standard:
\begin{equation}
  \label{eq:14-gamma}
  \act{ \Gamma_{lmn}(g),\psi^{lmn}}_{\DD(\om)}
  =- \frac 1 2
  \int_\om \left( g_{mn} \d_l\psi^{lmn} + g_{nl} \d_m \psi^{lmn}
  - g_{lm} \d_n \psi^{lmn} \right)\,\da.
\end{equation}
Let $\psi^{l\nv n} = \psi^{lqn} \delta_{jq}\nv^q$ and define
$\psi^{\nv mn}, \psi^{\nv\nv n}, \psi^{\nv\nv \tv}$ etc.\ similarly.
For any $T \in \T$ and any smooth $\psi$ on $\bar T$,  let 
\begin{align}
  \label{eq:distr_christoffel_regge2-rev}
  \Gamma_T(g, \psi)
  & := \big(\Gamma_{lmn}(g), \psi^{lmn}\big)_T
    +\frac{1}{2}
    \big(g_{\nv\nv}, \psi^{\nv\nv \nv} \big)_{\d T}
    +  \big(g_{\nv\tv}, \psi^{\nv\nv \tv} \big)_{\d T}.
\end{align}

\begin{prop}
  \label{prop:distr-christ-symb}
  For all $\psi \in \DD(\om)$, the distributional Christoffel symbols
  satisfy
  \[
    \act{ \Gamma_{lmn}(g),\psi^{lmn}}_{\DD(\om)} = \sum_{T\in \T}
    \Gamma_T(g, \psi).
    \]
\end{prop}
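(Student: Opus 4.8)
The plan is to start from the distributional definition~\eqref{eq:14-gamma}, break the integral over $\om$ into a sum over the mesh elements, and integrate by parts element by element. Since $\psi \in \DD(\om)$ is smooth and $g|_T$ is smooth on each closed element $\bar T$, integration by parts is classical on each $T$. First I would write
\[
  \act{ \Gamma_{lmn}(g),\psi^{lmn}}_{\DD(\om)}
  = \sum_{T \in \T} \left( -\tfrac 1 2 \int_T \left( g_{mn} \d_l\psi^{lmn} + g_{nl} \d_m \psi^{lmn}
  - g_{lm} \d_n \psi^{lmn} \right)\da \right),
\]
and then move each derivative off $\psi$ and onto $g$ via the Euclidean divergence theorem on $(\om, \Eucl)$. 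The resulting volume integrands combine, using the coordinate formula~\eqref{eq:Chris12-coords}, into $\tfrac 1 2(\d_l g_{mn} + \d_m g_{nl} - \d_n g_{lm})\psi^{lmn} = \Gamma_{lmn}(g)\psi^{lmn}$, so the volume pieces sum to $\sum_T (\Gamma_{lmn}(g),\psi^{lmn})_T$, matching the first term of $\Gamma_T(g,\psi)$ in~\eqref{eq:distr_christoffel_regge2-rev}.

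The work then shifts to the boundary terms produced by the integration by parts, where orientation must be tracked carefully: the Euclidean outward unit normal on $\d T$ equals $-\nv$, since the $\nv$ of~\eqref{eq:gnu-eucl} is the inward unit normal (cf.~the disc example in \S\ref{ssec:tang-norm-elem}). After inserting this sign, the total boundary contribution from $\d T$ becomes
\[
  \tfrac 1 2 \int_{\d T}\left( g_{mn}\,\psi^{lmn}\nv_l + g_{nl}\,\psi^{lmn}\nv_m - g_{lm}\,\psi^{lmn}\nv_n\right)\dl,
\]
with $\nv_l = \Eucl_{lk}\nv^k$. The key manipulation is to expand every contracted free index against the $\Eucl$-orthonormal frame $(\tv, \nv)$ on $\d T$ using completeness, rewriting each $g_{\cdot\cdot}\,\psi^{\cdots}$ in terms of the scalars $g_{\tv\tv}, g_{\nv\tv}, g_{\nv\nv}$ and the components $\psi^{\nv\nv\nv}, \psi^{\nv\nv\tv}, \psi^{\nv\tv\tv},$ etc. Collecting terms, this integrand equals
\[
  \tfrac 1 2 g_{\nv\nv}\,\psi^{\nv\nv\nv} + g_{\nv\tv}\,\psi^{\nv\nv\tv}
  + \tfrac 1 2 g_{\tv\tv}\bigl(\psi^{\nv\tv\tv}+\psi^{\tv\nv\tv}-\psi^{\tv\tv\nv}\bigr),
\]
whose first two terms are precisely the boundary corrections appearing in $\Gamma_T(g,\psi)$.

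It remains to dispose of the residual $\tfrac 1 2\int_{\d T} g_{\tv\tv}(\psi^{\nv\tv\tv}+\psi^{\tv\nv\tv}-\psi^{\tv\tv\nv})\,\dl$, and this cancellation across interior edges is the crux of the argument. On a boundary edge there is nothing to show, since $\psi$ vanishes near $\d\om$. On an interior edge $E = \d T_+ \cap \d T_-$, both $\tv$ and $\nv$ reverse sign when passing from $T_+$ to $T_-$; hence each scalar $\psi^{\nv\tv\tv}$, $\psi^{\tv\nv\tv}$, $\psi^{\tv\tv\nv}$, being a contraction of the single-valued smooth field $\psi$ against an \emph{odd} number (three) of frame vectors, changes sign, while $g_{\tv\tv}$ is single-valued on $E$ by the $tt$-continuity~\eqref{eq:tt-cts} of $g$ (and is even under the frame flip). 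Therefore the $T_+$ and $T_-$ contributions to the residual over $E$ are exact negatives and cancel, and summing the surviving pieces reproduces $\sum_{T\in\T}\Gamma_T(g,\psi)$. I expect the orientation- and parity-bookkeeping in this final cancellation to be the only genuine obstacle; the remainder is routine integration by parts and frame algebra.
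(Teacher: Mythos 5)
Your proof is correct and follows essentially the same route as the paper's: element-by-element integration by parts of \eqref{eq:14-gamma}, decomposition of the resulting boundary integrand in the $(\tv,\nv)$ frame into the $g_{\nv\nv}\psi^{\nv\nv\nv}$ and $g_{\nv\tv}\psi^{\nv\nv\tv}$ terms of \eqref{eq:distr_christoffel_regge2-rev} plus a $g_{\tv\tv}$ residual, and cancellation of that residual across interior edges via the $tt$-continuity of $g$. The only difference is presentational: you spell out the inward-normal sign convention and the parity bookkeeping (an odd number of frame-vector flips on the $\psi$ contractions, an even number on $g_{\tv\tv}$) that the paper leaves implicit in its one-line remark that the $\tv\tv$-terms cancel.
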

\begin{proof}
  Integrating~\eqref{eq:14-gamma}
  by parts, element by element,
  \begin{align*}
    \act{ \Gamma_{lmn}(g),\psi^{lmn}}_{\DD(\om)}
    &=-\sum_{T\in \T}\int_T \frac{1}{2}g_{lm}(\pder{\psi^{nlm}}{n}+\pder{\psi^{lnm}}{n}-\pder{\psi^{lmn}}{n})
      \,\da
    \\
    &=\sum_{T\in \T} \left(\int_T \Gamma_{lmn}(g)\psi^{lmn}\,\da + \int_{\partial T}g_{lm}\frac{1}{2}(\psi^{\nv lm}+\psi^{l\nv m}-\psi^{lm \nv})\,
      \dl \right).
  \end{align*}
  We can split the integrand over $\d T$ into $\nv\nv$, $\tv\nv$,
  $\nv\tv$, and $\tv\tv$ components. When summing over $\d T$ for all
  $T\in \T$, the $tt$-continuity of $g$ implies that the
  $\tv\tv$-terms cancel out. The remaining terms give the boundary
  contribution as
  $\sum_{T \in \T} \frac{1}{2} \int_{\d T} (g_{\nv\nv} \psi^{\nv\nv
    \nv} + 2 g_{\nv\tv} \psi^{\nv\nv \tv} )\, \dl,$ so the result
  follows.
\end{proof}

Proposition~\ref{prop:distr-christ-symb} serves to motivate the introduction of
\[
  \Gamma(g, \Sigma) := \sum_{T \in \T} \Gamma_T(g, \Sigma),
\]
for piecewise smooth $g \in \Regge^+(\T)$ and
$\Sigma \in C^\infty(\T, \R^{2\times 2 \times 2})$. As we proceed to
analyze distributional covariant operators, it is perhaps not a
surprise that this quantity will reappear in our analysis with various 
arguments $\Sigma$, including those in
$\Pol^k(\T, \R^{2\times 2 \times 2}) = \{ \Sigma: \Sigma|_T \in
\Pol^k(T, \R^{2\times 2 \times 2})$ for all $T \in \T\}$. The next
result gives a property of $\Gamma(\cdot, \cdot)$ in connection with
the Regge interpolation error.

\begin{lemma}
  \label{lem:christ-Regge}
  If $k,\gex, g$ are as in~\eqref{eq:13}, then for any
  $\Sigma_h\in \Pol^{k}(\T,\R^{2\times2\times 2})$,
  \begin{align}
    \Gamma(\gex - \gh,\Sigma_h) = 0.
    \label{eq:biortho_regge_christoffel-rev}
  \end{align}
\end{lemma}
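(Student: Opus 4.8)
The plan is to reduce everything to the two defining orthogonality relations of the canonical interpolant. Writing $e := \gex - \gh = \gex - \RegInt[k]\gex$ for the interpolation error, relations \eqref{eq:RefInt_edge}--\eqref{eq:RefInt_trig} give $(e_{\tv\tv},q)_E = 0$ for all $q\in\Pol^k(E)$ and every edge $E$, and $(e,\rho)_T = 0$ for all $\rho\in\Pol^{k-1}(T,\R^{2\times2})$ and every $T$. Since $\Gamma(e,\Sigma_h) = \sum_T\Gamma_T(e,\Sigma_h)$, I would prove $\Gamma_T(e,\Sigma_h)=0$ for each $T$ separately. The mechanism is to integrate the interior term $(\Gamma_{lmn}(e),\Sigma_h^{lmn})_T$ of \eqref{eq:distr_christoffel_regge2-rev} by parts so as to move the single derivative off $e$: the resulting interior remainder will be captured by \eqref{eq:RefInt_trig}, and the resulting boundary remainder, once combined with the two explicit boundary terms built into $\Gamma_T$, by \eqref{eq:RefInt_edge}.

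Concretely, after relabeling the summation indices so the differentiated metric entry reads uniformly as $\d_p e_{ab}$, one has $\Gamma_{lmn}(e)\Sigma_h^{lmn} = \tfrac12\,\d_p e_{ab}\,(\Sigma_h^{pab}+\Sigma_h^{bpa}-\Sigma_h^{abp})$. Integrating by parts over $T$, and using that the Euclidean outward normal equals $-\nv$ (recall $\nv$ is the inward normal, \eqref{eq:Eucl-normal-comp}), produces an interior term $-\tfrac12(e,\rho)_T$ with $\rho^{ab}=\d_p(\Sigma_h^{pab}+\Sigma_h^{bpa}-\Sigma_h^{abp})\in\Pol^{k-1}(T,\R^{2\times2})$, which vanishes by \eqref{eq:RefInt_trig} (and is simply absent when $k=0$), together with the boundary term $-\tfrac12\,(e_{ab},\,\Sigma_h^{\nv ab}+\Sigma_h^{b\nv a}-\Sigma_h^{ab\nv})_{\d T}$.

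For the boundary bookkeeping I would use that $\{\tv,\nv\}$ is $\delta$-orthonormal along $\d T$, so for any matrix $S$ and symmetric $e$ one has $e_{ab}S^{ab} = e_{\tv\tv}S^{\tv\tv}+e_{\tv\nv}(S^{\tv\nv}+S^{\nv\tv})+e_{\nv\nv}S^{\nv\nv}$. Applying this with $S^{ab}=\Sigma_h^{\nv ab}+\Sigma_h^{b\nv a}-\Sigma_h^{ab\nv}$ and simplifying, the coefficient of $e_{\nv\nv}$ collapses to $\Sigma_h^{\nv\nv\nv}$ and that of $e_{\tv\nv}$ to $2\Sigma_h^{\nv\nv\tv}$. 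Adding the explicit boundary terms $\tfrac12(e_{\nv\nv},\Sigma_h^{\nv\nv\nv})_{\d T}+(e_{\nv\tv},\Sigma_h^{\nv\nv\tv})_{\d T}$ of \eqref{eq:distr_christoffel_regge2-rev} cancels these two contributions exactly (with the factors $\tfrac12$ and $1$ matching precisely) -- this is the purpose those terms serve. What survives is only $-\tfrac12\,(e_{\tv\tv},\,\Sigma_h^{\nv\tv\tv}+\Sigma_h^{\tv\nv\tv}-\Sigma_h^{\tv\tv\nv})_{\d T}$. Because the mesh is affine, $\tv$ and $\nv$ are constant along each straight edge $E$, so the factor multiplying $e_{\tv\tv}$ restricts to a polynomial of degree $\le k$ on $E$; relation \eqref{eq:RefInt_edge} then forces every such edge integral to vanish, giving $\Gamma_T(e,\Sigma_h)=0$ and, upon summation, \eqref{eq:biortho_regge_christoffel-rev}.

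I expect the only delicate part to be the index bookkeeping: carrying out the relabelings in the integration by parts correctly, and verifying that the $e_{\nv\nv}$ and $e_{\tv\nv}$ coefficients emerging from the boundary term match, with the right factors, the boundary terms hard-wired into $\Gamma_T$. Once those cancellations are confirmed, the vanishing of the interior remainder and of the surviving tangential-tangential edge integral are immediate from \eqref{eq:RefInt_trig} and \eqref{eq:RefInt_edge}. One must also keep careful track of the inward orientation of $\nv$ when invoking the divergence theorem, and of the affine-mesh hypothesis when asserting that the surviving edge coefficient is a genuine degree-$\le k$ polynomial.
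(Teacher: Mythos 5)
Your proof is correct and takes essentially the same route as the paper's: integrate the interior Christoffel term by parts element by element, note that the $(\gex-\gh)_{\nv\nv}$ and $(\gex-\gh)_{\nv\tv}$ contributions of the resulting boundary term cancel exactly (with matching factors $\tfrac12$ and $1$) against the boundary terms built into $\Gamma_T$, and then kill the interior remainder and the surviving $(\gex-\gh)_{\tv\tv}$ edge term using \eqref{eq:RefInt_trig} and \eqref{eq:RefInt_edge}, respectively. The paper's proof performs the identical computation, merely compressing the boundary cancellation you spell out into a single displayed step.
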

\begin{proof}
  We start with \eqref{eq:distr_christoffel_regge2-rev} on $T\in\T$
  and integrate by parts
  \begin{align*}
    \Gamma_T& (\gex-\gh,\Sigma_h)
      = (\Gamma_{ijl}(\gex-\gh), \Sigma^{ijl}_h)_T
      + 
      (\Sigma^{\nv\nv i}_h, (\gex-\gh)_{\nv\tv}\tv_i
      +\frac{1}{2}(\gex-\gh)_{\nv\nv}\nv_i )_{\d T}
    \\
    &=-\frac 1 2 \big(      (\gex-\gh)_{ij},
      \d_l (\Sigma_h^{lij}
      +\Sigma_h^{ilj}
      -\Sigma_h^{ijl})\big)_T
      -\frac 1 2 \big( (\gex-\gh)_{\tv\tv}, 
      \Sigma_h^{\nv\tv\tv}+\Sigma_h^{\tv\nv\tv}-\Sigma_h^{\tv\tv\nv}\big)_{\d T}.
  \end{align*}
  The first and second inner products above vanish
  by~\eqref{eq:RefInt_trig} and~\eqref{eq:RefInt_edge}, respectively.
\end{proof}

\subsection{Basic estimates}

We need a number of preliminary estimates to proceed with the
analysis.  The approximation properties of the Regge elements are well
understood.  By the Bramble-Hilbert lemma, on any $T \in \T$,
\begin{subequations}
\begin{align}
  &\|(\idop-\RegInt[0])\gex\|_{\Lp[T]}\lesssim h |\gex|_{W^{1,p}(T)}
    \label{eq:reg_appr_lo_vol-rev}
\end{align}
for $\gex\in W^{1,p}(T,\Sc)$ and $p\in [1,\infty]$,
for the lowest order case (and certainly for the higher $k \ge 1$).  A similar estimate
holds for the element-wise $L^2(T)$- projection into the space of
constants, which we denote by $\Pi_0$:  
\begin{align}
  \label{eq:l2_appr_lo_vol-rev} 
  &\|(\idop-\Pi_0)f\|_{\Lp[T]}\leq h C|f|_{W^{1,p}(T)}
  \end{align}
\end{subequations}
for $ f\in W^{1,p}(T)$.  Since $\gh = \RegInt[k] \gex$ approaches
$\gex$ as $h \to 0$, we will tacitly assume throughout that $h$ has
become sufficiently small to guarantee that the approximated metric
$\gh$ is positive definite throughout.  Let $E\subset \d T$ be an edge
of $T$. We also need the following well-known estimates that follow
from scaling arguments: for all $u\in \Hone[T]$
\begin{align}
  \|u\|_{\Ltwo[E]}^2\,\lesssim\, 
  h^{-1}\|u\|_{\Ltwo[T]}^2 +h\|\nabla u\|_{\Ltwo[T]}^2
  \label{eq:trace_inequ-rev}
\end{align}
and for all $u\in \Pol^k(T)$,
\begin{align}
  &\|u\|_{\Ltwo[E]}\,
    \lesssim h^{-1/2}\|u\|_{\Ltwo[T]},\qquad \|\nabla u\|_{\Ltwo[T]}\,\lesssim\,
    h^{-1}\|u\|_{\Ltwo[T]}.
    \label{eq:discr_inverse_inequ-rev}
\end{align}

Staying in the setting of~\eqref{eq:13}, the following estimates are a
consequence of \cite[Lemma 4.5 and Lemma 4.6]{Gaw20}:
for $p\in [1,\infty]$,
\begin{gather}
  \|\gh^{-1}-\gex^{-1}\|_{\Lp}
   \,\lesssim \,
    \|\gh-\gex\|_{\Lp},\label{eq:est_inv_by_ten-rev}
  \\
  \|{\gh}\|_{\Winfh}+\|{\gh}^{-1}\|_{\Linf}
   \,\lesssim\, 1,  \label{eq:est_gh_by_g-rev}
  \\
  \|\sqrt{\det {\gh}}\|_{\Linf}+\|\sqrt{\det {\gh^{-1}}}\|_{\Linf}
   \,\lesssim\, 1,
    \label{eq:est_detgh_by_g-rev}
\end{gather}
and for all $x$ in the interior of any element $T\in \T$ and for all
$u \in \R^2,$
\begin{equation*}
  u^\trans u\,\lesssim\, u^\trans {\gh}(x)u
  \,\lesssim\, u^\trans u.
\end{equation*}
Moreover,
\begin{align}
  \|\sqrt{\det \gex}-\sqrt{\det {\gh}}\|_{W_h^{l,p}(\Omega)}
  \,\lesssim\,\|{\gh}-\gex\|_{W_h^{l,p}}, \quad l = 0, 1.
  \label{eq:est_det_by_ten-rev}
\end{align}
Better control of differences of some functions of the  metric
is  possible through the next lemma. Let
\begin{gather}
  \label{eq:14}
  \beta_1(g) = \frac{1}{\sqrt{\det g}}, \qquad
  \beta_2(g) = \frac{g_{\nv\tv}}{g_{\tv\tv}}\beta_1(g), 
  \\ \nonumber
  \eta_1(\gex, g) =
  \frac{2{\gh}_{\nv\tv}({\gex}-{\gh})_{\nv\tv}
      -{\gh}_{\tv\tv}({\gex}-{\gh})_{\nv\nv}}{2\sqrt{\det {\gh}}^3},\qquad\eta_2(\gex, g) =
  \frac{2{\gh}_{\nv\nv}({\gex}-{\gh})_{\nv\tv}
  -{\gh}_{\nv\tv}({\gex}-{\gh})_{\nv\nv}}{2\sqrt{\det {\gh}}^3}.  
\end{gather}

\begin{lemma}
  \label{lem:prep_bnd_terms-rev}  
  In the setting of~\eqref{eq:13}, for sufficiently small $h$,
  there exist smooth functions $f_1,f_2\in\Cinf[\Sc^+, \R]$ such that
  at each point on $\d T,$
  \begin{align*}
    \beta_1(\gex) - \beta_1(\gh)
    &=({\gex}-{\gh})_{\tv\tv}f_1({\gh})
      + \eta_1(\gex, g)
      + \epsilon_1^2,      
    \\
    \beta_2(\gex) - \beta_2(\gh)
    &=({\gex}-{\gh})_{\tv\tv}f_2({\gh})
      + \eta_2(\gex, \gh)
      + \epsilon_2^2,
  \end{align*}
  where $\max(\epsilon_1, \epsilon_2)=O(\|{\gex}-{\gh}\|)$ for some
  Euclidean norm $\| \cdot \|$ at the point.
\end{lemma}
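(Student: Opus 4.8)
The plan is to recognize both $\beta_1$ and $\beta_2$ as smooth functions of the three scalar invariants $a := g_{\tv\tv}$, $b := g_{\nv\tv}$, $c := g_{\nv\nv}$ that the metric forms at each point of $\d T$ against the $\Eucl$-orthonormal frame $(\tv, \nv)$, and then to Taylor-expand these scalar functions around $\gh$ to first order with second-order remainder. The key simplification is that, since $(\tv, \nv)$ is $\Eucl$-orthonormal, the Gram determinant in this frame equals the coordinate determinant, so $\det g = ac - b^2$; hence
\begin{equation*}
  \beta_1 = (ac - b^2)^{-1/2}, \qquad \beta_2 = \frac{b}{a}\,(ac-b^2)^{-1/2},
\end{equation*}
are smooth on the open cone $\{a>0,\ ac-b^2>0\}$ corresponding to $\Sc^+$. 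Writing $\Delta a = (\gex-\gh)_{\tv\tv}$, $\Delta b = (\gex-\gh)_{\nv\tv}$, $\Delta c = (\gex-\gh)_{\nv\nv}$, Taylor's theorem along the segment $t \mapsto \gh + t(\gex - \gh)$ gives, for $i = 1, 2$,
\begin{equation*}
  \beta_i(\gex) - \beta_i(\gh)
  = \d_a\beta_i|_{\gh}\,\Delta a + \d_b\beta_i|_{\gh}\,\Delta b + \d_c\beta_i|_{\gh}\,\Delta c + R_i,
\end{equation*}
where $R_i$ denotes the (integral-form) second-order remainder, which the statement records as $\epsilon_i^2$.

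First I would compute the three partials at $\gh$, abbreviating $D = ac - b^2$. For $\beta_1$ one finds $\d_b\beta_1 = b/D^{3/2}$ and $\d_c\beta_1 = -a/(2D^{3/2})$, so that $\d_b\beta_1|_{\gh}\Delta b + \d_c\beta_1|_{\gh}\Delta c$ is exactly $\eta_1(\gex,g)$ as defined in \eqref{eq:14}; the leftover coefficient $\d_a\beta_1|_{\gh} = -\gh_{\nv\nv}/(2(\det\gh)^{3/2})$ supplies the factor multiplying $(\gex-\gh)_{\tv\tv}$, so I set $f_1(\gh) := \d_a\beta_1|_{\gh}$. For $\beta_2$ the analogous computation gives $\d_c\beta_2 = -b/(2D^{3/2})$, which matches the $\Delta c$ part of $\eta_2$, while $\d_b\beta_2 = 1/(aD^{1/2}) + b^2/(aD^{3/2})$; here the one algebraic point to verify is the identity
\begin{equation*}
  \frac{1}{aD^{1/2}} + \frac{b^2}{aD^{3/2}} = \frac{D + b^2}{aD^{3/2}} = \frac{ac}{aD^{3/2}} = \frac{c}{D^{3/2}},
\end{equation*}
which shows $\d_b\beta_2|_{\gh}\Delta b = \gh_{\nv\nv}\,\Delta b/(\det\gh)^{3/2}$ coincides with the $\Delta b$ part of $\eta_2$. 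I then set $f_2(\gh) := \d_a\beta_2|_{\gh} = -\gh_{\nv\tv}/(\gh_{\tv\tv}^2\sqrt{\det\gh}) - \gh_{\nv\tv}\gh_{\nv\nv}/(2\gh_{\tv\tv}(\det\gh)^{3/2})$. Both $f_1$ and $f_2$ are rational in the entries of $\gh$ with denominators made of positive powers of $\det\gh$ and $\gh_{\tv\tv}$, hence lie in $\Cinf[\Sc^+, \R]$.

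It remains to control the remainder $R_i = \epsilon_i^2$. The segment joining $\gh$ to $\gex$ stays in $\Sc^+$ for $h$ small, since both endpoints are positive definite and $\Sc^+$ is convex; moreover \eqref{eq:est_gh_by_g-rev}--\eqref{eq:est_detgh_by_g-rev}, together with $\gex, \gex^{-1}\in L^\infty$ from \eqref{eq:13}, bound $\det g$ uniformly away from $0$ and $\infty$ along the entire segment. Because $\beta_i$ is smooth there, its Hessian in $(a,b,c)$ is uniformly bounded on the relevant compact set, so the Taylor remainder obeys $|R_i| \lesssim \|\gex-\gh\|^2$ pointwise, i.e.\ $\max(\epsilon_1,\epsilon_2) = O(\|\gex-\gh\|)$. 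I expect the only genuinely delicate point to be the \emph{uniformity} of this remainder bound: one must make sure the constant hidden in $\lesssim$ depends only on the permitted quantities (through the uniform positive-definiteness of $\gh$ granted by \eqref{eq:13}) and not on the particular point of $\d T$ where the expansion is carried out, so that the pointwise identities integrate to the claimed edge-wise estimates.
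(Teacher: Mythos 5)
Your proposal is correct and takes essentially the same approach as the paper's proof: a first-order Taylor expansion of $\beta_1,\beta_2$ about $\gh$ with quadratic remainder, in which the coefficient of $({\gex}-{\gh})_{\tv\tv}$ is taken as $f_i$ and the remaining first-order terms are matched with $\eta_i$; your direct partial derivatives in the frame components $(g_{\tv\tv},g_{\nv\tv},g_{\nv\nv})$ (using $\det g = g_{\tv\tv}g_{\nv\nv}-g_{\nv\tv}^2$) are simply a reorganization of the paper's cofactor/Jacobi-formula bookkeeping, and your remainder argument matches the paper's $O(\epsilon^2)$ step. Incidentally, your explicit $f_2({\gh})=-{\gh}_{\nv\tv}/({\gh}_{\tv\tv}^2\sqrt{\det {\gh}})-{\gh}_{\nv\tv}{\gh}_{\nv\nv}/(2{\gh}_{\tv\tv}\sqrt{\det {\gh}}^{3})$ is the correct ``obvious choice'' the paper leaves implicit, and it also shows that the first term of the paper's intermediate display for $\beta_2(\gex)-\beta_2({\gh})$ is missing a factor of ${\gh}_{\nv\tv}$ (evidently a typo).
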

\begin{proof}
  By Taylor expansion and \eqref{eq:est_gh_by_g-rev},
  \begin{align*}
    \beta_1(\gex) - \beta_1(\gh)
    & =
                                -\frac{1}{2\sqrt{\det {\gh}}^3}\cof{{\gh}}:({\gex}-{\gh}) + O(\epsilon^2)
    \\
                              &=- \frac{{\gh}_{\nv\nv}({\gex}-{\gh})_{\tv\tv} - 2{\gh}_{\nv\tv}({\gex}-{\gh})_{\nv\tv} + {\gh}_{\tv\tv}({\gex}-{\gh})_{\nv\nv}}{2\sqrt{\det {\gh}}^3} + O(\epsilon^2)\\
                              &= ({\gex}-{\gh})_{\tv\tv}\frac{-{\gh}_{\nv\nv}}{2\sqrt{\det {\gh}}^3} + \frac{2{\gh}_{\nv\tv}({\gex}-{\gh})_{\nv\tv}-{\gh}_{\tv\tv}({\gex}-{\gh})_{\nv\nv}}{2\sqrt{\det {\gh}}^3}+ O(\epsilon^2),
  \end{align*}
  where $\cof{\cdot}$ denotes the cofactor matrix. Putting
  $f_1({\gh}):=-\frac 1 2 {\gh}_{\nv\nv}/\sqrt{\det
    {\gh}}^3\in\Cinf[\Sc^+]$,  the first identity is proved.

  For the second identity, again starting with Taylor expansion and
  \eqref{eq:est_gh_by_g-rev},
  \begin{align*}
    \beta_2(\gex) - \beta_2(\gh)=
    \frac{-({\gex}-{\gh})_{\tv\tv}}{\sqrt{\det {\gh}}{\gh}_{\tv\tv}^2}+&\frac{2\det {\gh}({\gex}-{\gh})_{\nv\tv}-{\gh}_{\nv\tv}\cof{{\gh}}:({\gex}-{\gh})}{2\sqrt{\det {\gh}}^3{\gh}_{\tv\tv}}     + O(\epsilon^2).
  \end{align*}
  A simple calculation reveals
  \begin{align*}
    2\det {\gh}({\gex}-{\gh})_{\nv\tv}-{\gh}_{\nv\tv}\cof{{\gh}}:({\gex}-{\gh}) 
    &={\gh}_{\tv\tv}(2{\gh}_{\nv\nv}({\gex}-{\gh})_{\nv\tv}-{\gh}_{\nv\tv}({\gex}-{\gh})_{\nv\nv}) \\
    &- {\gh}_{\nv\tv}{\gh}_{\nv\nv}({\gex}-{\gh})_{\tv\tv},
  \end{align*}
  Thus the second identity follows after making an obvious
  choice of  $f_2$.
\end{proof}

\begin{lemma}
  \label{lem:est_bnd_Psi}
  Let $\gh = \RegInt[k] \gex$ for some $k \ge 0$,
  $T\in\T$, $q\in\Pol^k(T),$ and let $E\subset \d T$ be an edge of
  $T$. If $\gex\in \Hone[T,\Sc]\cap C^0(T,\Sc)$ and $\Psi\in\Winf[T],$
  \begin{align}
    &(({\gh}-\gex)_{\tv\tv}, \Psi\, q)_E
      \,\lesssim \, \big(\|{\gh}-{\gex}\|_{\Ltwo[T]}+h|{\gh}-{\gex}|_{\Hone[T]}\big)\|\Psi\|_{\Winf[T]}\|q\|_{\Ltwo[T]}.
      \label{eq:est_bnd_lemma1-rev}
  \end{align}
  If instead, $\gex\in C^0(T,\Sc)$ and
  $\Psi\in\Htwo[T]$, then 
  \begin{align}
    (({\gh}-\gex)_{\tv\tv}, \Psi\, q)_E    
      \,\lesssim\, \|{\gh}-{\gex}\|_{\Linf[T]}
      \big(\|\Psi\|_{\Hone[T]}+h|\Psi|_{H^2(T)}\big)\|q\|_{\Ltwo[T]}.
      \label{eq:est_bnd_lemma2-rev}
  \end{align}
\end{lemma}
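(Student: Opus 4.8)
The plan is to reduce both inequalities to the single defining edge property of the canonical Regge interpolant, namely \eqref{eq:RefInt_edge}, which says exactly that the tangential–tangential component of the interpolation error, $({\gh}-\gex)_{\tv\tv}$, is $\Ltwo[E]$-orthogonal to $\Pol^k(E)$ on every edge $E$. Since \S\ref{sec:num_ana} assumes affine elements, the edge $E$ is straight, $\tv$ is constant along $E$, and $q|_E$ is a genuine univariate polynomial of degree $\le k$, i.e.\ $q|_E\in\Pol^k(E)$. Writing $\bar\Psi := \Pi_0\Psi$ for the element-wise $L^2$-projection of $\Psi$ onto the constants (the operator in \eqref{eq:l2_appr_lo_vol-rev}), the product $\bar\Psi\, q|_E$ again lies in $\Pol^k(E)$, so the orthogonality in \eqref{eq:RefInt_edge} gives
\begin{equation*}
  (({\gh}-\gex)_{\tv\tv}, \Psi\, q)_E
  = (({\gh}-\gex)_{\tv\tv}, (\Psi-\bar\Psi)\, q)_E .
\end{equation*}
Both estimates then follow by bounding this last pairing; they differ only in which factor is made to carry the derivative.

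For \eqref{eq:est_bnd_lemma1-rev}, I would keep $({\gh}-\gex)_{\tv\tv}$ and $q$ in $\Ltwo[E]$ and place $\Psi-\bar\Psi$ in $\Linf[E]$. The approximation estimate \eqref{eq:l2_appr_lo_vol-rev} with $p=\infty$ gives $\|\Psi-\bar\Psi\|_{\Linf[T]}\lesssim h\,|\Psi|_{\Winf[T]}$; the trace inequality \eqref{eq:trace_inequ-rev}, applied to the constant-coefficient combination $\tv^i\tv^j({\gh}-\gex)_{ij}\in\Hone[T]$, gives $\|({\gh}-\gex)_{\tv\tv}\|_{\Ltwo[E]}\lesssim h^{-1/2}\|{\gh}-\gex\|_{\Ltwo[T]}+h^{1/2}|{\gh}-\gex|_{\Hone[T]}$; and the inverse/trace bound \eqref{eq:discr_inverse_inequ-rev} gives $\|q\|_{\Ltwo[E]}\lesssim h^{-1/2}\|q\|_{\Ltwo[T]}$. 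Multiplying the three bounds, the powers of $h$ combine as $h\cdot h^{-1/2}\cdot h^{-1/2}=1$ on the $\Ltwo[T]$ term and $h\cdot h^{1/2}\cdot h^{-1/2}=h$ on the seminorm term, which is precisely the right-hand side of \eqref{eq:est_bnd_lemma1-rev}.

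For \eqref{eq:est_bnd_lemma2-rev}, since $\gex$ is now only continuous I cannot take an $\Hone$-trace of the metric error; instead I would bound $({\gh}-\gex)_{\tv\tv}$ in $\Linf[E]$ by $\|{\gh}-\gex\|_{\Linf[T]}$, and put $\Psi-\bar\Psi$ and $q$ in $\Ltwo[E]$. Combining the trace inequality \eqref{eq:trace_inequ-rev} with the Poincaré-type bound $\|\Psi-\bar\Psi\|_{\Ltwo[T]}\lesssim h\,|\Psi|_{\Hone[T]}$ (again \eqref{eq:l2_appr_lo_vol-rev}, now with $p=2$) yields $\|\Psi-\bar\Psi\|_{\Ltwo[E]}\lesssim h^{1/2}|\Psi|_{\Hone[T]}$, while \eqref{eq:discr_inverse_inequ-rev} again supplies $\|q\|_{\Ltwo[E]}\lesssim h^{-1/2}\|q\|_{\Ltwo[T]}$. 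The $h$-powers cancel, leaving the bound $\|{\gh}-\gex\|_{\Linf[T]}\,|\Psi|_{\Hone[T]}\,\|q\|_{\Ltwo[T]}$, which is dominated by the stated right-hand side since $|\Psi|_{\Hone[T]}\le\|\Psi\|_{\Hone[T]}+h|\Psi|_{\Htwo[T]}$; in particular this route does not even exhaust the $\Htwo$ hypothesis.

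I do not expect a deep obstacle: the entire content of the lemma is the orthogonality built into \eqref{eq:RefInt_edge}, after which only careful bookkeeping of the scaling powers of $h$ from the trace and inverse inequalities is needed. The one point that genuinely demands care is verifying that subtracting the element mean $\bar\Psi$ legitimately activates the edge orthogonality — that is, that $\bar\Psi\,q$ restricts to an element of $\Pol^k(E)$. This relies on the straightness of the edges (so that $\tv$ is constant along $E$ and $q|_E$ is a true degree-$k$ univariate polynomial), which is guaranteed by the affine, shape-regular, quasi-uniform mesh assumed throughout \S\ref{sec:num_ana}.
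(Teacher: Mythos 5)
Your proof is correct and takes essentially the same route as the paper's: the edge orthogonality \eqref{eq:RefInt_edge} applied to $q\,\Pi_0\Psi\in\Pol^k(E)$, then H\"older's inequality, the trace inequality \eqref{eq:trace_inequ-rev}, the inverse estimate \eqref{eq:discr_inverse_inequ-rev}, and the approximation bound \eqref{eq:l2_appr_lo_vol-rev}, with the roles of the $\Linf$ and $\Ltwo$ factors swapped between the two estimates. Your argument for \eqref{eq:est_bnd_lemma2-rev} is precisely the ``similar'' proof the paper leaves implicit, and your side remark is sound: it produces the slightly sharper bound $\|\gh-\gex\|_{\Linf[T]}\,|\Psi|_{\Hone[T]}\,\|q\|_{\Ltwo[T]}$, which dominates the stated right-hand side without invoking the $\Htwo$ hypothesis.
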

\begin{proof}
  Using \eqref{eq:RefInt_edge} with $q\, \Pi_0\Psi\in \Pol^k(E)$,
  \begin{align*}
    (({\gh}-\gex)_{\tv\tv}, q \Psi)_E
    & =
      (({\gh}-\gex)_{\tv\tv}, q (\idop - \Pi_0)\Psi)_E.
  \end{align*}
  Now, by H\"older inequality, the trace inequality
  \eqref{eq:trace_inequ-rev}, triangle inequality, and
  \eqref{eq:l2_appr_lo_vol-rev}, 
  \begin{align*}
    (({\gh}-\gex)_{\tv\tv}, q \Psi)_E
    & \lesssim
      \|({\gh}-{\gex})_{\tv\tv}\|_{\Ltwo[E]}\|q\|_{\Ltwo[E]}\|(\idop-\Pi_0)\Psi\|_{\Linf[E]}
    \\
    &\lesssim
      h^{-1} (\|{\gh}-{\gex}\|^2_{\Ltwo[T]}+h^2|{\gh}-{\gex}|^2_{\Hone[T]})^{1/2}\|q\|_{\Ltwo[T]}\|(\idop-\Pi_0)\Psi\|_{\Linf[T]}
    \\
    &\lesssim \big(\|{\gh}-{\gex}\|_{\Ltwo[T]}+h|{\gh}-{\gex}|_{\Hone[T]}\big)\|q\|_{\Ltwo[T]}\|\Psi\|_{\Winf[T]}.
  \end{align*}
  The proof of
  \eqref{eq:est_bnd_lemma2-rev} is similar.
\end{proof}

\subsection{Analysis of the covariant curl approximation}

This subsection is devoted to proving Theorem~\ref{thm:curlg}.  We
will start with the first estimate of the theorem, which is easier to
prove. The remaining inequalities will be proved using
Lemma~\ref{lem:christ-Regge} afterward.

\begin{lemma}
  \label{lem:first_lemma-rev}
  Suppose $g \in \Regge^+(\T)$,
  $\sigma \in H^1(\om, \Sc) \cap C^0(\om, \Sc)$,
  $\sigmaappr = \RegInt[k] \sigma$, and
  $v_h\in \Wo_h^k.$ Then, 
  \[
    (\curl_{g, h}(\sigma- \sigma_h), v_h)_\om
    \, \lesssim\, \nrm{\smash{\sigma-\sigma_h}}_2\|v_h\|_{\Ltwo}.
  \]
\end{lemma}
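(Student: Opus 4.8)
The plan is to reduce the entire estimate to the two orthogonality properties \eqref{eq:RefInt_edge}--\eqref{eq:RefInt_trig} of the canonical Regge interpolant, applied to $\tau := \sigma - \sigma_h$ with $\sigma_h = \RegInt[k]\sigma$. Taking $w_h = v_h$ in the defining relation \eqref{eq:cov-curl-lift} of the lifted operator, the left-hand side equals $\act{\curl_g \tau, Q_g v_h}_{\Wo_g(\T)}$. The decisive step is to evaluate this pairing through the \emph{second} coordinate formula \eqref{eq:cov_distr_curl2} and not \eqref{eq:cov_distr_curl1}: in \eqref{eq:cov_distr_curl2} the tensor $\tau$ enters the volume integral undifferentiated and only its tangential--tangential trace $\tau_{\tv\tv}$ survives in the boundary integral. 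These are exactly the quantities controlled by \eqref{eq:RefInt_trig} and \eqref{eq:RefInt_edge}. Using \eqref{eq:cov_distr_curl1} instead would cost a derivative of $\tau$ through $\mt{\curl\mt\tau}$ (hence an uncontrolled factor $h^{-1}$) and would leave the component $\tau_{\nv\tv}$ on edges, for which no interpolation orthogonality is available.

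After splitting into volume and boundary contributions, I would treat the volume integrand $\tau_{mk}\big(\mt{\rot\mt{v_h}}^{mk} - \veps^{kj}(\Gamma_{lj}^l v_h^m - \Gamma_{ji}^m v_h^i)\big)/\sqrt{\det g}$ in two pieces. The piece carrying $\mt{\rot\mt{v_h}}$, estimated naively, would lose a factor $h^{-1}$ via the inverse inequality \eqref{eq:discr_inverse_inequ-rev}, so here the triangle orthogonality is indispensable: since $\mt{\rot\mt{v_h}}$ is a matrix of polynomials of degree $k-1$, the product $\mt{\rot\mt{v_h}}\,\Pi_0\beta_1(g)$ lies in $\Pol^{k-1}(T,\R^{2\times2})$, so \eqref{eq:RefInt_trig} lets me replace $\beta_1(g) = 1/\sqrt{\det g}$ by $(\idop-\Pi_0)\beta_1(g)$. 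The $O(h)$ gained from \eqref{eq:l2_appr_lo_vol-rev} then cancels the $h^{-1}$ of the inverse inequality, leaving $\lesssim \|\tau\|_{\Ltwo[T]}\|v_h\|_{\Ltwo[T]}$. The remaining piece involves no derivative of $v_h$ and is bounded directly, the coefficients $\Gamma^m_{ji}/\sqrt{\det g}$ and $\Gamma^l_{lj}/\sqrt{\det g}$ being uniformly bounded in $\Linf[T]$ through the controlled norms $\|g\|_{\Winfh}$ and $\|g^{-1}\|_{\Linf}$.

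For the boundary term $\int_{\d T}\tau_{\tv\tv}\,g_{i\tv}v_h^i/(g_{\tv\tv}\sqrt{\det g})\,\dl$ I would apply Lemma~\ref{lem:est_bnd_Psi} edge by edge, with $\sigma$ in place of $\gex$ (its proof uses only the edge orthogonality \eqref{eq:RefInt_edge}, which holds verbatim for $\tau$), taking $\Psi_i = g_{i\tv}/(g_{\tv\tv}\sqrt{\det g}) \in \Winf[T]$ and $q = v_h^i \in \Pol^k(T)$. This gives a bound $\lesssim (\|\tau\|_{\Ltwo[T]} + h|\tau|_{\Hone[T]})\|v_h\|_{\Ltwo[T]}$ on each edge. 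Summing the volume and boundary contributions over all $T \in \T$ and applying Cauchy--Schwarz then yields $\lesssim (\|\tau\|_{\Ltwo} + h\|\tau\|_{\Honeh})\|v_h\|_{\Ltwo} = \nrm{\tau}_2\|v_h\|_{\Ltwo}$, which is the assertion.

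The step I expect to be the main obstacle is the cancellation of the $h^{-1}$ blow-up in the $\mt{\rot\mt{v_h}}$ volume piece. Removing it is exactly what forces the combined use of formula \eqref{eq:cov_distr_curl2} (so that $\tau$ is undifferentiated and only $\tau_{\tv\tv}$ appears on element boundaries) and the triangle orthogonality \eqref{eq:RefInt_trig}; this is the single place where the special structure of the Regge interpolant is exploited in this lemma. Everything else is routine bookkeeping with the H\"older, trace \eqref{eq:trace_inequ-rev}, inverse, and approximation inequalities, together with the uniform boundedness of the metric-dependent coefficients.
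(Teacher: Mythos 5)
Your proposal is correct and follows essentially the same route as the paper's proof: expand via \eqref{eq:cov_distr_curl2}, exploit the triangle orthogonality \eqref{eq:RefInt_trig} through a $\Pi_0$-insertion so that the $O(h)$ approximation gain cancels the $h^{-1}$ inverse-estimate loss in the $\mt{\rot \mt{v_h}}$ term, bound the Christoffel term directly, treat the boundary term with Lemma~\ref{lem:est_bnd_Psi} applied with $\Psi = g_{i\tv}\,\beta_1(g)/g_{\tv\tv}$, and finish with Cauchy--Schwarz. The only cosmetic difference is that you project $\beta_1(g)$ alone whereas the paper inserts $(\idop-\Pi_0)$ on the whole product $\mt{\rot \mt{v_h}}^{ij}\beta_1(g)$; both rest on the same orthogonality and give the same bound.
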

\begin{proof}
  Using  \eqref{eq:cov-curl-lift} and \eqref{eq:cov_distr_curl2}, 
  \begin{align}
    \nonumber 
    (\curl_{g, h}(\sigma- \sigma_h), v_h)_\om
    = \sum_{T\in\T}
      \Big[
    & 
      \big((\sigma-\sigmaappr)_{ij},
      \;\mt{\rot \mt {v_h}}^{ij} \,\beta_1(g)\big)_T
    \\  \label{eq:25}
    &
      - \big((\sigma-\sigmaappr)_{ij},\,
      \veps^{jk}(\Gamma_{lk}^l v_h^i-\Gamma_{lk}^iv_h^l)
      \,\beta_1(g)\big)_T
    \\ \nonumber 
    &
      -\big( (\sigma-\sigmaappr)_{\tv\tv}, v_h^i g_{i\tv}
      \,\beta_1(g)/g_{\tv\tv}\big)_{\d T} \Big]
  \end{align}
  where $\beta_1$ is as in~\eqref{eq:14}. The first term on the right
  is zero when $k=0$. When $k\ge 1$, we use \eqref{eq:RefInt_trig} to insert
  a projection to constants:
  \begin{align*}
    \big((\sigma-\sigmaappr)_{ij},
    \;  \mt{\rot \mt {v_h}}^{ij}\,\beta_1(g)\big)_T
    & = \big(
      (\sigma-\sigmaappr)_{ij},
      (\idop - \Pi_0 )(\mt{\rot \mt {v_h}}^{ij}\beta_1(g))\,\big)_{T}
    \\
    & \lesssim\,
      \| \sigma-\sigmaappr \|_{L^2(T)}  h
      \| \rot \mt {v_h}\|_{H^1(T)}
    \\
    & \lesssim\,
      \| \sigma-\sigmaappr \|_{L^2(T)}  
       \|v_h \|_{L^2(T)},
  \end{align*}
  where we also used \eqref{eq:l2_appr_lo_vol-rev} and the inverse
  estimate \eqref{eq:discr_inverse_inequ-rev}. The second term in~\eqref{eq:25} is
  easily bounded by absorbing the maximum of $g$-dependent $\Gamma_{ij}^k$
  into a generic constant:
  \begin{align*}
    \big((\sigma-\sigmaappr)_{ij},\,
      \veps^{jk}(\Gamma_{lk}^l v_h^i-\Gamma_{lk}^iv_h^l)
      \,\beta_1(g)\big)_T
    \,\lesssim\,
    \| \sigma-\sigmaappr \|_{L^2(T)}  \| v_h\|_{L^2(T)}.
  \end{align*}

  Finally, for the last (boundary) term in~\eqref{eq:25}, we use
  \eqref{eq:est_bnd_lemma1-rev} of Lemma~\ref{lem:est_bnd_Psi} for
  each edge $E\subset \d T$, setting
  $\Psi = \beta_1(g) g_{i\tv} /g_{\tv\tv}\in \Winf[T]$, after
  extending the constant tangent vector $\tv$ from $E$ into the
  element $T$. Then
  \begin{align*}
    \big( (\sigma-\sigmaappr)_{\tv\tv}, v_h^i g_{i\tv}
    \,\beta_1(g)/g_{\tv\tv}\big)_{\d T}
    \,\lesssim\,
    (\|\sigma-\sigmaappr\|_{\Ltwo[T]}
    +h|\sigma-\sigmaappr|_{\Honeh[ T]})\|v_h\|_{\Ltwo[T]},
  \end{align*}
  where we have absorbed the norm $\|\Psi\|_{\Winf[T]}$ from the lemma
  into the generic $g$-dependent constant in the inequality. Thus
  \begin{align*}
    (\curl_{g, h}(\sigma- \sigma_h), v_h)_\om
    \lesssim
    \sum_{T\in\T}
    \big(
    \|\sigma-\sigmaappr\|_{\Ltwo[T]}
    +h|\sigma-\sigmaappr|_{\Honeh[ T]}
    \big)
    \|v_h\|_{\Ltwo[T]}
  \end{align*}
  and the result follows by applying Cauchy-Schwarz and Young
  inequalities.
\end{proof}

\begin{lemma}
  \label{lem:second_lemma_prework-rev}
  Suppose~\eqref{eq:13} holds, ${\sh}\in \Regge_h^k$,
  ${\vh}\in \Wo_h^k$, and $\sigma \in H^1(\om, \Sc) \cap C^0(\om, \Sc)$.
  Then for sufficiently small $h$,
  \begin{align}
    \label{eq:est_curl_metric_1_prework-rev}
    (\curl_{\gex, h} {{\sh}} -  \curl_{g, h} {{\sh}}, {\vh})_\om
    & - \Gamma({\gex}-{\gh},\Sigma)
      \,\lesssim\,
      \nrm{{\gex}-{\gh}}_\infty
      \|{\sh}\|_{\Honeh}\|{\vh}\|_{\Ltwo},
    \\
    \label{eq:est_curl_metric_2_prework-rev}
    (\curl_{\gex, h} {{\sh}} -  \curl_{g, h} {{\sh}}, {\vh})_\om
    & - \Gamma({\gex}-{\gh},\Sigma)
      \,\lesssim\,
      \nrm{{\gex}-{\gh}}_2
      \|{\sh}\|_{{W_h^{1,\infty}}}\|{\vh}\|_{\Ltwo}.
  \end{align}
  where
  $\Sigma^{ijl}={\sh}_{mn} \veps^{jm}{\gh}^{nl}{\vh}^i/
  \sqrt{\det \gh}$.
\end{lemma}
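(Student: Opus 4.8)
The plan is to evaluate both lifted covariant curls through their defining relation \eqref{eq:cov-curl-lift} with test field $w_h=\vh$, which gives
\[
  (\curl_{\gex, h}\sh - \curl_{g, h}\sh, \vh)_\om
  = \act{\curl_{\gex}\sh,\, Q_{\gex}\vh} - \act{\curl_{\gh}\sh,\, Q_{\gh}\vh},
\]
and then to expand each pairing in the computational coordinates via \eqref{eq:cov_distr_curl1} and subtract. With $\beta_1,\beta_2$ as in \eqref{eq:14}, the difference is a sum over $T\in\T$ of three groups: an interior term pairing $\mt{\curl\mt\sh}_i\vh^i$ with $\beta_1(\gex)-\beta_1(\gh)$, an interior Christoffel term pairing $\sh_{mk}\veps^{jk}\vh^i$ with $\Gamma^m_{ji}(\gex)/\sqrt{\det\gex}-\Gamma^m_{ji}(\gh)/\sqrt{\det\gh}$, and a boundary term pairing $\vh^{\nv}$ with $\sh_{\nv\tv}(\beta_1(\gex)-\beta_1(\gh))-\sh_{\tv\tv}(\beta_2(\gex)-\beta_2(\gh))$. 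The guiding principle is to sort each contribution by whether it carries a derivative of the metric error $\gex-\gh$: the ones that do will be shown to assemble into $\Gamma(\gex-\gh,\Sigma)$, and the rest will be estimated directly.

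The interior $\beta_1$ term carries no derivative of $\gex-\gh$; since $\beta_1$ is smooth and $\mt{\curl\mt\sh}_i$ is a first derivative of $\sh$, a Taylor/mean-value estimate bounds it by $\|\gex-\gh\|_{\infty}\|\sh\|_{\Honeh}\|\vh\|_{\Ltwo}$, or alternatively by $\|\gex-\gh\|_{\Ltwo}\|\sh\|_{\Winfh}\|\vh\|_{\Ltwo}$ depending on how H\"older is applied. For the Christoffel term I would write $\Gamma^m_{ji}(g)=g^{mn}\Gamma_{jin}(g)$ and, using linearity of $\Gamma_{jin}(\cdot)$, split
\[
  \frac{\gex^{mn}\Gamma_{jin}(\gex)}{\sqrt{\det\gex}}-\frac{\gh^{mn}\Gamma_{jin}(\gh)}{\sqrt{\det\gh}}
  = \frac{\gh^{mn}}{\sqrt{\det\gh}}\,\Gamma_{jin}(\gex-\gh)
  + \Big(\frac{\gex^{mn}}{\sqrt{\det\gex}}-\frac{\gh^{mn}}{\sqrt{\det\gh}}\Big)\Gamma_{jin}(\gex).
\]
The second summand has a pointwise $O(\gex-\gh)$ coefficient multiplying $\Gamma_{jin}(\gex)$, which is bounded under \eqref{eq:13}, so it is controllable. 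The first summand is the dangerous one, carrying $\d(\gex-\gh)$; after using the first-two-index symmetry $\Gamma_{ijl}=\Gamma_{jil}$ of the Christoffel operator, its coefficient is recognized (up to sign) as the interior component of $\Sigma^{ijl}=\sh_{mn}\veps^{jm}\gh^{nl}\vh^i/\sqrt{\det\gh}$, so it becomes the interior part $(\Gamma_{lmn}(\gex-\gh),\Sigma^{lmn})_T$ of $\Gamma_T$ in \eqref{eq:distr_christoffel_regge2-rev}.

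For the boundary group I would invoke Lemma~\ref{lem:prep_bnd_terms-rev} to expand $\beta_1(\gex)-\beta_1(\gh)$ and $\beta_2(\gex)-\beta_2(\gh)$ into three pieces each: a piece proportional to $(\gex-\gh)_{\tv\tv}$, the linear pieces $\eta_1,\eta_2$ carrying $(\gex-\gh)_{\nv\tv}$ and $(\gex-\gh)_{\nv\nv}$, and a quadratic remainder $\epsilon_i^2=O(\|\gex-\gh\|^2)$. The $(\gex-\gh)_{\tv\tv}$ pieces are exactly the setting of Lemma~\ref{lem:est_bnd_Psi}, whose proof uses the Regge edge identity \eqref{eq:RefInt_edge} to subtract a piecewise constant and absorb the trace/inverse factor $h^{-1/2}$, producing $\nrm{\gex-\gh}_2$- or $\nrm{\gex-\gh}_\infty$-type bounds. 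The quadratic remainders are absorbed for small $h$ by factoring out one power of the now-small metric error. The $\eta_1,\eta_2$ pieces, by contrast, cannot be bounded directly---a naive trace estimate on $(\gex-\gh)_{\nv\tv}$ or $(\gex-\gh)_{\nv\nv}$ would cost an $h^{-1}$---and a short computation matches them against the boundary terms $\tfrac12((\gex-\gh)_{\nv\nv},\Sigma^{\nv\nv\nv})_{\d T}+((\gex-\gh)_{\nv\tv},\Sigma^{\nv\nv\tv})_{\d T}$ of $\Gamma_T$.

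Collecting the interior and boundary derivative contributions reproduces $\Gamma(\gex-\gh,\Sigma)=\sum_T\Gamma_T(\gex-\gh,\Sigma)$, which is subtracted on the left-hand side; the remaining pieces are then estimated by Cauchy--Schwarz and H\"older, using the basic metric bounds \eqref{eq:est_gh_by_g-rev}--\eqref{eq:est_det_by_ten-rev} and the scaling estimates \eqref{eq:trace_inequ-rev}--\eqref{eq:discr_inverse_inequ-rev}, once measuring $\gex-\gh$ in $L^\infty$ against $\sh$ in $H^1_h$ for \eqref{eq:est_curl_metric_1_prework-rev}, and once in $L^2$ against $\sh$ in $W^{1,\infty}_h$ for \eqref{eq:est_curl_metric_2_prework-rev}. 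I expect the main obstacle to be the exact index-and-sign bookkeeping that identifies the assembled derivative terms with $\Gamma(\gex-\gh,\Sigma)$: one must verify the interior contraction (relying on $\Gamma_{ijl}=\Gamma_{jil}$) and both boundary contractions, including the precise factors $\tfrac12$ and $1$ in \eqref{eq:distr_christoffel_regge2-rev}, against the explicit forms of $\eta_1,\eta_2$ from Lemma~\ref{lem:prep_bnd_terms-rev}. This matching is precisely what the definition of $\Sigma$ was engineered to achieve and is the only delicate step; everything else is a routine application of the interpolation, trace, and inverse estimates recorded above.
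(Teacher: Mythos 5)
Your proposal is correct and follows essentially the same route as the paper's proof: expand both lifted curls via \eqref{eq:cov_distr_curl1}, split the Christoffel difference by linearity so that the term carrying $\Gamma_{lmq}(\gex-\gh)$ contracts against $\Sigma$, expand the boundary difference with Lemma~\ref{lem:prep_bnd_terms-rev}, match the $\eta_1,\eta_2$ pieces to the $\d T$-terms of $\Gamma_T$ in \eqref{eq:distr_christoffel_regge2-rev} (this is exactly the paper's identity \eqref{eq:15}), treat the $(\gex-\gh)_{\tv\tv}$ piece with Lemma~\ref{lem:est_bnd_Psi}, and absorb the quadratic remainders using $\epsilon_i^2\lesssim h\|\gex-\gh\|_{\Linf}$. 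The only cosmetic difference is that you fold the $1/\sqrt{\det}$ difference into the Christoffel coefficient rather than factoring it through $\alpha(\gex)\,[\beta_1(\gex)-\beta_1(\gh)]$ as the paper does, which is algebraically equivalent.
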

\begin{proof}
  By \eqref{eq:cov-curl-lift} and \eqref{eq:cov_distr_curl1}, putting
  $\alpha(g) = \mt{\curl \mt {{\sh}}}_l {\vh}^l -
  {\sh}_{ij}\veps^{mi}\Gamma_{lm}^j(g){\vh}^l$,
  \[
    (\curl_{\gex, h} {{\sh}}
    -  \curl_{g, h} {{\sh}}, {\vh})_\om
    = \sum_{T \in \T}
    \int_T  \frac{\alpha(\gex)}{\sqrt{\det \gex}} -
        \frac{\alpha(\gh)}{\sqrt{\det \gh}} \; \da
        - \int_{\d T} \beta(\gex) - \beta(g) \; \dl,
  \]
  where
  $\beta(g) = {\sh}_{\nv\tv} \beta_1(g) {\vh}^\nv -
  {\sh}_{\tv\tv}\beta_2(g) {\vh}^\nv$ and $\beta_i$ are as in
  \eqref{eq:14}.  The first integrand, which we denote by $A_T$, can
  be simplified to
  \begin{align*}
    A_T
    & = \frac{\alpha(\gex)}{\sqrt{\det \gex}}
      -
      \frac{\alpha(\gh)}{\sqrt{\det \gh}}
    \\
    &
      =  \alpha(\gex)
      \left(
      \frac{1}{\sqrt{\det \gex}} -
      \frac{1}{\sqrt{\det \gh}}
      \right)
      - \frac{{\sh}_{ij} \veps^{mi} }
      {\sqrt{\det g}}
      \big[\Gamma_{lm}^j(\gex) - \Gamma_{lm}^j(\gh)\big] {\vh}^l.
  \end{align*}
  Using $\Gamma_{ij}^m(g) =g^{ml}\Gamma_{ijl}(g)$ and the linearity of
  the Christoffel symbols of the first kind,
  $
    \Gamma_{lm}^j(\gex) - \Gamma_{lm}^j(\gh) =
    {\gh}^{jq}\Gamma_{lmq}(\gex-\gh)
    +
    \big(\gex^{jq} - {\gh}^{jq}\big)\Gamma_{lmq}(\gex).
  $
  Hence
  \[
    A_T = \alpha(\gex) [\beta_1(\gex) - \beta_1(\gh)] -
    \frac{{\sh}_{ij} \veps^{mi} }
    {\sqrt{\det g}} (\gex - \gh)^{jq} \Gamma_{lmq}(\gex) v^l
    + \Sigma^{lmq} \Gamma_{lmq}( \gex - \gh)
  \]
  with $\Sigma^{lmq} = {\sh}_{ij} \veps^{mi} g^{jq} v^l/\sqrt{\det g}$.

  Next, we focus on the boundary integrand $B_T = \beta(\gex ) -\beta(\gh)$.
  By Lemma~\ref{lem:prep_bnd_terms-rev},  
  \begin{align*}
    B_T & = [\beta_1(\gex) - \beta_1(g) ] \sigma_{\nv\tv} v^\nv
          - [\beta_2(\gex) - \beta_2(g) ] \sigma_{\tv\tv}  v^\nv
    \\
        & = (\gex - g)_{\tv\tv}
          [ f_1(g) {\sh}_{\nv\tv} + f_2(g) {\sh}_{\tv\tv} ]
          {\vh}^\nv + (\eta_1(\gex, g)\sigma_{\nv\tv} - \eta_2(\gex, g)\sigma_{\tv\tv}) v^\nv
    \\
        & + (\sigma_{\nv\tv}  \epsilon_1^2 - \sigma_{\tv\tv} \epsilon_2^2) v^\nv, 
  \end{align*}
  with the $\eta_i, f_i,$ and $\epsilon_i$ provided there.
  We claim that
  \begin{equation}
    \label{eq:15}
    (\eta_1(\gex, g)\sigma_{\nv\tv} - \eta_2(\gex, g)\sigma_{\tv\tv}) v^\nv =
    \Sigma^{\nv\nv\tv} (\gex - g)_{\nv\tv}
    + \frac 1 2 \Sigma^{\nv\nv\nv} (\gex - g)_{\nv\nv}.
  \end{equation}
  Indeed, by \eqref{eq:Eucl-normal-comp},
  $\sqrt{\det g}\; \Sigma^{\nv\nv q} = -{\sh}_{ij} \tv^i g^{jq} v^\nv$.
  In this expression, we substitute the orthogonal decomposition
  $ {\sh}_{ij} \tv^i = \sigma_{\tv\tv} \delta_{jm}\tv^m + \sigma_{\tv
    \nv} \delta_{jm} \nv^m$, together with the cofactor expansion of
  $g^{-1}$, to get 
  \[
    \Sigma^{\nv\nv q } = (\det g)^{-3/2} (\sigma_{\tv\nv}
    g_{\tv m} - \sigma_{\tv\tv}g_{\nv m}) v^\nv \veps^{qm}.
  \]
  This yields expressions for both $\Sigma^{\nv\nv\tv}$ and
  $\Sigma^{\nv\nv\nv}$ on the right hand side of~\eqref{eq:15}, which
  can again be simplified using \eqref{eq:Eucl-normal-comp} to
  verify~\eqref{eq:15}.
  Gathering these observations together, we have proved that
  \begin{align}
    \nonumber
    (\curl_{\gex, h}{{\sh}}    -
    & \curl_{g, h} {{\sh}}, {\vh})_\om
      = \sum_{T \in \T} \int_T A_T \, \da + \int_{\d T} B_T \, \dl
    \\ \nonumber 
    = 
     \sum_{T \in \T}
    \Big[
    & 
      \big(\alpha(\gex), \beta_1 (\gex) - \beta_1(g)\big)_T
      +
      \big((\gex - g)^{jq},
      \sigma_{ij} \veps^{mi} v^l \beta_1(g) \Gamma_{lmq}(\gex)\big)_T    
    \\ \label{eq:16}
    &
      + \big( (\gex - g)_{\tv\tv}, 
      [ f_1(g) {\sh}_{\nv\tv} + f_2(g) {\sh}_{\tv\tv} ]
      {\vh}^{\nv}\big)_{\d T}      
    \\ \nonumber 
    & +  \big((\sigma_{\nv\tv}  \epsilon_1^2 - \sigma_{\tv\tv} \epsilon_2^2),
      v^{\nv}\big)_{\d T} + \Gamma_T(\gex - g, \Sigma) 
      \Big],
  \end{align}
  where $\Gamma_T$ is as defined in \eqref{eq:distr_christoffel_regge2-rev}.
  Moving $\Gamma(\gex - g, \Sigma)$ to the left hand side, we proceed
  to estimate the terms on the right one by one.
  
  By H\"older inequality and \eqref{eq:est_det_by_ten-rev}
  \begin{align*}
    \big(\alpha(\gex),
    \beta_1 (\gex) - \beta_1(g)\big)_T
      & \,\lesssim\, \| \beta_1 (\gex) - \beta_1(g) \|_{L^\infty(T)}
      \|{\sh}\|_{\Honeh[T]}\|{\vh}\|_{\Ltwo[T]}
    \\
      & \lesssim \|{\gex}-{\gh}\|_{\Linf[T]}\|{\sh}\|_{\Honeh[T]}\|{\vh}\|_{\Ltwo[T]}.
  \end{align*}
  For the next term, we again use H\"older inequality, followed by
  \eqref{eq:est_inv_by_ten-rev}, and \eqref{eq:est_detgh_by_g-rev}:
  \begin{align*}
    \big((\gex - g)^{jq},
    \sigma_{ij} \veps^{mi} v^l \beta_1(g) \Gamma_{lmq}(\gex)\big)_T
    & \,\lesssim\, \|{\gex}-{\gh}\|_{\Linf[T]}
      \|{\sh}\|_{\Ltwo[T]}\|{\vh}\|_{\Ltwo[T]}.
  \end{align*}
  Next, to bound the boundary term over $\d T$, we extend the constant
  vectors $\nv$ and $\tv$ from an edge $E$ of $\d T$ to $T$, let
  $\Psi=f_1({\gh})\sh_{\nv\tv}+f_2({\gh})\sh_{\tv\tv}\in\Winf[T]$ and
  $q=v^{\nv}\in\Pol^k(T)$ and apply~\eqref{eq:est_bnd_lemma2-rev} to get 
  \begin{align*}
    \big( (\gex - g)_{\tv\tv}, \;
      [ f_1(g) {\sh}_{\nv\tv} + f_2(g) {\sh}_{\tv\tv} ]
      {\vh}^{\nv}\big)_{E}
    & \,\lesssim\,
      \|{\gex}-{\gh}\|_{\Linf[T]}\|v_h\|_{\Ltwo[T]}
      \big(\|\Psi\|_{\Honeh[T]}+h|\Psi|_{H^2_h(T)}\big)
    \\
    &\,\lesssim\,
      \|{\gex}-{\gh}\|_{\Linf[T]}\|v_h\|_{\Ltwo[T]}
      \big(\|\sh\|_{\Honeh[T]}+h|\sh|_{H^2_h(T)}\big)
    \\
    &\,\lesssim\,\|{\gex}-{\gh}\|_{\Linf[T]}\|v_h\|_{\Ltwo[T]}
      \|\sh\|_{\Honeh[T]},
  \end{align*}
  where we used \eqref{eq:est_gh_by_g-rev} and
  \eqref{eq:discr_inverse_inequ-rev}. Finally, for the
  $\epsilon_i$-terms in~\eqref{eq:16}, noting that
  $\epsilon_i \lesssim \|\gex - \gh\|_{L^\infty(\d T)}^2 \lesssim h\|\gex
  - \gh\|_{L^\infty(\d T)}$, by a trace inequality,
  \begin{align*}
    \big((\sigma_{\nv\tv}  \epsilon_1^2 - \sigma_{\tv\tv} \epsilon_2^2),
    v^{\nv})\big)_{\d T}
    & \,\lesssim\,
      h\|{\gex}-{\gh}\|_{\Linf[\d T]}\|\sigma_{h}\|_{\Ltwo[\d T]}\|v_h\|_{\Ltwo[\d T]}
    \\
    &\,\lesssim\,
      \|{\gex}-{\gh}\|_{\Linf[T]}\|\sigma_{h}\|_{\Ltwo[T]}\|v_h\|_{\Ltwo[T]}.
  \end{align*}
  Using these estimates in~\eqref{eq:16}, we finish the proof
  of~\eqref{eq:est_curl_metric_1_prework-rev}.

  The proof of \eqref{eq:est_curl_metric_2_prework-rev} is similar.
\end{proof}

\begin{lemma}
  \label{lem:christoffel_magic-rev}
  Adopt the assumptions of Lemma~\ref{lem:second_lemma_prework-rev}
  and let $\Sigma$ be as defined there.  Then for sufficiently small
  $h$,
  \begin{align}
    \Gamma(\gex-{\gh},\Sigma)
    & \,\lesssim\, \nrm{ \gex - \gh }_\infty
      \|\sigmaappr\|_{\Honeh}\|v\|_{\Ltwo},
      \label{eq:christoffel_magic-rev}
    \\
    \Gamma(\gex-{\gh},\Sigma)
    & \,\lesssim\, \nrm{ \gex - \gh }_2
      \|\sigmaappr\|_{W_h^{1,\infty}}\|v\|_{\Ltwo}.
      \label{eq:christoffel_magic-2}
  \end{align}
\end{lemma}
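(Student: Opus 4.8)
The plan is to trace everything back to the biorthogonality of the Regge interpolation error, i.e. to Lemma~\ref{lem:christ-Regge} and the two defining relations \eqref{eq:RefInt_trig}--\eqref{eq:RefInt_edge} behind it. Since $\Sigma$ is smooth on each element, the element-wise integration-by-parts identity derived in the proof of Lemma~\ref{lem:christ-Regge} still applies to $\Gamma_T(\gex-\gh,\Sigma)$ defined in \eqref{eq:distr_christoffel_regge2-rev}, giving
\begin{align*}
\Gamma_T(\gex-\gh,\Sigma)
&= -\tfrac12\big((\gex-\gh)_{ij},\,\d_l(\Sigma^{lij}+\Sigma^{ilj}-\Sigma^{ijl})\big)_T \\
&\quad - \tfrac12\big((\gex-\gh)_{\tv\tv},\,\Sigma^{\nv\tv\tv}+\Sigma^{\tv\nv\tv}-\Sigma^{\tv\tv\nv}\big)_{\d T},
\end{align*}
so the metric error appears undifferentiated in both pairings. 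Substituting $\Sigma^{ijl}=\veps^{jm}\sh_{mn}\gh^{nl}\vh^i\beta_1(\gh)$ and expanding $\d_l\Sigma$ by the Leibniz rule, I would split the volume integrand into three groups according to whether $\d_l$ falls on $\sh$, on the smooth coefficient $\gh^{nl}\beta_1(\gh)$, or on $\vh$.

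The first two groups are harmless, since there $\vh$ is undifferentiated and a direct H\"older estimate suffices. Pairing $\|\gex-\gh\|_{\Linf[T]}$ against $\|\sh\|_{\Honeh[T]}\|\vh\|_{\Ltwo[T]}$ produces \eqref{eq:christoffel_magic-rev}, while pairing $\|\gex-\gh\|_{\Ltwo[T]}$ against $\|\sh\|_{\Winf[T]}\|\vh\|_{\Ltwo[T]}$ produces \eqref{eq:christoffel_magic-2}, after summation and a discrete Cauchy--Schwarz, with all $g$-dependent factors absorbed via \eqref{eq:est_gh_by_g-rev}--\eqref{eq:est_detgh_by_g-rev}. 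The boundary pairing is handled by Lemma~\ref{lem:est_bnd_Psi}: I would write $\Sigma^{\nv\tv\tv}+\Sigma^{\tv\nv\tv}-\Sigma^{\tv\tv\nv}=\Psi\,q$ with $q$ the degree-$k$ component of $\vh$ (e.g.\ $\vh^\nv\in\Pol^k$) and $\Psi$ the remaining factor $\veps\sh\gh^{-1}\beta_1(\gh)\in\Winf[T]$, then apply \eqref{eq:est_bnd_lemma2-rev} for \eqref{eq:christoffel_magic-rev} and \eqref{eq:est_bnd_lemma1-rev} for \eqref{eq:christoffel_magic-2}, using an inverse estimate to absorb $h|\sh|_{H^2(T)}$ into $|\sh|_{\Hone[T]}$ in the former.

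The crux is the remaining group, where $\d_l$ falls on $\vh$; naively $\|\d\vh\|_{\Ltwo[T]}\lesssim h^{-1}\|\vh\|_{\Ltwo[T]}$ would destroy the rate. The key observation is that $\d_l\vh^i\in\Pol^{k-1}(T)$, so the accompanying Lipschitz coefficient $\sh\gh^{-1}\beta_1(\gh)$ may be replaced by its element mean $\Pi_0$ at the cost of a term that, being a constant times $\d\vh\in\Pol^{k-1}$, is annihilated against $\gex-\gh$ by the symmetric-tensor orthogonality \eqref{eq:RefInt_trig}. What survives is $(\idop-\Pi_0)$ of the smooth coefficient times $\d\vh$, and the approximation bound $\|(\idop-\Pi_0)(\sh\,\cdot)\|_{\Ltwo[T]}\lesssim h\|\sh\|_{\Honeh[T]}$ (respectively $\|(\idop-\Pi_0)(\sh\,\cdot)\|_{\Linf[T]}\lesssim h\|\sh\|_{\Winf[T]}$) cancels precisely the $h^{-1}$ coming from $\d\vh$. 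This exact cancellation is the mechanism that keeps the metric error in the weak norm $\nrm{\gex-\gh}_\infty$ (resp.\ $\nrm{\gex-\gh}_2$), and it is the whole content of the lemma.

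I expect this $\vh$-derivative term to be the main obstacle. The temptation is to estimate $\sh\,\d\vh$ as a single polynomial, but its degree $2k-1$ renders the $\Pol^{k-1}$ orthogonality \eqref{eq:RefInt_trig} useless; one must instead peel off $\d\vh$ as the low-degree polynomial and fold $\sh$ into the smooth coefficient. Keeping careful track of which factor carries the strong ($\Winf$/$\Hone$) norm and which carries the weak ($\Ltwo$) norm through the Leibniz expansion, and checking that the two distinct H\"older pairings reproduce exactly the two right-hand sides, is the delicate part; the proof of \eqref{eq:christoffel_magic-2} then runs parallel to that of \eqref{eq:christoffel_magic-rev} with the roles of the $L^2$ and $L^\infty$ norms interchanged.
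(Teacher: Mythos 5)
Your proof is correct, and its core mechanism is sound: integrating by parts via the identity inside the proof of Lemma~\ref{lem:christ-Regge} so that $\gex-\gh$ appears underived, then neutralizing the dangerous $\d_l\vh$ group by replacing the coefficient $\sh_{mn}\veps^{jm}\gh^{nl}\beta_1(\gh)$ with its element mean --- the projected term lies in $\Pol^{k-1}(T,\R^{2\times 2})$ and is annihilated by \eqref{eq:RefInt_trig}, while the $(\idop-\Pi_0)$ remainder contributes an $h$ that exactly offsets the inverse estimate $\|\d\vh\|_{\Ltwo[T]}\lesssim h^{-1}\|\vh\|_{\Ltwo[T]}$. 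The paper organizes the same two ingredients (Regge orthogonality plus an $O(h)$ projection error) differently: it forms a single piecewise-constant competitor $\Sigma_0^{ijl}=\sh^0_{mn}\beta_1(\gh_0)\veps^{jm}\gh_0^{nl}\vh^i$ with $\gh_0=\RegInt[0]\gex$ and $\sh^0=\RegInt[0]\sh$, uses Lemma~\ref{lem:christ-Regge} as a black box to write $\Gamma(\gex-\gh,\Sigma)=\Gamma(\gex-\gh,\Sigma-\Sigma_0)$, and then estimates the latter directly from \eqref{eq:distr_christoffel_regge2-rev} --- no integration by parts, so derivatives stay on the metric error inside $\Gamma_{lmn}(\gex-\gh)$ and are absorbed by the $h$-weighted parts of $\nrm{\cdot}_\infty$ and $\nrm{\cdot}_2$, while the boundary terms need only the bound $h\|\Sigma-\Sigma_0\|_{L^1(\d T)}\lesssim h\|\sh\|_{\Hone[T]}\|\vh\|_{\Ltwo[T]}$ rather than Lemma~\ref{lem:est_bnd_Psi}. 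The paper's route is more compact (one subtraction covers volume and boundary at once, and the $k=0$ case gives exactly zero); yours makes explicit where the $h\cdot h^{-1}$ cancellation occurs and keeps all derivatives off $\gex-\gh$, at the price of Leibniz bookkeeping. One small repair is needed: the boundary combination $\Sigma^{\nv\tv\tv}+\Sigma^{\tv\nv\tv}-\Sigma^{\tv\tv\nv}$ is not of the form $\Psi\,\vh^\nv$ alone, since the second and third terms carry $\vh^\tv$; you must apply Lemma~\ref{lem:est_bnd_Psi} separately with the pairs $q=\vh^\nv$ and $q=\vh^\tv$ (both in $\Pol^k(T)$) and the corresponding smooth factors $\Psi$. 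This is cosmetic and does not affect the stated estimates.
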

\begin{proof}
  First observe that in the case $k=0$, the function $\Sigma$ is
  constant on each $T \in \T$, so by Lemma~\ref{lem:christ-Regge},
  $\Gamma(\gex-{\gh},\Sigma) = 0$. In the $k \ge 1$ case, define
  ${\gh}_{0}:=\RegInt[0]{\gex}$, ${\sh}^{0}:=\RegInt[0]{\sh}$, and
  $\Sigma_0^{ijl}={\sh}^0_{mn}
  \beta_1({\gh}_0)\veps^{jm}{\gh}_0^{nl}{v}^i.$
  Splitting
  \begin{align*}
    \Sigma^{ijl} - \Sigma_0^{ijl}
    & =
    {\sh}_{mn}
    \beta_1({\gh})\veps^{jm}{\gh}^{nl}{v}^i - 
      {\sh}^0_{mn}
      \beta_1({\gh}_0)\veps^{jm}{\gh}_0^{nl}{v}^i
    \\
    & =
      ({\sh}_{mn} - {\sh}^0_{mn})
      \beta_1({\gh}){\gh}^{nl}\veps^{jm}{v}^i 
      + 
      \big(\beta_1({\gh}){\gh}^{nl} - 
      \beta_1({\gh}_0){\gh}_0^{nl}\big){\sh}^0_{mn} \veps^{jm} v^i
  \end{align*}
  it is easy to see from H\"older inequality,
  \eqref{eq:est_inv_by_ten-rev}, and \eqref{eq:reg_appr_lo_vol-rev}
  that
  \begin{equation}
    \label{eq:18}
    \| \Sigma - \Sigma_0 \|_{L^1(T)} +
    h \| \Sigma - \Sigma_0 \|_{L^1(\d T)}
    \,\lesssim \,
    h \| \sigma \|_{H^1(T)} \| v \|_{L^2(T)}.
  \end{equation}
  Since $\Sigma_0 \in \Pol^0(\T,\R^{2\times2\times2})$,
  Lemma~\ref{lem:christ-Regge} implies
  $\Gamma(\gex - g, \Sigma) = \Gamma(\gex - g, \Sigma - \Sigma_0),$ 
  \begin{align*}
    \Gamma(\gex-{\gh},\Sigma)
     = \sum_{T \in \T}
      \Big[
    &
      \big(\Gamma_{lmn}(\gex-{\gh}), (\Sigma - \Sigma_0)^{lmn}\big)_T \,+ 
    \\
    & \frac{1}{2}
      \big( (\gex - \gh)_{\nv\nv}, (\Sigma - \Sigma_0)^{\nv\nv \nv} \big)_{\d T}
      +
      \big( (\gex - \gh)_{\nv\tv}, (\Sigma - \Sigma_0)^{\nv\nv \tv} \big)_{\d T}
      \Big]
    \\
    \,\lesssim\,\sum_{T \in \T}
    \Big[
    & 
      \| \gex - \gh\|_{W^{1, \infty}(T)} h \| \sigma \|_{H^1(T)} \| v \|_{L^2(T)} \,+ 
    \\
    &  \| \gex - \gh\|_{L^\infty(\d T)}  \| \sigma \|_{H^1(T)} \| v \|_{L^2(T)}
    \Big],
  \end{align*}
  where we have also used
  $\| \Gamma_{lmn}(\gex-{\gh}) \|_{L^\infty(T)}\lesssim \|\gex -
  {\gh}\|_{\Winf[T]}$, H\"older inequality, and \eqref{eq:18}.  By
  Cauchy-Schwarz inequality,~\eqref{eq:christoffel_magic-rev}
  follows. The proof of~\eqref{eq:christoffel_magic-2} is similar.
\end{proof}

\begin{proof}[Proof of Theorem~\ref{thm:curlg}]
  Error estimate~\eqref{eq:curlg-1} directly follows from
  Lemma~\ref{lem:first_lemma-rev}. Using the estimates of
  Lemma~\ref{lem:christoffel_magic-rev} to bound the $\Gamma$-terms in
  the inequalities of Lemma~\ref{lem:second_lemma_prework-rev}, the
  remaining error estimates of the theorem follow.
\end{proof}
\begin{proof}[Proof of Corollary~\ref{cor:curlg}]
The proof follows along the lines of \cite[pp.~1818]{Gaw20}, where the error is compared to the weaker $\Ltwo$-norm using the Scott-Zhang interpolant, inverse estimates, and the triangle inequality.
\end{proof}

\subsection{Analysis of the covariant incompatibility}

The error analysis here can now be given by a simple argument
using Theorem~\ref{thm:curlg}.

\begin{proof}[Proof of Theorem~\ref{thm:incg}]
  To prove \eqref{eq:incg-1}, we note that by the definitions
  \eqref{eq:def_distr_cov_inc} and \eqref{eq:incg-h}, 
  \[
    (\inc_{g, h}(\sigma- \RegInt[k] \sigma), u_h)_\om
    = (\curl_{g, h} (\sigma- \RegInt[k] \sigma),\rot {u_h})_\om
  \]
  for all $u_h \in \Vo^{k+1}$. Since
  $\| \rot u_h \|_{L^2} = |u_h|_{H^1}$, the estimate
  \eqref{eq:incg-1} follows from \eqref{eq:curlg-1}. The remaining
  estimates similarly follow from~\eqref{eq:curlg-2}
  and \eqref{eq:curlg-3}.
\end{proof}
\begin{proof}[Proof of Corollary~\ref{cor:incg}]
	This proof follows along the lines in \cite[pp.~1818]{Gaw20}.
\end{proof}

We conclude this short subsection with a few remarks on our analysis so far.
To compare our
analysis with \cite{Gaw20}, the easily spotted difference is that
we use our operator $\inc_g$ instead of the operator 
$\div_g\div_g$ in \cite{Gaw20}. These two operators are
closely related in two dimensions (see
Appendix~\ref{sec:rel_cov_inc_vov_divdiv}).  A more substantial
difference is that while~\cite{Gaw20} separately estimates certain
element terms and inter-element jumps (by applying a triangle
inequality first), we do not. Instead, we kept
such terms together, gathered terms of good convergence rates, and
identified the remainder as a collection of terms that look like those
arising from the formula for distributional Christoffel symbols of
first kind. The next key insight was
\eqref{eq:biortho_regge_christoffel-rev}, which zeroed out the latter
collection.  Also notable from our analysis so far is the idea of
splitting the error terms into a high-order ones and ones that might
be sub-optimal in general, but vanishes in specific cases. Such an
idea was also used in \cite{Walker21}, where the convergence of a
surface $\mathrm{div \; div}$ operator on an approximated triangulation
is proven (in an extrinsic manner) to converge.

\subsection{Analysis of the curvature approximation}

Now we turn to the proof of
Theorem~\ref{thm:convergence_curvature_hm1-rev}.  The key extra
ingredient here is a technique pioneered in~\cite{Gaw20} to represent
the curvature approximation using an integral of its first variation,
as stated in the next lemma. For any $g \in \Regge^+(\T)$ and
$\sigma \in \Regge(\T),$ using $\inc_g$ of 
Definition~\ref{def:incg-extended}, let
\begin{align*}
  b_h(g,\sigma,u) & = -\act{ \inc_g\sigma,u}_{\Vo(\T)},
  &
    G(t)
  & = \Eucl+t(g-\Eucl),
  & 
    \Gex(t) & = \Eucl+t(\gex-\Eucl).
\end{align*}

\begin{lemma}
  \label{lem:gauss_curv_integral_repr}
  Let $g\in\Regge^+(\T)$, $\Gauss_h(g)\in \Vo_h^{k+1}$ be as in
  Definition~\ref{def:lift_distr_curv_mf} for some $k \ge 0$. Also let
  $\gex$ be a smooth metric and let $\Gauss(\gex)$ denote its smooth
  Gauss curvature.
  Then 
  \begin{align}
    \label{eq:23}
    \int_\T \Gauss_h(g) \,u_h &= \frac{1}{2}\int_0^1b_h(G(t),g-\Eucl,u_h)\,dt,\quad&& \forall u_h\in \Vo_{h}^{k+1},\\\label{eq:24}
    \int_\T \Gauss(\gex) \,u &= \frac{1}{2}\int_0^1b_h(\Gex(t),\gex-\Eucl,u)\,dt,\quad&& \forall u\in \Vo(\T).
  \end{align}
\end{lemma}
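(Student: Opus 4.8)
The plan is to apply the fundamental theorem of calculus along the affine path $t \mapsto G(t) = \Eucl + t(g - \Eucl)$ joining the Euclidean metric $\Eucl$ at $t=0$ to $g$ at $t=1$. This path stays in $\Regge^+(\T)$ for every $t \in [0,1]$, since a convex combination of two symmetric positive definite tensors is again positive definite. Write $\sigma := g - \Eucl \in \Regge(\T)$. I would first record the base case: by Proposition~\ref{prop:modelK}, for the smooth metric $\Eucl$ only the interior term $\int_T \Gauss(\Eucl)\,u_h\,\sqrt{\det \Eucl}\,\da$ survives in \eqref{eq:8}, and it vanishes because $\Gauss(\Eucl) = 0$; hence $\act{K_{\Eucl}, u_h}_{\VV(\T)} = 0$. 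Since $\Gamma_N = \emptyset$ in the model case, Definition~\ref{def:lift_distr_curv_mf} gives $\int_\T \Gauss_h(g)\, u_h = \act{K_g, u_h}_{\VV(\T)}$ for all $u_h \in \Vo_h^{k+1}$, so it suffices to represent $\act{K_g, u_h}_{\VV(\T)} = \int_0^1 \tfrac{d}{dt}\,\act{K_{G(t)}, u_h}_{\VV(\T)}\, dt$.

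The core of the argument is a termwise differentiation of the three curvature sources in \eqref{eq:Kg}, using the reformulation \eqref{eq:7} for the vertex contributions, with $G(t)$ as base metric and the fixed direction $\sigma = g-\Eucl$, so that $\tfrac{d}{dt} f(G(t)) = \VDer{f(\rho)}{\sigma}$ evaluated at $\rho = G(t)$ for any metric functional $f$ (note $u_h$ does not vary with $t$). Lemma~\ref{lem:variation} supplies exactly the three pieces: \eqref{eq:variation-1} turns the interior density $\Gauss(g)\,\vol T$ into $-\tfrac12 \vol T\,\inc_{G(t)}\sigma$; \eqref{eq:variation-2} turns the edge density $\kappa(g)\,\vol E$ into $\tfrac12 (\curl_{G(t)}\sigma + d^0\sigma_{\gt\gn})(\tv)$; and \eqref{eq:variation-3} turns $\agl_V^T g$ into $-\tfrac12 \jmp{\sigma_{\gn\gt}}_V^T$, while the constant Euclidean angles $\agl_V^T\Eucl$ in \eqref{eq:7} contribute nothing. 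Collecting these three terms and rewriting the edge integrals against the Riemannian measure of $(E, G(t))$ via $(\curl_{G(t)}\sigma)(\tv) = \sqrt{G(t)_{\tv\tv}}\,(\curl_{G(t)}\sigma)(\gt)$, the sum is seen to coincide with $-\tfrac12$ times the right-hand side of the distributional incompatibility formula of Proposition~\ref{prop:cov_distr_inc} (using $\sigma_{\gt\gn} = \sigma_{\gn\gt}$ by symmetry). That is, $\tfrac{d}{dt}\,\act{K_{G(t)}, u_h}_{\VV(\T)} = -\tfrac12 \act{\inc_{G(t)}\sigma, u_h}_{\Vo(\T)} = \tfrac12\, b_h(G(t), g-\Eucl, u_h)$.

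Integrating this identity over $t \in [0,1]$ and invoking the vanishing base case yields \eqref{eq:23}. The second identity \eqref{eq:24} is the special case in which the path metric $\Gex(t)$ is globally smooth: all edge and vertex sources are then absent (by the last statement of Proposition~\ref{prop:cov_distr_inc}), only \eqref{eq:variation-1} is invoked, and the same argument reduces $\int_\T \Gauss(\gex)\,u$ to $\tfrac12\int_0^1 b_h(\Gex(t), \gex-\Eucl, u)\, dt$ for all $u \in \Vo(\T)$.

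The step I expect to demand the most care is justifying the interchange of $\tfrac{d}{dt}$ with the element, edge, and vertex integrals that define $\act{K_{G(t)}, u_h}$. This rests on the smooth dependence of every integrand on $t$: the coordinate expressions involve $G(t)_{ij}$, their spatial derivatives, and $1/\sqrt{\det G(t)}$, all polynomial or rational in $t$ with $\det G(t)$ bounded away from zero uniformly on $[0,1]$ by positive-definiteness of the path, so dominated convergence legitimizes differentiation under the integral. A secondary, purely bookkeeping obstacle is the consistent tracking of orientation and of the $\gn,\gt$ versus $\nv,\tv$ normalizations so that the edge contribution of Lemma~\ref{lem:variation} matches, with the correct sign, the edge term of Proposition~\ref{prop:cov_distr_inc}.
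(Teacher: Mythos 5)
Your proof is correct and follows essentially the same route as the paper's: the fundamental theorem of calculus along the affine path $G(t)$ starting from $K_h(\delta)=0$, termwise differentiation of the interior, edge, and vertex sources via the three identities of Lemma~\ref{lem:variation}, term-by-term comparison with Proposition~\ref{prop:cov_distr_inc}, and for \eqref{eq:24} the observation that global smoothness of $\gex-\delta$ kills the jump terms. The only difference is that you spell out details the paper leaves implicit (the base case $\act{K_\delta,u_h}_{\VV(\T)}=0$ via Proposition~\ref{prop:modelK}, the $\tv$-versus-$\gt$ normalization of the edge terms, and the justification for differentiating under the integral), all of which are sound.
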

\begin{proof}
  Since $K_h(\delta) = 0$, 
  \begin{align*}
    \int_\T \Gauss_h(g) \,u_h
    & = \int_0^1 \frac {d}{d t} \int_\T K_h(G(t)) \,u_h.
  \end{align*}
  Now, expand the inner integral using \eqref{eq:1} and differentiate
  each term in the direction $\sigma = G'(t) = g-\delta$, using each
  of the three identities of Lemma~\ref{lem:variation}. Comparing the
  result, term by term, to the expression in
  Proposition~\ref{prop:cov_distr_inc}, equation~\eqref{eq:23} is
  proved. The proof of~\eqref{eq:24} is similar, after noting that the
  global smoothness of $\gex-\delta$ implies that the jump terms in
  $b_h(\Gex(t), \gex - \delta, u)$ vanish (by the last statement of 
  Proposition~\ref{prop:cov_distr_inc}).
\end{proof}

To state a simple lemma before the error analysis, for any
$u,v, f\in\Ltwo[\Omega]$, let
\[
  ( u,v)_{g} =\int_{\Omega}u\, v \,\sqrt{\det g}\;\da,\quad
  \| u \|_g = (u, u)_g^{1/2}, 
\]
and let
$P^g_{k+1}:\Ltwo[\Omega]\to \Vo_{h}^{k+1}$ denote the
$(\cdot,\cdot)_g$-orthogonal projector into $\Vo_{h}^{k+1}$.

\begin{lemma}
  \label{lem:est_mixed_term-rev}
  For any  $u\in\Honez[\Omega]$ and any $\gh \in \Regge^k_h$, 
  \begin{align*}
    (\Gauss_h(\gh)-\Gauss(\gex),u- P^g_{k+1}u)_{\gex}
    & \,\lesssim\,  h \|u\|_{\Hone}
      \|(\idop -     \Pi^\VV_{k+1}) \Gauss(\gex)\|_{\Ltwo},
  \end{align*}
\end{lemma}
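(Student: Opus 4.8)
The plan is to exploit the $(\cdot,\cdot)_g$-orthogonality built into the projector $P^g_{k+1}$ to make the \emph{computed} curvature $\Gauss_h(g)$ drop out, and then to inject the unweighted $L^2$-projection of the exact curvature so that only the projection error $(\idop-\Pi^\VV_{k+1})\Gauss(\gex)$ survives, picking up the extra factor $h$ from the projection error of $u$. First I would set $w=u-P^g_{k+1}u$ and record two orthogonalities. By the defining property of the $(\cdot,\cdot)_g$-orthogonal projector, $w$ is $(\cdot,\cdot)_g$-orthogonal to $\Vo_h^{k+1}$. In the model case $\Gauss_h(g)\in\Vo_h^{k+1}$ (Definition~\ref{def:lift_distr_curv_mf} with $\Gamma_D=\d M$ and $K^D=0$), and $\Pi^\VV_{k+1}\Gauss(\gex)\in\Vo_h^{k+1}$ as well, so
\[
  (\Gauss_h(g),w)_g=0,\qquad (\Pi^\VV_{k+1}\Gauss(\gex),w)_g=0.
\]

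Using these, the reduction is immediate: working with the compatible weight $\sqrt{\det g}$,
\[
  (\Gauss_h(g)-\Gauss(\gex),w)_g=-(\Gauss(\gex),w)_g=-\big((\idop-\Pi^\VV_{k+1})\Gauss(\gex),\,w\big)_g .
\]
A Cauchy--Schwarz inequality then bounds this by $\|(\idop-\Pi^\VV_{k+1})\Gauss(\gex)\|_{\Ltwo}\,\|w\|_{\Ltwo}$, and it remains to supply the approximation estimate $\|w\|_{\Ltwo}=\|u-P^g_{k+1}u\|_{\Ltwo}\lesssim h|u|_{\Hone}\le h\|u\|_{\Hone}$. This I would obtain from quasi-optimality of the weighted projection together with the uniform equivalence $\sqrt{\det g}\sim 1$ from~\eqref{eq:est_detgh_by_g-rev}: namely $\|u-P^g_{k+1}u\|_{\Ltwo}\lesssim\|u-P^g_{k+1}u\|_g\le\|u-\Pi^\VV_{k+1}u\|_g\lesssim\|u-\Pi^\VV_{k+1}u\|_{\Ltwo}$, the last term being $O(h|u|_{\Hone})$ by the standard approximation property of the $L^2$-projection into $\Vo_h^{k+1}$. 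Combining these estimates yields exactly the claimed bound.

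The one point requiring genuine care---and the main obstacle---is the weight mismatch: the inner product in the statement carries the exact weight $\sqrt{\det\gex}$, whereas the orthogonality furnished by $P^g_{k+1}$ is with respect to $\sqrt{\det g}$. I would handle this by systematically passing between the $g$-, $\gex$-, and unweighted $L^2$ inner products using the uniform two-sided bounds on $\sqrt{\det g}$ and $\sqrt{\det\gex}$ (from~\eqref{eq:est_detgh_by_g-rev}, and~\eqref{eq:est_det_by_ten-rev} in the setting~\eqref{eq:13}). The cross terms produced by $\sqrt{\det\gex}-\sqrt{\det g}$ are controlled by $\|\gex-g\|_{\infty}\,\|w\|_{\Ltwo}$, hence are of strictly higher order than the displayed projection-error bound and are absorbed (and, where this lemma is later invoked in Theorem~\ref{thm:convergence_curvature_hm1-rev}, folded into the $\nrm{\gex-g}_\infty$ consistency term). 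The orthogonality cancellation itself is elementary; essentially all of the remaining work is the bookkeeping that confirms every weight-discrepancy contribution is dominated.
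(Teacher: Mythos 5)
Your core cancellation mechanism is the same as the paper's: use orthogonality of the projection error $w=u-P_{k+1}u$ to the finite element space $\Vo_h^{k+1}$ to annihilate $\Gauss_h(\gh)$ (which lies in $\Vo_h^{k+1}$ in the model case), insert a discrete approximation of $\Gauss(\gex)$ by the same orthogonality, and finish with Cauchy--Schwarz plus the $O(h)$ approximation estimate for $w$. Your small variant of inserting $\Pi^\VV_{k+1}\Gauss(\gex)$ directly (rather than symmetrizing with the weighted projector, as the paper does) is fine and even slightly cleaner. The essential difference is which weighted projector is used. The paper's proof takes the projector to be $P^{\gex}_{k+1}$, i.e.\ orthogonal with respect to the \emph{same} weight $\sqrt{\det \gex}$ that appears in the pairing $(\cdot,\cdot)_{\gex}$ of the statement (the superscript $g$ in the lemma's statement is a notational slip; in the proof and in the application inside Theorem~\ref{thm:convergence_curvature_hm1-rev}, where one sets $u_h=P^{\gex}_{k+1}u$, it is unambiguously the $\gex$-weighted projector). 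With matching weights the orthogonality holds in exactly the inner product of the statement, $\Gauss_h(\gh)$ drops out for \emph{arbitrary} $\gh\in\Regge_h^k$, and no cross terms ever appear.

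By insisting on the $g$-weighted projector $P^g_{k+1}$ against the $\gex$-weighted pairing, you create a weight mismatch, and your proposed repair is where the proof genuinely breaks. The cross term you must control is
\begin{equation*}
  \int_\om \big(\Gauss_h(\gh)-\Gauss(\gex)\big)\, w \,\big(\sqrt{\det \gex}-\sqrt{\det \gh}\big)\,\da
  \;\lesssim\; \|\gex-\gh\|_{\Linf}\,\|\Gauss_h(\gh)-\Gauss(\gex)\|_{\Ltwo}\,\|w\|_{\Ltwo},
\end{equation*}
and this is \emph{not} dominated by the claimed right-hand side $h\|u\|_{\Hone}\|(\idop-\Pi^\VV_{k+1})\Gauss(\gex)\|_{\Ltwo}$: the lemma is stated for \emph{any} $\gh\in\Regge_h^k$, so nothing bounds $\|\Gauss_h(\gh)\|_{\Ltwo}$ at all (one can make it arbitrarily large while the right-hand side stays fixed), and even under the interpolation assumption~\eqref{eq:13} the factor $\|\gex-\gh\|_{\Linf}\|\Gauss_h(\gh)-\Gauss(\gex)\|_{\Ltwo}$ bears no relation to the projection error of $\Gauss(\gex)$ (e.g.\ it need not vanish when $\Gauss(\gex)\in\Vo_h^{k+1}$). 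Declaring the cross term ``absorbed'' later in Theorem~\ref{thm:convergence_curvature_hm1-rev} amounts to proving a different, weaker lemma; moreover, that absorption would itself require the a~priori bound $\|\Gauss_h(\gh)\|_{\Ltwo}\lesssim 1$, which the paper establishes separately inside the theorem's proof (via the inverse inequality~\eqref{eq:discr_inverse_inequ-rev} and~\eqref{eq:13}) and which is not available as a hypothesis of this lemma. The fix is simple: work with $P^{\gex}_{k+1}$ throughout, exactly as the lemma is used; then your own argument closes with no cross terms and no appeal to~\eqref{eq:est_det_by_ten-rev}.
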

\begin{proof}
  Since
  \begin{align*}
    (\Gauss_h(\gh)-\Gauss(\gex),u- P^{\gex}_{k+1}u)_{\gex}
    & =
      (\Gauss(\gex),u- P^{\gex}_{k+1}u)_{\gex}
    \\
    & =
      \big((\idop - P^{\gex}_{k+1})\Gauss(\gex),
      (\idop - P^{\gex}_{k+1})u\big)_{\gex}
    \\
    & \lesssim
      h\|u\|_{\Hone[\Omega]}
      \inf\limits_{v_h\in\Vo_h^{k+1}}\|\Gauss(\gex)-v_h\|_{\Ltwo[\Omega]},
  \end{align*}
  where we have used the equivalence of $L^2(\om)$-norm and
  $\|\cdot \|_{\gex}$-norm.
\end{proof}

\begin{proof}[Proof of Theorem~\ref{thm:convergence_curvature_hm1-rev}] 
	The general structure of the proof follows \cite{Gaw20}.
	Let $u\in\Honez[\Omega]$ and let 
        $u_h \in \Vo_h^{k+1}$. Then
	\begin{align*}
          (\Gauss_h(\gh)-\Gauss(\gex),u)_{\gex}
          & = (\Gauss_h(\gh)-\Gauss(\gex),u - u_h + u_h)_{\gex}
            = s_1 + s_2 + s_3
	\end{align*}
	where
        $ s_1 = (\Gauss_h(\gh),u_h)_{\gh} - (\Gauss(\gex),u_h)_{\gex},
        $
        $ s_2 = (\Gauss_h(\gh),u_h)_{\gex} -( \Gauss_h(\gh),u_h)_{\gh}$, and         
        $ s_3 = ( \Gauss_h(\gh)-\Gauss(\gex),u-u_h)_{\gex}.$
        We proceed to estimate each $s_i$.
          
By Lemma~\ref{lem:gauss_curv_integral_repr} and \eqref{eq:incg-h} we rewrite $s_1$ as follows:
\begin{align*}
  s_1 & = \frac{1}{2}\int_0^1 b_h(G(t),\gh-\Eucl,u_h)-b_h(\Gex(t),\gex-\Eucl,u_h)\,dt\\
	&= \frac{1}{2}\int_0^1 b_h(G(t),\gh-\Eucl,u_h)-b_h(\Gex(t),\gh-\Eucl,u_h)\,dt +\frac{1}{2}\int_0^1 b_h(\Gex(t),\gh-\gex,u_h)\,dt
	\\&= \frac{1}{2}(\inc_{\Gex(t),h}(\gh-\Eucl)-\inc_{G(t),h}(\gh-\Eucl),u_h)_{\Omega} -\frac{1}{2}(\inc_{\Gex(t),h}(\gh-\gex),u_h)_{\Omega} . 
\end{align*}
Now we can use Theorem~\ref{thm:incg}.
Applying \eqref{eq:incg-2} with  $g=\Gex(t)$ and $\sigmaappr=\gh-\Eucl\in \Regge_h^k$ and applying
\eqref{eq:incg-1} with  $g=\Gex(t)$ and $\sigma=\gex$,
	\begin{align}
	\label{eq:est_2nd_term}
          s_1 = 
           (\Gauss_h(\gh),u_h)_{\gh} - (\Gauss(\gex),u_h)_{\gex}
                                             \,\lesssim\, \nrm{\gex-{\gh}}_\infty \|\nabla u_h\|_{\Ltwo[]}.
	\end{align}
        for any $u_h \in \Vo_h^{k+1}$, 
        where we have also used $\nrm{\cdot }_2 \lesssim \nrm{\cdot }_\infty$ and  \eqref{eq:est_gh_by_g-rev}.

        For the next term $s_2$, we use H\"older inequality and
        \eqref{eq:est_det_by_ten-rev} to get
	\begin{align*}
          s_2
          & = (\Gauss_h(\gh),u_h)_{\gex} - (\Gauss_h(\gh),u_h)_{\gh}
            = \big( \Gauss_h(\gh) u_h, \sqrt{\det \gex}-\sqrt{\det\gh}\big)_\om
          \\
          &\leq \|\Gauss_h(\gh)\|_{\Ltwo}\|u_h\|_{\Ltwo}\|\sqrt{\det(\gex)}-\sqrt{\det(\gh)}\|_{\Linf}
          \\
          & \lesssim\,\|\gex-\gh\|_{\Linf}\|\Gauss_h(\gh)\|_{\Ltwo}
            \| u_h\|_{\Ltwo}.                
	\end{align*}
Next, setting $u_h = K_h(g)$ in~\eqref{eq:est_2nd_term}, we
        obtain
        $\|\Gauss_h(\gh)\|_{\Ltwo}^2 \lesssim h^{-1} \nrm{ \gex -
          \gh}_\infty \| K_h(g)\|_{L^2} + \| K(\gex) \|_{L^2} \|
        K_h(g)\|_{L^2} $ using the inverse
        inequality~\eqref{eq:discr_inverse_inequ-rev}. Since standard
        estimates imply 
        $h^{-1} \nrm{ \gex - \gh}_\infty \lesssim 1$, we conclude that
        $\| K_h(g) \|_{L^2} \lesssim 1$, so 
	\begin{align}
          \label{eq:26}
          s_2\,\lesssim\,\|\gex-\gh\|_{\Linf}          \| u_h\|_{\Ltwo}.
	\end{align}


        Finally, to estimate $s_3$,  
        let us now fix $u_h=P^{\gex}_{k+1}u\in \Vo_{h}^{k+1}.$
        Then by  Lemma~\ref{lem:est_mixed_term-rev}:
	\begin{align}
          \label{eq:27}
          s_3
          \,\lesssim\,  h \|u\|_{\Hone}
          \left\|(\idop -     \Pi^\VV_{k+1}) \Gauss(\gex)\right\|_{\Ltwo}.
	\end{align}
        For this  choice of $u_h$, we also have 
        $| u_h - \Pi^\VV_{k+1} u |_{H^1} \lesssim h^{-1} \| u_h -
        \Pi^\VV_{k+1} u \|_{L^2} \lesssim | u |_{H^1}, $ so by the
        stability \cite{CrouzThome87}
        of the $L^2$ projection $\Pi^\VV_{k+1}$ in $H^1$, we conclude
        that
        $\| u_h \|_{H^1} \lesssim \| u \|_{H^1}$. This 
        allowing us to replace $u_h$ by $u$
        in the bounds for $s_1$ and $s_2$.
	All together, \eqref{eq:est_2nd_term}, \eqref{eq:26}, and \eqref{eq:27},
        imply
	\begin{align*}
	&(\Gauss_h(\gh)-\Gauss(\gex),u)_{\gex} \,\lesssim\, \big(\nrm{\gex-{\gh}}_\infty+h 
   \left\|(\idop -     \Pi^\VV_{k+1}) \Gauss(\gex)\right\|_{\Ltwo}\big) 
  \|u\|_{\Hone[]}.
	\end{align*}
	Thus, since $\| u / \sqrt{\det \gex} \|_{H^1} \lesssim \| u \|_{H^1}$,
	\begin{align*}
          \|\Gauss_h(\gh)-\Gauss(\gex)\|_{H^{-1}}
          & = \sup\limits_{u\in \Honez}\frac{(\Gauss_h(\gh)-\Gauss(\gex),u)_{\om}}{\|u\|_{\Hone}}
            = \sup\limits_{u\in \Honez}\frac{(\Gauss_h(\gh)-\Gauss(\gex),
            u/\sqrt{\det \gex})_{\gex}}{\|u \|_{\Hone}}
          \\
          & 
            \,\lesssim\,
            \nrm{\gex-{\gh}}_\infty+h 
	\|(\idop -     \Pi^\VV_{k+1}) \Gauss(\gex)\|_{\Ltwo},
	\end{align*}
        which proves the first estimate of the theorem. The second follows from  standard interpolation error estimates.
\end{proof}
\begin{proof}[Proof of Corollary~\ref{cor:convergence_curvature_l2_hl}]
  One can prove this following \cite[pp.~1818]{Gaw20} using the
  improved estimate of
  Theorem~\ref{thm:convergence_curvature_hm1-rev}] in place of the
  estimate used there.
\end{proof}



\subsection{Analysis of the connection approximation}

The integral representation of the connection 1-form can be given
analogously to the curvature case once we know the variation of the
connection with respect to the metric.  Recall that we compute the
connection using the canonical $g$-orthonormal frame
$\GBasis_i(t)=G(t)^{-1/2}\EuclBasis_i$ (see \eqref{eq:28}) obtained using the flow
\eqref{eq:ODE_oneform} evaluated at $t=1$.  We only discuss the
variation of $\OneForm_g$ with respect to $g$ along this flow (i.e.,
unlike Lemma~\ref{lem:variation}, the following is valid only for a
specific direction $\sigma$).

\begin{lemma}
  \label{lem:evolution_one_form}
  Let $\sigma = d G/ dt = g - \delta$. Then for all $X \in \Xm M$,
	\begin{subequations}
	\begin{align}
          \frac{d}{d t}\OneForm_{G(t)}(X)
          &= -\frac{1}{2}
            (\curl_{G(t)}\sigma)(X),
            \label{eq:evolution_one_form_vol}
          \\
          \frac{d}{d t} \Theta^E
          &=\frac{1}{2}\jmp{\sigma_{\gn\gt}}.
            \label{eq:evolution_one_form_bnd}
	\end{align}
	\end{subequations}
\end{lemma}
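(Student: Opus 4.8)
The plan is to prove both identities by differentiating, along the flow $G(t)=\delta+t(g-\delta)$, the very quantities that define $\OneForm_{G(t)}$ and $\Theta^E$, exploiting the explicit evolution of the canonical frame from Lemma~\ref{lem:expl_sol_ode_oneform}. Since $e_i(t)=G(t)^{-1/2}E_i$, the ODE \eqref{eq:ODE_oneform} yields the pointwise frame evolution $\dot e_i=-\tfrac12 G(t)^{-1}\sigma\,e_i$ with $\sigma=\dot G=g-\delta$, and this is the engine for both computations. Throughout I would fix $t$, abbreviate $G=G(t)$ and $\nabla=\nabla^{G}$, and treat each identity as a pointwise (resp. edge-wise) statement, so that, exactly as in the proof of Lemma~\ref{lem:variation}, I may localize to Riemann normal coordinates for $G$ at the relevant point to suppress the Christoffel symbols and the first derivatives of the metric.

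For \eqref{eq:evolution_one_form_vol} I would write $\OneForm_{G}(X)=G(e_1,\nabla_X e_2)$ and differentiate by the Leibniz rule into four contributions: the metric variation $\sigma(e_1,\nabla_X e_2)$, the two frame variations $G(\dot e_1,\nabla_X e_2)$ and $G(e_1,\nabla_X \dot e_2)$, and the connection variation $G(e_1,(\dot\nabla)_X e_2)$. Here $\dot\nabla$ is the tensorial first variation of the Levi--Civita connection, with the standard coordinate form $\dot\Gamma^l_{jk}=\tfrac12 G^{lm}(\nabla_j\sigma_{mk}+\nabla_k\sigma_{mj}-\nabla_m\sigma_{jk})$. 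Substituting $\dot e_i=-\tfrac12 G^{-1}\sigma e_i$, using the $G$-orthonormality of $(e_1,e_2)$, and passing to normal coordinates (where $\partial G$ and $\Gamma^{G}$ vanish at the point, so $\dot\Gamma$ collapses to a single antisymmetrized derivative of $\sigma$), I expect the metric- and frame-variation terms to combine, leaving a term that assembles into the antisymmetrized derivative $\tfrac12\veps^{jk}\partial_j\sigma_{ik}$. This is precisely $-\tfrac12(\curl_{G}\sigma)(X)$ via the coordinate expression \eqref{eq:curlgsigma-coord-matvec}, whose Christoffel part also drops in normal coordinates.

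For \eqref{eq:evolution_one_form_bnd} I would start from $\cos\Theta^E=G(e_{+,1},e_{-,1})$, which is meaningful on $E$ because $G$ is continuous in the glued smooth structure of \S\ref{sec:covariant-curl-regge}, and differentiate to obtain $-\sin\Theta^E\,\dot\Theta^E$ as a sum of a metric-variation term and the two one-sided frame-variation terms. Substituting the frame ODE on each side (with the one-sided $\sigma|_{T_\pm}$) and decomposing $e_{\pm,1}$ into the $G$-orthonormal edge frame $(\gt,\gn)$, the contribution of $\gt$ (single-valued on $E$) cancels between the two sides, and the remaining terms assemble into $-\tfrac{\sin\Theta^E}{2}\jmp{\sigma_{\gn\gt}}$, with the edge jump as in \eqref{eq:jump-def-b}; dividing by $-\sin\Theta^E$ gives \eqref{eq:evolution_one_form_bnd}, exactly as the factor $\sin\theta$ was removed in the proof of \eqref{eq:variation-3}. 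As an independent check, differentiating the consistency condition \eqref{eq:top-cons} along the flow and inserting \eqref{eq:variation-3} forces $\sum_{E\in\E_V}s_{EV}\dot\Theta^E(V)=\tfrac12\sum_{T\in\T_V}\jmp{\sigma_{\gn\gt}}_V^T$, which is consistent with \eqref{eq:evolution_one_form_bnd} by the telescoping of edge jumps into element vertex jumps around $V$.

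The main obstacle is the bookkeeping in \eqref{eq:evolution_one_form_vol}: one must correctly track the simultaneous variation of metric, frame, and connection, and the connection-variation term $\dot\nabla$ is the delicate one, since it is the only place derivatives of $\sigma$ enter and hence the only source of the $\curl_{G}\sigma$ on the right. Reconciling the frame $e_i=G^{-1/2}E_i$, tied to the ambient Euclidean basis $E_i$, with the normal-coordinate simplification requires care, so it may be safer to carry out the computation in the given coordinates keeping all Christoffel terms and to simplify only at the end. In \eqref{eq:evolution_one_form_bnd} the subtlety is instead geometric: making precise, via the glued structure, the $G$-inner product of vectors drawn from the two elements, and verifying that after the tangential cancellation only the normal--tangential jump $\jmp{\sigma_{\gn\gt}}$ survives.
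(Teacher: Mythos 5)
Your proof of \eqref{eq:evolution_one_form_vol} is correct and is genuinely more self-contained than the paper's: the paper simply imports the identity $2\tfrac{d}{dt}\OneForm_{G(t)}(X)=-(\nabla_{e_1}\sigma)(e_2,X)+(\nabla_{e_2}\sigma)(e_1,X)$ from \cite[Eq.~(7)]{BKG21} and then converts it to $-(\curl_{G(t)}\sigma)(X)$ via \eqref{eq:d1sigma}, whereas you rederive that identity. Your four-term Leibniz split does close up: with $\dot e_i=-\tfrac12 G^{-1}\sigma e_i$, with $\nabla_X e_2=\OneForm_{G}(X)\,e_1$ (orthonormality), with metric compatibility (so $\nabla_X(G^{-1}\sigma)=G^{-1}\nabla_X\sigma$), and with $\dot\Gamma^l_{jk}=\tfrac12 G^{lm}(\nabla_j\sigma_{mk}+\nabla_k\sigma_{mj}-\nabla_m\sigma_{jk})$, the metric-variation term $\OneForm_G(X)\,\sigma(e_1,e_1)$ cancels against the two half-sized frame-variation contributions, the $(\nabla_X\sigma)(e_1,e_2)$ pieces cancel between the frame and connection variations, and what survives is $\tfrac12[(\nabla_{e_2}\sigma)(e_1,X)-(\nabla_{e_1}\sigma)(e_2,X)]=-\tfrac12(\curl_{G}\sigma)(X)$. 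Note that this computation is frame-intrinsic and needs no normal coordinates at all, so the reconciliation issue you flag (between $e_i=G^{-1/2}E_i$ and normal coordinates) simply does not arise; also, your remark that $\dot\Gamma$ "collapses to a single antisymmetrized derivative" in normal coordinates is not quite right---its symmetric part is eliminated only by the cancellation against the other terms.

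For \eqref{eq:evolution_one_form_bnd}, however, there is a genuine gap (the paper proves nothing here either; it cites \cite[Prop.~2.20]{BKG21}). Your Leibniz split of $\tfrac{d}{dt}G\bigl(e_{+,1},e_{-,1}\bigr)$ into "metric variation plus two frame variations" is both ill-defined and incomplete. Ill-defined, because $\sigma=g-\delta$ is two-valued on $E$ (only its $\gt\gt$-component is continuous), so the term "$\sigma(e_{+,1},e_{-,1})$" has no meaning; incomplete, because the identification of $T_+$- and $T_-$-side vectors inside one tangent space---the glued smooth structure, equivalently the $G$-orthonormal edge frame $(\gt^E,\gn^E)$---is itself built from $G(t)$ and therefore moves with $t$, and its velocity must enter the differentiation. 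Concretely, writing $\Theta^E=\Theta^E_+-\Theta^E_-$ with one-sided angles $\Theta^E_\pm=\agl_g(e_{\pm,1},\pm\gn_\pm)$ (as in Appendix~\ref{sec:oneform_algorithm}), the derivative of each side needs, besides your two ingredients, the evolution of the $G$-unit normal, $\dot{\gn}=-\sigma_{\gn\gt}\,\gt-\tfrac12\sigma_{\gn\gn}\,\gn$; this contributes a $\sigma_{\gn\gt}$-term of exactly the same size as the metric-variation and frame-ODE terms you keep, and dropping it changes the coefficient of $\sigma_{\gn\gt}$ in the answer (in my accounting it reverses its sign). So the omission is not a detail to be "made precise"; without it the computation does not yield \eqref{eq:evolution_one_form_bnd}. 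Two further problems: your final division by $\sin\Theta^E$ is inadmissible here, since $\Theta^E(0)=0$ identically (at $t=0$ the frame $E_i$ is globally continuous) and the lemma is needed on all of $[0,1]$---unlike the vertex angle in \eqref{eq:variation-3}, which stays in $(0,\pi)$, the aligned-frame case is here the rule, so you also need the companion derivative of the signed quantity $\sin\Theta^E$ (the volume-form pairing of the two frame vectors). Finally, reading $\jmp{\sigma_{\gn\gt}}$ literally via \eqref{eq:jump-def-b}, with each side using its own pair $(\gn_\pm,\gt_\pm)$, gives a sum that does not vanish for continuous $\sigma$ (the integrand is even under $(\gn,\gt)\mapsto(-\gn,-\gt)$), which is incompatible with $\dot\Theta^E=0$ for continuous metrics; the jump must be taken with a common edge tangent $\gt^E$ and one-sided normals, making it a true difference across $E$.
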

\begin{proof}
  Both identities follow from \cite{BKG21}, e.g., with $e_i(t) = G(t)^{-1/2} E_i$, 
  \begin{align*}
    2\frac{d}{d t}\OneForm_{G(t)}(X)
    & = -(\nabla_{\GBasis_1}\sigma) (\GBasis_2, X)
      + (\nabla_{\GBasis_2}\sigma)( e_1, X)
    &&  \text{ by \cite[Eq.~(7)]{BKG21}},
    \\
    & = -(\nabla_{\GBasis_1}\sigma) (X, \GBasis_2)
      + (\nabla_{\GBasis_2}\sigma)( X, e_1)
    &&  \text{ by symmetry of $\sigma$},
    \\
    & = - d^1 \sigma_X(e_1, e_2)
    &&  \text{ by \eqref{eq:d1sigma},}
  \end{align*}
  which equals $-(\curl_{G(t)} \sigma)(X)$, thus proving~\eqref{eq:evolution_one_form_vol}.  For a proof of
  \eqref{eq:evolution_one_form_bnd}, see \cite[Proposition
  2.20]{BKG21}.
\end{proof}

Let
$c_h(g,\sigma,v) = -\act{ \curl_g{\sigma},Q_gv}_{\Wo_g(\T)}$ for $v \in \Wo(\om)$, 
where  $Q_g$ is as in \eqref{eq:def_Q_op} and the distributional covariant curl is as defined in~\eqref{eq:cov_distr_curl}.

\begin{lemma}
	\label{lem:conn_oneform_integral_repr}
	Let $g\in\Regge^+(\T)$, $k\in\N_0$. There holds for $\OneForm_h(g)\in \Wo_h^k$ from Definition~\ref{def:distributional_one_form}, the exact metric $\gex$ and its connection 1-form $\OneForm(\gex)$
	\begin{align}
	\label{eq:disc_lifting_oneform_prob}
          \int_{\T}\OneForm_h(g)\, Q_gv_h
          & = \frac{1}{2}\int_0^1c_h(G(t),g-\Eucl,v_h)\,dt,\,\,
          && \forall v_h\in \Wo_h^k,\\
	\label{eq:integral_id_ex_connectionform}
          \int_{\T}\OneForm(\gex)\, Q_gv
          & = \frac{1}{2}\int_0^1c_h(\Gex(t),\gex-\Eucl,v)\,dt,\,\,
          && \forall v\in \Wo(\om).
	\end{align}
\end{lemma}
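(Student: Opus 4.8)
The plan is to mirror the proof of Lemma~\ref{lem:gauss_curv_integral_repr}, with the connection form $\cn_g$ and its variation (Lemma~\ref{lem:evolution_one_form}) playing the roles of the curvature sources and Lemma~\ref{lem:variation}. First I would make the left-hand sides explicit. Expanding $\int_\T \OneForm_h(g)\,Q_gv_h$ through Definition~\ref{def:distributional_one_form} and the integration rule \eqref{eq:intTf}, the volume factor $\sqrt{\det g}$ cancels the $1/\sqrt{\det g}$ built into $Q_g$, so that
\[
  \int_\T \OneForm_h(g)\,Q_gv_h = \int_\om \Eucl(\OneForm_h(g),v_h)\,\da = \act{\cn_g, Q_gv_h}_{\Wo_g(\T)} .
\]
Inserting \eqref{eq:29} and using \eqref{eq:d-g-w-Qw} with the boundary measure \eqref{eq:bdrintg}, an analogous cancellation of $\sqrt{g_{\tv\tv}}$ reduces the edge part of $\act{\cn_g, Q_gv_h}_{\Wo_g(\T)}$ to $\sum_{E}\int_E \Theta^E\, v_h^{\nv}\,\dl$, where the Euclidean normal component $v_h^{\nv}=\Eucl(v_h,\nv)$ does not depend on the metric. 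Consequently, along the flow $G(t)=\Eucl+t(g-\Eucl)$, the pairing $\act{\cn_{G(t)}, Q_{G(t)}v_h}_{\Wo_{G(t)}(\T)}$ depends on $t$ only through $\cn_{G(t)}$ and $\Theta^E(t)$.

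Next I would apply the fundamental theorem of calculus in $t$. At $t=0$ the canonical frame is $e_i(0)=\Eucl^{-1/2}\EuclBasis_i=\EuclBasis_i$, which is globally smooth on the flat manifold, so $\cn_\Eucl\equiv0$ and every $\Theta^E(0)=0$; hence the pairing vanishes at $t=0$ and
\[
  \act{\cn_g, Q_gv_h}_{\Wo_g(\T)}
  = \int_0^1 \frac{d}{dt}\,\act{\cn_{G(t)}, Q_{G(t)}v_h}_{\Wo_{G(t)}(\T)}\,dt .
\]
Differentiating under the integral with $\sigma=g-\Eucl$ and invoking Lemma~\ref{lem:evolution_one_form}, the volume term produces, using the same measure cancellation, $-\tfrac12\int_\T(\curl_{G(t)}\sigma)(Q_{G(t)}v_h)$, while the edge terms produce $\tfrac12\sum_E\int_E \jmp{\sigma_{\gn\gt}}\,v_h^{\nv}\,\dl$.

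It then remains to identify this $t$-derivative with $\tfrac12 c_h(G(t),g-\Eucl,v_h)=-\tfrac12\act{\curl_{G(t)}\sigma, Q_{G(t)}v_h}_{\Wo_{G(t)}(\T)}$. The volume part matches the first term of Definition~\ref{def:curlg-extended} verbatim. For the boundary part, I would take the term $-\int_{\d\T} G(t)(Q_{G(t)}v_h,\gn)\,\sigma(\gn,\gt)$ of \eqref{eq:curlg-extended-2}, apply the same $\sqrt{G(t)_{\tv\tv}}$ cancellation, and collect the two element contributions on each interior edge using $\nv_-=-\nv_+$, $\gt_-=-\gt_+$, $\gn_-=-\gn_+$ together with the normal continuity $v_h^{\nv_+}+v_h^{\nv_-}=0$ of the BDM field; this yields $-\sum_E\int_E v_h^{\nv_+}\big(\sigma|_{T_+}(\gn_+,\gt_+)-\sigma|_{T_-}(\gn_+,\gt_+)\big)\,\dl$, which coincides with the edge contribution above once $\jmp{\sigma_{\gn\gt}}$ in \eqref{eq:evolution_one_form_bnd} is read with this fixed-edge difference convention. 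Integrating in $t$ gives \eqref{eq:disc_lifting_oneform_prob}. The identity \eqref{eq:integral_id_ex_connectionform} follows by the same argument but is simpler: since $\gex$ is smooth, $\gex-\Eucl$ is globally smooth, so the boundary term of \eqref{eq:curlg-extended-2} and all jump contributions vanish (by the remark following Definition~\ref{def:curlg-extended}), and one simply integrates \eqref{eq:evolution_one_form_vol} from $t=0$ to $1$ and pairs with $v$, using $\int_\T\OneForm(\gex)\,Q_gv=\int_\om\Eucl(\OneForm(\gex),v)\,\da$. I expect the main obstacle to be precisely this boundary bookkeeping---reconciling the orientation and sign convention of $\jmp{\sigma_{\gn\gt}}$ inherited from \cite{BKG21} with the per-edge splitting of the distributional curl's boundary integral, and verifying that the metric-dependent vectors $\gn^E,\gt^E$ enter the connection and the curl expressions identically, so that differentiating in $t$ produces no spurious terms.
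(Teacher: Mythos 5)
Your proof is correct and follows essentially the same route as the paper's: the paper likewise derives \eqref{eq:disc_lifting_oneform_prob} from Lemma~\ref{lem:evolution_one_form} together with the fundamental theorem of calculus (mirroring the proof of Lemma~\ref{lem:gauss_curv_integral_repr}), and obtains \eqref{eq:integral_id_ex_connectionform} by noting that the jump terms in $c_h(\Gex(t),\gex-\Eucl,v)$ vanish due to the global smoothness of $\gex-\Eucl$. Your write-up simply makes explicit the measure cancellations and the edge-orientation/jump-sign bookkeeping that the paper's terse proof leaves implicit.
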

\begin{proof}
  The proof is similar to proof of
  Lemma~\ref{lem:gauss_curv_integral_repr}:
  \eqref{eq:disc_lifting_oneform_prob} follows from
  Lemma~\ref{lem:evolution_one_form} and the fundamental theorem of
  calculus, and for~\eqref{eq:integral_id_ex_connectionform}, we note
  that the jump terms in $c_h(\Gex(t),\gex-\Eucl,v)$ vanish due to the
  global smoothness of $\gex - \delta.$
\end{proof}

\begin{proof}[Proof of Theorem~\ref{thm:convergence_connectionform_l2-rev}]
	For $v\in\Ltwo[\Omega,\R^2]$ we add and subtract the $\Ltwo$-orthogonal projector $v_h=\Pi_{k}^\WW v\in \Wo_h^k$ and write
	\begin{align*}
	(\OneForm_h(\gh)-\OneForm(\gex),v)_{\Omega} =   s_1+s_2
	\end{align*}
        where $s_1= (\OneForm_h(\gh)-\OneForm(\gex),v -v_h)_{\Omega}$ and
        $s_2 = (\OneForm_h(\gh)-\OneForm(\gex),v_h)_{\Omega}$.
        Due to the properties of the $\Ltwo$-orthogonal projector we obtain
	\begin{align*}
	s_1&=(\Pi_{k}^\WW\OneForm(\gex)-\OneForm(\gex),v-v_h)_{\Omega}\,\lesssim\,\|v\|_{\Ltwo[]}\left\|(\idop-\Pi_{k}^\WW)\OneForm(\gex)\right\|_{\Ltwo[]}.
	\end{align*}
	For $s_2$, we use Lemma~\ref{lem:conn_oneform_integral_repr} and \eqref{eq:cov-curl-lift} to get
	\begin{align*}
	s_2&=\frac{1}{2}\int_0^1 c_h(G(t),\gh-\Eucl,v_h)-c_h(\Gex(t),\gh-\Eucl,v_h)+ c_h(\Gex(t),\gh-\gex,v_h)\,dt\\
	&= \frac{1}{2}(\curl_{\Gex(t), h}(\gh-\Eucl)-\curl_{G(t), h}(\gh-\Eucl), v_h)_\om-\frac{1}{2}(\curl_{\Gex(t), h}(\gh-\gex), v_h)_\om.
	\end{align*}
	 Estimating using \eqref{eq:curlg-1} and \eqref{eq:curlg-3} of  Theorem~\ref{thm:curlg} and \eqref{eq:est_gh_by_g-rev},
	\begin{align*}
	(\OneForm_h(\gh)-\OneForm(\gex),v_h)_{\Omega}\,\lesssim\,\nrm{\gex-\gh}_{2} \| v_h\|_{\Ltwo[]}.
	\end{align*}
	Thus, we obtain
	\begin{align*}
	\|\OneForm_h(\gh)-\OneForm(\gex)\|_{\Ltwo[]}&=\sup\limits_{v\in\Ltwo[\Omega,\R^2]}\frac{(\OneForm_h(\gh)-\OneForm(\gex),v+ v_h -v_h)_{\Omega}}{\|v\|_{\Ltwo[]}}\\
	&\,\lesssim\, \nrm{\gex-\gh}_{2}  + \left\|(\idop-\Pi_{k}^\WW)\OneForm(\gex)\right\|_{\Ltwo[]}.
	\end{align*}
        proving \eqref{eq:oneform_estimate1}. Inequality
	\eqref{eq:oneform_estimate2} follows by interpolation error estimates.
\end{proof}

\begin{proof}[Proof of Corollary~\ref{cor:convergence_connectionform_l2_hl}]
This is analogous to proof of Corollary~\ref{cor:curlg}.
\end{proof}

\begin{remark}[Case $k=0$ for connection approximation]
	\label{rem:conv_one_form_lo}
	In the case $k=0$, the space $\Wo_h^0$ consists of piecewise constant vector fields with normal continuity. Thus every function in $\Wo_h^0$ is exactly divergence-free. If the exact connection 1-form $\OneForm_{\gex}$         is not divergence-free,
        then it cannot generally be approximated by functions in $\Wo_h^0$. Thus, no convergence for the connection approximation in the lowest order case $k=0$ should be expected. This is confirmed by numerical experiments in the next section.
\end{remark}

\section{Numerical examples}
\label{sec:num_examples}

In this section we confirm, by numerical examples, that the theoretical convergence rates from Theorem~\ref{thm:convergence_curvature_hm1-rev} and Theorem~\ref{thm:convergence_connectionform_l2-rev} are sharp.
All experiments were performed in the open source finite element software NGSolve\footnote{www.ngsolve.org} \cite{Sch97,Sch14}, where the Regge elements are available.

\subsection{Curvature approximation}

We consider the numerical example proposed in \cite{Gaw20}, where on the square $\Omega=(-1,1)\times (-1,1)$ the smooth Riemannian metric tensor
\begin{align*}
\gex(x,y):= \mat{1+(\pder{f}{x})^2 & \pder{f}{x}\pder{f}{y} \\ \pder{f}{x}\pder{f}{y} & 1+ (\pder{f}{y})^2}
\end{align*}
with $f(x,y):= \frac{1}{2}(x^2+y^2)-\frac{1}{12}(x^4+y^4)$ is defined. This metric corresponds to the surface induced by the embedding $\big(x,y\big)\mapsto \big(x,y,f(x,y)\big)$ and its exact Gauss curvature is given by
\begin{align*}
\Gauss(\gex)(x,y) = \frac{81(1-x^2)(1-y^2)}{(9+x^2(x^2-3)^2+y^2(y^2-3)^2)^2}.
\end{align*}
The embedded surface and Gauss curvature are depicted in Figure~\ref{fig:embedd_surf_gauss_curv}.

\begin{figure}
	\centering
	\includegraphics[width=0.4\textwidth]{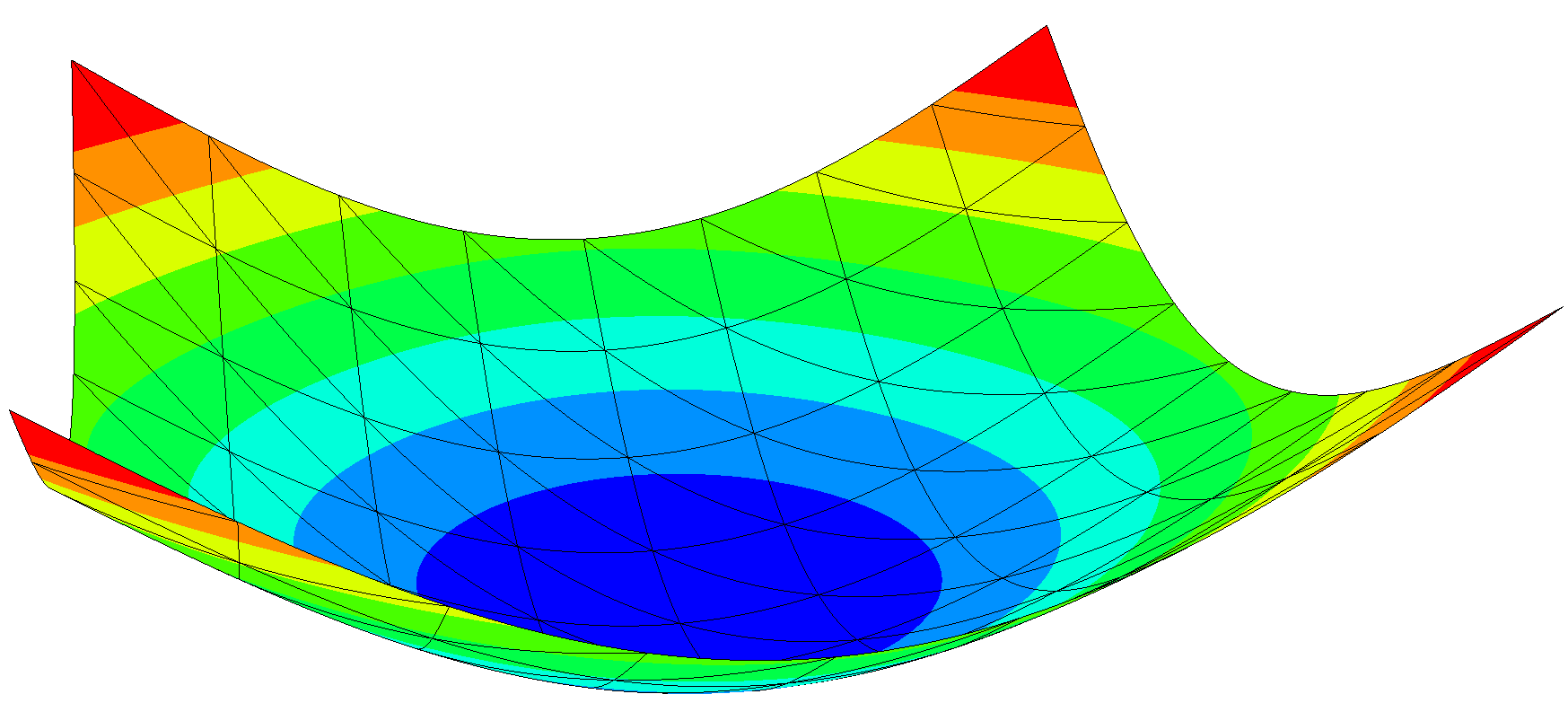}\hspace*{1cm}
	\includegraphics[width=0.3\textwidth]{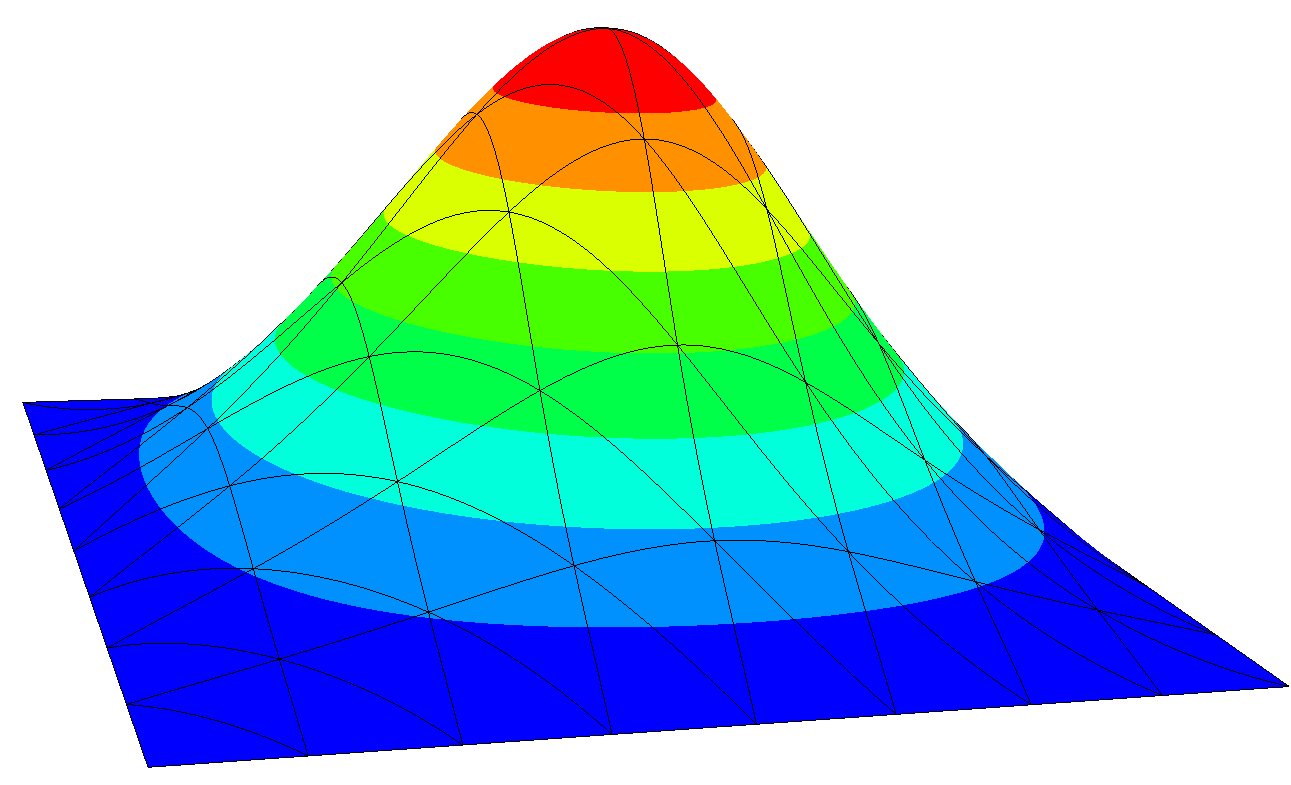}
	\caption{Left: Embedded surface, color indicates to $z$-component. Right: Exact Gauss curvature as graph over the domain $\Omega$.}
	\label{fig:embedd_surf_gauss_curv}
\end{figure}

To test also the case of non-homogeneous Dirichlet and Neumann boundary conditions we consider only one quarter $\Omega=(0,1)\times(0,1)$ and define the right and bottom boundaries as Dirichlet and the remaining parts as Neumann boundary. To avoid possible super-convergence properties due to a structured grid, we perturb all internal points of the triangular mesh by a uniform distribution in the range $[-\frac{h}{4},\frac{h}{4}]$, $h$ denoting the maximal mesh-size of the originally generate mesh. The geodesic curvature on the left boundary is exactly zero, whereas on the top boundary we compute
\begin{align*}
\kappa_g(\gex)|_{\Gamma_{\mathrm{left}}} = 0,\qquad\kappa_g(\gex)|_{\Gamma_{\mathrm{top}}} = \frac{-27(x^2 - 1)y(y^2 - 3)}{(x^2(x^2 - 3)^2 + 9)^{3/2}\sqrt{x^2(x^2 - 3)^2 + y^2(y^2 - 3)^2 + 9}}.
\end{align*}
The vertex expressions $K_V$ at the vertices of the Neumann boundary can directly be computed by measuring the angle $\arccos( \frac{\gex(\tv_1, \tv_2)}{\|\tv_1\|_{\gex}\|\tv_2\|_{\gex}})$.

\begin{figure}
	\centering
	\includegraphics[width=0.28\textwidth]{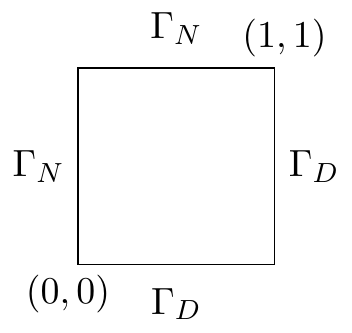}\hfill
	\includegraphics[width=0.28\textwidth]{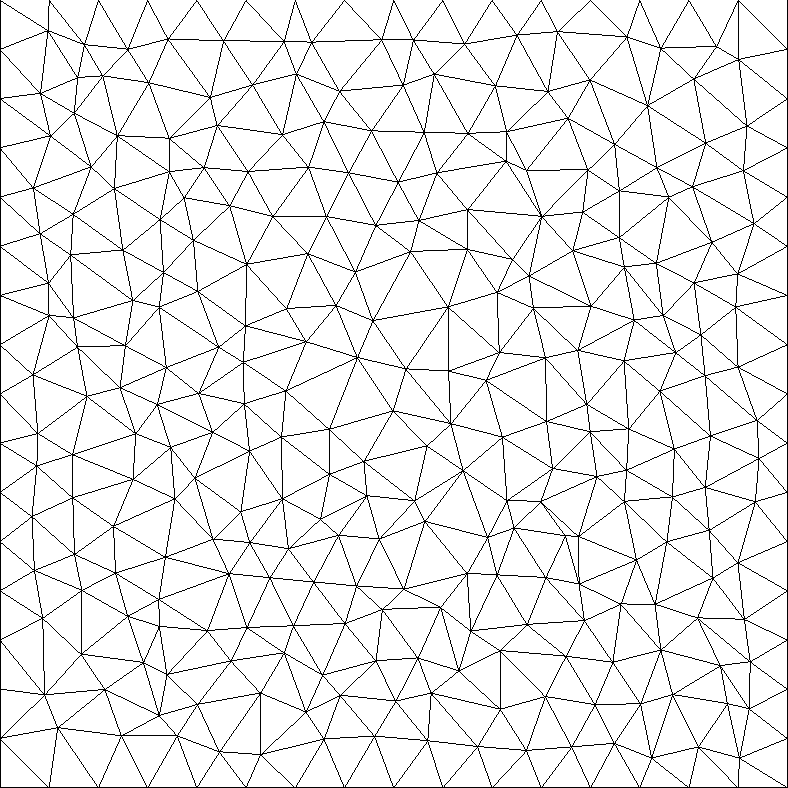}\hfill
	\includegraphics[width=0.28\textwidth]{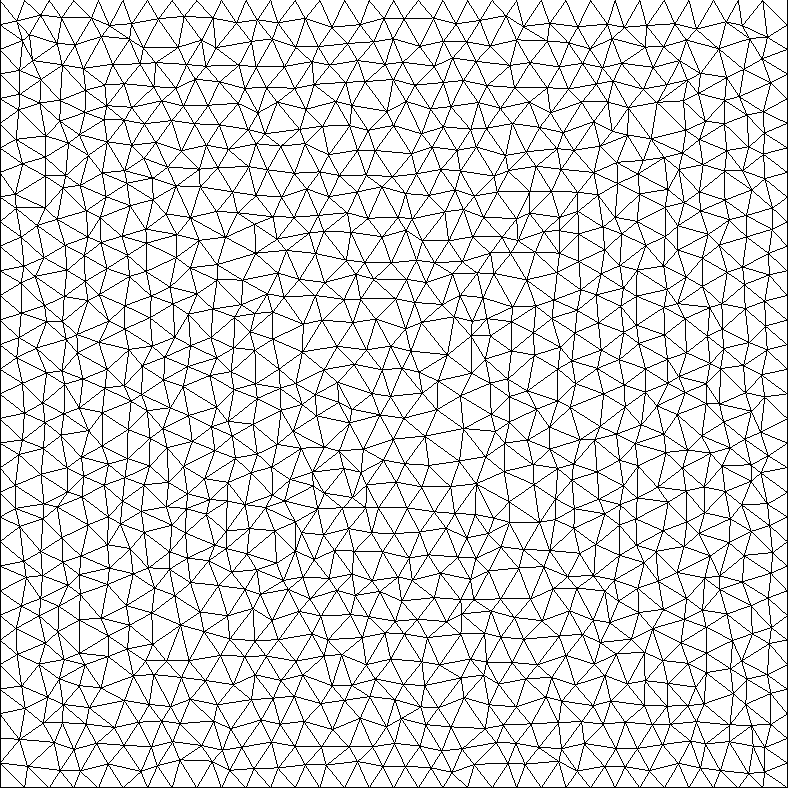}
	\caption{Left: Domain with Dirichlet and Neumann boundaries. Middle and right: perturbed unstructured triangular mesh grids.}
	\label{fig:domain_perturbed_grids}
\end{figure}

To illustrate our theorems, we must use
$\gh=\RegInt[k]\gex$.  In implementing the Regge interpolant, the
moments on the edges have to coincide exactly: see \eqref{eq:RegInt}.
To this end,  we use a high enough integration rule for interpolating $\gex$ for  minimizing the numerical integration errors. 

We compute and report the curvature error in the  $\Ltwo$-norm, namely  $\|\Gauss(\gex)-\Gauss_h(\gh)\|_{\Ltwo[]}$. We also report the  $\Hmone$-norm of the error. It can be computed by  solving for  $w\in\Honez[]$ such that 
$-\Delta w= \Gauss(\gex)-\Gauss_h(\gh)$ and observing that 
\begin{align*}
\|\Gauss(\gex)-\Gauss_h(\gh)\|_{\Hmone[]} = \|w\|_{\Hone[]}.
\end{align*}
Of course the right hand side can generally be computed only
approximately.  To avoid extraneous errors, we approximate $w$ using
Lagrange finite elements of two degrees more, i.e.,
$w_h\in \VV_h^{k+3}$ when  $\Gauss_h(\gh)\in \VV_h^{k+1}$.

\begin{figure}
	\centering

\resizebox{0.32\textwidth}{!}{
\begin{tikzpicture}
\begin{loglogaxis}[
legend style={at={(0,0)}, anchor=south west},
xlabel={ndof},
ylabel={error},
ymajorgrids=true,
grid style=dotted,
]
\addlegendentry{$\|\Gauss(\gex)-\Gauss_h(\gh)\|_{L^2}$}
\addplot[color=red, mark=*] coordinates {
	( 4 , 0.09326784925256802 )
	( 8 , 0.06345322388830371 )
	( 26 , 0.02595867508861117 )
	( 89 , 0.01703732023484835 )
	( 338 , 0.012654767367178849 )
	( 1246 , 0.012565878317247378 )
	( 4879 , 0.011545048826398747 )
};
\addlegendentry{$\|\Gauss(\gex)-\Gauss_h(\gh)\|_{H^{-1}}$}
\addplot[color=blue, mark=o] coordinates {
	( 4 , 0.012489863849611824 )
	( 8 , 0.006365298755469182 )
	( 26 , 0.0014626927103760425 )
	( 89 , 0.0005708093190676403 )
	( 338 , 0.00023759455999740632 )
	( 1246 , 0.00011550881569255937 )
	( 4879 , 5.3239285713173135e-05 )
};


\def\scal{0.01}
\addplot[dashed, color=black] coordinates {
	( 4 , \scal*0.5 )
	( 8 , \scal*0.3535533905932738 )
	( 26 , \scal*0.19611613513818404 )
	( 89 , \scal*0.105999788000636 )
	( 338 , \scal*0.05439282932204212 )
	( 1246 , \scal*0.028329634983503677 )
	( 4879 , \scal*0.014316425279852692 )
};

\end{loglogaxis}
\node (A) at (5, 1.85) [] {$\mathcal{O}(h)$};
\end{tikzpicture}}
\resizebox{0.32\textwidth}{!}{
\begin{tikzpicture}
\begin{loglogaxis}[
legend style={at={(0,0)}, anchor=south west},
xlabel={ndof},
ylabel={error},
ymajorgrids=true,
grid style=dotted,
]
\addlegendentry{$\|\Gauss(\gex)-\Gauss_h(\gh)\|_{L^2}$}
\addplot[color=red, mark=*] coordinates {
	( 9 , 0.03794599735078921 )
	( 21 , 0.013423836922395502 )
	( 85 , 0.006923288837855359 )
	( 321 , 0.002704214400767389 )
	( 1285 , 0.0012252009477189173 )
	( 4853 , 0.0005526100376963728 )
	( 19257 , 0.0002640804715243997 )
};
\addlegendentry{$\|\Gauss(\gex)-\Gauss_h(\gh)\|_{H^{-1}}$}
\addplot[color=blue, mark=o] coordinates {
	( 9 , 0.00378103414321965 )
	( 21 , 0.0011065158571022778 )
	( 85 , 0.0002327366154983381 )
	( 321 , 4.4838104279259916e-05 )
	( 1285 , 9.976071810983794e-06 )
	( 4853 , 2.3747422899575113e-06 )
	( 19257 , 5.672363282613678e-07 )
};


\def\scal{0.01}
\addplot[dashed, color=black] coordinates {
	( 9 , \scal*0.3333333333333333 )
	( 21 , \scal*0.2182178902359924 )
	( 85 , \scal*0.10846522890932808 )
	( 321 , \scal*0.055814557218594754 )
	( 1285 , \scal*0.027896417632583534 )
	( 4853 , \scal*0.01435472425324029 )
	( 19257 , \scal*0.00720618960436169 )
};

\def\scal{0.008}
\addplot[dashed, color=black] coordinates {
	( 9 , \scal*0.1111111111111111 )
	( 21 , \scal*0.047619047619047616 )
	( 85 , \scal*0.011764705882352941 )
	( 321 , \scal*0.003115264797507788 )
	( 1285 , \scal*0.0007782101167315176 )
	( 4853 , \scal*0.000206058108386565 )
	( 19257 , \scal*5.192916861401049e-05 )
};

\end{loglogaxis}
\node (A) at (5, 2.6) [] {$\mathcal{O}(h)$};
\node (A) at (5.7, 1.6) [] {$\mathcal{O}(h^2)$};
\end{tikzpicture}}
\resizebox{0.32\textwidth}{!}{
\begin{tikzpicture}
\begin{loglogaxis}[
legend style={at={(0,0)}, anchor=south west},
xlabel={ndof},
ylabel={error},
ymajorgrids=true,
grid style=dotted,
]
\addlegendentry{$\|\Gauss(\gex)-\Gauss_h(\gh)\|_{L^2}$}
\addplot[color=red, mark=*] coordinates {
	( 16 , 0.03283537542914546 )
	( 40 , 0.01279005887266998 )
	( 178 , 0.0017451911516211448 )
	( 697 , 0.0003817721974622066 )
	( 2842 , 9.413032794245751e-05 )
	( 10822 , 2.5557308283167545e-05 )
	( 43135 , 6.270117247445405e-06 )
};
\addlegendentry{$\|\Gauss(\gex)-\Gauss_h(\gh)\|_{H^{-1}}$}
\addplot[color=blue, mark=o] coordinates {
	( 16 , 0.002404947712051608 )
	( 40 , 0.00064688676729271 )
	( 178 , 3.099353061355925e-05 )
	( 697 , 3.7108450894523983e-06 )
	( 2842 , 4.336048447733595e-07 )
	( 10822 , 6.091280561801814e-08 )
	( 43135 , 7.360410691406379e-09 )
};


\def\scal{0.15}
\addplot[dashed, color=black] coordinates {
	( 16 , \scal*0.0625 )
	( 40 , \scal*0.025 )
	( 178 , \scal*0.0056179775280898875 )
	( 697 , \scal*0.0014347202295552368 )
	( 2842 , \scal*0.0003518648838845883 )
	( 10822 , \scal*9.240436148586214e-05 )
	( 43135 , \scal*2.318303002202388e-05 )
};

\def\scal{0.04}
\addplot[dashed, color=black] coordinates {
	( 16 , \scal*0.015625 )
	( 40 , \scal*0.003952847075210474 )
	( 178 , \scal*0.0004210852185370008 )
	( 697 , \scal*5.434390380764492e-05 )
	( 2842 , \scal*6.600303250212253e-06 )
	( 10822 , \scal*8.882571368875902e-07 )
	( 43135 , \scal*1.1162341275778122e-07 )
};

%
\end{loglogaxis}
\node (A) at (5, 2.55) [] {$\mathcal{O}(h^2)$};
\node (A) at (5.8, 1.6) [] {$\mathcal{O}(h^3)$};
\end{tikzpicture}}

	\caption{Convergence of Gauss curvature with respect to number of degrees of freedom (ndof) in different norms for Regge elements $\gh\in\Regge_h^k$ of order $k=0,1,2$.}
	\label{fig:conv_plot_curvature}
\end{figure}
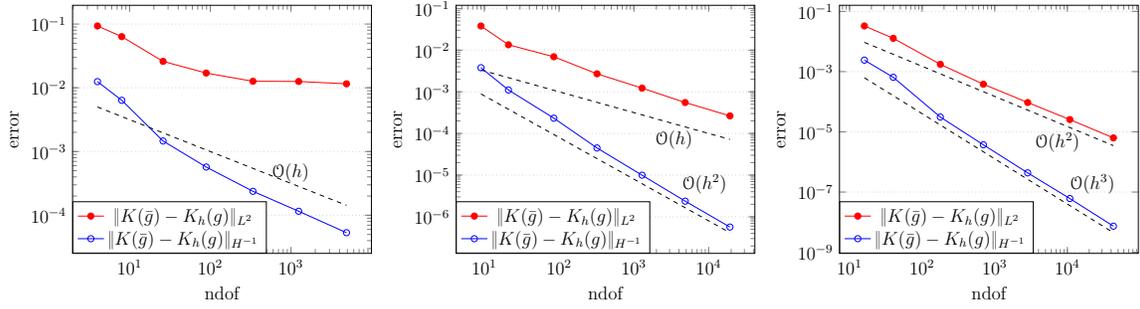

We start by approximating $\gex$ by the lowest order piecewise constant Regge elements $\gh\in\Regge_h^0$. As shown in Figure~\ref{fig:conv_plot_curvature} (left), we do not obtain convergence in the $\Ltwo$-norm, but do obtain linear convergence in the weaker $\Hmone$-norm, in agreement with Theorem~\ref{thm:convergence_curvature_hm1-rev}. When increasing the approximation order of Regge elements to linear and quadratic polynomials we observe the appropriate increase of the convergence rates: see  Figure~\ref{fig:conv_plot_curvature} (middle and right), confirming that the results stated in Theorem~\ref{thm:convergence_curvature_hm1-rev} and Corollary~\ref{cor:convergence_curvature_l2_hl} are practically sharp. In Figure~\ref{fig:plot_curvature} snap-shots of the approximated Gauss curvature are shown.

\begin{figure}
	\centering
	\includegraphics[width=0.28\textwidth]{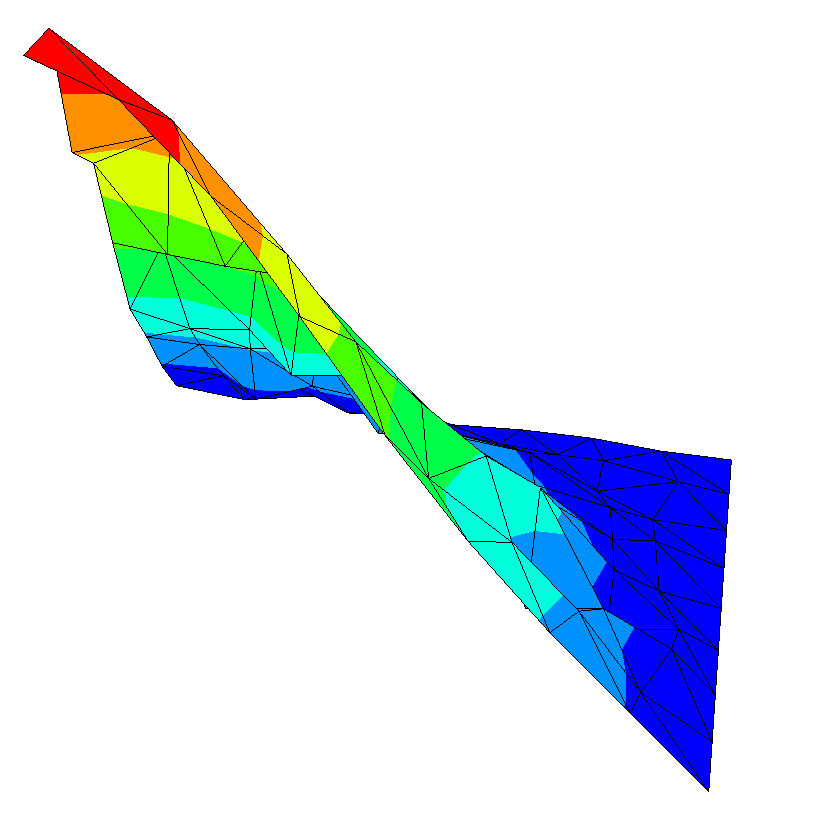}
	\includegraphics[width=0.28\textwidth]{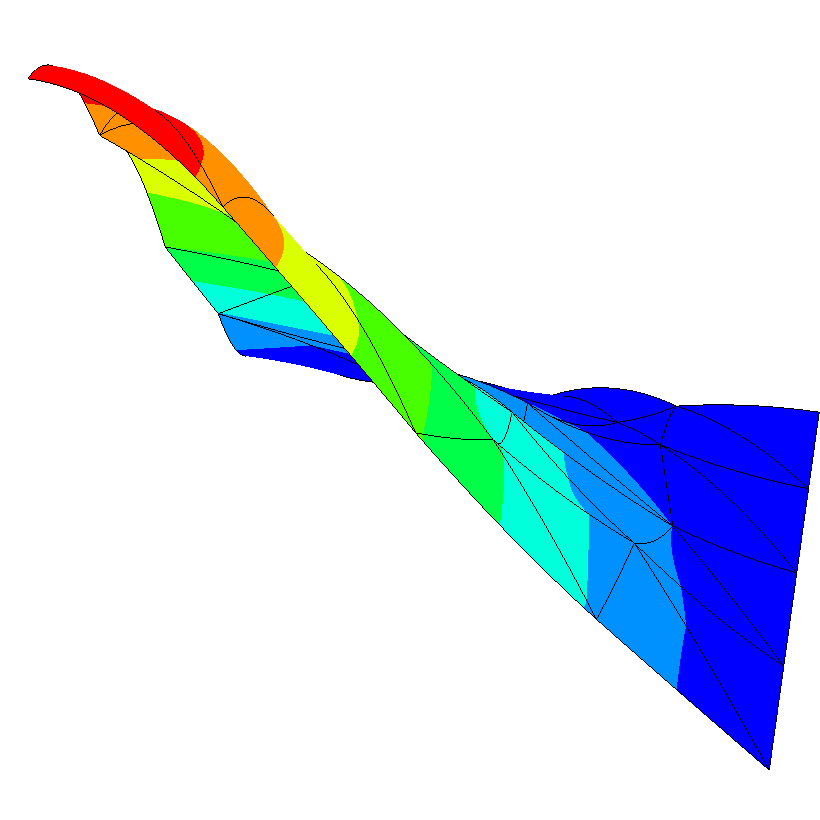}
	\includegraphics[width=0.28\textwidth]{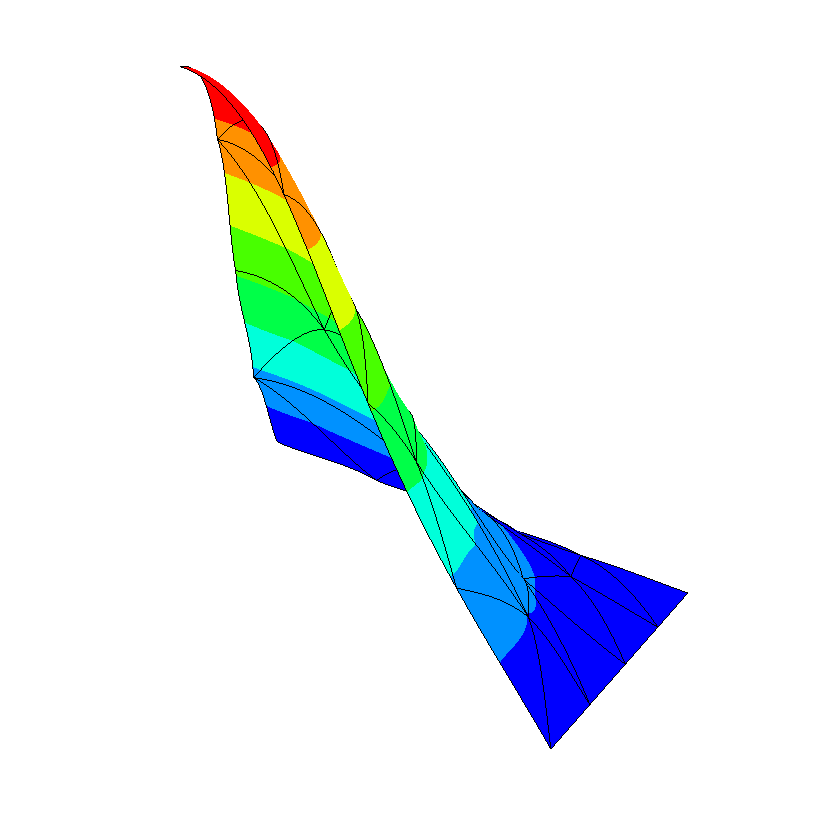}
	\caption{From left to right: Approximated Gauss curvature with Regge elements of order $k=0,1,2$.}
	\label{fig:plot_curvature}
\end{figure}

\subsection{Connection 1-form approximation}
\label{subsec:numex_oneform}
For testing the convergence of the connection 1-form we consider the same metric tensor $\gex$, same domain, and the same type of boundary conditions as before. The exact connection 1-form $\OneForm$ depicted in Figure~\ref{fig:ex_oneform} is given by 
\begin{align*}
&\OneForm(\gex) = -\frac{1}{2}\mat{ (\GBasis_2)^\trans \gex\nabex_{\EuclBasis_1}\GBasis_1-(\GBasis_1)^\trans \gex\nabex_{\EuclBasis_1}\GBasis_2\\(\GBasis_2)^\trans \gex\nabex_{\EuclBasis_2}\GBasis_1-(\GBasis_1)^\trans \gex\nabex_{\EuclBasis_2}\GBasis_2}=\mat{-\frac{3y(y^2-3) (x^2-1) (9 + 3 \sqrt{A} + y^2 (y^2-3)^2 + x^2 (x^2-3)^2)}{\sqrt{A}((A+9)\sqrt{A} + 6A)}\\
\frac{3x(x^2-3) (y^2-1) (9 + 3 \sqrt{A} + x^2 (x^2-3)^2 + y^2 (y^2-3)^2)}{\sqrt{A}((A+9)\sqrt{A} + 6A)}},	
\end{align*}
where
 $A=y^6-6y^4+9+x^6-6x^4+9x^2$, 
$\EuclBasis_i\in\R^2$ are the Euclidean basis vectors, $\GBasis_i=\gex^{-\frac{1}{2}}\EuclBasis_i$  in accordance with \eqref{eq:28}, and the covariant derivative $(\nabex_XY)^i=((\nabex X)Y)^i+\bar\Gamma_{jk}^{i}X^jY^k$ with  $\bar\Gamma_{ij}^k$ denoting the Christoffel symbol of second kind with respect to $\gex$. 

\begin{figure}
	\centering
	\includegraphics[width=0.28\textwidth]{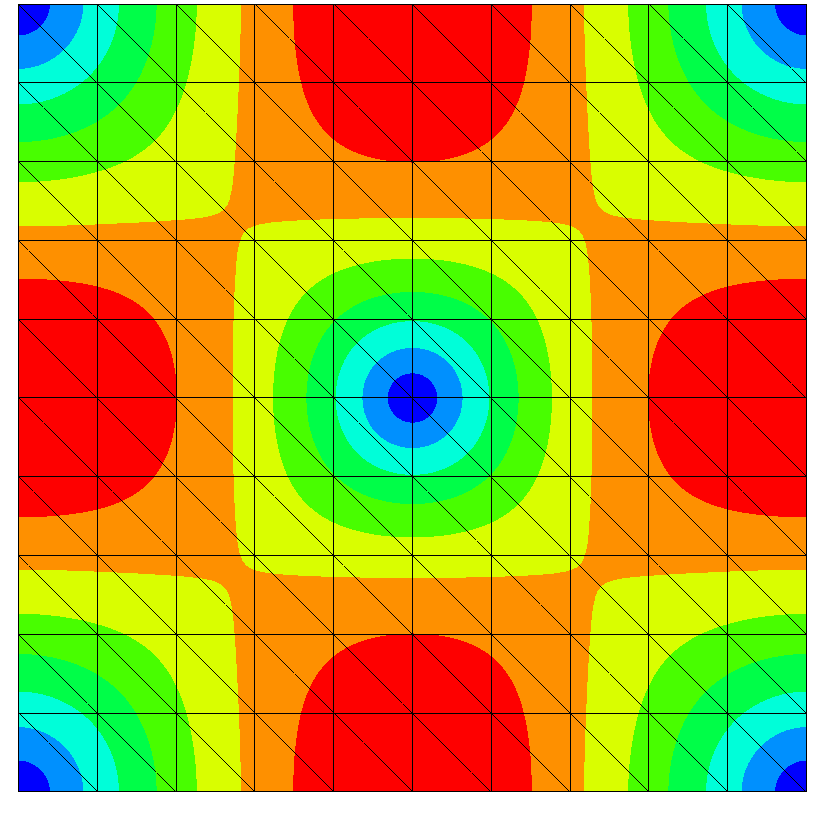}
	\includegraphics[width=0.28\textwidth]{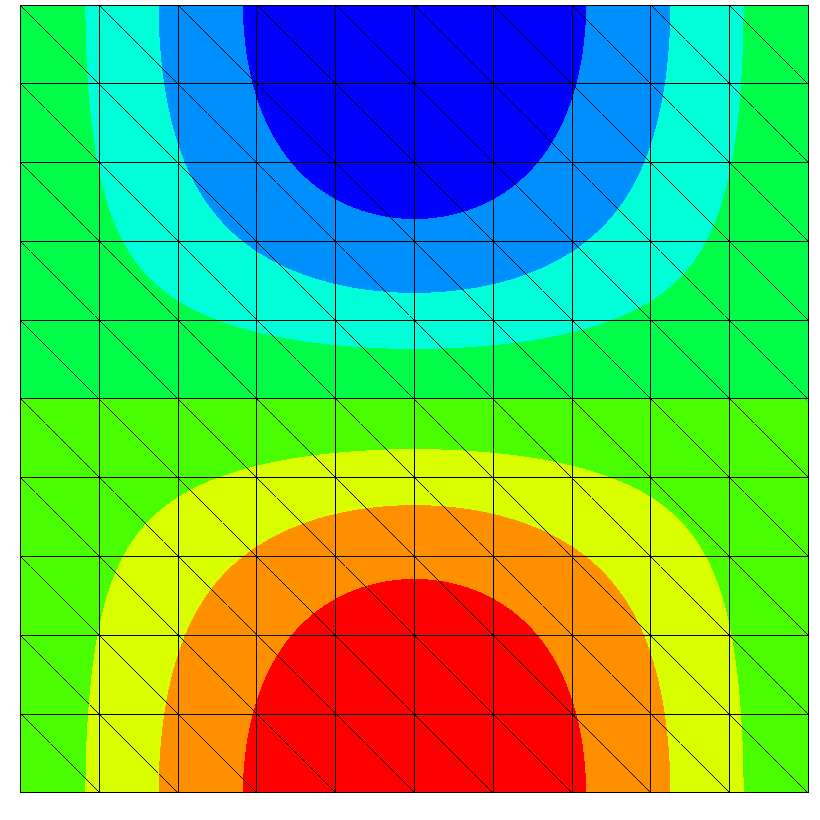}
	\includegraphics[width=0.28\textwidth]{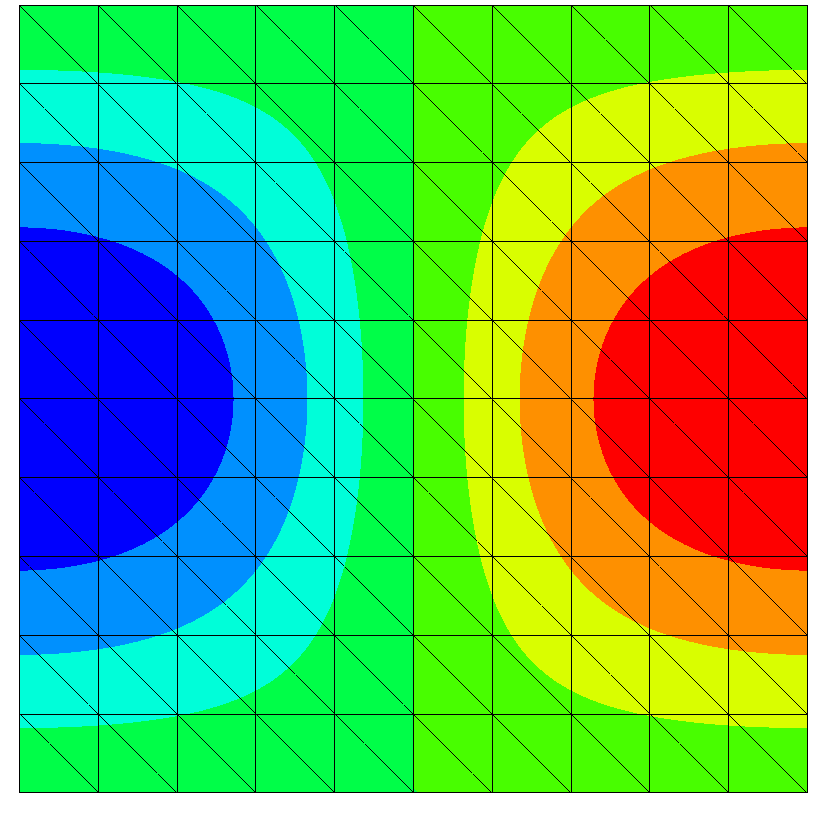}
	\caption{Exact connection 1-form $\OneForm$. Left: $\|\OneForm\|_{2}$, middle: $x$-component, right: $y$-component.}
	\label{fig:ex_oneform}
\end{figure}


For the results shown in Figure~\ref{fig:conv_plot_oneform} we use $\BDM$ and also Raviart--Thomas \cite{RT77}$\RT$ elements. The optimal $\Ltwo$-convergence rates stated in Theorem~\ref{thm:convergence_connectionform_l2-rev} are confirmed for $k>0$ to be sharp when using $\BDM$ elements for $k>0$. If we increase the test-space, however, to $\RT$ elements, which additionally include specific polynomials of one order higher than $\BDM$, one order of convergence is lost, compare also Figure~\ref{fig:conv_plot_oneform}. Note, that to construct the finite element space $\WW_h^0 = \BDM^0,$ we consider the lowest order Raviart-Thomas elements $\RT^0$ and lock the linear part by enforcing that $\div(\RT^0)=0$.  As depicted in Figure~\ref{fig:conv_plot_oneform} (left) the discrete solution converges linearly at the beginning, however, after some refinements the error stagnates. This is in accordance with the explanation provided in   Remark~\ref{rem:conv_one_form_lo}. 
Solution snap-shots are displayed in Figure~\ref{fig:plot_oneform}.

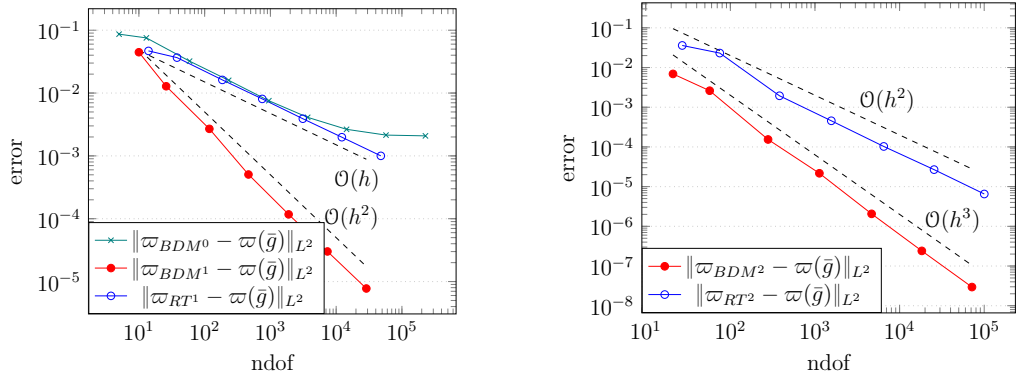
\begin{figure}
	\centering
\resizebox{0.4\textwidth}{!}{
	\begin{tikzpicture}
	\begin{loglogaxis}[
	legend style={at={(0,0)}, anchor=south west},
	xlabel={ndof},
	ylabel={error},
	ymajorgrids=true,
	grid style=dotted,
	]
	
	\addlegendentry{$\|\OneForm_{BDM^0}-\OneForm(\gex)\|_{L^2}$}
	\addplot[color=teal, mark=x] coordinates {
		( 5 , 0.08646296750973805 )
		( 13 , 0.07545373020294766 )
		( 59 , 0.03218940817938298 )
		( 232 , 0.01593162473683139 )
		( 947 , 0.007547356427023641 )
		( 3688 , 0.004105228213658372 )
		( 14387 , 0.002646875412634397 )
		( 57274 , 0.002142258788715815 )
		( 227951 , 0.0020824589657696606 )
	};
	
	\addlegendentry{$\|\OneForm_{BDM^1}-\OneForm(\gex)\|_{L^2}$}
	\addplot[color=red, mark=*] coordinates {
		( 10 , 0.044606773669639437 )
		( 26 , 0.012816980696466945 )
		( 118 , 0.002693393345664789 )
		( 464 , 0.0005055668489533827 )
		( 1894 , 0.00011725318781077317 )
		( 7376 , 3.0130332195958852e-05 )
		( 28774 , 7.756723463741678e-06 )
	};
	\addlegendentry{$\|\OneForm_{RT^1}-\OneForm(\gex)\|_{L^2}$}
	\addplot[color=blue, mark=o] coordinates {
		( 14 , 0.046839647803022265 )
		( 38 , 0.036628804084490575 )
		( 186 , 0.016183746808327442 )
		( 752 , 0.008066274736456732 )
		( 3114 , 0.0039026644612921626 )
		( 12208 , 0.0019826037382274656 )
		( 47786 , 0.001000911631927249 )
	};

	\def\scal{0.15}
	\addplot[dashed, color=black] coordinates {
		( 10 , \scal*0.31622776601683794 )
		( 26 , \scal*0.19611613513818404 )
		( 118 , \scal*0.09205746178983233 )
		( 446 , \scal*0.047351372381037836 )
		( 1888 , \scal*0.023014365447458083 )
		( 7208 , \scal*0.011778571185788638 )
		( 28744 , \scal*0.00589829375244162 )
	};
	
	\def\scal{0.5}
	\addplot[dashed, color=black] coordinates {
		( 10 , \scal*0.1 )
		( 26 , \scal*0.038461538461538464 )
		( 118 , \scal*0.00847457627118644 )
		( 446 , \scal*0.002242152466367713 )
		( 1888 , \scal*0.0005296610169491525 )
		( 7208 , \scal*0.00013873473917869035 )
		( 28744 , \scal*3.4789869190091844e-05 )
	};
	
	\end{loglogaxis}
	\node (A) at (5, 2.5) [] {$\mathcal{O}(h)$};
	\node (A) at (4.9, 1.8) [] {$\mathcal{O}(h^2)$};
	\end{tikzpicture}
}
\hspace*{1cm}\resizebox{0.4\textwidth}{!}{
	\begin{tikzpicture}
	\begin{loglogaxis}[
	legend style={at={(0,0)}, anchor=south west},
	xlabel={ndof},
	ylabel={error},
	ymajorgrids=true,
	grid style=dotted,
	]
	\addlegendentry{$\|\OneForm_{BDM^2}-\OneForm(\gex)\|_{L^2}$}
	\addplot[color=red, mark=*] coordinates {
		( 21 , 0.006903352743413215 )
		( 57 , 0.0026052969807374314 )
		( 279 , 0.00015402121395687588 )
		( 1128 , 2.1679903464834585e-05 )
		( 4671 , 2.061990331602166e-06 )
		( 18312 , 2.4057008976445295e-07 )
		( 71679 , 2.9516610395181677e-08 )
	};
	\addlegendentry{$\|\OneForm_{RT^2}-\OneForm(\gex)\|_{L^2}$}
	\addplot[color=blue, mark=o] coordinates {
		( 27 , 0.035942528357242855 )
		( 75 , 0.023114214504283158 )
		( 381 , 0.001941782709932394 )
		( 1560 , 0.00045532775531465164 )
		( 6501 , 0.00010277000201441132 )
		( 25560 , 2.6814147549149552e-05 )
		( 100197 , 6.521960675976014e-06 )
	};

	\def\scal{2}
	\addplot[dashed, color=black] coordinates {
		( 21 , \scal*0.047619047619047616 )
		( 57 , \scal*0.017543859649122806 )
		( 279 , \scal*0.0035842293906810036 )
		( 1083 , \scal*0.0009233610341643582 )
		( 4656 , \scal*0.0002147766323024055 )
		( 17892 , \scal*5.58909009613235e-05 )
		( 71604 , \scal*1.3965700240210044e-05 )
		
	};
	
	\def\scal{2}
	\addplot[dashed, color=black] coordinates {
		( 21 , \scal*0.010391328106475828 )
		( 57 , \scal*0.0023237409773070945 )
		( 279 , \scal*0.000214582200748835 )
		( 1083 , \scal*2.8058039033368603e-05 )
		( 4656 , \scal*3.1476072903809557e-06 )
		( 17892 , \scal*4.1784159029757865e-07 )
		( 71604 , \scal*5.219081444831702e-08 )
	};
	
	\end{loglogaxis}
	\node (A) at (4.5, 3.9) [] {$\mathcal{O}(h^2)$};
	\node (A) at (5.7, 1.7) [] {$\mathcal{O}(h^3)$};
	\end{tikzpicture}}
	\caption{$\Ltwo$-convergence of connection 1-form error with BDM and RT elements. Left: $\BDM^0$, $\BDM^1$, and $\RT^1$. Right: $\BDM^2$ and $\RT^2$.}
	\label{fig:conv_plot_oneform}
\end{figure}

\begin{figure}
	\centering
	\includegraphics[width=0.28\textwidth]{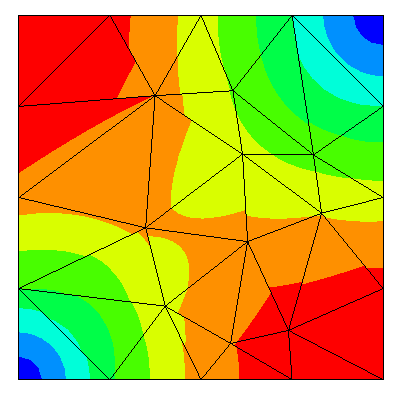}
	\includegraphics[width=0.28\textwidth]{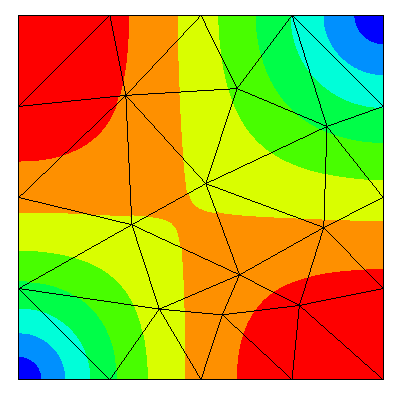}
	\includegraphics[width=0.28\textwidth]{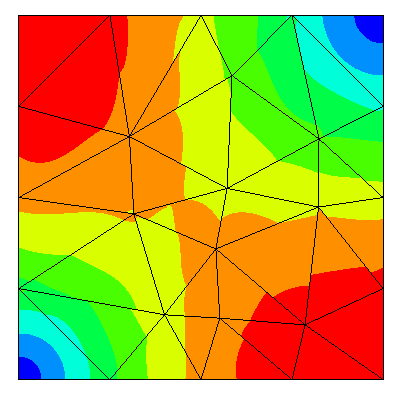}
	\caption{Norm of approximated connection 1-form. Left: $\BDM^1$, middle: $\BDM^2$, right: $\RT^2$.}
	\label{fig:plot_oneform}
\end{figure}


  
\appendix

\section{Rationale for the $g$-normal continuity}
\label{sec:rationale-g-normal}

This section briefly presents a justification for  the usage of $g$-normal
continuous vector fields in Definition~\ref{def:curlg-extended}.
We show that there are smooth functions $\vphi$ in $\Xmo \Moo$ approaching a $g$-normal continuous 
$W \in \Wo_g(\T)$ in such a way that the right hand side
of~\eqref{eq:dist-curlg-firstdef} converges to that
of~\eqref{eq:curlg-extended-2}.
For any mesh vertex $V \in \V$, let
$B_\veps(V) = \{ q \in M: d_g(q, V) \le \veps\}.$ Then put
$D_\veps = \cup_{V \in \V} B_\veps(V)$ and
$\Moo_\veps = M \setminus D_\veps$.  Let
$U_i, \Poo_i: U_i \to \R^2$ denote a chart of the glued smooth
structure. In the parameter domain $\Poo_i(U_i)$, using the Euclidean
divergence operator, define
$W^p(\div, \Poo_i(U_i)) = \{ w \in L^p( \Poo_i(U_i)),\; \div (w) \in
L^p( \Poo_i(U_i))\}$ for any $1\le p < \infty$ with its natural
Euclidean norm.  This norm and the duality pairings defined
in~\eqref{eq:dist-curlg-firstdef} and~\eqref{eq:curlg-extended-2}
feature in the next result.

\begin{prop}
  \label{prop:dens}
  Let $\sigma \in \Regge(\T)$ and $W \in \Wo_g(\T)$. For any given
  $\veps_1>0$, there exists a $p>2$, an $\veps_2>0$, finitely many
  charts $\{(U_i, \Poo_i): i \in I\}$ covering $\Moo_{\veps_2}$ in the
  glued smooth structure, a partition of unity $\psi_i$ subordinate to
  $U_i$, and a smooth $\vphi \in \Xmo \Moo$ such that
  $\vphi = \sum_{i \in I} \vphi_i$ with support of $\vphi_i$  contained
  in $U_i$, satisfies 
  \begin{gather}
    \label{eq:5A}
    \big|\act{\curl_g\sigma, \vphi}_{\Xmo \Moo}
    -
    \act{\curl_g\sigma, W}_{\Wo_g(\T)}\big|
    \le  \veps_1, \quad \text { and } 
    \\ \label{eq:3A}
    \| (\Poo_i)_* ( \vphi_i - \psi_i W ) \|_{W^p(\div, \Poo_i(U_i \cap \Moo_{\veps_2}))}
    \le \veps_1, \quad \text{ for all } i \in I.
  \end{gather}
\end{prop}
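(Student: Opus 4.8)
The plan is to exploit the defining feature of the glued smooth structure: in it the metric $g$ is globally continuous, and the intrinsic $g$-normal continuity of $W$ becomes continuity of the normal component of $W$ across the (now interior) mesh edges, so that $W$ is locally a piecewise smooth field lying in $\HDiv$. Since $W$ is piecewise smooth on a compact region, its local representatives belong to $W^p(\div,\cdot)$ for every finite $p$, and I would fix any $p>2$ (this value is needed only for the boundary-trace pairing at the very end). The construction of $\vphi$ then proceeds in three stages: truncate away from the mesh vertices, localize with a partition of unity in the glued charts, and mollify each localized piece using density of smooth fields in $W^p(\div,\cdot)$.

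The first step is the vertex truncation. Since $D_{\veps_2}$ shrinks to the finite vertex set as $\veps_2\to 0$, both the area of $D_{\veps_2}$ and the total arclength of the edge portions it contains are $O(\veps_2)$; as $\curl_g\sigma$, $W$, and $\sigma(\gn,\gt)$ are all bounded, the contribution of $D_{\veps_2}$ to the right-hand side of \eqref{eq:curlg-extended-2} is $O(\veps_2)$ and can be made at most $\veps_1/2$ by choosing $\veps_2$ small. On the compact complement $\Moo_{\veps_2}$ I would take a finite cover by glued charts $(U_i,\Poo_i)$ with a subordinate partition of unity $\psi_i$. The key structural observation, following \cite{Kosov02} and the discussion around \eqref{eq:Kosovskii-coords}, is that in each glued chart the mesh edge is an interior smooth curve across which both $g$ and the normal component of $W$ are continuous, so $(\Poo_i)_*(\psi_i W)\in W^p(\div,\Poo_i(U_i))$.

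Next comes the mollification. By density of $C^\infty$ fields in $W^p(\div,\cdot)$, I would choose a smooth $\vphi_i$ supported in $U_i$ with $(\Poo_i)_*\vphi_i$ arbitrarily close to $(\Poo_i)_*(\psi_i W)$ in $W^p(\div,\Poo_i(U_i\cap\Moo_{\veps_2}))$, which yields \eqref{eq:3A} by construction. Setting $\vphi=\sum_i\vphi_i\in\Xmo\Moo$, I would then verify \eqref{eq:5A} by subtracting \eqref{eq:curlg-extended-2} from \eqref{eq:dist-curlg-firstdef}:
\[
\act{\curl_g\sigma, \vphi}_{\Xmo \Moo} - \act{\curl_g\sigma, W}_{\Wo_g(\T)} = \int_\T (\curl_g\sigma)(\vphi - W) - \int_{\d \T} g(\vphi - W, \gn)\, \sigma(\gn, \gt).
\]
On $\Moo_{\veps_2}$ one has $\vphi-W=\sum_i(\vphi_i-\psi_i W)$, so the interior integral is bounded by $\|\curl_g\sigma\|_{L^{p'}}\sum_i\|\vphi_i-\psi_i W\|_{L^p}$, which is small via \eqref{eq:3A} (here $\curl_g\sigma\in L^\infty$ because $\sigma$ is piecewise smooth), while the part of the interior integral over $D_{\veps_2}$ was already absorbed into the $O(\veps_2)$ error.

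The hard part will be the boundary integral. Because both the smooth $\vphi$ and the $g$-normal-continuous $W$ have continuous normal component, on each interior edge $E$ the two adjacent element contributions combine into the normal trace of $\vphi-W$ multiplied by the jump of $\sigma(\gn,\gt)$ across $E$, and this jump is smooth along $E$ away from the vertices. The normal trace of $\vphi_i-\psi_i W$ need not tend to zero in $L^\infty$, but it does tend to zero in the negative-order trace space as its $W^p(\div)$ norm shrinks; pairing it against the smooth edge datum $\sigma(\gn,\gt)$ then controls this contribution by $\sum_i\|\vphi_i-\psi_i W\|_{W^p(\div)}$. The exponent $p>2$ enters here as the technical requirement that makes this normal-trace pairing legitimate, while the edge portions lying inside $D_{\veps_2}$ contribute only $O(\veps_2)$. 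Choosing $\veps_2$ first and then the mollification parameters small enough yields \eqref{eq:5A}. The main obstacle throughout is precisely this normal-trace control near the vertices, which is why the truncation to $\Moo_{\veps_2}$ and the restriction to exponents $p>2$ appear in the statement.
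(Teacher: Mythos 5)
Your proposal is correct and follows essentially the same route as the paper's proof: a cutoff near the vertices costing $O(\veps_2)$, a finite cover by glued charts with a subordinate partition of unity, approximation of each localized piece by smooth fields via density in $W^p(\div,\cdot)$, and control of the element-boundary terms by pairing the vanishing normal trace (in the negative-order trace space $W^{-1/p,p}$) against the edge datum $\sigma(\gn,\gt)$, with $p>2$ entering exactly where you say it does --- the paper makes this last step precise by noting that the edge datum, extended by zero to the $\Yoo$-axis, lies in $W^{1-1/q,q}$ because $1-1/q<1/2$ when $q<2$. The only detail you leave implicit is that the localized fields have vanishing normal trace on the chart boundary, so the approximants can be taken compactly supported in each $U_i$, which the paper handles via the subspace $\mathring{W}^p(\div,\Poo_i(U_i))$.
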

\begin{proof}
  As a first step, we zero out the vector field $W$ near vertices. For any
  $\veps>0$, let $0 \le \chi_\veps \le 1$ be a smooth cutoff function
  satisfying $\chi_\veps \equiv 1$ in $\Moo_{\veps}$ and
  $\chi_\veps \equiv 0$ in $D_{\veps/2}$, and let
  \[
    r_\veps(W) = \sum_{T \in \T} \int_{(T \cap D_\veps, g)} g(W, W)^{1/2} +
    \int_{(\d T \cap D_\veps, g)} g(W, W)^{1/2}.
  \]
  Since $\sigma$ is piecewise smooth, there is a constant $C_\sigma$
  depending only on $\sigma$ (independent of $\veps$) such that
  $ | \act{\curl_g\sigma, \; W - \chi_\veps W}_{\Wo_g(\T)} | \le
  C_\sigma \, r_\veps(W).$ Since $W$ is piecewise smooth, $r_\veps(W)$
  approaches zero as $\veps \to 0$. Hence for the given $\veps_1$,
  there is a $\veps_2>0$ such that
  \begin{equation}
    \label{eq:20A}
    \left| \act{\curl_g\sigma, \; W - \chi_\veps W}_{\Wo_g(\T)} \right| \le
    C_\sigma \, r_\veps(W) \le \frac {\veps_1}{2}
    \qquad \text{ for all } \veps \le \veps_2.
  \end{equation}

  Next, to approximate $\chi_{\veps_2} W$ by a smooth vector field, we
  use the precompactness of $\Moo_{\veps_2/2}$ to extract a finite
  subcover from the maximal atlas of $\Moo$. Denoting the resulting
  finitely many charts by $U_i, \Poo_i$, let  $\psi_i$ be  a partition of
  unity subordinate to the cover $U_i$. We focus on a $U_i$
  that intersects an edge $E$ (since the other cases are easier) and
  use the accompanying notation in~\eqref{eq:jump-def}.  By the
  previous discussion of the coordinate
  construction~\eqref{eq:Kosovskii-coords}, $\doo^1 = \gn_+$ and
  $\doo^2 = \gt_+$ along $E$, so the expansion $W = W^i \doo_i$ is
  $g$-orthonormal and 
  \begin{equation}
    \label{eq:30A}
    g(W|_{T_\pm}, \gn_\pm) = \pm W^1|_{T_\pm}  \text { on } E\cap U_i.
  \end{equation}
  The $g$-normal continuity $\jump{g(W, \gn)} = 0$ implies that
  $W_i = \psi_i \chi_{\veps_2} W$ pushed forward to the parameter
  domain, namely $(\Poo_i)_*W_i$, has continuous normal component
  across  $\Poo_i(E)$, a subset of the axis
  $\Yoo = \{ (\xoo^1, \xoo^2) \in \R^2: \xoo^1=0\}.$ Hence
  $(\Poo_i)_*W_i$ is in $\mathring{W}^p(\div, \Poo_i(U_i)),$ the
  subspace of ${W}^p(\div, \Poo_i(U_i))$ with zero normal traces on
  the boundary of $ \Poo_i(U_i)$.  By a well known density result in
  the Euclidean domain, there exists an infinitely smooth compactly
  supported vector field $\breve{\vphi}_i$ on $\Poo_i(U_i)$ that is
  arbitrarily close to $(\Poo_i)_*W_i$, so letting
  $\vphi_i = (\Poo_i^{-1})_*\breve{\vphi}_i$, we have
  \begin{equation}
    \label{eq:50A}
    \| (\Poo_i)_* ( \vphi_i - W_i ) \|_{W^p(\div, \Poo_i(U_i))}
    =
    \| \breve{\vphi}_i - (\Poo_i)_*  W_i  \|_{W^p(\div, \Poo_i(U_i))}
    \le
    \veps_1.    
  \end{equation}
  Moreover, since $\chi_{\veps_{2}} \equiv 1$ in
  $U_i \cap \Moo_{\veps_2}$, the functions $W_i$ and $\psi_i W$
  coincide there, so~\eqref{eq:3A} follows. Constructing such
  $\vphi_i$ on every $U_i$, put $\vphi = \sum_{i\in I} \vphi_i$.

  To  prove~\eqref{eq:5A}, in view of~\eqref{eq:20A}, it suffices to show that
  \begin{equation}
    \label{eq:40A}
    \left| \act{\curl_g\sigma, \; \chi_{\veps_2} W}_{\Wo_g(\T)}
      - \act{\curl_g\sigma, \vphi}_{\Xmo \Moo} \right| \le
    \frac {\veps_1}{2}.
  \end{equation}
  Obviously, the element contributions in the difference above,
  $ \int_\T (\curl_g \sigma )(\chi_{\veps_2} W - \vphi), $ can be made
  arbitrarily small by~\eqref{eq:50A}, revising the choice of
  $\vphi_i$ if needed. For the element boundary terms,
  \begin{equation}
    \label{eq:dT-trace}
    \int_{\d\T} g( \chi_{\veps_2} W - \vphi, \gn)\, \sigma(\gn, \gt)
    = \sum_{i \in I}
    \int_{\d\T} g( W_i - \vphi_i, \gn)\, \sigma(\gn, \gt),    
  \end{equation}
  we focus, as before, on a neighborhood $U_i$ intersecting an edge
  $E = \d T_- \cap \d T_+$.  On $\d T_+$, by~\eqref{eq:30A},
  $g( W_i - \vphi_i, \gn) = W_i^1 - \vphi_i^1$ yields the normal
  component of the pushforward $(\Poo_i)_*(W_i - \vphi_i)$ on
  $\Yoo$-axis in the parameter domain.  Since the latter converges to
  zero in $W^p(\div, \Poo_i(U_i))$ by~\eqref{eq:50A}, its normal trace
  converges to zero in $W^{-1/p, p}(\Poo_i(E \cap U_i))$.  Choose
  $p>2$ and $q$ such that $1/p + 1/q=1$.  When $\sigma(\gn, \gt)$ is mapped to
  $\Poo_i(E \cap U_i)$ and extended to $\Yoo$-axis by zero, is in
  $W^{1-1/q, q}(\Yoo)$ since $1-1/q < 1/2$. Hence the contribution
  from $U_i \cap E$ to the right hand side of~\eqref{eq:dT-trace}
  vanishes. Repeating this argument on other charts, \eqref{eq:40A} is
  proved.
\end{proof}

\section{Angle computation for connection approximation}
\label{sec:oneform_algorithm}

In this section we discuss and present a stable angle computation used
in the connection 1-form approximation. Since $g_{ij}$ is in general
discontinuous across an edge $E$ (in the computational coordinates
$x^i$), we cannot use it to directly compute the angle between frame
vectors on two different triangles. Instead, we compute angles the
frame makes with an intermediate vector element by element and then use it
to compute $\Theta^E$ as explained below.


Before going into details we make the following  observation: if the metric $g$ approximates a smooth metric $\gex$, we
expect that the frame $e_i$ fixed by~\eqref{eq:28} will be such
that their restrictions to adjacent elements,
$(\GBasis_{+,1},\GBasis_{+,2})$ and $(\GBasis_{-,1},\GBasis_{-,2})$,
will differ by a small angle, say less than $\pi$. This is the case in Figure~\ref{fig:inconsistent_jump} (left), where the angle difference $\Theta^E$ is negative and $|\Theta^E| < \pi$. 

On each interior mesh edge $E$, let $T_\pm, \gn_\pm, \gt_\pm$ be as
in~\eqref{eq:jump-def}, orient the edge $E$ by 
$\gt^E = \gt_+$, and  put $\gn^E = \gn_+$, $e_{\pm, i} = e_i|_{T_\pm}$. 
Let
$\Theta^E_\pm = \agl_g(e_{\pm, 1}, \pm \gn_\pm)$. Clearly, $\Theta^E_\pm$ can be computed using $g|_{T_\pm}$, specifically using its components $g_{ij}$  in the computational coordinates $x^i$ on either triangle.
Then  $\Theta^E=\Theta^E_+-\Theta^E_-$ in most cases (and certainly in the case illustrated in left drawing of Figure~\ref{fig:inconsistent_jump}).
In some cases however, such as that in the middle drawing of  Figure~\ref{fig:inconsistent_jump},
although $\Theta^E$ is negative and  $|\Theta^E| < \pi,$ the number 
$\Theta^E_+-\Theta^E_-$ is positive and larger than $\pi$. Thus setting
$\Theta^E = \Theta^E_+-\Theta^E_-$ 
would be incorrect and would lead to a bad numerical approximation of the connection 1-form. To cure this problem we change the choice of the starting angle on the fly. First, we compute
$\Theta^E_\pm = \agl_g(e_{\pm, 1}, \pm \gn_\pm)$
on every edge
as a pre-processing step. On each edge, set
\begin{equation}
\label{eq:31}
s_{E}  = \begin{cases}
+1, & \text{ if } |\Theta^E_+-\Theta^E_-| < \pi,\\
-1, & \text{ otherwise}.
\end{cases}  
\end{equation}
Then set $\tilde \Theta^E_\pm = \agl_g(e_{\pm, 1}, \pm s_E \gn_{\pm})$
and $\Theta^E = \tilde\Theta^E_+ - \tilde\Theta^E_-$. In other words,
we  compute after 
reversing the sign of the artifically introduced
$g$-normal vector $\gn_{\pm}$ on both the adjacent elements of an edge
if the modulus of the pre-computed angle is larger than $\pi$. This is illustrated in Figure~\ref{fig:inconsistent_jump} (right), where the sign change of the normal vector is depicted in red. The following computational formula is easy to prove.

\begin{figure}[h]
	\centering
%
%
%
%
%
%
%
%
%
	\begin{tikzpicture}[scale=1.5]
\draw[draw=black] (0,0.5) --(1.5,0)--(1.5,1.5)--cycle;
\draw[draw=black] (1.5,0)--(1.5,1.5)--(3.1,0.9)--cycle;

\draw[draw=blue,->] (1.2,0.65) to (0.7,0.65);
\draw[draw=black,->] (1.2,0.65) to (0.75,0.45);

\draw[draw=blue,->] (2.1,0.75) to (1.6,0.75);
\draw[draw=black,->] (2.1,0.75) to (1.75,0.4);

\draw[draw=teal, thick,->] (1.5,0.7) to (1.5,1.2);

\node (A) at (0.4, 0.1) [] {$T_+$};
\node (A) at (2.2, 0.1) [] {$T_-$};

\node[blue] (A) at (1.2, 0.8) [] {$\gn_+$};
\node[blue] (A) at (1.8, 0.95) [] {$-\gn_-$};

\node[teal] (A) at (1.3, 1.2) [] {$\gt^E$};

\node (A) at (1.2, 0.4) [] {$\GBasis_{+,1}$};
\node (A) at (2.3, 0.6) [] {$\GBasis_{-,1}$};

\centerarc[black, thick,-{Latex[length=1.3mm]}](1.2,0.65)(180:180+26:0.35);
\centerarc[black, thick,-{Latex[length=1.3mm]}](2.1,0.75)(180:180+46:0.35);

\node (A) at (0.5, 0.6) [] {$\Theta_+^E$};
\node (A) at (1.7, 0.2) [] {$\Theta_-^E$};
\end{tikzpicture}
	\begin{tikzpicture}[scale=1.5]
	\draw[draw=black] (0,0.5) --(1.5,0)--(1.5,1.5)--cycle;
	\draw[draw=black] (1.5,0)--(1.5,1.5)--(3.1,0.9)--cycle;
	
	\draw[draw=blue,->] (1.2,0.65) to (0.7,0.65);
	\draw[draw=black,->] (1.2,0.65) to (0.75,0.85);
	
	\draw[draw=blue,->] (2.1,0.75) to (1.6,0.75);
	\draw[draw=black,->] (2.1,0.75) to (1.75,0.4);

	\draw[draw=teal, thick,->] (1.5,0.7) to (1.5,1.2);
	
	\centerarc[black, thick,-{Latex[length=1.3mm]}](1.2,0.65)(180:180+335:0.35);
	\centerarc[black, thick,-{Latex[length=1.3mm]}](2.1,0.75)(180:180+45:0.35);
	
	
	
	\node (A) at (1.2, 0.78) [] {$\Theta_+^E$};
	\node (A) at (1.75, 0.2) [] {$\Theta_-^E$};
	
	\node (A) at (0.4, 0.1) [] {$T_+$};
	\node (A) at (2.2, 0.1) [] {$T_-$};
	
	\node[blue] (A) at (0.7, 0.45) [] {$\gn_+$};
	\node[blue] (A) at (1.8, 0.9) [] {$-\gn_-$};
	
	
	\node (A) at (0.6, 1.1) [] {$\GBasis_{+,1}$};
	\node (A) at (2.25, 0.58) [] {$\GBasis_{-,1}$};
	
	\node[teal] (A) at (1.75, 1.2) [] {$\gt^E$};
	\end{tikzpicture}
	\begin{tikzpicture}[scale=1.5]
\draw[draw=black] (0,0.5) --(1.5,0)--(1.5,1.5)--cycle;
\draw[draw=black] (1.5,0)--(1.5,1.5)--(3.1,0.9)--cycle;

\draw[draw=black,->] (1.2,0.65) to (0.75,0.85);
\draw[draw=red,->] (1.2,0.65) to (1.7,0.65);

\draw[draw=black,->] (2.1,0.75) to (1.75,0.4);
\draw[draw=red,->] (2.1,0.75) to (2.6,0.75);

\draw[draw=teal, thick,->] (1.5,0.7) to (1.5,1.2);


\centerarc[red, thick,-{Latex[length=1.3mm]}](1.2,0.65)(0:335-180:0.25);

\centerarc[red, thick,-{Latex[length=1.3mm]}](2.1,0.75)(0:405-180:0.25);

\node (A) at (1.2, 1.05) [] {\textcolor{red}{$\tilde\Theta_+^E$}};
\node (A) at (2.2, 1.15) [] {\textcolor{red}{$\tilde\Theta_-^E$}};

\node (A) at (0.4, 0.1) [] {$T_+$};
\node (A) at (2.2, 0.1) [] {$T_-$};


\node (A) at (1.25, 0.5) [] {\textcolor{red}{$-\gn_+$}};
\node (A) at (2.3, 0.6) [] {\textcolor{red}{$\gn_-$}};

\node (A) at (0.6, 1.1) [] {$\GBasis_{+,1}$};
\node (A) at (1.75, 0.32) [] {$\GBasis_{-,1}$};

\node[teal] (A) at (1.75, 1.2) [] {$\gt^E$};
\end{tikzpicture}
	\caption{Computation of angle difference. Left: $|\Theta^E_+-\Theta^E_-|<\pi$. Middle: $|\Theta^E_+-\Theta^E_-|>\pi$. Right: $g$-normal vector sign is swapped  according to~\eqref{eq:31}.}
	\label{fig:inconsistent_jump}
\end{figure}
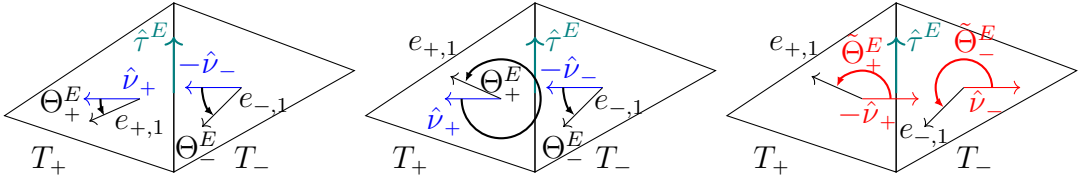

\begin{proposition}
	\label{prop:modelOF}
	Let
	$ \mt{\cn}_i = \frac{1}{2}g_{jk}(\d_i\GBasis_2^j+\Gamma_{li}^j\GBasis_2^l\GBasis_1^k-\d_i\GBasis_1^j-\Gamma_{li}^j\GBasis_1^l\GBasis_2^k)
	$ where $e_i$ is chosen as in~\eqref{eq:28}. Then 
	the connection 1-form $\OneForm_h(g)$ of
	Definition~\ref{def:distributional_one_form} satisfies
	\begin{equation*}
	\begin{aligned}
	\int_\om
	\delta(\OneForm_h(g),  v)\, \da
	= 
	\sum_{T \in \T}
	\bigg(& 
	\int_T
	\mt{\cn}_i v^i
	\; \da-
	\int_{\d T }
	\sphericalangle_g(\GBasis_1,s_E\,g(\gt_E,\gt)\gn)\,
	v^\nv\,\dl\bigg),
	\end{aligned}
	\end{equation*}
	for all
	$v\in \Wo_h^k$.
\end{proposition}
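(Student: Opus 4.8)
The plan is to evaluate the right-hand side of the defining relation \eqref{eq:distr_one_form}, namely $\act{\cn_g, Q_g v}_{\Wo_g(\T)}$, in the computational coordinates $x^i$, treating separately the volume (element) contribution and the edge contribution in \eqref{eq:29}. For the volume term I would start from the intrinsic formula $\cn_g(X) = g(\GBasis_1, \nabla_X \GBasis_2)$ of \eqref{eq:oneform_gortho_frame}, compute $\cn_g(\d_i)$ by expanding $\nabla_{\d_i}\GBasis_2 = (\d_i\GBasis_2^j + \Gamma_{li}^j\GBasis_2^l)\d_j$, and symmetrize using metric compatibility together with $g(\GBasis_1,\GBasis_2)=0$ (so that $g(\GBasis_1,\nabla_{\d_i}\GBasis_2) = -g(\nabla_{\d_i}\GBasis_1,\GBasis_2)$); this reproduces exactly the coefficient $\mt{\cn}_i$ of the statement. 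Since $Q_g v = v^i\d_i/\sqrt{\det g}$ by \eqref{eq:def_Q_op}, linearity gives $\cn_g(Q_g v) = \mt{\cn}_i v^i/\sqrt{\det g}$, and applying the integration rule \eqref{eq:intTf} with $\Phi$ equal to the identity map (the model case) cancels the $\sqrt{\det g}$ weight, yielding $\int_{(T,g)}\cn_g(Q_g v) = \int_T \mt{\cn}_i v^i\,\da$. This produces the first summand of the claimed formula verbatim.

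The substance of the argument is rewriting the interior-edge integrals $\int_{(E,g)}\Theta^E\, g(Q_g v, \gn^E)$ as element-boundary integrals. First I would convert the $g$-arclength measure to the Euclidean measure $\dl$ via \eqref{eq:bdrintg} and the identity \eqref{eq:d-g-w-Qw}, $g(Q_g v, \gn)\sqrt{g_{\tv\tv}} = v^\nv$, so each edge term becomes $\int_E \Theta^E v^{\nv_+}\,\dl$. Next I would invoke the additive splitting $\Theta^E = \tilde\Theta^E_+ - \tilde\Theta^E_-$ supplied by the angle-computation recipe \eqref{eq:31}, with $\tilde\Theta^E_\pm = \agl_g(\GBasis_{\pm,1}, \pm s_E\gn_\pm)$, and use the sign relations $\gt_- = -\gt_+$, $\gn_- = -\gn_+$, and $\nv_- = -\nv_+$ across the shared edge $E = \d T_-\cap\d T_+$. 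These relations let me distribute the two summands of $\Theta^E$ onto the two incident triangles: on $T_\pm$ the reference vector $s_E\,g(\gt_E,\gt)\gn$ collapses to $\pm s_E\gn_\pm$ (the argument of $\tilde\Theta^E_\pm$), while the factor $v^{\nv}$ carries the orientation-consistent sign. Collecting the contributions, each interior edge supplies the integrand $\agl_g(\GBasis_1, s_E\,g(\gt_E,\gt)\gn)\,v^\nv$ on each of its two adjacent element boundaries, which is exactly the $\d T$-integrand in the statement; the boundary edges of $\om$ drop out because $v^\nv = 0$ there for $v\in\Wo_h^k$.

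The main obstacle I anticipate is the orientation and sign bookkeeping in this redistribution. One must simultaneously track three orthonormal pairs along $E$ — the Euclidean $(\tv,\nv)$, the $g$-orthonormal $(\gt,\gn)$, and the edge-fixed pair $(\gt^E,\gn^E)=(\gt_+,\gn_+)$ — and verify that the scalar $g(\gt_E,\gt)\in\{+1,-1\}$ precisely compensates the sign flip of the per-element normal, so that $s_E\,g(\gt_E,\gt)\gn = s_E\gn^E$ consistently from both sides and agrees with the appendix's $\tilde\Theta^E_\pm$; getting every sign right is what determines the overall sign of the boundary term in the final formula. A second delicate point is that $\Theta^E$ in \eqref{eq:29} is defined through the glued smooth structure, whereas the computable formula uses only the discontinuous $g_{ij}$ in the coordinates $x^i$: the bridge is the observation, established around \eqref{eq:31}, that $\tilde\Theta^E_+$ and $\tilde\Theta^E_-$ are each computable from $g|_{T_+}$ and $g|_{T_-}$ alone, and that the choice of $s_E$ is exactly what ensures $\tilde\Theta^E_+ - \tilde\Theta^E_-$ recovers the transport angle $\Theta^E$ without a spurious $2\pi m$ shift. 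Once these identifications are pinned down, summing the volume and edge contributions yields the asserted identity.
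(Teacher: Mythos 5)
Your proposal takes essentially the same route as the paper's own proof: the element term by symmetrizing \eqref{eq:oneform_gortho_frame} to get $\mt{\cn}_i$ (the paper's one-line ``symmetrization'' step, which you carry out in more detail), and the interior-edge terms redistributed onto element boundaries using the conversion $g(Q_g v,\gn^E)\,\vol{E} = v^{\nv^E}\,\dl$, the identity $\pm\gn_\pm = g(\gt^E,\gt_\pm)\,\gn_\pm$, the splitting $\Theta^E=\tilde{\Theta}^E_+-\tilde{\Theta}^E_-$, and the normal continuity of $v$. One point worth flagging: the redistribution as you describe it (and as the paper's own displayed equality chain gives it) produces the element-boundary term with a \emph{plus} sign, whereas the proposition's statement has a \emph{minus} sign in front of $\int_{\d T}$; this sign discrepancy is internal to the paper itself, and your proposal inherits it (asserting the result ``follows'') rather than resolving it, exactly as the paper's proof does.
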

\begin{proof}
	By noting that $\pm \gn^E_\pm = g(\gt^E,\gt_{\pm})\gn^E_\pm$ we obtain
	\begin{align*}
	\sum_{E\in\Eint} \int_{E,g}{\Theta}^Eg(Q_gv,\gn^E)=\sum_{E\in\Eint} \int_{E}(\tilde{\Theta}^E_+-\tilde{\Theta}^E_-)v^{\nv^E}\,\dl = \sum_{ T\in \T}\int_{\d T}\sphericalangle_g(\GBasis_1,s_E\,g(\gt_E,\gt)\gn)v^{\nv}\,\dl.
	\end{align*}
	By symmetrization of \eqref{eq:oneform_gortho_frame} we have $\OneForm(g;e_i)= \frac{1}{2}g_{jk}(\d_i\GBasis_2^j+\Gamma_{li}^j\GBasis_2^l\GBasis_1^k-\d_i\GBasis_1^j-\Gamma_{li}^j\GBasis_1^l\GBasis_2^k)$ and the claim follows.
\end{proof}

\section{Relation between distributional covariant $\mathrm{inc}$ and $\div\div$}
\label{sec:rel_cov_inc_vov_divdiv}
In this section we show that the distributional covariant inc from Proposition~\ref{prop:cov_distr_inc} and the covariant distributional divdiv operator of a rotated sigma used in \cite{BKG21} coincide in the sense 
\begin{align}
\label{eq:identity_distr_cov_divdiv_curlcurl}
\act{\div_g\div_gS_g \sigma, u}_{\Vo(\T)}=-\act{\inc_g\sigma, u}_{\Vo(\T)}\qquad \forall u\in\Vo(\T),
\end{align}
where the covariant divergence is defined below. This is in common with the Euclidean identity $\div\div S\sigma = \div\div g-\Delta \tr{\sigma} = -\inc\sigma$, $\tr{\cdot}$ denoting the trace of a matrix. The distributional covariant divdiv reads
\begin{align}
\label{eq:def_distr_cov_divdiv}
\act{\div_g\div_gS_g \sigma, u}_{\Vo(\T)} =
\int_{\T} u \;\div_g\div_gS_g \sigma
&+ \int_{\d \T} u\; ( (\div_gS_g\sigma)^\flat(\gn) + (d^0\sigma_{\gn \gt})(\gt))\nonumber\\
&+ \sum_{T \in \T}\sum_{V \in \V_T}\jump[V]{\sigma_{\gn \gt}}^Tu(V),
\end{align}
where $S_g\sigma = \sigma - \mathrm{tr}_g(\sigma)g$ with $\mathrm{tr}_g(\sigma)=\sigma_{ij}g^{ij}$. (Note that the authors in \cite{BKG21} used $(\gn,\gt)$ as positively oriented frame, whereas we use $(\gt,\gn)$ such that the signs in the boundary and vertex terms differ. Also, a different orientation in the vertex jump is used.)

The covariant divergence is defined as the $L^2$-adjoint of the covariant gradient. For $f\in\W^0(\Omega)$ its covariant gradient is given by the equation
\begin{align*}
g(\grad_g f,v) = d^0 f(v) \qquad \forall v\in\Xm{\Omega}
\end{align*}
and $\div_g:\Xm{\Omega}\to\W^0(\Omega)$ by
\begin{align*}
\int_{\Omega}g(\grad_g f,v) = \int_{\Omega}f\,\div_g v, \quad \forall v\in\Wo_g(\Omega),f\in\W^0(\Omega),
\end{align*}
in coordinates
\begin{align*}
\div_g v &= \frac{1}{\sqrt{\det g}}\d_i(\sqrt{\det g}v^i),\quad v\in\Xm{\Omega}.
\end{align*}
The usual extension to tensor fields, see e.g. \cite{Gaw20}, $\div_g:\TT_0^2(\Omega)\to \Xm{\Omega}$ reads in coordinates
\begin{align*}
\div_g\sigma &= (\partial_j\sigma^{ij}+\Gamma_{lj}^i\sigma^{lj}+\Gamma_{jl}^j\sigma^{il}) \d _i,\qquad \sigma\in\TT^2_0(\Omega),
\end{align*}
where $\sigma^{ij} = g^{ik}\sigma_{kl}g^{lj}$ for $\sigma\in \TT_2^0(\Omega)$.

\begin{lemma}
	       \label{lem:identities_div_curl}
	       There holds for $g\in \Regge^+(\T)$ and $\sigma\in \Regge[\T]$
	       \begin{enumerate}
		               \item $\star (\div_gS_g\sigma)^\flat = -\curl_g\sigma$ and  $(\div_gS_g\sigma)^\flat(\gn) = (\curl_g\sigma)(\gt)$ on $\d\T$,
		               \item $\div_g\div_gS_g\sigma = \div_g\div_g\sigma-\Delta_g\mathrm{tr}_g(\sigma) = -\inc_g\sigma$ on $\T$,
		       \end{enumerate}
	       where $\Delta_gf:=\div_g \grad_g f$, $f\in \W^0(\Omega)$ denotes the Laplace-Beltrami operator.
	\end{lemma}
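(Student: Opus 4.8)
The plan is to prove the two statements in order, extracting the second-order identity (2) from the first-order identity (1) together with Hodge duality, so that no direct computation with second derivatives of the metric is ever needed. The heart of the matter is the first equality in (1), namely $\star(\div_g S_g\sigma)^\flat = -\curl_g\sigma$. Since every operator here ($\div_g$, $S_g$, $\flat$, $\star$, $\curl_g$) is defined intrinsically, both sides are genuine $1$-forms, so it suffices to verify the equality pointwise. At an arbitrary $p \in T$ I would pass to Riemann normal coordinates exactly as in Lemma~\ref{lem:variation}, using \eqref{eq:normalmagic} to kill the Christoffel symbols and first derivatives of $g$ at $p$. Because $\div_g S_g\sigma$ and $\curl_g\sigma$ are first order in both $\sigma$ and $g$, only $\partial\sigma$ terms survive at $p$: one finds $(\div_g S_g\sigma)^i|_p = \dtt_j(\sigma - \mathrm{tr}_g(\sigma)g)_{ij}|_p = \dtt_j\sigmatt_{ij} - \dtt_i(\sigmatt_{11}+\sigmatt_{22})$. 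Lowering the index trivially at $p$ and applying $\star$ (which at $p$ sends $dx^1 \mapsto dx^2$, $dx^2 \mapsto -dx^1$), one recovers precisely the coordinate expression of $-\curl_g\sigma$ read off from \eqref{eq:curlgsigma-coord} with the $\Gamtt$-terms dropped. The one delicate piece of bookkeeping is the trace-reversal term $\mathrm{tr}_g(\sigma)g$, whose divergence contributes $\dtt_i\mathrm{tr}_g(\sigma)$ at $p$; this is exactly the term that turns the divergence into the Hodge-rotated curl.

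The boundary identity $(\div_g S_g\sigma)^\flat(\gn) = (\curl_g\sigma)(\gt)$ is then immediate from the interior one: writing $\og = (\div_g S_g\sigma)^\flat$ so that $\star\og = -\curl_g\sigma$, the second relation in \eqref{eq:star-rotation}, $(\star\og)(\gt) = -\og(\gn)$, yields $\og(\gn) = -(\star\og)(\gt) = (\curl_g\sigma)(\gt)$. This disposes of (1) entirely.

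For (2), the first equality is purely algebraic: by linearity of $\div_g$ and metric compatibility $\nabla g = 0$ (which gives $\div_g(f g) = \grad_g f$ for any scalar $f$), one has $\div_g(\mathrm{tr}_g(\sigma)g) = \grad_g\mathrm{tr}_g(\sigma)$, hence $\div_g\div_g S_g\sigma = \div_g\div_g\sigma - \div_g\grad_g\mathrm{tr}_g(\sigma) = \div_g\div_g\sigma - \Delta_g\mathrm{tr}_g(\sigma)$. For the second equality I would invoke the standard two-dimensional Hodge identity $\div_g v = \star d^1(\star v^\flat) = \curl_g(\star v^\flat)$ for $v \in \Xm T$, which follows in one line from $\star v^\flat = \sqrt{\det g}\,\veps_{jk}v^j\,dx^k$ and $\veps_{jk}\veps^{lk} = \delta_j^l$. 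Applying this with $v = \div_g S_g\sigma$ and using part (1) in the form $\star v^\flat = -\curl_g\sigma$, I obtain $\div_g\div_g S_g\sigma = \curl_g(\star v^\flat) = -\curl_g\curl_g\sigma = -\inc_g\sigma$ by the definition \eqref{eq:incg}.

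The main obstacle is the careful normal-coordinate verification of the first identity in (1): one must track the trace-reversal term so that the divergence of the trace-reversed tensor reproduces the Hodge-rotated covariant curl, matching signs and components correctly. Once that first-order identity is in hand, everything else is formal manipulation with $\star$, $\flat$, and the linearity of $\div_g$, with the second-order statement (2) never requiring a direct computation.
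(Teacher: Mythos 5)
Your proof is correct, and while part (1) follows essentially the paper's route, part (2) takes a genuinely different one. For (1) you do exactly what the paper does: note that only first derivatives of $g$ enter, pass to Riemann normal coordinates via \eqref{eq:normalmagic} so that $(\div_g S_g\sigma)^\flat$ reduces at the point to the Euclidean divergence of the trace-reversed matrix, apply the $90$-degree rotation $\star$, and then read off the boundary identity from \eqref{eq:star-rotation}. The difference is in (2): the paper proves the second equality by a further normal-coordinate computation that genuinely involves second derivatives of $\gtt$ --- expanding $\Delta_g\mathrm{tr}_g(\sigma)$ and $\div_g\div_g\sigma$ at the point, carrying out a ``lengthy but elementary'' Christoffel calculation, and matching the result $-\inc\mt\sigmatt+\tfrac{1}{2}\tro\mt\sigmatt\,\inc\mt\gtt$ against \eqref{eq:cov_inc_normal_coord} --- while citing the trace-reversal identity $\div_g\div_gS_g\sigma=\div_g\div_g\sigma-\Delta_g\mathrm{tr}_g(\sigma)$ from \cite{Gaw20}. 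You instead deduce (2) formally from (1): the two-dimensional Hodge identity $\div_g v=\curl_g(\star v^\flat)$ (which does follow in one line from $\star v^\flat=\sqrt{\det g}\,\veps_{jk}v^j\,dx^k$, $\veps_{jk}\veps^{lk}=\delta_j^l$, and \eqref{eq:curl_inc_group-1}), applied to $v=\div_g S_g\sigma$ with $\star v^\flat=-\curl_g\sigma$ from (1), gives $\div_g\div_g S_g\sigma=-\curl_g\curl_g\sigma=-\inc_g\sigma$ by \eqref{eq:incg}; and you prove the trace-reversal identity yourself from $\div_g(fg)=\grad_g f$ (metric compatibility), rather than citing it. Your route buys brevity and robustness --- no second derivatives of the metric are ever computed, and (2) is exposed as a structural consequence of (1) plus Hodge duality --- whereas the paper's direct computation provides an independent consistency check against the normal-coordinate formula \eqref{eq:cov_inc_normal_coord} and makes the intermediate expression explicit. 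One point worth stating explicitly in your write-up: the first identity of (1) holds as an equality of smooth $1$-forms on each element $T$ (not merely on $\d\T$, where only the $\gn$--$\gt$ statement lives), and it is this element-wise validity that licenses applying $\curl_g$ to both sides in your derivation of (2).
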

\begin{proof}
	       In the first identity only first order derivatives of $g$ are involved. Thus, in normal coordinates, $\xtt^i$ (see $\S$\ref{subsec:connection_curv_inc}), $(\div_gS_g\sigma)^\flat$ becomes the Euclidean version $\div(\sigma-\tr{\sigma}\Eucl)$, which reads in components
	       \begin{align*}
	       (\div_gS_g\sigma)^\flat = \mat{\dtt_2\sigmatt_{12}-\dtt_1\sigmatt_{22}\\ \dtt_1\sigmatt_{12}-\dtt_2\sigmatt_{11}}.
	       \end{align*}
	       The Hodge star operator performs a counterclockwise 90-degree rotation, $\startt=-\varepsilon^{ij}$, so that 
	       \begin{align*}
	       \star(\div_gS_g\sigma)^\flat = -\mat{\dtt_1\sigmatt_{12}-\dtt_2\sigmatt_{11}\\ \dtt_1\sigmatt_{22}-\dtt_2\sigmatt_{12}} = -\curl \sigmatt,
	       \end{align*}
	       which coincides with $-\curl_g\sigma$ in normal coordinates. The identity $(\div_gS_g\sigma)^\flat(\gn) = (\curl_g\sigma)(\gt)$ follows now by \eqref{eq:star-rotation}.

	       The identity $\div_g\div_gS_g\sigma = \div_g\div_g\sigma -\Delta_g\mathrm{tr}_g(\sigma)$ is well known~\cite{Gaw20}, so we focus on proving its relationship with $\inc$, by means of normal coordinates. The Laplace-Beltrami operator becomes
	       \begin{align*}
	       \mt{\Delta_g\mathrm{tr}_g(\sigma)} = \mt{\div_g\nabla_g\mathrm{tr}_g(\sigma)} =\mt{\div_g(g^{ij}\d_j \mathrm{tr}_g(\sigma))} = \dtt_i^2\tr{\sigmatt \gtt^{-1}} = \Delta \tr{\sigmatt}-\tr{\sigmatt\Delta \gtt}
	       \end{align*}
	       and the divdiv part becomes 
	       \begin{align*}
	       \mt{\div_g\div_g\sigma}= \div\div\mt\sigmatt-2\dtt^2_{ij}\gtt_{ik}\sigmatt_{kj} +\dtt_i\Gamtt_{lji}\sigmatt_{lj}-\dtt_i\Gamtt_{jlj}\sigmatt_{il}.
	       \end{align*}
	       Note that we abused notation and summed over repeated indices all of which are subscripts (forgivable while using normal coordinates).	       
	       Furthermore, by inserting the definition of Christoffel symbols of the  first kind, a lengthy but elementary computation gives
	       \begin{align*}
	       -2\dtt^2_{ij}\gtt_{ik}\sigmatt_{kj} +\dtt_i\Gamtt_{lji}\sigmatt_{lj}&-\dtt_i\Gamtt_{jlj}\sigmatt_{il} = \sigmatt_{11}\big(\frac{1}{2}\dtt_1^2\gtt_{22}-\dtt_1^2\gtt_{11}-\frac{1}{2}\dtt_2^2\gtt_{11}-\dtt_1\dtt_2\gtt_{12}\big)\\
	       &-2\sigmatt_{12}\big(\dtt_1^2\gtt_{12}+\dtt_2^2\gtt_{12}\big)
	       -\sigmatt_{22}\big(\dtt_2^2\gtt_{22}+\frac{1}{2}\dtt_1^2\gtt_{22}+\dtt_1\dtt_2\gtt_{12}-\frac{1}{2}\dtt_2^2\gtt_{11}\big).
	       \end{align*}
	       Combining these,  
	       \begin{align*}
	       \mt{\div_g\div_gS_g\sigma} = -\inc\mt\sigmatt +\frac{1}{2}\mathrm{tr} \mt\sigmatt\inc \mt\gtt,
	       \end{align*}
	       which finishes the proof by comparing with \eqref{eq:cov_inc_normal_coord}.
	\end{proof}

Using the results of Lemma~\ref{lem:identities_div_curl} and comparing the terms of \eqref{eq:def_distr_cov_divdiv} with Proposition~\ref{prop:cov_distr_inc} shows that identity \eqref{eq:identity_distr_cov_divdiv_curlcurl} holds.

\section*{Acknowledgments}

%

This work was supported in part by the Austrian Science Fund (FWF)
project F65 and NSF grant DMS-1912779.

\bibliographystyle{acm}
\bibliography{cites}

\end{document}